\DeclareMathOperator{\Lie}{Lie}
\DeclareMathOperator{\der}{der}
\DeclareMathOperator{\spr}{spr}
\DeclareMathOperator{\nil}{nil}
\newcommand{\Centn}{\ensuremath{\mathrm{Cent}_{\mathrm{nil}}}}
\newcommand{\Centu}{\ensuremath{\mathrm{Cent}_{\mathrm{uni}}}}
\newcommand{\gr}{\ensuremath{\mathrm{gr}\,}}
\Crefname{enumi}{}{}
\setlist[enumerate,1]{label=(\roman*), leftmargin=1.3cm}
\setlist[enumerate,2]{leftmargin=1.3cm}
\title{Generalised Gelfand--Graev Representations in Small Characteristics}
\author{Jay Taylor}
\address{FB Mathematik, TU Kaiserslautern, Postfach 3049, 67653 Kaiserslautern, Germany}
\email{taylor@mathematik.uni-kl.de}
\keywords{Finite reductive groups, character sheaves, generalised Gelfand--Graev representations, wave front sets}
\begin{document}
\begin{abstract}
Let $\bG$ be a connected reductive algebraic group over an algebraic closure $\overline{\mathbb{F}_p}$ of the finite field of prime order $p$ and let $F : \bG \to \bG$ be a Frobenius endomorphism with $G = \bG^F$ the corresponding $\mathbb{F}_q$-rational structure. One of the strongest links we have between the representation theory of $G$ and the geometry of the unipotent conjugacy classes of $\bG$ is a formula, due to Lusztig \cite{lusztig:1992:a-unipotent-support}, which decomposes Kawanaka's Generalised Gelfand--Graev Representations (GGGRs) in terms of characteristic functions of intersection cohomology complexes defined on the closure of a unipotent class. Unfortunately, the formula given in \cite{lusztig:1992:a-unipotent-support} is only valid under the assumption that $p$ is large enough. In this article we show that Lusztig's formula for GGGRs holds under the much milder assumption that $p$ is an acceptable prime for $\bG$ ($p$ very good is sufficient but not necessary). As an application we show that every irreducible character of $G$, resp., character sheaf of $\bG$, has a unique wave front set, resp., unipotent support, whenever $p$ is good for $\bG$.
\end{abstract}

\section{Introduction}
\begin{pa}
Let $\bG$ be a connected reductive algebraic group defined over an algebraic closure $\mathbb{K} = \overline{\mathbb{F}_p}$ of the finite field of prime order $p$ and let $F : \bG \to \bG$ be a Frobenius endomorphism defining an $\mathbb{F}_q$-rational structure $G = \bG^F$ on $\bG$. Assuming $p$ is a good prime for $\bG$ a theory of generalised Gelfand--Graev representations (GGGRs) was developed by Kawanaka in \cite{kawanaka:1986:GGGRs-exceptional} building on his investigations in \cite{kawanaka:1985:GGGRs-and-ennola-duality}. These are certain unipotently supported representations $\Gamma_u$ of $G$ which are defined for any unipotent element $u \in G$. Note that, identifying $\Gamma_u$ with its character, we have $\Gamma_u = \Gamma_v$ whenever $u, v \in G$ are $G$-conjugate so the GGGRs are naturally indexed by the unipotent conjugacy classes of $G$.
\end{pa}

\begin{pa}
Let $\rho \in \Irr(G)$ be an irreducible character and $\mathcal{O}$ an $F$-stable unipotent conjugacy class of $\bG$. We will denote by $\AV(\rho,\mathcal{O})$ the average value $\sum_{g \in \mathcal{O}^F} \rho(g)$ of $\rho$ on the rational points $\mathcal{O}^F$. We say $\mathcal{O}$ is a \emph{unipotent support} of $\rho$ if:
\begin{enumerate}[label=(US\arabic*), leftmargin=1.5cm]
	\item $\AV(\rho,\mathcal{O}) \neq 0$ and
	\item\label{US:2} $\AV(\rho,\widetilde{\mathcal{O}}) \neq 0$ implies $\dim \widetilde{\mathcal{O}} \leqslant \dim\mathcal{O}$ with $\widetilde{\mathcal{O}}$ any $F$-stable unipotent class.
\end{enumerate}
For any unipotent element $v \in \bG$ we denote by $\mathcal{O}_v$ the $\bG$-conjugacy class containing $v$. With this we say that $\mathcal{O}$ is a \emph{wave front set} of $\rho$ if:
\begin{enumerate}[label=(WF\arabic*), leftmargin=1.5cm]
	\item $\langle \Gamma_u, \rho \rangle \neq 0$ for some $u \in \mathcal{O}$ and
	\item\label{WF:2} $\langle \Gamma_v, \rho \rangle \neq 0$ implies $\dim \mathcal{O}_v \leqslant \dim\mathcal{O}$ with $v \in G$ any unipotent element.
\end{enumerate}
When $u$ is the identity element we have $\Gamma_u$ is the regular representation of $G$ so $\langle \Gamma_1,\rho\rangle = \rho(1) = \AV(\rho,\{1\})$; hence every irreducible character of $G$ admits a unipotent support and a wave-front set. However, it was conjectured by Lusztig \cite{lusztig:1980:some-problems-in-the-representation-theory}, resp., Kawanaka \cite{kawanaka:1985:GGGRs-and-ennola-duality}, that each irreducible character $\rho \in \Irr(G)$ admits a unique unipotent support $\mathcal{O}_{\rho}$, resp., wave front set $\mathcal{O}_{\rho}^*$. If this conjecture is satisfied then we say the unipotent support/wave front set of $\rho$ is well defined.
\end{pa}

\begin{pa}
Assuming $p$ and $q$ are sufficiently large then Lusztig has shown in \cite{lusztig:1992:a-unipotent-support} that the unipotent support and wave front set of an irreducible character are well defined. He also gave a definition for the unipotent support of a character sheaf and similarly showed that each character sheaf has a well-defined unipotent support, see \cref{def:unipotent-support-char-sheaf} for the definition. These results provide one of the most profound relationships between irreducible characters of $G$ and the geometry of the algebraic group $\bG$. They also highlight the central role that character sheaves play in the representation theory of finite reductive groups.
\end{pa}

\begin{pa}
Using Lusztig's results Geck was able to show that each irreducible character has a unique unipotent support whenever $p$ is a good prime for $\bG$, see \cite{geck:1996:on-the-average-values}. In turn, using Geck's result together with ideas developed in \cite{lusztig:1986:on-the-character-values} Aubert was able to prove, in certain special cases, that character sheaves admit a unique unipotent support whenever $p$ is good, see \cite{aubert:2003:characters-sheaves-and-generalized}. The following completes this picture in good characteristic, see \cref{thm:unip-support-char-sheaves,thm:existence-wave-front-sets}, thus proving Kawanaka's conjecture in general.
\end{pa}

\begin{thm*}
Assume $p$ is a good prime for $\bG$ then any irreducible character of $G$ has a unique wave front set and any character sheaf of $\bG$ has a unique unipotent support.
\end{thm*}

\begin{pa}\label{pa:geometric-condition}
Thanks to results of Achar and Aubert \cite{achar-aubert:2007:supports-unipotents-de-faisceaux} we may even give a geometric refinement of the conditions \cref{US:2,WF:2}. Namely, for any $F$-stable unipotent class $\widetilde{\mathcal{O}}$ of $\bG$ or unipotent element $v \in G$ we have
\begin{enumerate}[leftmargin=1.5cm]
	\item[(US2')] $\AV(\rho,\widetilde{\mathcal{O}}) \neq 0$ implies $\widetilde{\mathcal{O}} \subseteq \overline{\mathcal{O}_{\rho}}$,
	\item[(WF2')] $\langle \Gamma_v,\rho\rangle \neq 0$ implies $\mathcal{O}_v \subseteq \overline{\mathcal{O}_{\rho}^*}$,
\end{enumerate}
see \cite[Th\'eor\`eme 6.3, Th\'eor\`eme 9.1]{achar-aubert:2007:supports-unipotents-de-faisceaux} and \cref{prop:geometric-refinement}. Here $\overline{\mathcal{O}_{\rho}}$ denotes the Zariski closure of $\mathcal{O}_{\rho}$ and similarly for $\mathcal{O}_{\rho}^*$. Now, if $p$ is a good prime for $\bG$ then we have two well-defined maps
\begin{equation*}
\Irr(G) \to \{F\text{-stable unipotent conjugacy classes of }\bG\}
\end{equation*}
given by $\rho \mapsto \mathcal{O}_{\rho}$ and $\rho \mapsto \mathcal{O}_{\rho}^*$. These turn out to be dual in the following sense. Let $\rho^* \in \Irr(G)$ be the unique irreducible character such that $\rho^* = \pm D_G(\rho)$ where $D_G(\rho)$ is the Alvis--Curtis dual of $\rho$ then $\mathcal{O}_{\rho^*} = \mathcal{O}_{\rho}^*$, see \cref{lem:duality-unip-supp-wave-front}. In other words, the unipotent support of the Alvis--Curtis dual of $\rho$ is the wave front set of $\rho$.
\end{pa}

\begin{pa}
We note here that in \cite{lusztig:2015:restriction-of-a-character-sheaf-to-conjugacy-classes} Lusztig has obtained a refinement of the notion of unipotent support for a character sheaf in good characteristic. There it is stated that the uniqueness of unipotent supports in good characteristic may be deduced from the case of large characteristic by standard methods. In \cref{prop:unipotent-supports-SLn} we give an example of such methods in the special case where $\bG$ is $\SL_n(\mathbb{K})$.
\end{pa}

\begin{pa}
We now give an overview of the arguments used in this paper. In \cite{lusztig:1992:a-unipotent-support} Lusztig gave a formula relating GGGRs and IC complexes on unipotent classes, which we will refer to as \emph{Lusztig's formula}. Reading carefully \cite{lusztig:1992:a-unipotent-support} one sees that Lusztig's proof that the wave front set of an irreducible character is well defined ultimately relies on the validity of Lusztig's formula and the validity of the results in \cite{lusztig:1990:green-functions-and-character-sheaves}. If $p$ is good for $\bG$ then the results of \cite{lusztig:1990:green-functions-and-character-sheaves} are true if $q$ is sufficiently large. However, if $\bG$ has a connected centre then Shoji has shown that this restriction on $q$ may be dropped, see \cite{shoji:1996:on-the-computation}. In other words, if $p$ is a good prime for $\bG$ and $Z(\bG)$ is connected then the results of \cite{lusztig:1990:green-functions-and-character-sheaves} are true.
\end{pa}

\begin{pa}
One of the main results in this paper shows that Lusztig's formula remains valid whenever $p$ is an \emph{acceptable prime} for $\bG$, see \cref{def:acceptable-prime,thm:main-theorem-unipotent}. Combining this with Shoji's result from \cite{shoji:1996:on-the-computation} one sees that Lusztig's proof in \cite{lusztig:1992:a-unipotent-support} showing that the wave front set is well defined remains valid if $p$ is an acceptable prime for $\bG$ and $Z(\bG)$ is connected. Now, the notion of acceptable prime lies in between the notions of very good prime and good prime. More precisely, we have a series of implications
\begin{equation*}
p\text{ is very good for }\bG \Rightarrow p\text{ is acceptable for }\bG \Rightarrow p\text{ is good for }\bG.
\end{equation*}
If $\bG$ is $\GL_n(\mathbb{K})$ or $Z(\bG)$ is connected and $\bG/Z(\bG)$ is simple not of type $\A$ then a prime $p$ is acceptable for $\bG$ if and only if it is good for $\bG$. Thus, for these groups we have the wave front set is well defined if $p$ is good for $\bG$ by \cref{thm:main-theorem-unipotent}, \cite{shoji:1996:on-the-computation} and \cite{lusztig:1992:a-unipotent-support}. Using a series of standard reduction arguments we may then deduce that the wave front set is well defined for any connected reductive algebraic group $\bG$ assuming only that $p$ is good for $\bG$. Similar reduction arguments were also used by Geck in \cite{geck:1996:on-the-average-values} to show that the unipotent support is well defined. Our approach for dealing with unipotent supports of character sheaves is entirely similar, however in this case things are somewhat simpler as one may ignore the $\mathbb{F}_q$-structure. These reduction arguments are carried out in \S12 - \S15.
\end{pa}

\begin{pa}
Recall that if $p$ is sufficiently large then one may define inverse isomorphisms $\exp : \mathcal{N} \to \mathcal{U}$ and $\log : \mathcal{U} \to \mathcal{N}$ between the unipotent variety of $\bG$ and the nilpotent cone of its Lie algebra $\lie{g}$. In \cite{lusztig:1992:a-unipotent-support} Lusztig uses the $\exp$ and $\log$ maps to define the GGGRs and to transfer their study to that of $\bG$-invariant functions supported on $\mathcal{N}$. The upshot of this is that one acquires a powerful new tool, namely the Fourier transform. Lusztig then uses results from \cite{lusztig:1987:fourier-transforms-on-a-semisimple-lie-algebra} on the Fourier transform to deduce the formula.
\end{pa}

\begin{pa}
The results in \cite{lusztig:1987:fourier-transforms-on-a-semisimple-lie-algebra} are proved under the assumption that $p$ is sufficiently large. However, in \cite{letellier:2005:fourier-transforms}, Letellier has shown that the main results of \cite{lusztig:1987:fourier-transforms-on-a-semisimple-lie-algebra} still hold if $p$ is an acceptable prime for $\bG$. This is where our assumption on the characteristic comes from as we prolifically use the results of Letellier throughout this article. With this in hand, our strategy for proving Lusztig's formula is to show that Lusztig's argument still applies if one replaces the $\exp$ and $\log$ maps with $\phi_{\spr}^{-1}$ and $\phi_{\spr}$, where $\phi_{\spr} : \mathcal{U} \to \mathcal{N}$ is a suitably chosen Springer isomorphism.
\end{pa}

\begin{pa}
For this strategy to work we must address several technical details, which are dealt with in \S2 - \S5. For instance, to ensure that a Springer isomorphism exists one needs to make some assumption on the group $\bG$. In particular, if we assume that a simply connected covering of the derived subgroup of $\bG$ is a separable morphism then a Springer isomorphism will exist. We call a group satisfying this condition \emph{proximate}. In \S4 we show that if $\bG$ is proximate then one can find a special type of Springer isomorphism $\phi_{\spr}$ satisfying properties which mean that $\phi_{\spr}$ can be used in the definition of the GGGRs of $G$. This construction is then given in \S5 following work of Kawanaka. Note this is very important to our cause. If one constructs the GGGRs as in \cite{kawanaka:1986:GGGRs-exceptional} without using a Spinger isomorphism then one runs into serious problems when trying to compute the Fourier transform of the corresponding GGGR on the Lie algebra.
\end{pa}

\begin{pa}
As well as finding a good Springer isomorphism we will also need to know various facts concerning centralisers of nilpotent elements. These are used in proving statements that are needed for the definition of GGGRs. Under the assumption that $\bG$ is proximate we derive these results in \S3 from the work of Premet \cite{premet:2003:nilpotent-orbits-in-good-characteristic}. Note that these results can fail if $\bG$ is not proximate. In light of this, we show in \S2 that for any connected reductive algebraic group $\bG$ and Frobenius endomorphism $F : \bG \to \bG$ there exists a proximate algebraic group $\overline{\bG}$, a bijective morphism of algebraic groups $\phi : \overline{\bG} \to \bG$ and a Frobenius endomorphism $\overline{F} : \overline{\bG} \to \overline{\bG}$ such that $\phi\circ\overline{F} = F \circ \phi$. In particular, we have $\phi$ restricts to an isomorphism $\overline{\bG}^{\overline{F}} \to \bG^F$. This shows that, for the purposes of defining the GGGRs of $\bG^F$, we can assume that $\bG$ is proximate so that the desired results are available to us.
\end{pa}

\begin{pa}
With the groundwork on GGGRs in place we then proceed to prove Lusztig's formula in sections \S6 - \S11. Here we follow \cite{lusztig:1992:a-unipotent-support} to the letter, simply finding alternative arguments when either the $\exp$ or $\log$ maps were used or when the theory of $\lie{sl}_2$-triples was used. We have tried not to unnecessarily repeat arguments from \cite{lusztig:1992:a-unipotent-support} but some things are repeated to improve the quality of the exposition. Having said this, we have chosen to give most of the arguments from \cite[\S6]{lusztig:1992:a-unipotent-support} as this was originally proved under the assumption that $F$ is split but later remarked that this assumption is unnecessary \cite[8.7]{lusztig:1992:a-unipotent-support}.
\end{pa}

\begin{acknowledgments}
The author would like to thank Pramod Achar, Olivier Dudas, Simon Goodwin, Sebastian Herpel, Gunter Malle, Iulian Simion and Britta Sp\"ath for useful conversations/correspondence. We especially thank Simon Goodwin for pointing us towards \cite{gan-ginzburg:2002:quantization-of-slodowy-slices} and \cite{bardsley-richardson:1985:Etale-slices-for-algebraic-transformation-groups}, which formed important ingredients in this paper, and George McNinch both for useful discussions on Springer isomorphisms and for allowing us to include his proof of \cref{lem:sep-isog-iso-uni-vars}. Finally the author gratefully acknowledges the financial support of ERC Advanced Grant 291512 awarded to Prof.\ Gunter Malle.
\end{acknowledgments}

\section{Proximate Algebraic Groups}
\begin{assumption}
We will assume that $\bG$ is a connected reductive algebraic group defined over $\mathbb{K}$, an algebraic closure of a finite field $\mathbb{F}_p$ of prime order $p$, and that $F : \bG \to \bG$ is a Frobenius endomorphism defining an $\mathbb{F}_q$-rational structure $G = \bG^F$.
\end{assumption}

\begin{pa}
Recall that a root datum $\mathcal{R} = (X,\Phi,Y,\widecheck{\Phi})$ is a quadruple such that $X$ and $Y$ are free $\mathbb{Z}$-modules of finite rank equipped with a perfect pairing $\langle-,-\rangle_{\mathcal{R}} : X \times Y \to \mathbb{Z}$ and $\Phi \subseteq X$ and $\widecheck{\Phi}\subseteq Y$ are finite subsets equipped with a bijection $\alpha \mapsto \widecheck{\alpha}$ satisfying the conditions in \cite[7.4.1, 7.4.4]{springer:2009:linear-algebraic-groups}. In particular, all our root data are assumed to be reduced. Now assume $\mathcal{R}' = (X',\Phi',Y',\widecheck{\Phi}')$ is another root datum and $\varphi : X' \to X$ is a $\mathbb{Z}$-module homomorphism. We will denote by $\widecheck{\varphi} : Y \to Y'$ the transpose of $\varphi$ which is the unique $\mathbb{Z}$-module homomorphism satisfying
\begin{equation*}
\langle \varphi(x), y\rangle_{\mathcal{R}} = \langle x,\widecheck{\varphi}(y) \rangle_{\mathcal{R}'}
\end{equation*}
for all $x \in X'$ and $y \in Y$. We say $\varphi$ is a homomorphism of root data if $\varphi$ restricts to a bijection $\Phi' \to \Phi$ and $\widecheck{\varphi}(\widecheck{\alpha}) = \widecheck{\beta}$ whenever $\varphi(\beta) = \alpha$. A homomorphism of root data is an isomorphism if $\varphi$ is a $\mathbb{Z}$-module isomorphism. Following \cite{steinberg:1999:the-isomorphism-and-isogeny-theorems} we say that $\varphi$ is an isogeny of root data if the following hold:
\begin{enumerate}
	\item $\varphi : X' \to X$ and $\widecheck{\varphi} : Y \to Y'$ are injective,
	\item there exists a bijection $b : \Phi \to \Phi'$ and a map $q : \Phi \to \{p^a \mid a \in \mathbb{Z}_{\geqslant 0}\}$ such that for any $\alpha \in \Phi$ we have $\varphi(\beta) = q(\alpha)\alpha$ and $\widecheck{\varphi}(\widecheck{\alpha}) = q(\alpha)\widecheck{\beta}$ where $\beta = b(\alpha)$.
\end{enumerate}
\end{pa}

\begin{pa}\label{pa:pairs}
If $\bT \leqslant \bG$ is a maximal torus then one may construct a root datum $\mathcal{R}(\bG,\bT) = (X(\bT),\Phi,Y(\bT),\widecheck{\Phi})$ as in \cite[7.4.3]{springer:2009:linear-algebraic-groups}, where $X(\bT)$, resp., $Y(\bT)$, is the character, resp., cocharacter, group of $\bT$. Recall that an isogeny of algebraic groups is defined to be a surjective homomorphism of algebraic groups whose kernel is finite. If $(\bH,\bS)$ is another pair consisting of a connected reductive algebraic group $\bH$ over $\mathbb{K}$ and a maximal torus $\bS$ then we say $\varphi : (\bG,\bT) \to (\bH,\bS)$ is an isogeny, resp., isomorphism, if $\varphi : \bG \to \bH$ is an isogeny, resp., isomorphism, of algebraic groups and $\varphi(\bT) = \bS$. If $\varphi$ is such an isogeny, resp., isomorphism, then the map $\varphi^* : X(\bS) \to X(\bT)$ given by $\varphi^*(\chi) = \chi\circ\varphi$ is an isogeny, resp., isomorphism, of root data. Now, for any $g \in \bG$ we denote by $\Inn g : \bG \to \bG$ the inner automorphism defined by $\Inn g(x) = gxg^{-1}$ for all $x \in \bG$. With this we have the following classical result, see \cite[1.5]{steinberg:1999:the-isomorphism-and-isogeny-theorems} and \cite[3.17]{digne-michel:1991:representations-of-finite-groups-of-lie-type}.
\end{pa}

\begin{thm}[Isogeny Theorem]\label{thm:isogeny-theorem}
Let $(\bG,\bT)$ and $(\bH,\bS)$ be two pairs as in \cref{pa:pairs} then every isogeny, resp., isomorphism, of root data $\mathcal{R}(\bH,\bS) \to \mathcal{R}(\bG,\bT)$ is of the form $\varphi^*$ for some isogeny, resp., isomorphism, $\varphi : (\bG,\bT) \to (\bH,\bS)$. Moreover, if $\varphi, \psi : (\bG,\bT) \to (\bH,\bS)$ are two isogenies such that $\varphi^* = \psi^*$ then $\psi = \varphi\circ\Inn t$ for some $t \in \bT$. Furthermore an isogeny $\varphi : (\bG,\bT) \to (\bG,\bT)$ is a Frobenius endomorphism defining an $\mathbb{F}_q$-rational structure on $\bG$ if and only if $\varphi^* = q\tau$ with $\tau : X(\bT) \to X(\bT)$ a finite order automorphism.
\end{thm}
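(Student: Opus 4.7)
The plan is to prove the three assertions by reducing everything to the Chevalley generators and relations, since the data of the root datum together with a choice of pinning of each root subgroup determines $\bG$ up to an inner automorphism by $\bT$. Throughout I work with a chosen pair of Borel subgroups $\bB \supseteq \bT$ in $\bG$ and $\bB' \supseteq \bS$ in $\bH$ corresponding under $\varphi$ in the obvious way, and fix isomorphisms $u_\alpha : \mathbb{G}_a \to \bU_\alpha$ and $u'_\beta : \mathbb{G}_a \to \bU'_\beta$ of the root subgroups for all $\alpha \in \Phi$, $\beta \in \Phi'$.

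For the existence of $\varphi$ lifting a given isogeny $\varphi^* : X(\bS) \to X(\bT)$ of root data, I would first define a homomorphism $\bT \to \bS$ dually to $\varphi^*$; since $\widecheck{\varphi^*} : Y(\bT) \to Y(\bS)$ is injective this gives an isogeny between the tori. Then for each $\alpha \in \Phi$, using the exponent $q(\alpha)$ and the bijection $b$ provided by the isogeny of root data, I would define a candidate map on $\bU_\alpha$ by $u_\alpha(t) \mapsto u'_{b(\alpha)}(c_\alpha t^{q(\alpha)})$ for a suitable scalar $c_\alpha \in \mathbb{K}^\times$. The key technical step, which I expect to be the main obstacle, is showing that the constants $c_\alpha$ can be chosen so that the resulting maps on $\bT$ and on all $\bU_\alpha$ satisfy the Chevalley commutator formulas and the $\SL_2$-relations in $\bH$. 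This is the content of Steinberg's analysis in \cite{steinberg:1999:the-isomorphism-and-isogeny-theorems}: once compatibility holds on simple roots, a Bruhat decomposition argument combined with the Steinberg presentation of $\bG$ forces consistency globally, and yields the required morphism $\varphi : \bG \to \bH$.

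For the uniqueness statement, suppose $\varphi, \psi : (\bG,\bT) \to (\bH,\bS)$ are two isogenies with $\varphi^* = \psi^*$. Then $\varphi$ and $\psi$ induce the same map on $\bT$, because the map $\bT \to \bS$ is recovered from its action on characters. Thus on each root subgroup $\bU_\alpha$ the two isogenies differ by a scalar: $\psi(u_\alpha(t)) = u'_{b(\alpha)}(\lambda_\alpha \cdot c_\alpha t^{q(\alpha)})$ for some $\lambda_\alpha \in \mathbb{K}^\times$. The Chevalley relations together with the $\bT$-equivariance then force the assignment $\alpha \mapsto \lambda_\alpha$ to arise from a single element $t \in \bT$ via the pairing between characters and cocharacters, so that $\psi = \varphi \circ \Inn t$.

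For the Frobenius characterization, the forward direction is the observation that if $F$ defines an $\mathbb{F}_q$-rational structure then some power $F^n$ acts on $\bT$ as the standard $q^n$-power Frobenius (after replacing $\bT$ by a conjugate split maximal torus), so $F^*$ has the form $q\tau$ with $\tau$ a lattice automorphism of finite order permuting $\Phi$. For the converse, given that $F^* = q\tau$ with $\tau^n = \mathrm{id}$, one checks that $(F^n)^* = q^n \mathrm{id}$, so by the existence statement and uniqueness up to $\Inn t$ the morphism $F^n$ agrees, after a torus twist, with an ordinary $q^n$-power Frobenius endomorphism; this is enough to conclude $F$ itself is a Frobenius endomorphism, as by Lang--Steinberg a self-isogeny whose fixed point set on $\bT$ is finite and whose iterate is an ordinary Frobenius must itself be a Frobenius endomorphism for $\bG$.
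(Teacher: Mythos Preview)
The paper does not give its own proof of this theorem; it states it as a classical result and cites \cite[1.5]{steinberg:1999:the-isomorphism-and-isogeny-theorems} and \cite[3.17]{digne-michel:1991:representations-of-finite-groups-of-lie-type} for the three assertions. Your sketch follows precisely the classical route taken in those references: Steinberg's presentation of $\bG$ via Chevalley generators handles existence and uniqueness, and the Frobenius characterisation is the content of the cited Digne--Michel result. So there is nothing to compare against in the paper itself, and your outline is in line with the standard arguments.

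One small point worth tightening in your Frobenius direction: the converse needs the observation that once $(F^n)^* = q^n\,\mathrm{id}$, the uniqueness clause gives $F^n = F_0 \circ \Inn t$ with $F_0$ the standard $q^n$-Frobenius, and then a short Lang--Steinberg style argument (absorbing $\Inn t$ into the definition of the rational structure, or noting that $F^{nm}$ is literally a standard Frobenius for $m$ large) shows $F$ is a Frobenius endomorphism. Your phrasing ``after a torus twist'' and the final sentence gesture at this but would benefit from being spelled out; the paper itself later uses exactly this absorption trick in the proof of \cref{prop:pretty-good-exists}.
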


\begin{rem}\label{rem:Frob-end}
Note that $\varphi^*$ is of the form $q\tau$ with $\tau : X(\bT) \to X(\bT)$ a finite order automorphism if and only if its transpose is of the form $q\widecheck{\tau}$ with $\widecheck{\tau} : Y(\bT) \to Y(\bT)$ a finite order automorphism.
\end{rem}

\begin{pa}\label{pa:induced-root-datum}
For any $\mathbb{Z}$-module $W$ we will denote by $W^* = \Hom(W,\mathbb{Z})$ the dual module. Now assume $\mathcal{R} = (X,\Phi,Y,\widecheck{\Phi})$ is a root datum and $B \subseteq Y$ is a submodule such that $\widecheck{\Phi} \subseteq B$. We denote by $\langle -,-\rangle_{\mathcal{R}^B} : B \times B^* \to \mathbb{Z}$ the natural perfect pairing given by $\langle x,y\rangle_{\mathcal{R}^B} = y(x)$. By the definition of perfect pairing the homomorphisms
\begin{align*}
\Theta_X : X &\to Y^* & \Theta_Y : Y &\to X^*\\
x &\mapsto \langle x,-\rangle_{\mathcal{R}} & y &\mapsto \langle -,y\rangle_{\mathcal{R}}
\end{align*}
are isomorphisms. Now if $\iota : B \to Y$ is the natural inclusion then it is easily checked that $\widecheck{\iota} = \iota^*\circ\Theta_X : X \to B^*$ is the transpose of $\iota$, where $\iota^* : X^* \to B^*$ is the map defined by $\iota^*(f) = f\circ\iota$. By \cite[XXI, 1.1.4]{sga:2011:sga3-TomeIII} we have $\widecheck{\iota} : X \to B^*$ restricts to a bijection $\Phi \to \widecheck{\iota}(\Phi)$ and by definition
\begin{equation*}
\langle \widecheck{\iota}(\alpha), \widecheck{\alpha} \rangle_{\mathcal{R}_A} = \langle \alpha,\iota(\widecheck{\alpha})\rangle_{\mathcal{R}} = 2
\end{equation*}
for any $\alpha \in \Phi$. From this it is easy to see that $\mathcal{R}^B = (B^*,\widecheck{\iota}(\Phi),B,\widecheck{\Phi})$ is a root datum such that $\iota(\widecheck{\alpha}) = \widecheck{\beta}$ if $\beta = \widecheck{\iota}(\alpha)$ for any $\alpha \in \Phi$ hence $\widecheck{\iota} : A \to X$ is a homomorphism of root data. Following \cite[XXI, 6.5]{sga:2011:sga3-TomeIII} we call $\mathcal{R}^B$ the \emph{root datum coinduced by $B$}.
\end{pa}

\begin{pa}\label{pa:perp-top}
Now for any \emph{subset} $B \subseteq Y$ we define submodules
\begin{align*}
B^{\perp} &= \{x \in X \mid \langle x,y\rangle_{\mathcal{R}} = 0\text{ for all }y \in B\} \subseteq Y,\\
B^{\top} &= \{y \in Y \mid ny \in \mathbb{Z}B\text{ for some non-zero } n \in \mathbb{Z}\} \subseteq Y,
\end{align*}
where $\mathbb{Z}B$ is the submodule generated by $B$. For any subset $A \subseteq X$ we have the submodules $A^{\perp} \subseteq Y$ and $A^{\top} \subseteq X$ are defined analogously. Note that $B^{\top}/\mathbb{Z}B$ is nothing other than the torsion submodule of $Y/\mathbb{Z}B$ and so the module $Y/B^{\top}$ is a free module.
\end{pa}

\begin{pa}\label{pa:derived-subgroup-RD}
By \cref{pa:perp-top} we have the quotient $Y/\widecheck{\Phi}^{\top}$ is a free module so the natural short exact sequence
\begin{equation*}
\begin{tikzpicture}[baseline={([yshift=-1ex]current bounding box.center)}]
\matrix (m) [matrix of math nodes, row sep=4em, column sep=2em, text height=1.5ex, text depth=0.25ex]
{ 0 & B & Y & Y/B & 0\\};

\path [>=stealth,->]	(m-1-1) edge (m-1-2)
		(m-1-2) edge node [above] {$\iota$} (m-1-3)
		(m-1-3) edge (m-1-4)
		(m-1-4) edge (m-1-5);
\end{tikzpicture}
\end{equation*}
splits. In particular, there exists a submodule $N \subseteq Y$ such that $Y = \widecheck{\Phi}^{\top} \oplus N$ which implies that $Y^* = (\widecheck{\Phi}^{\top})^* \oplus N^*$. Now $\iota^* : Y^* \to (\widecheck{\Phi}^{\top})^*$ is the natural projection homomorphism so the transpose $\widecheck{\iota} : X \to (\widecheck{\Phi}^{\top})^*$ is surjective with kernel $\widecheck{\Phi}^{\perp}$, c.f., \cref{pa:induced-root-datum}. Let $\mathcal{R}_{\der} = (X_{\der},\Phi,Y_{\der},\widecheck{\Phi})$ be the root datum with $X_{\der} = X/\widecheck{\Phi}^{\perp}$, $Y_{\der} = \widecheck{\Phi}^{\top}$ and the perfect pairing $\langle -,-\rangle_{\mathcal{R}_{\der}} : X_{\der} \times Y_{\der} \to \mathbb{Z}$ given by
\begin{equation*}
\langle x+\widecheck{\Phi}^{\perp},y\rangle_{\mathcal{R}_{\der}} := \langle x,y\rangle_{\mathcal{R}}
\end{equation*}
for all $x+\widecheck{\Phi}^{\top} \in X_{\der}$ and $y \in Y_{\der}$. The map $X/\widecheck{\Phi}^{\perp} \to B^*$ given by $x + \widecheck{\Phi}^{\perp} \mapsto \widecheck{\iota}(x)$ is then an isomorphism of root data $\mathcal{R}_{\der} \to \mathcal{R}^{\widecheck{\Phi}^{\top}}$.
\end{pa}

\begin{lem}[{}{\cite[8.1.9]{springer:2009:linear-algebraic-groups}}]
If $\bG$ is a connected reductive algebraic group with root datum $\mathcal{R}(\bG,\bT)$ then the root datum $\mathcal{R}(\bG_{\der},\bT\cap\bG_{\der})$ of the derived subgroup $\bG_{\der}$ of $\bG$ is isomorphic to $\mathcal{R}(\bG,\bT)_{\der} \cong \mathcal{R}(\bG,\bT)^{\widecheck{\Phi}^{\top}}$.
\end{lem}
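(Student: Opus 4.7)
My plan is to establish the isomorphism $\mathcal{R}(\bG_{\der},\bT\cap\bG_{\der})\cong\mathcal{R}(\bG,\bT)^{\widecheck{\Phi}^{\top}}$ directly and then invoke the identification $\mathcal{R}_{\der}\cong\mathcal{R}^{\widecheck{\Phi}^{\top}}$ already given in \cref{pa:derived-subgroup-RD}. First I would replace $\bT\cap\bG_{\der}$ with its identity component $\bT'=(\bT\cap\bG_{\der})^{\circ}$, which is a maximal torus of the semisimple group $\bG_{\der}$; the standard fact that $\bT=\bT'\cdot Z(\bG)^{\circ}$ with finite intersection shows this replacement does not change the root datum associated to the pair.

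Next I would identify the character and cocharacter lattices. The inclusion $\bT'\hookrightarrow\bT$ induces a restriction map $r:X(\bT)\to X(\bT')$ and an inclusion $\iota:Y(\bT')\hookrightarrow Y(\bT)$ which are transpose under the natural pairings. For the cocharacters, the coroots $\widecheck{\alpha}$ factor through the subgroups $\langle\bU_{\alpha},\bU_{-\alpha}\rangle\subseteq\bG_{\der}$, so $\mathbb{Z}\widecheck{\Phi}\subseteq\iota(Y(\bT'))$; conversely, since $\bG_{\der}$ is semisimple, its coroots rationally span the cocharacter lattice of any of its maximal tori, so $\iota(Y(\bT'))\subseteq \widecheck{\Phi}^{\top}$. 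As $\widecheck{\Phi}^{\top}$ is by definition the saturation of $\mathbb{Z}\widecheck{\Phi}$ in $Y(\bT)$ and $\iota(Y(\bT'))$ is itself saturated (being a direct summand, because $\bT'$ is a direct factor of $\bT$ up to isogeny), I obtain $\iota(Y(\bT'))=\widecheck{\Phi}^{\top}$. Taking duals shows that $r$ is surjective with kernel the characters of $\bT$ trivial on $\bT'$, and $\chi$ is trivial on $\bT'$ exactly when $\langle\chi,\widecheck{\alpha}\rangle=0$ for every $\alpha\in\Phi$, i.e.\ when $\chi\in\widecheck{\Phi}^{\perp}$. Hence $r$ induces an isomorphism $X(\bT)/\widecheck{\Phi}^{\perp}\xrightarrow{\sim} X(\bT')$.

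It then remains to match the root data combinatorics. The roots of $\bG_{\der}$ with respect to $\bT'$ are precisely the restrictions $r(\alpha)$ for $\alpha\in\Phi$, because the root subgroups of $\bG_{\der}$ coincide with those of $\bG$; the corresponding coroot in $Y(\bT')$ is the unique preimage under $\iota$ of $\widecheck{\alpha}$. Compatibility of the pairings is immediate from the definitions since
\begin{equation*}
\langle r(\chi),\widecheck{\alpha}\rangle_{\mathcal{R}(\bG_{\der},\bT')}=\langle \chi,\iota(\widecheck{\alpha})\rangle_{\mathcal{R}(\bG,\bT)}.
\end{equation*}
This is exactly the data defining $\mathcal{R}(\bG,\bT)_{\der}$, and by \cref{pa:derived-subgroup-RD} the latter is isomorphic to $\mathcal{R}(\bG,\bT)^{\widecheck{\Phi}^{\top}}$.

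The main obstacle is the lattice-theoretic step proving $\iota(Y(\bT'))=\widecheck{\Phi}^{\top}$: one must rule out the a priori possibility that $\iota(Y(\bT'))$ sits strictly between $\mathbb{Z}\widecheck{\Phi}$ and its saturation. My plan is to exploit the decomposition $\bT=\bT'\cdot Z(\bG)^{\circ}$ to realise $Y(\bT')$ as an honest direct summand of $Y(\bT)$; once $\iota(Y(\bT'))$ is known to be saturated and to contain $\mathbb{Z}\widecheck{\Phi}$, equality with $\widecheck{\Phi}^{\top}$ follows from the definition of the latter as the smallest saturated submodule containing $\mathbb{Z}\widecheck{\Phi}$.
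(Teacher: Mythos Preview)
The paper does not actually prove this lemma; it is stated with a citation to \cite[8.1.9]{springer:2009:linear-algebraic-groups} and no argument is given. The isomorphism $\mathcal{R}(\bG,\bT)_{\der}\cong\mathcal{R}(\bG,\bT)^{\widecheck{\Phi}^{\top}}$ is supplied by the preceding paragraph \ref{pa:derived-subgroup-RD}, so the only content being cited is the identification of $\mathcal{R}(\bG_{\der},\bT\cap\bG_{\der})$ with $\mathcal{R}(\bG,\bT)_{\der}$.

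Your outline is the standard one and is essentially correct. One point deserves tightening: you justify that $\iota(Y(\bT'))$ is saturated in $Y(\bT)$ by saying ``$\bT'$ is a direct factor of $\bT$ up to isogeny''. That phrase alone is not enough, since a finite-index sublattice is a ``direct factor up to isogeny'' yet need not be saturated. The clean argument is that $\bT/\bT'$ is again a torus (because $\bT'$ is a closed connected subgroup of $\bT$), so $Y(\bT)/\iota(Y(\bT'))\cong Y(\bT/\bT')$ is free, whence $\iota(Y(\bT'))$ is a direct summand. With that in hand your sandwich $\mathbb{Z}\widecheck{\Phi}\subseteq\iota(Y(\bT'))\subseteq\widecheck{\Phi}^{\top}$ together with saturation forces equality, exactly as you say. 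The remainder of the argument (matching roots, coroots and pairings) is routine and correct.
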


\begin{pa}\label{pa:simply-connected-RD}
Let $\mathbb{Q}X_{\der}$, resp., $\mathbb{Q}Y_{\der}$, be the $\mathbb{Q}$-vector space $\mathbb{Q} \otimes_{\mathbb{Z}} X_{\der}$, resp., $\mathbb{Q} \otimes_{\mathbb{Z}} Y_{\der}$. The $\mathbb{Z}$-submodules $\mathbb{Z}\Phi \subseteq X_{\der}$ may naturally be considered as submodules of the weight lattice
\begin{equation*}
\Lambda = \{\omega \in \mathbb{Q}\Phi \mid \langle \omega, \widecheck{\alpha}\rangle_{\mathcal{R}} \in \mathbb{Z}\text{ for all }\widecheck{\alpha} \in \widecheck{\Phi}\} \subseteq \mathbb{Q}X_{\der},
\end{equation*}
where the form $\langle-,-\rangle_{\mathcal{R}}$ is extended naturally to the $\mathbb{Q}$-vector spaces. We may then form the root datum $\mathcal{R}_{\simc} = (\Lambda,\Phi,\mathbb{Z}\widecheck{\Phi},\widecheck{\Phi})$ with the perfect pairing $\langle-,-\rangle_{\mathcal{R}_{\simc}}$ being the natural one inherited from the extension of $\langle-,-\rangle_{\mathcal{R}}$ to coefficients over $\mathbb{Q}$. The root datum $\mathcal{R}_{\simc}$ is the root datum of a connected reductive algebraic group $\bG_{\simc}$, which is the simply connected group of the same type as $\bG_{\der}$. Now we have a natural injective homomorphism $f_{\simc} : X_{\der} \to \Lambda$ given by inclusion whose dual is simply the natural inclusion $\widecheck{f}_{\simc} : \mathbb{Z}\widecheck{\Phi} \to Y_{\der}$. In particular, one easily checks that $f_{\simc}$ is an isogeny of root data which lifts to an isogeny $\phi_{\simc} : \bG_{\simc} \to \bG_{\der}$. We call $\phi_{\simc}$ a \emph{simply connected covering} of $\bG_{\der}$.
\end{pa}

\begin{assumption}
From now on we assume that $\phi_{\simc} : \bG_{\simc} \to \bG_{\der}$ is a fixed simply connected covering of the derived subgroup of $\bG$.
\end{assumption}

\begin{definition}
We say an algebraic group $\bG$ is \emph{proximate} if $\bG$ is connected, reductive and the isogeny $\phi_{\simc}$ is separable.
\end{definition}

\begin{rem}
The isogeny $\phi_{\simc}$ is not uniquely defined but as any two simply connected coverings differ by an inner automorphism of $\bG$ we see that $\phi_{\simc}$ is separable if and only if every simply connected covering is separable.
\end{rem}

\begin{pa}
For any algebraic group $\bH$ we will denote by $\Lie(\bH)$ the corresponding Lie algebra, i.e., the tangent space $T_1(\bH)$ at the identity. Moreover, for notational convenience, we define $\lie{g} = \Lie(\bG)$, $\lie{g}_{\der} = \Lie(\bG_{\der})$ and $\lie{g}_{\simc} = \Lie(\bG_{\simc})$. The following easy result shows the utility of a proximate algebraic group,  c.f., \cite[4.3.7(iii)]{springer:2009:linear-algebraic-groups}.
\end{pa}

\begin{prop}\label{prop:lie-algebra-iso}
If $\phi : \bH \to \bH'$ is an isogeny between two affine algebraic groups then $\phi$ is separable if and only if the differential $\mathrm{d}\phi : \Lie(\bH) \to \Lie(\bH')$ is an isomorphism.
\end{prop}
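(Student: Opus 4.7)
The plan is to reduce the proposition to the standard criterion for separability of a dominant morphism of irreducible varieties in terms of its differential, as stated in Springer \cite[4.3.7(iii)]{springer:2009:linear-algebraic-groups}, and then use the homogeneity of algebraic groups to pass from a generic point to the identity.

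First I would observe that since $\phi$ is an isogeny its kernel is finite, and hence $\dim\bH = \dim\bH'$. As affine algebraic groups are smooth, this forces $\dim\Lie(\bH) = \dim\bH = \dim\bH' = \dim\Lie(\bH')$. Consequently, the $\mathbb{K}$-linear map $\mathrm{d}\phi : \Lie(\bH) \to \Lie(\bH')$ is an isomorphism if and only if it is surjective, so the task reduces to establishing that $\phi$ is separable if and only if $\mathrm{d}\phi$ is surjective at the identity.

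Next, because $\phi$ is surjective with finite kernel between irreducible varieties of equal dimension, $\phi$ is automatically dominant, so Springer's criterion applies: $\phi$ is separable if and only if $\mathrm{d}\phi_{h}$ is surjective for some (equivalently, for $h$ in a dense open subset of) $h \in \bH$. The remaining step is to translate this from a generic point back to $1$. For any $h \in \bH$ and $h' = \phi(h) \in \bH'$ the equality $\phi \circ \lambda_{h} = \lambda_{h'} \circ \phi$, where $\lambda$ denotes left translation, yields on differentials
\begin{equation*}
\mathrm{d}\phi_{h} \circ \mathrm{d}(\lambda_{h})_{1} = \mathrm{d}(\lambda_{h'})_{1} \circ \mathrm{d}\phi_{1}.
\end{equation*}
Since left translations are isomorphisms of varieties, their differentials are linear isomorphisms, so $\mathrm{d}\phi_{h}$ and $\mathrm{d}\phi_{1}$ have the same rank. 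Thus surjectivity of $\mathrm{d}\phi$ at any single point is equivalent to surjectivity at $1$, completing the equivalence.

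No single step is a real obstacle; the only mildly delicate point is applying Springer's criterion, which requires checking that $\phi$ is dominant with equidimensional source and target, both of which follow immediately from finiteness of $\ker\phi$ and surjectivity of $\phi$. The whole argument therefore amounts to combining dimension-counting, the standard generic-fibre criterion for separability, and the homogeneous translation trick.
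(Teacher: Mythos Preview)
Your proposal is correct and follows essentially the same approach as the paper, which simply cites \cite[4.3.7(iii)]{springer:2009:linear-algebraic-groups} without further argument. You have supplied precisely the details the paper omits: the dimension count reducing ``isomorphism'' to ``surjective'', and the homogeneity argument via left translations that transfers surjectivity of the differential from a generic point (as guaranteed by Springer's criterion) to the identity element.
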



\begin{pa}
We now wish to show that we can replace our algebraic group $\bG$ by a proximate algebraic group without affecting the finite reductive group $\bG^F$ (up to isomorphism). This will be important later as we will need to assume that $\bG$ is proximate to define Kawanaka's GGGRs. Before doing this we will need the following characterisation of proximate algebraic groups.
\end{pa}

\begin{lem}\label{lem:p-tor-springer-good}
If $\bG$ has root datum $\mathcal{R}(\bG,\bT) = (X,\Phi,Y,\widecheck{\Phi})$ with respect to some maximal torus $\bT\leqslant\bG$ then $\bG$ is proximate if and only if $Y/\mathbb{Z}\widecheck{\Phi}$ has no $p$-torsion.
\end{lem}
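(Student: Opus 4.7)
The plan is to exploit \cref{prop:lie-algebra-iso}, which reduces the separability of $\phi_{\simc}$ to whether the differential $\mathrm{d}\phi_{\simc}:\lie{g}_{\simc}\to\lie{g}_{\der}$ is an isomorphism, and to then compute this differential using the root-space decomposition together with the cocharacter description of the Lie algebra of a torus.

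First I would fix a maximal torus $\bT_{\simc}\leqslant\bG_{\simc}$ with image $\bT_{\der}:=\phi_{\simc}(\bT_{\simc})=\bT\cap\bG_{\der}$. By the construction recalled in \cref{pa:simply-connected-RD}, the isogeny $\phi_{\simc}$ corresponds under \cref{thm:isogeny-theorem} to the inclusion of character lattices $f_{\simc}:X_{\der}\hookrightarrow\Lambda$; in particular the accompanying function $q:\Phi\to\{p^a\}$ is identically $1$. Consequently $\phi_{\simc}$ restricts to an isomorphism on each root subgroup $\bU_\alpha\leqslant\bG_{\simc}$, so $\mathrm{d}\phi_{\simc}$ respects the Cartan decomposition
\[
\lie{g}_{\simc}=\Lie(\bT_{\simc})\oplus\bigoplus_{\alpha\in\Phi}(\lie{g}_{\simc})_\alpha
\]
and maps each root summand isomorphically onto the corresponding summand of $\lie{g}_{\der}$. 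Hence $\mathrm{d}\phi_{\simc}$ is an isomorphism on $\lie{g}_{\simc}$ if and only if its restriction $\Lie(\bT_{\simc})\to\Lie(\bT_{\der})$ is.

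Next I would invoke the standard identification $\Lie(\bS)\cong Y(\bS)\otimes_{\mathbb{Z}}\mathbb{K}$ for a torus $\bS$. By the lemma preceding the statement, $Y(\bT_{\der})=Y_{\der}=\widecheck{\Phi}^{\top}$, while $Y(\bT_{\simc})=\mathbb{Z}\widecheck{\Phi}$ by construction, so the restricted differential is the natural map $\mathbb{Z}\widecheck{\Phi}\otimes_{\mathbb{Z}}\mathbb{K}\to\widecheck{\Phi}^{\top}\otimes_{\mathbb{Z}}\mathbb{K}$ induced by the inclusion $\widecheck{f}_{\simc}$. Tensoring the short exact sequence $0\to\mathbb{Z}\widecheck{\Phi}\to\widecheck{\Phi}^{\top}\to\widecheck{\Phi}^{\top}/\mathbb{Z}\widecheck{\Phi}\to0$ with $\mathbb{K}$ shows that this map is an isomorphism precisely when $(\widecheck{\Phi}^{\top}/\mathbb{Z}\widecheck{\Phi})\otimes_{\mathbb{Z}}\mathbb{K}=0$. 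As the group $\widecheck{\Phi}^{\top}/\mathbb{Z}\widecheck{\Phi}$ is finite, this vanishing is equivalent to its order being coprime to $p$, i.e.\ to its having no $p$-torsion. By \cref{pa:perp-top} this quotient is exactly the torsion submodule of $Y/\mathbb{Z}\widecheck{\Phi}$, so the condition is equivalent to $Y/\mathbb{Z}\widecheck{\Phi}$ having no $p$-torsion, as claimed.

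The step I expect to require the most care is the Cartan-decomposition argument — namely, justifying that $\mathrm{d}\phi_{\simc}$ preserves the root-space decomposition and is an isomorphism on each root summand. This rests on translating the fact that $\phi_{\simc}$ has trivial $q$-function (by \cref{pa:simply-connected-RD}) into the assertion that it acts as an isomorphism on each root subgroup, via the explicit description of isogenies furnished by \cref{thm:isogeny-theorem}. Once that reduction to the maximal-torus level is in place, the remainder is a routine tensor-product calculation.
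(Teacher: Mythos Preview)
Your proposal is correct but follows a different route from the paper. The paper argues via duality: the finite groups $\Lambda/X_{\der}$ and $Y_{\der}/\mathbb{Z}\widecheck{\Phi}$ are in duality, so one has $p$-torsion if and only if the other does; the equivalence of ``$\Lambda/X_{\der}$ has no $p$-torsion'' with separability of $\phi_{\simc}$ is then imported from \cite[Proposition 2.4.4]{letellier:2005:fourier-transforms}, and the passage from $Y_{\der}/\mathbb{Z}\widecheck{\Phi}$ to $Y/\mathbb{Z}\widecheck{\Phi}$ uses the splitting in \cref{pa:derived-subgroup-RD}, just as you do. By contrast, you bypass the duality and the external reference entirely: invoking \cref{prop:lie-algebra-iso}, you compute $\mathrm{d}\phi_{\simc}$ directly via the Cartan decomposition, observe that the root-space pieces are isomorphisms because the $q$-function of $f_{\simc}$ is identically $1$, and reduce everything to the torus map $\mathbb{Z}\widecheck{\Phi}\otimes\mathbb{K}\to\widecheck{\Phi}^{\top}\otimes\mathbb{K}$. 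Your approach is more self-contained and makes explicit exactly where the $p$-torsion obstruction lives, at the cost of having to justify the root-subgroup step carefully; the paper's approach is shorter but leans on Letellier for the key equivalence.
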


\begin{proof}
Let the notation be as in \cref{pa:simply-connected-RD}. The groups $\Lambda/X_{\der}$ and $Y_{\der}/\mathbb{Z}\widecheck{\Phi}$ are finite groups in duality, c.f., \cite[XXI, 6.2.3]{sga:2011:sga3-TomeIII}, so $Y_{\der}/\mathbb{Z}\widecheck{\Phi}$ has no $p$-torsion if and only if $\Lambda/X_{\der}$ has no $p$-torsion. However, this latter condition is precisely equivalent to $\phi_{\simc}$ being separable, see \cite[Proposition 2.4.4]{letellier:2005:fourier-transforms}. It now suffices to observe that the torsion submodule of $Y/\mathbb{Z}\widecheck{\Phi}$ is naturally isomorphic to the quotient $Y_{\der}/\mathbb{Z}\widecheck{\Phi}$ because $Y_{\der}$ has a complement, c.f., \cref{pa:derived-subgroup-RD}.
\end{proof}

\begin{prop}\label{prop:pretty-good-exists}
There exists a proximate algebraic group $\overline{\bG}$, defined over $\mathbb{K}$, and a bijective morphism of algebraic groups $\phi : \overline{\bG} \to \bG$. Furthermore, there exists a Frobenius endomorphism $\overline{F} : \overline{\bG} \to \overline{\bG}$ such that $\phi\circ\overline{F} = F\circ\phi$ so that $\phi$ restricts to an isomorphism $\overline{\bG}^{\overline{F}} \to G$.
\end{prop}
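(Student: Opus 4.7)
The plan is to modify the root datum of $\bG$ so as to kill the $p$-torsion of $Y/\mathbb{Z}\widecheck{\Phi}$—which by \cref{lem:p-tor-springer-good} is exactly what it means to be proximate—while arranging that the resulting map to $\bG$ is purely inseparable and hence bijective on $\mathbb{K}$-points. Concretely, fix $N \geq 0$ large enough that $p^N$ annihilates the $p$-primary component of $\widecheck{\Phi}^{\top}/\mathbb{Z}\widecheck{\Phi}$ and set $\overline{Y} := p^N Y + \mathbb{Z}\widecheck{\Phi} \subseteq Y$. A direct verification yields three key properties: (a) the image $\overline{Y}/\mathbb{Z}\widecheck{\Phi} = p^N(Y/\mathbb{Z}\widecheck{\Phi})$ has no $p$-torsion (since $p^N$ kills the $p$-part and is an automorphism of the prime-to-$p$ part); (b) $Y/\overline{Y}$ is a quotient of $Y/p^N Y$ and hence a $p$-group; (c) any automorphism of $Y$ preserving $\mathbb{Z}\widecheck{\Phi}$ also preserves $\overline{Y}$, directly from the manifestly invariant formula defining $\overline{Y}$.

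Applying the coinduced construction of \cref{pa:induced-root-datum} with $B = \overline{Y}$ produces a root datum $\mathcal{R}^{\overline{Y}} = (\overline{X}, \widecheck{\iota}(\Phi), \overline{Y}, \widecheck{\Phi})$ with $\overline{X} := \overline{Y}^*$, which by Chevalley--Demazure is realised by a connected reductive algebraic group $\overline{\bG}$ over $\mathbb{K}$ relative to some maximal torus $\overline{\bT}$; property (a) together with \cref{lem:p-tor-springer-good} shows that $\overline{\bG}$ is proximate. The inclusion $\widecheck{\iota} : X \to \overline{X}$ is visibly an isogeny of root data (with $q(\alpha) = 1$ for every root), so \cref{thm:isogeny-theorem} provides an isogeny $\phi : (\overline{\bG}, \overline{\bT}) \to (\bG, \bT)$ with $\phi^* = \widecheck{\iota}$. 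Its kernel is central, lies in $\overline{\bT}$, and has character group $\overline{X}/\widecheck{\iota}(X) \cong Y/\overline{Y}$, which is a $p$-group by (b); hence $\ker\phi$ is an infinitesimal group scheme and $\phi$ is bijective on $\mathbb{K}$-points.

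For the Frobenius, \cref{thm:isogeny-theorem} gives $F^* = q\tau$ with $\tau : X \to X$ of finite order, so $\widecheck{\tau}$ permutes the coroots, preserves $\mathbb{Z}\widecheck{\Phi}$, and by (c) restricts to an automorphism of $\overline{Y}$. Dualising this restriction produces $\overline{\tau} : \overline{X} \to \overline{X}$ with $\overline{\tau}\circ\widecheck{\iota} = \widecheck{\iota}\circ\tau$, and then $q\overline{\tau}$ is an isogeny of root data of the shape required by \cref{rem:Frob-end}; \cref{thm:isogeny-theorem} thus produces a Frobenius endomorphism $\overline{F}_0 : \overline{\bG} \to \overline{\bG}$ with $\overline{F}_0^* = q\overline{\tau}$. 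Since $\overline{F}_0^*\circ\phi^* = \phi^*\circ F^*$, the uniqueness clause of \cref{thm:isogeny-theorem} forces $\phi\circ\overline{F}_0 = F\circ\phi\circ\Inn t$ for some $t \in \bT$. Picking $\bar{s} \in \overline{\bT}$ with $\phi(\bar{s}) = t^{-1}$, which is possible since $\phi|_{\overline{\bT}}$ is bijective on $\mathbb{K}$-points, and setting $\overline{F} := \overline{F}_0\circ\Inn\bar{s}$, we obtain a Frobenius (as $\overline{F}$ agrees with $\overline{F}_0$ on $\overline{\bT}$, inner automorphisms by torus elements being trivial there), and the identity $\phi\circ\Inn\bar{s} = \Inn\phi(\bar{s})\circ\phi$ absorbs the unwanted inner automorphism to give $\phi\circ\overline{F} = F\circ\phi$. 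The delicate point is property (c): the explicit choice $\overline{Y} = p^N Y + \mathbb{Z}\widecheck{\Phi}$, rather than some less symmetric lift of the prime-to-$p$ part of the torsion, is precisely what is needed to descend the Frobenius.
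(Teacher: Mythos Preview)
Your argument is correct and follows the same overall architecture as the paper: pick a submodule $\overline{Y}\subseteq Y$ containing $\widecheck{\Phi}$ with $Y/\overline{Y}$ a finite $p$-group and $\overline{Y}/\mathbb{Z}\widecheck{\Phi}$ free of $p$-torsion, form the coinduced root datum $\mathcal{R}^{\overline{Y}}$, and descend the Frobenius via the Isogeny Theorem. The genuine difference is in the choice of $\overline{Y}$. The paper takes $\overline{Y}$ to be ``the unique minimal submodule with property $(\star)$, namely the intersection of all such''; you instead write down $\overline{Y}=p^{N}Y+\mathbb{Z}\widecheck{\Phi}$ explicitly. Your choice is in fact the more robust one: the paper's pairwise-intersection argument does not extend to infinite intersections, and a least element need not exist. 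For instance, when $\bG=\SL_p\times\mathbb{G}_m$ the submodules $\mathbb{Z}\widecheck{\Phi}\oplus p^{n}\mathbb{Z}$ all satisfy $(\star)$ but their intersection $\mathbb{Z}\widecheck{\Phi}$ has $Y/\mathbb{Z}\widecheck{\Phi}\cong(\mathbb{Z}/p)\oplus\mathbb{Z}$, which is not a finite $p$-group. Your explicit formula sidesteps this, and your property~(c) gives the $\widecheck{\tau}$-stability for free. Your verification that the adjusted endomorphism $\overline{F}$ is a Frobenius---by noting that $\overline{F}$ and $\overline{F}_0$ agree on $\overline{\bT}$, hence $\overline{F}^{*}=q\overline{\tau}$, and invoking the last clause of \cref{thm:isogeny-theorem}---is also more direct than the paper's route through $\overline{F}^{n}=\overline{F}_0^{\,n}$.

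One small slip: the uniqueness clause of \cref{thm:isogeny-theorem} places the correcting element in the \emph{source} torus, so you obtain $\phi\circ\overline{F}_0=F\circ\phi\circ\Inn t$ with $t\in\overline{\bT}$, not $t\in\bT$. With $t\in\overline{\bT}$ one may simply set $\overline{F}=\overline{F}_0\circ\Inn t^{-1}$; the passage through a preimage $\bar{s}$ with $\phi(\bar{s})=t^{-1}$ and the identity $\phi\circ\Inn\bar{s}=\Inn\phi(\bar{s})\circ\phi$ is then unnecessary.
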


\begin{proof}
Let $\mathcal{R}(\bG,\bT) = (X,\Phi,Y,\widecheck{\Phi})$ be the root datum of $\bG$ with respect to a maximal torus $\bT \leqslant \bG$. A submodule $V \subseteq Y$ is said to have property ($\star$) if the following holds:
\begin{enumerate}
	\item[($\star$)] $\widecheck{\Phi} \subseteq V$ and $Y/V$ is a $p$-group.
\end{enumerate}
Note, in particular, that $Y/V$ is necessarily finite. Now assume $W \subseteq Y$ also has property ($\star$) then we claim that $V\cap W$ has property ($\star$) as well. Clearly we need only show that $Y/V\cap W$ is a $p$-group. By the second isomorphism theorem we have $V/V\cap W \cong V+W/V$ which is a $p$-group because $V+W/V \leqslant Y/V$. Moreover by the first isomorphism theorem we have $|Y/V\cap W| = |Y/V|\cdot|V/V\cap W|$ which shows that $Y/V\cap W$ is a $p$-group, as desired.

With this we see that there is a unique minimal submodule $\overline{Y} \subseteq Y$ having property ($\star$), which is simply the intersection of all the submodules having property ($\star$). If $\iota : \overline{Y} \to Y$ is the natural inclusion then we may construct the root datum $\mathcal{R}^{\overline{Y}} = (\overline{Y}^*,\widecheck{\iota}(\Phi),\overline{Y},\widecheck{\Phi})$ coinduced by $\overline{Y}$, c.f., \cref{pa:induced-root-datum}. Now consider the natural short exact sequence
\begin{equation*}
\begin{tikzpicture}[baseline={([yshift=-1ex]current bounding box.center)}]
\matrix (m) [matrix of math nodes, row sep=4em, column sep=2em, text height=1.5ex, text depth=0.25ex]
{ 0 & \overline{Y} & Y & Y/\overline{Y} & 0\\};

\path [>=stealth,->]	(m-1-1) edge (m-1-2)
		(m-1-2) edge node [above] {$\iota$} (m-1-3)
		(m-1-3) edge (m-1-4)
		(m-1-4) edge (m-1-5);
\end{tikzpicture}
\end{equation*}
By assumption $Y/\overline{Y}$ is finite so applying $\Hom(-,\mathbb{Z})$ to this sequence we obtain an exact sequence
\begin{equation*}
\begin{tikzpicture}[baseline={([yshift=-1ex]current bounding box.center)}]
\matrix (m) [matrix of math nodes, row sep=4em, column sep=2em, text height=1.5ex, text depth=0.25ex]
{ 0 & 0 & Y^* & \overline{Y}^*\\};

\path [>=stealth,->]	(m-1-1) edge (m-1-2)
		(m-1-2) edge (m-1-3)
		(m-1-3) edge node [above] {$\iota^*$} (m-1-4);
\end{tikzpicture}
\end{equation*}
In particular, we have $\iota^* : Y^* \to \overline{Y}^*$ is injective hence so is the dual $\widecheck{\iota} : X \to \overline{Y}^*$. It is then easily checked that $\widecheck{\iota} : X \to \overline{Y}^*$ is an isogeny of root data.

We will denote by $(\overline{\bG},\overline{\bT})$ a connected reductive algebraic group $\overline{\bG}$ and maximal torus $\overline{\bT}\leqslant\overline{\bG}$ such that $\mathcal{R}(\overline{\bG},\overline{\bT}) = \mathcal{R}^{\overline{Y}}$. By \cref{thm:isogeny-theorem} there exists an isogeny $\phi : \overline{\bG} \to \bG$ of algebraic groups such that $\widecheck{\iota} = \phi^*$. In particular, $\phi$ is surjective and has finite kernel. By design the quotient $Y/\overline{Y}$ is a $p$-group so according to \cite[1.11]{bonnafe:2006:sln} we have $\Ker(\phi) = \{1\}$, which shows that $\phi$ is a bijective morphism of algebraic groups. Clearly $\overline{Y}/\mathbb{Z}\widecheck{\Phi}$ has no $p$-torsion because $\overline{Y}$ is the unique minimal submodule having property ($\star$). Hence $\overline{\bG}$ is proximate by \cref{lem:p-tor-springer-good}.

Let us now consider the statement concerning the Frobenius endomorphism. We may assume that our chosen maximal torus $\bT$ is $F$-stable so that $F$ induces an isogeny of root data $F^* : X \to X$. By \cref{thm:isogeny-theorem} $F^*$ is necessarily of the form $q\tau$ with $\tau : X \to X$ a finite order automorphism. As the dual $\widecheck{\tau} : Y \to Y$ is an automorphism stabilising $\widecheck{\Phi}$ we see that $\widecheck{\tau}(\overline{Y}) = \overline{Y}$ from the definition of $\overline{Y}$. In particular $\widecheck{F}^*$ restricts to an injective homomorphism $\widecheck{\sigma} : \overline{Y} \to \overline{Y}$ which satisfies $\iota \circ \widecheck{\sigma} = \widecheck{F}^*\circ \iota$ by definition. Consequently we have $\sigma\circ\widecheck{\iota} = \widecheck{\iota}\circ F^*$ where $\sigma : \overline{Y}^* \to \overline{Y}^*$ is the dual of $\widecheck{\sigma}$. From this it follows easily that $\sigma$ is an isogeny of root data because $F^*$ is.

As $\widecheck{\sigma} = q\widecheck{\tau}|_{\overline{Y}}$ we have by \cref{thm:isogeny-theorem,rem:Frob-end} that there exists a Frobenius endomorphism $\overline{F}' : \overline{\bG} \to \overline{\bG}$ stabilising $\overline{\bT}$ such that $\overline{F}'^* = \sigma$. Now, as $\sigma\circ\widecheck{\iota} = \widecheck{\iota}\circ F^*$ we have $(\phi\circ \overline{F}')^* = (F \circ\phi)^*$ so there exists an element $t \in \overline{\bT}$ such that $\phi\circ \overline{F}'\circ\Inn t = F \circ \phi$ by \cref{thm:isogeny-theorem}. Setting $\overline{F} = \overline{F}'\circ \Inn t$ we thus have $\phi\circ \overline{F} = F \circ \phi$ as desired. We now need only show that $\overline{F}$ is a Frobenius endomorphism. By \cite[3.6(i)]{digne-michel:1991:representations-of-finite-groups-of-lie-type} it suffices to show that $\overline{F}^n = \overline{F}'^n$ for some integer $n>0$. However, for any $n > 0$ we have
\begin{equation*}
\overline{F}^n = \Inn \overline{F}'(t)\overline{F}'^2(t)\cdots \overline{F}'^n(t) \circ \overline{F}'^n.
\end{equation*}
The statement now follows because every element of $\bG$ is fixed by some power of $\overline{F}'$ and has finite order.
\end{proof}



\section{Unipotent and Nilpotent Elements}\label{sec:uni-nil-elements}
\begin{assumption}
From this point forward we assume that $p$ is a good prime for $\bG$. Moreover we assume that $\bT_0 \leqslant \bB_0 \leqslant \bG$ are a fixed choice of $F$-stable maximal torus and Borel subgroup. We will denote by $\Phi \subseteq X(\bT_0)$, resp., $\widecheck{\Phi} \subseteq \widecheck{X}(\bT_0)$, the roots, resp., coroots, of $\bG$ with respect to $\bT_0$. Furthermore, we denote by $\Delta \subset \Phi^+ \subset \Phi$ the set of simple and positive roots determined by $\bB_0$.
\end{assumption}

\subsection{Springer Isomorphisms}
\begin{pa}
As $\bG$ is equipped with a Frobenius endomorphism so is $\lie{g}$ and we will denote this again by $F : \lie{g} \to \lie{g}$. Our main interest of study will be the variety of unipotent elements $\mathcal{U}(\bG) \subset \bG$ and the nilpotent cone $\mathcal{N}(\lie{g}) \subset \lie{g}$. If there is no ambiguity over the ambient group, resp., Lie algebra, then we will simply write $\mathcal{U}$, resp., $\mathcal{N}$, for $\mathcal{U}(\bG)$, resp., $\mathcal{N}(\lie{g})$. To relate these two objects we will need the existence of a \emph{Springer isomorphism}, which is a $\bG$-equivariant isomorphism of varieties $\phi_{\spr} : \mathcal{U} \to \mathcal{N}$ compatible with the Frobenius endomorphisms on $\bG$ and $\lie{g}$. Note that this is $\bG$-equivariant in the sense that
\begin{equation*}
\phi_{\spr}\circ\Inn g = \Ad g \circ \phi_{\spr}
\end{equation*}
for all $g \in \bG$ where $\Ad g = d(\Inn g) : \lie{g} \to \lie{g}$ is the differential of $\Inn g : \bG \to \bG$, c.f., \cref{pa:pairs}.
\end{pa}

\begin{pa}
We will need the following well known result which states that a separable isogeny between connected algebraic groups restricts to an isomorphism between unipotent varieties. This result was stated by Springer in \cite{springer:1969:the-unipotent-variety} but he omits the proof. An argument is given in \cite[\S6.2]{humphreys:1995:conjugacy-classes} for this statement but this seems only to show that the restriction is a birational morphism. If one knew the target unipotent variety was normal then one could conclude that this was indeed an isomorphism but usually one uses such an isomorphism to deduce the unipotent variety is normal so this is circular. The following argument was communicated to us by George McNinch; we thank him for kindly allowing us to include it here.
\end{pa}

\begin{lem}\label{lem:sep-isog-iso-uni-vars}
Assume $\bH$ and $\bH'$ are connected affine algebraic groups defined over $\mathbb{K}$. If $\phi : \bH \to \bH'$ is a separable isogeny then $\phi$ restricts to an isomorphism $\mathcal{U}(\bH) \to \mathcal{U}(\bH')$ which is equivariant with respect to the natural conjugation action.
\end{lem}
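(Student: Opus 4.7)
The plan is to exploit that a separable isogeny is a finite étale cover, and to combine this with the way homomorphisms of algebraic groups interact with Jordan decomposition. First, \cref{prop:lie-algebra-iso} tells me that $\mathrm{d}\phi$ is an isomorphism, so $\phi$ is étale; being also an isogeny it is finite, and therefore $\phi$ is a finite étale Galois cover whose Galois group is $K = \ker(\phi)$, a finite central subgroup of the connected group $\bH$. Any homomorphism of algebraic groups preserves Jordan decomposition, so $\phi$ carries unipotent elements to unipotent elements and hence restricts to an $\bH$-equivariant morphism $\psi : \mathcal{U}(\bH) \to \mathcal{U}(\bH')$; my task is to upgrade $\psi$ to an isomorphism of varieties.

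The next step is to identify the full preimage $\phi^{-1}(\mathcal{U}(\bH'))$. If $h$ lies in this preimage and $h = h_s h_u$ is its Jordan decomposition, then $\phi(h_s)$ and $\phi(h_u)$ are the semisimple and unipotent parts of $\phi(h)$, which forces $\phi(h_s) = 1$ (so $h_s \in K$) and $h_u \in \mathcal{U}(\bH)$. This yields the set-theoretic equality $\phi^{-1}(\mathcal{U}(\bH')) = K \cdot \mathcal{U}(\bH)$. Because $\phi$ is separable, $K$ is reduced; being finite and central, each of its elements lies in a maximal torus and is therefore semisimple. The uniqueness of Jordan decomposition then forces the union $\bigcup_{z \in K} z\mathcal{U}(\bH)$ to be disjoint, so $\phi^{-1}(\mathcal{U}(\bH'))$ is the disjoint union of $|K|$ irreducible components, each a translate of $\mathcal{U}(\bH)$.

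To finish I would use base-change stability of finite étale morphisms. The restriction $\phi^{-1}(\mathcal{U}(\bH')) \to \mathcal{U}(\bH')$ is finite étale of degree $|K|$ onto the irreducible variety $\mathcal{U}(\bH')$, and its $|K|$ irreducible components each map surjectively onto $\mathcal{U}(\bH')$; by additivity of degree each component maps with degree one, and a finite étale morphism of degree one onto a connected target is an isomorphism. Applying this to the component $\mathcal{U}(\bH)$ gives the conclusion. The principal technical hurdle is the Jordan decomposition bookkeeping in the second paragraph, and specifically the semisimplicity of every element of $K$, which is what ensures the disjointness of $\bigcup_{z} z\mathcal{U}(\bH)$; once this is in hand the remainder is a routine manipulation with finite étale covers.
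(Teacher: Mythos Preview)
Your argument is correct and takes a genuinely different route from the paper's. Both proofs begin identically, using Jordan decomposition to write $\phi^{-1}(\mathcal{U}(\bH'))$ as the disjoint union $\bigsqcup_{z\in K} z\,\mathcal{U}(\bH)$. They diverge at the endgame. The paper works with affine quotients: it first shows that $(\mathcal{U}(\bH'),\phi)$ is the categorical quotient of $\phi^{-1}(\mathcal{U}(\bH'))$ by the free $K$-action (by checking surjectivity of the comorphism onto the $K$-invariants), and then verifies by an explicit computation with coordinate rings that the composite $\mathcal{U}(\bH)\hookrightarrow \phi^{-1}(\mathcal{U}(\bH'))\to \phi^{-1}(\mathcal{U}(\bH'))/K$ is an isomorphism. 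You instead note that $\phi$ is finite \'etale, base-change this property to $\mathcal{U}(\bH')$, and count degrees across the $|K|$ components to see that each maps with degree one. Your approach is more conceptual and avoids any explicit ring manipulation; the paper's is more elementary and self-contained, never invoking the \'etale formalism.

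One caution on the step you flag as the main hurdle: the claim ``being finite and central, each of its elements lies in a maximal torus'' fails for general connected affine $\bH$. The Artin--Schreier map $x\mapsto x^p-x$ on $\mathbb{G}_a$ is a separable isogeny whose kernel $\mathbb{F}_p$ consists entirely of unipotent elements, and indeed the lemma itself fails there. The paper's proof carries the same implicit assumption when it asserts that the union $\bigsqcup_{z} z\,\mathcal{U}(\bH)$ is disjoint. Both arguments go through once $\bH$ is reductive (so that the centre lies in every maximal torus), which is the only case used in the paper; you may want to record that hypothesis explicitly.
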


\begin{proof}
Let us denote by $Z$ the kernel of $\phi$, which is by assumption a finite subgroup of the centre $Z(\bH)$. We start by noting that $(\bH',\phi)$ is the affine quotient of $\bH$ by $Z$ in the sense of \cite[2.5.8]{geck:2003:intro-to-algebraic-geometry}. Indeed, if $\pi : \bH \to \bH/Z$ is the natural projection map then there exists a unique morphism $\beta : \bH/Z \to \bH'$ such that $\phi = \pi\circ\beta$ by the universal property of the quotient. Clearly $\beta$ is bijective. Moreover, by assumption, the field extension $\mathbb{K}(\bH)/\phi^*(\mathbb{K}(\bH'))$ is separable hence so is $\pi^*(\mathbb{K}(\bH/Z))/\phi^*(\mathbb{K}(\bH'))$ but as $\pi^*$ is injective this implies $\mathbb{K}(\bH/Z)/\beta^*(\mathbb{K}(\bH')$ is separable. Hence $\beta$ is bijective and separable and so is an isomorphism, c.f., \cite[4.3.7(iii), 5.3.3(ii)]{springer:2009:linear-algebraic-groups}.

Now let us denote by $\bV$ the preimage $\phi^{-1}(\mathcal{U}(\bH'))$ of the unipotent variety under $\phi$. The Jordan decomposition shows that $\bV$ is the disjoint union $\bigsqcup_{z \in Z} z\mathcal{U}(\bH)$ and clearly each $z\mathcal{U}(\bH)$ is a closed subvariety isomorphic to $\mathcal{U}(\bH)$. The kernel $Z$ acts simply transitively on $\bV$ by left multiplication and we claim that $(\mathcal{U}(\bH'),\phi)$ is the affine quotient of $\bV$ by $Z$. Certainly the fibres of $\phi$ are $Z$-orbits and $\phi$ is an open map by \cite[2.5.6(b)]{geck:2003:intro-to-algebraic-geometry}. We just need to show that the map $\phi^* : \mathbb{K}[\mathcal{U}(\bH')] \to \mathbb{K}[\bV]^Z$ is surjective. If $\iota_{\bV} : \bV \to \bH$ and $\iota_{\mathcal{U}(\bH')} : \mathcal{U}(\bH') \to \bH'$ are the natural closed embeddings then we have a commutative square
\begin{equation*}
\begin{tikzpicture}[baseline={([yshift=-1ex]current bounding box.center)}]
\matrix (m) [matrix of math nodes, row sep=3em, column sep=3em, text height=1.5ex, text depth=0.25ex]
{ \mathbb{K}[\bH'] & \mathbb{K}[\bH]^Z\\
\mathbb{K}[\mathcal{U}(\bH')] & \mathbb{K}[\bV]^Z\\};

\path [>=stealth,->]	(m-1-1) edge node [above] {$\phi^*$} (m-1-2)
		(m-1-1) edge node [left] {$\iota_{\mathcal{U}(\bH)}^*$} (m-2-1)
		(m-2-1) edge node [above] {$\phi^*$} (m-2-2)
		(m-1-2) edge node [left] {$\iota_{\bV}^*$} (m-2-2);
\end{tikzpicture}
\end{equation*}
where the vertical arrows are surjective. By the first part the top arrow is surjective, which proves the claim.

Now let $\alpha : \mathcal{U}(\bH) \to \bV/Z$ be the morphism obtained as the composition of the natural closed embedding $\mathcal{U}(\bH) \to \bV$ and the natural projection $\bV \to \bV/Z$. As $\phi : \mathcal{U}(\bH) \to \mathcal{U}(\bH')$ factors as a composition of $\alpha$ and an isomorphism $\bV/Z \to \mathcal{U}(\bH')$ it suffices to show that $\alpha$ is an isomorphism. As $\bV = \bigsqcup_{z \in Z} z\mathcal{U}(\bH)$ we have a $\mathbb{K}$-algebra isomorphism $\mathbb{K}[\bV] \to \prod_{z \in Z} \mathbb{K}[z\mathcal{U}(\bH)]$ defined by $f \mapsto (f|_{z\mathcal{U}(\bH)})_{z \in Z}$. This clearly maps $\mathbb{K}[\bV]^Z$ onto the subalgebra $\{(f_z) \in \prod_{z \in Z} \mathbb{K}[z\mathcal{U}(\bH)] \mid f_z(zu) = f_z(u)$ for all $z \in Z$ and $u \in \mathcal{U}(\bH)\}$. Moreover, under this identification the comorphism of the closed embedding $\mathcal{U}(\bH) \to \bV$ is simply the projection onto $\mathbb{K}[\mathcal{U}(\bH)]$, i.e., the factor indexed by the identity. With this it is clear that the comorphism $\alpha^* : \mathbb{K}[\bV/Z] \to \mathbb{K}[\mathcal{U}(\bH)]$ is an isomorphism, hence $\alpha$ is an isomorphism as desired.
\end{proof}

\begin{lem}\label{lem:springer-morphism}
If $\bG$ is proximate then there exists a Springer isomorphism $\phi_{\spr} : \mathcal{U}(\bG) \to \mathcal{N}(\lie{g})$.
\end{lem}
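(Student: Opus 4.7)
The plan is to bootstrap a Springer isomorphism on $\bG$ from one on the simply connected cover $\bG_{\simc}$, transported through the separable isogeny $\phi_{\simc}$. As a first step I would invoke the classical existence of a $\bG_{\simc}$-equivariant isomorphism of varieties $\phi_{\spr}^{\simc} : \mathcal{U}(\bG_{\simc}) \to \mathcal{N}(\lie{g}_{\simc})$ in good characteristic, originally due to Springer \cite{springer:1969:the-unipotent-variety}. A compatible Frobenius endomorphism $F_{\simc} : \bG_{\simc} \to \bG_{\simc}$ lifting the Frobenius induced by $F$ on $\bG_{\der}$ can be produced by a central-isogeny variant of \cref{thm:isogeny-theorem}, mirroring the construction in the proof of \cref{prop:pretty-good-exists}. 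One may then arrange $\phi_{\spr}^{\simc}$ to commute with $F_{\simc}$, either by a canonical construction on each simple factor that is manifestly $\mathbb{F}_q$-rational, or by a Lang--Steinberg argument on the connected $F$-stable variety parametrising Springer isomorphisms on $\bG_{\simc}$.

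Next, since $\bG$ is proximate, $\phi_{\simc}$ is separable, and so by \cref{lem:sep-isog-iso-uni-vars} it restricts to a conjugation-equivariant isomorphism $\mathcal{U}(\bG_{\simc}) \to \mathcal{U}(\bG_{\der})$, while by \cref{prop:lie-algebra-iso} the differential $\mathrm{d}\phi_{\simc} : \lie{g}_{\simc} \to \lie{g}_{\der}$ is an isomorphism of Lie algebras which restricts to an isomorphism of nilpotent cones $\mathcal{N}(\lie{g}_{\simc}) \to \mathcal{N}(\lie{g}_{\der})$. Both maps are equivariant for the adjoint action of $\bG_{\simc}$ and commute with the relevant Frobenius endomorphisms because $F\circ\phi_{\simc} = \phi_{\simc}\circ F_{\simc}$ by construction.

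Finally, I would identify $\mathcal{U}(\bG) = \mathcal{U}(\bG_{\der})$ (every unipotent element of $\bG$ lies in the derived subgroup) and $\mathcal{N}(\lie{g}) = \mathcal{N}(\lie{g}_{\der})$ (every nilpotent element of $\lie{g}$ lies in $[\lie{g},\lie{g}] \subseteq \lie{g}_{\der}$), and then compose the three isomorphisms to obtain the desired map $\phi_{\spr} : \mathcal{U}(\bG) \to \mathcal{N}(\lie{g})$. Full $\bG$-equivariance is inherited from the $\bG_{\der}$-equivariance together with the observation that $Z(\bG)^{\circ}$ acts trivially on $\mathcal{U}(\bG)$ (by conjugation) and on $\mathcal{N}(\lie{g})$ (by the adjoint action), and $\bG = \bG_{\der}\cdot Z(\bG)^{\circ}$. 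The main technical point to secure will be the Frobenius-equivariance clause in the first step; everything else is a formal consequence of the proximate hypothesis via the two lemmas already established.
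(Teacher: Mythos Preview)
Your proposal is correct and follows essentially the same route as the paper: reduce to $\bG_{\simc}$, invoke an $F_{\simc}$-equivariant Springer isomorphism there, and transport it back through the separable isogeny $\phi_{\simc}$ using \cref{lem:sep-isog-iso-uni-vars} and \cref{prop:lie-algebra-iso}, then extend equivariance from $\bG_{\der}$ to $\bG$ via the central factor. The one point where the paper is more concrete is the Frobenius-compatibility on $\bG_{\simc}$: your Lang--Steinberg suggestion would require the variety of Springer isomorphisms to carry a transitive action of a connected group (not obvious from Serre's description), whereas the paper instead cites normality of $\mathcal{N}(\lie{g}_{\simc})$ \cite[Corollary 8.5]{jantzen:2004:nilpotent-orbits} and then appeals directly to \cite[III, 3.12]{springer-steinberg:1970:conjugacy-classes} for an $F_{\simc}$-rational Springer isomorphism.
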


\begin{proof}
Firstly let us note that $\mathcal{U}(\bG) = \mathcal{U}(\bG_{\der})$ and $\mathcal{N}(\lie{g}) = \mathcal{N}(\lie{g}_{\der})$, e.g., see \cite[pg.\ 2966]{premet:1995:an-analogue}. As $\phi_{\simc} : \bG_{\simc} \to \bG_{\der}$ is assumed to be a separable isogeny we have by \cref{lem:sep-isog-iso-uni-vars} that it restricts to an equivariant isomorphism $\alpha : \mathcal{U}(\bG_{\simc}) \to \mathcal{U}(\bG_{\der})$. Now we may view the vector spaces $\lie{g}_{\simc}$ and $\lie{g}_{\der}$ as affine varieties, in which case the nilpotent cones are closed subvarieties. By \cref{prop:lie-algebra-iso} the differential $d\phi_{\simc}$ is an isomorphism of vector spaces which must therefore also be an isomorphism of varieties. This is because the inverse of $d\phi_{\simc}$ is a linear map and any linear map between vector spaces is clearly a morphism of varieties. In particular, we have $d\phi_{\simc}$ restricts to an equivariant isomorphism $\beta : \mathcal{N}(\lie{g}_{\simc}) \to \mathcal{N}(\lie{g}_{\der})$.

Let us denote by $F_{\simc} : \bG_{\simc} \to \bG_{\simc}$ a Frobenius endomorphism such that $F\circ \phi_{\simc} = \phi_{\simc} \circ F_{\simc}$, which exists by \cite[9.16]{steinberg:1968:endomorphisms-of-linear-algebraic-groups}. By \cite[Corollary 8.5]{jantzen:2004:nilpotent-orbits} we know that the nilpotent cone $\mathcal{N}(\lie{g}_{\simc})$ is a normal variety. Hence, by \cite[III, 3.12]{springer-steinberg:1970:conjugacy-classes} and the remark following the statement of the theorem there exists a Springer isomorphism $\tilde{\phi}_{\spr} : \mathcal{U}(\bG_{\simc}) \to \mathcal{N}(\lie{g}_{\simc})$ with respect to $F_{\simc}$, see also \cite[\S6.20]{humphreys:1995:conjugacy-classes}. Clearly one then also has that $\phi_{\spr} = \beta \circ \tilde{\phi}_{\spr}\circ \alpha^{-1} : \mathcal{U}(\bG_{\der}) \to \mathcal{N}(\lie{g}_{\der})$ is an equivariant isomorphism as it is a composition of equivariant isomorphisms. Finally, as $\bG = \bG_{\der}Z(\bG)$ and $Z(\bG)$ acts trivially on both $\mathcal{U}$ and $\mathcal{N}$ we may consider $\phi_{\spr}$ to be $\bG$-equivariant. Hence, we have $\phi_{\spr}$ is a Springer isomorphism.
\end{proof}

\begin{rem}\label{rem:conv-spring-iso}
It seems reasonable to suspect that the converse to \cref{lem:springer-morphism} is also true. In other words, we have a Springer isomorphism if and only if $\bG$ is proximate. Note also that the Springer isomorphism $\phi_{\spr}$ is by no means unique. In fact, in the appendix to \cite{mcninch:2005:optimal-sl2-homomorphisms} Serre has shown that the Springer isomorphisms form a variety whose dimension is given by the rank of $\bG$.
\end{rem}

\subsection{Separability of Centralisers}
\begin{pa}\label{pa:centralisers}
An issue for us in this article will also be the so-called separability of centralisers. For any subset $\lie{h} \subseteq \lie{g}$ we define
\begin{align*}
C_{\bG}(\lie{h}) &= \{g \in \bG \mid \Ad g(x) = x\text{ for all }x \in \lie{h}\},\\
\lie{c_g}(\lie{h}) &= \{y \in \lie{g} \mid [y,x] = 0\text{ for all }x \in \lie{h}\}.
\end{align*}
If $\lie{h} = \{x\}$ then we simply write $C_{\bG}(x)$, resp., $\lie{c_g}(x)$, for $C_{\bG}(\lie{h})$, resp., ${\lie{c_g}}(\lie{h})$. Furthermore we will denote by $\lie{z(g)} = \bigcap_{x \in \lie{g}} \lie{c_g}(x)$ the centre of the Lie algebra. Note that, in general, we do \emph{not} have $\lie{z(g)} = \Lie(Z(\bG))$. We will be interested in knowing when $C_{\bG}(x)$ is \emph{separable} in the sense that $\Lie(C_{\bG}(x)) = \lie{c_g}(x)$. The following gives some equivalent characterisations of separability.
\end{pa}

\begin{lem}[{}{\cite[II, 6.7]{borel:1991:linear-algebraic-groups}}]\label{lem:separable-centraliser}
For any element $x \in \lie{g}$ the following are equivalent:
\begin{enumerate}
	\item $C_{\bG}(x)$ is separable,
	\item the orbit map $\pi_x : \bG \to (\Ad\bG)x$ defined by $\pi_x(g) = (\Ad g)x$ is separable,
	\item $\pi_x$ factors as $\sigma \circ \overline{\pi}_x$ where $\sigma : \bG \to \bG/C_{\bG}(x)$ is the natural projection morphism and $\overline{\pi}_x : \bG/C_{\bG}(x) \to (\Ad\bG)x$ is an isomorphism of varieties,
	\item $T_x((\Ad\bG)x) = [\lie{g},x]$.
\end{enumerate}
\end{lem}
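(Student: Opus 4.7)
The plan is to prove the four equivalences by a single web of dimension comparisons around the differential of the orbit map at the identity. Throughout I will use the standard computation that, for $\pi_x(g) = \Ad(g)x$, the differential at the identity is the linear map
\begin{equation*}
\mathrm{d}\pi_x|_1 : \lie{g} \to T_x((\Ad\bG)x), \qquad y \mapsto [y,x].
\end{equation*}
In particular the image of $\mathrm{d}\pi_x|_1$ is $[\lie{g},x]$ and its kernel is $\lie{c_g}(x)$, so
\begin{equation*}
\dim[\lie{g},x] = \dim\bG - \dim\lie{c_g}(x).
\end{equation*}
Two other facts I will use freely: the orbit $(\Ad\bG)x$ is a smooth homogeneous $\bG$-variety of dimension $\dim\bG - \dim C_{\bG}(x)$, and one always has the containments $\Lie(C_{\bG}(x)) \subseteq \lie{c_g}(x)$ and $[\lie{g},x] \subseteq T_x((\Ad\bG)x)$.

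\emph{Step 1: (i) $\Leftrightarrow$ (iv).} Since the orbit is smooth, $\dim T_x((\Ad\bG)x) = \dim\bG - \dim C_{\bG}(x)$. Combining this with the dimension formula for $[\lie{g},x]$, the equality $[\lie{g},x] = T_x((\Ad\bG)x)$ is equivalent to $\dim\lie{c_g}(x) = \dim C_{\bG}(x)$. Because $\Lie C_{\bG}(x) \subseteq \lie{c_g}(x)$, equality of dimensions is equivalent to $\Lie C_{\bG}(x) = \lie{c_g}(x)$, which is exactly (i).

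\emph{Step 2: (ii) $\Leftrightarrow$ (iv).} The map $\pi_x$ is a dominant morphism onto $(\Ad\bG)x$, which is smooth. A dominant morphism to a smooth variety is separable if and only if its differential at some (equivalently, every) point is surjective onto the tangent space of the image. By $\bG$-equivariance of $\pi_x$ it suffices to test this at $1 \in \bG$, where the image of the differential is $[\lie{g},x]$ and the target is $T_x((\Ad\bG)x)$; surjectivity there is precisely (iv).

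\emph{Step 3: (ii) $\Leftrightarrow$ (iii).} The factorisation $\pi_x = \overline{\pi}_x \circ \sigma$ always exists with $\overline{\pi}_x$ bijective and $\sigma$ a faithfully flat, separable quotient morphism, so that $\mathrm{d}\sigma|_1$ is surjective with kernel $\Lie C_{\bG}(x)$. The key point is that $\bG/C_{\bG}(x)$ and $(\Ad\bG)x$ have the same dimension. If (ii) holds, then $\mathrm{d}\pi_x|_1 = \mathrm{d}\overline{\pi}_x|_{\overline{1}} \circ \mathrm{d}\sigma|_1$ is surjective, forcing $\mathrm{d}\overline{\pi}_x|_{\overline{1}}$ to be surjective and hence, by equality of dimensions, bijective; equivariance extends this to every point, and so the bijective morphism $\overline{\pi}_x$ between smooth varieties with everywhere-bijective differential is an isomorphism, giving (iii). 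Conversely, if (iii) holds then $\mathrm{d}\overline{\pi}_x$ is an isomorphism; composing with the surjective $\mathrm{d}\sigma|_1$ shows $\mathrm{d}\pi_x|_1$ is surjective, so $\pi_x$ is separable, giving (ii).

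The only delicate point is in Step 3: one must justify promoting a bijective morphism with bijective differential at a single point to a genuine isomorphism of varieties. Here homogeneity under $\bG$ makes this routine, since equivariance propagates bijectivity of the differential from the identity to every point of the smooth variety $\bG/C_{\bG}(x)$, after which a standard criterion (an étale bijective morphism between irreducible varieties is an isomorphism) closes the argument.
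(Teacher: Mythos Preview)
Your proof is correct. The paper itself gives no proof of this lemma; it simply records the statement with a citation to Borel \cite[II, 6.7]{borel:1991:linear-algebraic-groups}, so there is nothing to compare against beyond noting that your argument is the standard one underlying that reference: compute $\mathrm{d}\pi_x|_1$, use smoothness of the orbit to turn the separability and tangent-space conditions into dimension equalities, and invoke homogeneity to upgrade pointwise bijectivity of the differential to an isomorphism.

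One cosmetic remark: the paper's statement of (iii) writes the factorisation as $\sigma \circ \overline{\pi}_x$, which is a typo for $\overline{\pi}_x \circ \sigma$; you have silently corrected this in your Step~3, which is the right thing to do.
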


\begin{pa}
In \cite{herpel:2013:on-the-smoothness} Herpel has considered when the scheme-theoretic centraliser of a closed subgroup scheme of $\bG$ is smooth and shown that this is related to the notion of separability. To apply his results to centralisers we will need to recall his elegant notion of a pretty good prime.
\end{pa}

\begin{definition}[{}{Herpel, \cite[Definition 2.11]{herpel:2013:on-the-smoothness}}]\label{def:pretty-good-prime}
Assume $\mathcal{R}(\bG,\bT) = (X,\Phi,Y,\widecheck{\Phi})$ is the root datum of $\bG$ with respect to some maximal torus $\bT\leqslant \bG$ then we say $p$ is a \emph{pretty good prime} for $\bG$ if $X/\mathbb{Z}\Psi$ and $Y/\mathbb{Z}\widecheck{\Psi}$ have no $p$-torsion for any subsets $\Psi \subseteq \Phi$ and $\widecheck{\Psi} \subseteq \widecheck{\Phi}$.
\end{definition}

\begin{prop}[Herpel]\label{prop:smooth-centraliser}
Assume $p$ is a pretty good prime for $\bG$ then $C_{\bG}(x)$ is separable for all $x \in \lie{g}$.
\end{prop}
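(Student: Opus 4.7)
The strategy is a Jordan-decomposition reduction to the nilpotent case, followed by an analysis via an associated cocharacter, with the distinguished nilpotent sub-case as the technical heart. The pretty-good hypothesis will be used in two places: to show it passes from $\bG$ to the connected centraliser of a semisimple element, and to control the $p$-torsion obstruction that governs separability of the centraliser of a distinguished nilpotent element inside a Levi.

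\textbf{Reduction to the nilpotent case.} Write $x = x_s + x_n$ for the Jordan decomposition in $\lie{g}$. Since $x_s$ and $x_n$ commute, $C_{\bG}(x) = C_{\bG}(x_s) \cap C_{\bG}(x_n)$ and $\lie{c_g}(x) = \lie{c_g}(x_s) \cap \lie{c_g}(x_n)$. After conjugating so that $x_s \in \Lie(\bT_0)$, a direct computation with root subgroups gives $\lie{c_g}(x_s) = \Lie(\bT_0) \oplus \bigoplus_{d\alpha(x_s)=0} \lie{g}_{\alpha}$, while the connected centraliser $\bH := C_{\bG}(x_s)^{\circ}$ is generated by $\bT_0$ together with the root subgroups $\bU_{\alpha}$ for the same set of roots; hence $\Lie(\bH) = \lie{c_g}(x_s)$, so semisimple centralisers are automatically separable. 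Moreover, $\bH$ is connected reductive with root datum having the same lattices $X,Y$ as $\bG$ and root system a subset $\Psi \subseteq \Phi$, so the pretty-good condition passes from $\bG$ to $\bH$. The problem is thereby reduced to showing separability of $C_{\bH}(x_n)$ with $x_n$ nilpotent in $\Lie(\bH)$.

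\textbf{Nilpotent case via an associated cocharacter.} Assume $x \in \mathcal{N}$. Since $p$ is good, Pommerening--Premet supplies an \emph{associated cocharacter} $\lambda : \mathbb{G}_m \to \bG$ with $\Ad(\lambda(t))x = t^2 x$, giving a weight decomposition $\lie{g} = \bigoplus_i \lie{g}(i)$ with $x \in \lie{g}(2)$, a parabolic $P(\lambda)$ with Levi $L(\lambda) = C_{\bG}(\lambda)$, and unipotent radical $U(\lambda)$. Standard arguments give $C_{\bG}(x) \subseteq P(\lambda)$ and a semidirect-product decomposition $C_{\bG}(x) = C_{L(\lambda)}(x) \ltimes C_{U(\lambda)}(x)$ mirrored on the Lie-algebra side. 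The unipotent-radical piece contributes matching group and Lie-algebra dimensions automatically, because $\mathrm{ad}(x) : \lie{g}(i) \to \lie{g}(i+2)$ is surjective for $i \geqslant 0$. The task is thus reduced to separability of $C_{L(\lambda)}(x)$, in which $x$ is distinguished.

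\textbf{Distinguished case and the main obstacle.} This is the technical heart of the proof and the point at which the pretty-good hypothesis is indispensable. By Bala--Carter (valid in good characteristic), distinguished nilpotent orbits correspond to distinguished parabolics of Levi subgroups, and for each such Levi the pretty-good condition, applied to the sub-root system $\Psi$, guarantees that $X/\mathbb{Z}\Psi$ and $Y/\mathbb{Z}\widecheck{\Psi}$ are $p$-torsion-free. This is precisely what is needed so that the associated cocharacter lifts through the simply-connected cover of the derived Levi (cf.\ the proof of \cref{lem:p-tor-springer-good}) and, equivalently, so that the orbit map $\pi_x : \bG \to (\Ad\bG)x$ is separable in the sense of \cref{lem:separable-centraliser}(ii). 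A dimension count via the grading then delivers the tangent identity $T_x((\Ad\bG)x) = [\lie{g},x]$, namely criterion (iv). The main obstacle is a uniform treatment of this distinguished step across all types; Herpel circumvents type-by-type checking by rephrasing separability in terms of smoothness of the scheme-theoretic centraliser and identifying the obstruction as precisely the torsion in the quotients $X/\mathbb{Z}\Psi$ and $Y/\mathbb{Z}\widecheck{\Psi}$, thereby tying the whole argument cleanly to the pretty-good hypothesis.
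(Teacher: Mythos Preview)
Your proposal is not so much a proof as a sketch of a reduction that ultimately defers to Herpel's result anyway, and the paper's proof is vastly simpler. The paper does \emph{none} of the Jordan-decomposition or cocharacter machinery: it simply observes that $C_{\bG}(x) = C_{\bG}(\lie{h})$ for the one-dimensional subalgebra $\lie{h}$ generated by $x$, invokes Herpel's \cite[Lemma~3.1(ii)]{herpel:2013:on-the-smoothness} to realise $C_{\bG}(\lie{h})$ as the scheme-theoretic centraliser of a closed subgroup scheme $\bH' \leqslant \bG$, and then applies Herpel's main theorem \cite[Theorem~1.1]{herpel:2013:on-the-smoothness} that all such centralisers are smooth when $p$ is pretty good. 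That is the entire argument.

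There are two concrete problems with your route. First, it is incomplete: in the distinguished step you describe the obstruction but never actually carry out the dimension count or the torsion analysis; your final paragraph simply \emph{describes} Herpel's method rather than executing your own. Second, and more seriously within this paper's internal logic, several of the structural facts you invoke about associated cocharacters --- in particular $C_{\bG}(x) \subseteq \bP(\lambda)$, the semidirect-product splitting of the centraliser, and the surjectivity of $\ad(x)\colon \lie{g}(i)\to\lie{g}(i+2)$ for $i\geqslant 0$ --- are established here as \cref{thm:class-nilpotent-orbits}(ii), whose part~(c) explicitly \emph{uses} \cref{prop:smooth-centraliser}. So your argument, as situated in this paper, risks circularity. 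If you wish to avoid citing Herpel as a black box you would need to develop the Premet-style facts independently of separability, and then still handle the distinguished case; but that amounts to reproving Herpel's theorem, which is not the intent here.
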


\begin{proof}
Let $\lie{h} \subseteq \lie{g}$ be the 1-dimensional subalgebra generated by $x$ then it is clear that we have $C_{\bG}(x) = C_{\bG}(\lie{h})$ and $\lie{c_g}(x) = \lie{c_g}(\lie{h})$. In particular, to show that $C_{\bG}(x)$ is separable it suffices to show that $C_{\bG}(\lie{h})$ is separable. In \cite[Lemma 3.1(ii)]{herpel:2013:on-the-smoothness} Herpel constructs a closed subgroup scheme $\bH' \leqslant \bG$ and shows that $C_{\bG}(\lie{h})$ is separable if and only if the scheme-theoretic centraliser of $\bH'$ in $\bG$ is smooth. By \cite[Theorem 1.1]{herpel:2013:on-the-smoothness} the scheme-theoretic centraliser of any closed subgroup scheme of $\bG$ is smooth if $p$ is a pretty good prime. Hence we can conclude that $C_{\bG}(x)$ is smooth.
\end{proof}

\begin{rem}
If $\bG$ is simple or $\GL_n(\mathbb{K})$ then this result is classical and contained in \cite[I, 5.6]{springer-steinberg:1970:conjugacy-classes}, see also \cite[3.13, Theorem]{slodowy:1980:simple-singularities}.
\end{rem}

\subsection{$\mathbb{G}_m$-varieties}
\begin{pa}\label{pa:action-on-aff-alg}
We will denote by $\mathbb{G}_m$ the set $\mathbb{K}\setminus\{0\}$ viewed as an algebraic group under multiplication. It will be useful to recall here some properties of $\mathbb{G}_m$-actions that we will use several times. Assume $\bX$ is an affine $\mathbb{G}_m$-variety, in the sense of \cite[\S2.3.1]{springer:2009:linear-algebraic-groups}, with action map $\alpha : \mathbb{G}_m \times \bX \to \bX$. The affine algebra $\mathbb{K}[\bX]$ of $\bX$ then becomes an abstract $\mathbb{G}_m$-module by setting
\begin{equation*}
(k\cdot f)(x) = f(\alpha(k^{-1},x))
\end{equation*}
for all $k \in \mathbb{G}_m$, $f \in \mathbb{K}[\bX]$ and $x \in \bX$. By \cite[2.3.9(i),3.2.3(c)]{springer:2009:linear-algebraic-groups} we have $\mathbb{K}[\bX] = \bigoplus_{n \in \mathbb{Z}} \mathbb{K}[\bX]_n$ where we have
\begin{equation*}
\mathbb{K}[\bX]_n = \{f \in \mathbb{K}[\bX] \mid k\cdot f = k^nf\text{ for all }k \in \mathbb{G}_m\}
\end{equation*}
is the corresponding weight space. For any $x \in \bX$ we have a morphism $\alpha_x : \mathbb{G}_m \to \bX$ defined by setting $\alpha_x(k) = \alpha(k,x)$ for all $k \in \mathbb{G}_m$. We say the limit $\lim_{k \to 0} \alpha_x(k)$ exists if the morphism $\alpha_x : \mathbb{G}_m \to \bX$ extends to a morphism $\tilde{\alpha}_x : \mathbb{A}^1 \to \bX$; note such an extension is unique by \cite[1.6.11(ii)]{springer:2009:linear-algebraic-groups}. Moreover we write $\lim_{k \to 0} \alpha_x(k) = z$ if $\tilde{\alpha}_x(0) = z$. Now we say the $\mathbb{G}_m$-action is \emph{contracting} if there exists a fixed point $x_0 \in \bX^{\mathbb{G}_m}$ such that $\lim_{k \to 0} \alpha_x(k) = x_0$ for all $x \in \bX$. We will also say that the action is a \emph{contraction to $x_0$}. With this we have the following.
\end{pa}

\begin{lem}\label{lem:contracting-action}
If $\alpha : \mathbb{G}_m \times \bX \to \bX$ is a contracting $\mathbb{G}_m$-action then the following hold:
\begin{enumerate}
	\item the fixed point $x_0 \in \bX^{\mathbb{G}_m}$ is unique,
	\item $\mathbb{K}[\bX]_n = \{0\}$ for all $n < 0$,
	\item $\mathbb{K}[\bX]_n$ is finite dimensional as a $\mathbb{K}$-vector space for all $n \in \mathbb{Z}$.
\end{enumerate}
\end{lem}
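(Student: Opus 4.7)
The three claims all reduce to a single core fact: the contracting $\mathbb{G}_m$-action on the affine variety $\bX$ extends from $\mathbb{G}_m \times \bX$ to a morphism $\tilde{\alpha} : \mathbb{A}^1 \times \bX \to \bX$ with $\tilde{\alpha}(0, x) = x_0$ for every $x \in \bX$. Granting this for a moment, item (i) is immediate: any fixed point $y \in \bX^{\mathbb{G}_m}$ has $\alpha_y : \mathbb{G}_m \to \bX$ equal to the constant morphism of value $y$, and its unique extension to $\mathbb{A}^1$ is the constant morphism of value $y$; the contracting hypothesis forces this constant to equal $x_0$, whence $y = x_0$.

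For (ii), I would first upgrade the pointwise extensions $\tilde\alpha_x$ to the joint extension $\tilde\alpha$. Writing the comorphism as a finite sum $\alpha^*(f) = \sum_{i \in \mathbb{Z}} t^i \otimes f_i$ with $f_i \in \mathbb{K}[\bX]$ and $t$ the coordinate on $\mathbb{G}_m$, evaluation at any $x \in \bX$ gives the Laurent-polynomial identity $f(\alpha(k,x)) = \sum_i k^i f_i(x)$, which must agree on the infinite set $\mathbb{G}_m$ with the honest polynomial $f \circ \tilde\alpha_x \in \mathbb{K}[k]$. This forces $f_i(x) = 0$ for every $i < 0$ and every $x$, so $f_i \equiv 0$ for $i < 0$ and $\alpha^*$ factors through $\mathbb{K}[t] \otimes \mathbb{K}[\bX]$, yielding the desired joint extension. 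For a weight vector $f \in \mathbb{K}[\bX]_n$ a direct computation from the formulas $(k \cdot f)(x) = f(\alpha(k^{-1},x))$ and $k \cdot f = k^n f$ shows that $\alpha^*(f)$ is a pure tensor of the form $t^m \otimes f$ with $|m| = |n|$, and the requirement that $m \geqslant 0$ is precisely the vanishing of $\mathbb{K}[\bX]_n$ for $n < 0$.

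For (iii), step (ii) presents $\mathbb{K}[\bX] = \bigoplus_{n \geqslant 0} \mathbb{K}[\bX]_n$ as a finitely generated nonnegatively graded $\mathbb{K}$-algebra. I would next identify the degree-zero part, i.e.\ the algebra of $\mathbb{G}_m$-invariants, with $\mathbb{K}$: for any invariant $f$, the function $f \circ \tilde\alpha_x$ is constant with value $f(x)$ on $\mathbb{G}_m$, hence constant on all of $\mathbb{A}^1$, and evaluating at $0$ gives $f(x) = f(x_0)$ for every $x$. Any finite generating set of $\mathbb{K}[\bX]$ can then be decomposed into weight vectors (each generator lies in a finite-dimensional $\mathbb{G}_m$-stable subspace, which splits into weight spaces), and generators of weight $0$ may be discarded as scalars. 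The remaining finitely many generators have strictly positive weights, so each $\mathbb{K}[\bX]_n$ is spanned by the finitely many monomials of weight $n$ in these generators and is therefore finite-dimensional.

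The only genuine obstacle is the passage from the pointwise extensions $\tilde\alpha_x$ to the joint extension $\tilde\alpha$ in (ii); once this short coalgebra calculation on $\alpha^*$ is made, the three items follow formally, with the contracting hypothesis playing exactly the role of pinning down the grading, the fixed point, and the degree-zero invariants.
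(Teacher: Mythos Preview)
Your argument is correct and follows essentially the same route as the paper: (i) via uniqueness of the extension of $\alpha_{x_0'}$ to $\mathbb{A}^1$, (ii) via the comorphism of the action map (the paper works with each $\alpha_x^*$ individually rather than with the joint $\alpha^*$, but the calculation is identical), and (iii) via $\mathbb{K}[\bX]_0=\mathbb{K}$ together with finitely many homogeneous generators of strictly positive weight. If anything, your justification that generators may be taken homogeneous and that invariants are constant is slightly more explicit than the paper's.
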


\begin{proof}
(i). Assume $x_0' \in X^{\mathbb{G}_m}$ was another fixed point and let $\tilde{\alpha}_{x_0'} : \mathbb{A}^1 \to \bX$ be the morphism extending $\alpha_{x_0'} : \mathbb{G}_m \to \bX$. Let $f_{x_0'} : \mathbb{A}^1 \to \bX$ be the morphism defined by $f_{x_0'}(k) = x_0'$ then as $x_0'$ is fixed by the $\mathbb{G}_m$ action we have $\tilde{\alpha}_{x_0'}(k) = f_{x_0'}(k)$ for all $k \in \mathbb{G}_m$. As noted above, the extending morphism is unique so we must have $\tilde{\alpha}_{x_0'} = f_{x_0'}$. However, by assumption $\tilde{\alpha}_{x_0'}(0) = x_0$ which implies $x_0' = x_0$.

(ii). For any $x \in \bX$ we have the morphism $\alpha_x : \mathbb{G}_m \to \bX$ extends to $\mathbb{A}^1$ if and only if the image of the comorphism $\alpha_x^* : \mathbb{K}[\bX] \to \mathbb{K}[\mathbb{G}_m] = \mathbb{K}[T,T^{-1}]$ is contained in $\mathbb{K}[T]$. For any $f \in \mathbb{K}[\bX]$ we may write $f = \sum_{n \in \mathbb{Z}} f_n$ with $f_n \in \mathbb{K}[\bX]_n$. It's easy to see that we have
\begin{equation*}
\alpha_x^*(f) = \sum_{n \in \mathbb{Z}} f_n(x)T^n.
\end{equation*}
Hence for any $n < 0$ we must have $f_n(x) = 0$ for all $x \in \bX$ and so $\mathbb{K}[\bX]_n = \{0\}$ for any $n<0$.

(iii). By part (ii) we need only show that $\dim_{\mathbb{K}} \mathbb{K}[\bX]_n <\infty$ when $n \geqslant 0$. Moreover, as $\mathbb{K}[\bX]_0$ is nothing other than the affine algebra $\mathbb{K}[\bX^{\mathbb{G}_m}]$ of the fixed points we have $\mathbb{K}[\bX]_0 = \mathbb{K}$ by (i) so we may assume $n > 0$. Let $\{g_1,\dots,g_k\} \subseteq \mathbb{K}[\bX]$ be a generating set for the algebra then the set $\mathcal{B} = \{g_1^{i_1}\cdots g_k^{i_k} \mid i_1,\dots,i_k \geqslant 0\}$ is a $\mathbb{K}$-basis of the algebra, where we assume that not all powers are 0. Let us assume that $g_j$ is contained in the weight space $\mathbb{K}[\bX]_{n_j}$ then by (i) and (ii) we have $n_j > 0$. The subset $\mathcal{B}_n = \{g_1^{i_1}\cdots g_k^{i_k} \mid n = n_1i_1+\cdots + n_ki_k\} \subseteq \mathcal{B}$ is obviously a basis for the weight space $\mathbb{K}[\bX]_n$. However as $n,n_1,\dots,n_k > 0$ are all strictly positive we see that the set $\mathcal{B}_n$ must be finite.
\end{proof}

\begin{rem}\label{rem:contracting-lin-action}
Assume that $\bX$ is a $\mathbb{K}$-vector space and the $\mathbb{G}_m$-action $\alpha : \mathbb{G}_m \times \bX \to \bX$ is linear in the sense that $\alpha(k,x) = \rho(k)x$ for some rational representation $\rho : \mathbb{G}_m \to \GL(\bX)$. In this situation we can directly break up $\bX$ as a direct sum $\bigoplus_{n \in \mathbb{Z}} \bX_n$ of its weight spaces, c.f., \cite[3.2.3(c)]{springer:2009:linear-algebraic-groups}. Using the above arguments one easily sees that the action is contracting if and only if $\bX_n = \{0\}$ for all $n \leqslant 0$; then $0$ is the unique fixed point of the action.
\end{rem}

\subsection{Cocharacters}
\begin{pa}\label{pa:weight-spaces}
We now recall some results from \cite{premet:2003:nilpotent-orbits-in-good-characteristic}. Assume $\lambda \in Y(\bG) = \Hom(\mathbb{G}_m,\bG)$ is a cocharacter of $\bG$ then $\lambda$ defines a $\mathbb{Z}$-grading $\lie{g} = \bigoplus_{i \in \mathbb{Z}} \lie{g}(\lambda,i)$ on the Lie algebra by setting
\begin{equation*}
\lie{g}(\lambda,i) = \{x \in \lie{g} \mid (\Ad \lambda(k))(x) = k^ix\text{ for all }k \in \mathbb{G}_m\}.
\end{equation*}
From the definition we see immediately that
\begin{equation}\label{eq:commutator-weight-space}
[\lie{g}(\lambda,i),\lie{g}(\lambda,j)] \subseteq \lie{g}(\lambda,i+j)
\end{equation}
for all $i,j \in \mathbb{Z}$. We note the following useful observation concerning the weight spaces. It is clear that we have $\lie{g} = \lie{g}_{\der} + \Lie(Z(\bG))$ but this sum need not be direct. Furthermore, it is also obvious that $\Lie(Z(\bG)) \subseteq \lie{g}(\lambda,0)$ so we must have $\lie{g}(\lambda,i) \subseteq \lie{g}_{\der}$ for any $i \neq 0$. In particular, if $\lambda \in Y(\bG_{\der}) \subseteq Y(\bG)$ then we have $\lie{g}(\lambda,i) = \lie{g}_{\der}(\lambda,i)$ for all $i \neq 0$.
\end{pa}

\begin{pa}\label{pa:canonical-parabolic-levi}
To any cocharacter $\lambda \in Y(\bG)$ we assign a parabolic subgroup $\bP(\lambda) \leqslant \bG$ with unipotent radical $\bU(\lambda)$ given by
\begin{align*}
\bP(\lambda) &= \{ x \in \bG \mid \lim_{k\to 0} \lambda(k)x\lambda(k)^{-1}\text{ exists}\}\\
\bU(\lambda) &= \{x \in \bG \mid \lim_{k\to 0} \lambda(k)x\lambda(k)^{-1} = 1\},
\end{align*}
see \cite[3.2.15, 8.4.5]{springer:2009:linear-algebraic-groups}. We then have a Levi decomposition $\bP(\lambda) = \bL(\lambda)\bU(\lambda)$ by setting $\bL(\lambda) = C_{\bG}(\lambda(\mathbb{G}_m))$. The Lie algebras of these subgroups are given by
\begin{equation*}
\lie{p}(\lambda) = \bigoplus_{i \geqslant 0} \lie{g}(\lambda,i) \qquad\qquad \lie{l}(\lambda) = \bigoplus_{i = 0} \lie{g}(\lambda,i) \qquad\qquad \lie{u}(\lambda) = \bigoplus_{i > 0} \lie{g}(\lambda,i).
\end{equation*}
For any $i \in \mathbb{Z}_{> 0}$ we will also need the Lie subalgebra $\lie{u}(\lambda,i) = \bigoplus_{j\geqslant i} \lie{g}(\lambda,j)$ and its corresponding closed connected unipotent subgroup $\bU(\lambda,i) \leqslant \bU(\lambda)$.

Assume now that $\lambda \in Y(\bT_0)$ and let $\bX_{\alpha} \leqslant \bG$ be the 1-dimensional unipotent root subgroup whose Lie algebra is the root space $\lie{g}_{\alpha}$ for any $\alpha \in \Phi$. If $\bP(\lambda)$ is standard, i.e., it contains $\bT_0 \leqslant \bB_0$, then we may also write the above as
\begin{align*}
\bP(\lambda) &= \langle \bT_0, \bX_{\alpha} \mid \langle \alpha,\lambda\rangle \geqslant 0 \rangle & \bL(\lambda) &= \langle \bT_0, \bX_{\alpha} \mid \langle \alpha,\lambda\rangle = 0 \rangle & \bU(\lambda) &= \langle \bX_{\alpha} \mid \langle \alpha,\lambda\rangle > 0 \rangle,\\
\lie{p}(\lambda) &= \lie{t}_0 \oplus\bigoplus_{\langle \alpha,\lambda \rangle \geqslant 0} \lie{g}_{\alpha} &
\lie{l}(\lambda) &= \lie{t}_0 \oplus\bigoplus_{\langle \alpha,\lambda \rangle = 0} \lie{g}_{\alpha} &
\lie{u}(\lambda) &= \bigoplus_{\langle \alpha,\lambda \rangle > 0} \lie{g}_{\alpha},
\end{align*}
as $\lie{t}_0 \subseteq \lie{g}(\lambda,0)$. With this we see that we have an analogue of \cref{eq:commutator-weight-space} which follows immediately from Chevalley's commutator relation \cite[8.2.3]{springer:2009:linear-algebraic-groups}. Namely,
\begin{equation}\label{eq:commutator-weight-space-grp}
[\bU(\lambda,i),\bU(\lambda,j)] \subseteq \bU(\lambda,i+j)
\end{equation}
for any $i,j \geqslant 1$. Note this clearly holds for all cocharacters.
\end{pa}

\subsection{Weighted Dynkin Diagrams}
\begin{pa}\label{pa:chevalley-basis}
Let us assume temporarily that $\bG$ is semisimple and simply connected. We will denote by $\bG_{\mathbb{C}}$ an algebraic group over $\mathbb{C}$ and by $\bT_{\mathbb{C}} \leqslant \bB_{\mathbb{C}} \leqslant \bG_{\mathbb{C}}$ a maximal torus and Borel subgroup of $\bG_{\mathbb{C}}$ such that the root datum, and simple roots, of $\bG_{\mathbb{C}}$ with respect to $\bT_{\mathbb{C}} \leqslant \bB_{\mathbb{C}}$ is the same as that of $\bG$. Fix a Chevalley basis $\mathcal{B}_{\mathbb{C}} = \{X_{\alpha} \mid \alpha \in \Phi\} \cup \{H_{\alpha} \mid \alpha \in \Delta\}$ of $\lie{g}_{\mathbb{C}}$ and denote by $\lie{g}_{\mathbb{Z}} \subset \lie{g}_{\mathbb{C}}$ the $\mathbb{Z}$-span of $\mathcal{B}_{\mathbb{C}}$. As $\bG$ is simply connected we may identify $\lie{g}$ with $\lie{g}_{\mathbb{Z}} \otimes_{\mathbb{Z}} \mathbb{K}$ as Lie algebras. Setting $e_{\alpha} = X_{\alpha} \otimes 1$ and $h_{\alpha} = H_{\alpha} \otimes 1$ for each $\alpha \in \Phi$ we have $\mathcal{B} = \{e_{\alpha} \mid \alpha \in \Phi\} \cup \{h_{\alpha} \mid \alpha \in \Delta\}$ is a basis for $\lie{g}$ and the corresponding root space $\lie{g}_{\alpha}$ of $\lie{g}$ is simply $\mathbb{K}e_{\alpha}$.

By the Jacobson--Morozov theorem, see \cite[5.3.2]{carter:1993:finite-groups-of-lie-type}, any nilpotent element $e \in \lie{g}_{\mathbb{C}} = \Lie(\bG_{\mathbb{C}})$ is contained in an $\lie{sl}_2$-triple $\{e,h,f\} \subset \lie{g}_{\mathbb{C}}$. Arguing as in \cite[pg.\ 344]{premet:2003:nilpotent-orbits-in-good-characteristic} we may assume, after possibly replacing $\{e,h,f\}$ by $\{\Ad g(e), \Ad g(h), \Ad g(f)\}$ for some $g \in \bG$, that $h = \sum_{\alpha \in \Delta} q_{\alpha}H_{\alpha}$ with $q_{\alpha} \in \mathbb{Z}$ and $\alpha(h) \geqslant 0$ for all $\alpha \in \Delta$. The function $d : \Phi \to \mathbb{Z}$ given by $\alpha \mapsto \alpha(h)$ is called the \emph{weighted Dynkin diagram} of the nilpotent orbit $\mathcal{O} \subset \lie{g}_{\mathbb{C}}$ containing $e$. We denote by $\mathcal{D}_{\Phi}$ the set of weighted Dynkin diagrams.

Note that, for each weighted Dynkin diagram $d \in \mathcal{D}_{\Phi}$, there exists a cocharacter $\lambda_d^{\bG} \in Y(\bT_0) \subseteq Y(\bG)$ such that $\Ad \lambda_d^{\bG}(k)(e_{\alpha}) = k^{d(\alpha)}e_{\alpha}$ and $\Ad \lambda_d^{\bG}(k)(x) = x$ for all $\alpha \in \pm\Delta$, $x \in \lie{t}_0$ and $k \in \mathbb{G}_m$. In fact this can be constructed by setting $\lambda_d^{\bG} = \sum_{\alpha \in \Delta} q_{\alpha}\widecheck{\alpha} \in \mathbb{Z}\widecheck{\Phi}$ where $q_{\alpha} \in \mathbb{Z}$ is as above and $\widecheck{\alpha} \in \widecheck{\Phi}$ is the coroot corresponding to $\alpha \in \Delta$.
\end{pa}

\begin{pa}\label{pa:chevalley-basis-general}
We now drop our assumption that $\bG$ is semisimple and simply connected. Let us denote by $\bT_{\simc} \leqslant \bG_{\simc}$ the maximal torus such that $\phi_{\simc}(\bT_{\simc}) \leqslant \bT_0$. The isogeny $\phi_{\simc}$ then induces a natural injection $Y(\bG_{\simc}) \to Y(\bG)$ which maps $Y(\bT_{\simc})$ into $Y(\bT_0)$. In \cref{pa:chevalley-basis} we have defined a cocharacter $\lambda_d^{\bG_{\simc}} \in Y(\bT_{\simc})$ for each weighted Dynkin diagram $d \in \mathcal{D}_{\Phi}$. We now set $\lambda_d^{\bG} := \phi_{\simc}\circ\lambda_d^{\bG_{\simc}}$ and denote by $\mathcal{D}_{\Phi}(\bG)$ the set $\{\lambda_d^{\bG} \mid d \in \mathcal{D}_{\Phi}\}$ of resulting cocharacters. Note that the definition of $\lambda_d^{\bG}$ does not depend upon the choice of simply connected cover $\phi_{\simc}$. Indeed, if $\phi_{\simc}'$ is another simply connected cover then $\phi_{\simc}' = \phi_{\simc}\circ\Inn t$ for some element $t \in \bT_{\simc}$ by \cref{thm:isogeny-theorem}. However, clearly $\Inn t\circ\lambda_d^{\bG_{\simc}} = \lambda_d^{\bG_{\simc}}$ because the image is contained in $\bT_{\simc}$, which is abelian. Moreover we will denote by $\mathcal{D}(\bG)$ the set
\begin{equation*}
\{\Inn g\circ\lambda_d^{\bG} \mid d \in \mathcal{D}_{\Phi}\text{ and }g \in \bG\}.
\end{equation*}
Finally we remark that if $\bG$ is a proximate algebraic group then $\mathrm{d}\phi_{\simc}$ is an isomorphism so we may, and will, identify the Chevalley basis considered in \cref{pa:chevalley-basis} with a basis of $\lie{g}_{\der}$.
\end{pa}

\subsection{Classification of Nilpotent Orbits}
\begin{pa}
We now have the following cheap generalisation of \cite{premet:2003:nilpotent-orbits-in-good-characteristic}, which gives a case-free proof of the classification of nilpotent orbits in good characteristic by weighted Dynkin diagrams. For historical remarks concerning this theorem see \cite[Remark 2]{premet:2003:nilpotent-orbits-in-good-characteristic}. In fact, we will also give certain statements concerning centralisers of nilpotent elements but for this we will need to assume that $\bG$ is proximate. This assumption cannot be dropped, in general, as is evident by the example given at the end of \cite[Introduction]{premet:1995:an-analogue}.
\end{pa}

\begin{thm}[Kawanaka, Premet]\label{thm:class-nilpotent-orbits}
For any cocharacter $\lambda \in \mathcal{D}(\bG) \subseteq Y(\bG)$ let $\lie{g}(\lambda,2)_{\reg}$ be the unique open dense orbit of $\bL(\lambda)$ acting on $\lie{g}(\lambda,2)$, c.f., \cite[pg.\ 132]{carter:1993:finite-groups-of-lie-type}.
\begin{enumerate}
	\item The map $\mathcal{D}_{\Phi}(\bG) \to \mathcal{N}(\lie{g})/\bG$ given by $\lambda_d^{\bG} \mapsto \mathcal{O}_{\bG}(d) := (\Ad\bG)\lie{g}(\lambda_d^{\bG},2)_{\reg}$ is a bijection.
	
	\item Assume $\bG$ is proximate then for any $\lambda \in \mathcal{D}(\bG)$ and $e \in \lie{g}(\lambda,2)_{\reg} \subseteq \mathcal{N}(\lie{g})$ the following hold:
\begin{enumerate}
	\item $\lie{c_g}(e) \subseteq \lie{p}(\lambda)$ and $C_{\bG}(e) \subseteq \bP(\lambda)$,
	\item $[e,\lie{p}(\lambda)] = \lie{u}(\lambda,2)$,
	\item $(\Ad\bP(\lambda))e$ is dense in $\lie{u}(\lambda,2)$.
\end{enumerate}
\end{enumerate}
\end{thm}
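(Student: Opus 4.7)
The plan is to deduce both parts of the theorem from the corresponding statement for the semisimple simply connected group $\bG_{\simc}$, which is established by Premet in \cite{premet:2003:nilpotent-orbits-in-good-characteristic}, and then to transport it through the chain $\bG_{\simc} \to \bG_{\der} \hookrightarrow \bG$ consisting of the simply connected covering $\phi_{\simc}$ followed by the inclusion of the derived subgroup.

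The first step is to reduce from $\bG$ to $\bG_{\der}$. Since $\bG = \bG_{\der}Z(\bG)$ and $\Ad$ is trivial on $Z(\bG)$, the $\bG$-orbits on $\lie{g}$ coincide with the $\bG_{\der}$-orbits, and one has $\mathcal{N}(\lie{g}) = \mathcal{N}(\lie{g}_{\der})$ as recalled in the proof of \cref{lem:springer-morphism}. By construction the cocharacter $\lambda_d^{\bG} = \phi_{\simc}\circ\lambda_d^{\bG_{\simc}}$ from \cref{pa:chevalley-basis-general} factors through $\bG_{\der}$, so \cref{pa:weight-spaces} gives $\lie{g}(\lambda_d^{\bG},i) = \lie{g}_{\der}(\lambda_d^{\bG},i)$ for every $i \neq 0$, and the action of $\bL(\lambda_d^{\bG})$ on $\lie{g}(\lambda_d^{\bG},2)$ is detected by its intersection with $\bG_{\der}$. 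Using in addition that $Z(\bG)^{\circ} \subseteq \bP(\lambda_d^{\bG})$ and acts trivially via $\Ad$, this reduces both the bijection in (i) and each of the conditions in (ii) to their counterparts for $\bG_{\der}$.

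Under the proximate hypothesis \cref{prop:lie-algebra-iso} makes $\mathrm{d}\phi_{\simc} : \lie{g}_{\simc} \to \lie{g}_{\der}$ a $\bG_{\simc}$-equivariant isomorphism of Lie algebras (where $\bG_{\simc}$ acts on $\lie{g}_{\der}$ through $\phi_{\simc}$) which respects the weight-space gradings attached to $\lambda_d^{\bG_{\simc}}$ and $\lambda_d^{\bG}$. The kernel of $\phi_{\simc}$ is finite, central, and thus contained in $\bP(\lambda_d^{\bG_{\simc}})$, so $\phi_{\simc}$ induces a bijection $\mathcal{N}(\lie{g}_{\simc})/\bG_{\simc} \to \mathcal{N}(\lie{g}_{\der})/\bG_{\der}$ and carries the parabolic, Levi, unipotent radical, and regular orbit in $\lie{g}_{\simc}(\lambda_d^{\bG_{\simc}},2)$ to the corresponding structures on the $\bG_{\der}$-side. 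Premet's theorem applied to $\bG_{\simc}$ therefore transports to $\bG_{\der}$, and then to $\bG$ by the previous paragraph; the group-level inclusion $C_{\bG_{\der}}(e) \subseteq \bP(\lambda_d^{\bG})$ in particular follows because $\phi_{\simc}$ maps $C_{\bG_{\simc}}(\tilde{e})$ surjectively onto $C_{\bG_{\der}}(e)$ for the unique preimage $\tilde{e}$ of $e$. For part (i) in the non-proximate case one instead uses \cref{prop:pretty-good-exists} to replace $\bG$ by a proximate cover $\overline{\bG}$ equipped with a bijective morphism $\overline{\bG} \to \bG$, which identifies the cocharacter sets and the nilpotent orbit classifications.

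The main obstacle will be the bookkeeping: one has to verify that each of the maps $\bG_{\simc} \to \bG_{\der} \hookrightarrow \bG$ transports the weight-space decomposition, the triple $(\bP(\lambda_d),\bL(\lambda_d),\bU(\lambda_d))$, the regular orbit in $\lie{g}(\lambda_d,2)$, and both the Lie-algebra and group centralisers compatibly, so that Premet's classification for $\bG_{\simc}$ passes cleanly to $\bG$. The point that makes every step of (ii) a genuine equivariant isomorphism is precisely the proximate hypothesis, which through \cref{prop:lie-algebra-iso} promotes $\mathrm{d}\phi_{\simc}$ to a Lie-algebra isomorphism; without it the centraliser statements in (ii) can fail, as is illustrated by the example at the end of \cite[Introduction]{premet:1995:an-analogue}.
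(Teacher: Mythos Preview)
Your reduction scheme is in the same spirit as the paper's, but there are two genuine gaps.

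\textbf{Premet's hypotheses do not hold for $\bG_{\simc}$ in general.}  Your argument terminates by ``applying Premet's theorem to $\bG_{\simc}$'', but the results of \cite{premet:2003:nilpotent-orbits-in-good-characteristic} are proved under the standard hypotheses, which include the existence of a non-degenerate $\bG$-invariant form on $\lie{g}$.  For $\SL_n(\mathbb{K})$ with $p \mid n$ this fails, and so does the separability of centralisers needed for~(c).  The paper therefore does \emph{not} stop at $\bG_{\simc}$: in both parts it passes to $\GL_{n+1}(\mathbb{K})$ for each type~$\A$ factor (via a closed embedding for (ii), via the adjoint quotient and then back up for (i)) before invoking \cite[2.3, 2.6, 2.7]{premet:2003:nilpotent-orbits-in-good-characteristic}.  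Without this extra step your proof of (ii) is incomplete exactly in the situations where the proximate hypothesis matters, and (c) in particular needs the separability input from \cref{prop:smooth-centraliser} applied to $\GL_{n+1}$, not to $\SL_{n+1}$.

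\textbf{The non-proximate patch for (i) via \cref{prop:pretty-good-exists} does not obviously work.}  The bijective morphism $\phi:\overline{\bG}\to\bG$ produced there is purely inseparable whenever $\bG$ is not already proximate, so $\mathrm{d}\phi:\overline{\lie{g}}\to\lie{g}$ is \emph{not} an isomorphism.  Hence there is no immediate identification of $\mathcal{N}(\overline{\lie{g}})/\overline{\bG}$ with $\mathcal{N}(\lie{g})/\bG$, let alone one compatible with the maps $\lambda_d\mapsto\mathcal{O}(d)$.  The paper avoids this by reducing \emph{downwards} to the adjoint group $\bG_{\ad}$ via $\Ad$: here $\Ker(\Ad)\subseteq Z(\bG)$ and $\Ker(\mathrm{d}\Ad)\subseteq\lie{z(g)}$ hold automatically, so \cite[2.7]{jantzen:2004:nilpotent-orbits} applies cleanly, and one checks directly that $\mathrm{d}\Ad$ carries $\mathcal{O}_{\bG}(d)$ to $\mathcal{O}_{\bG_{\ad}}(d)$.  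This route handles all $\bG$ uniformly, with no proximate assumption.
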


\begin{proof}
(i). Assume $\varphi : \bG \to \bH$ is a surjective homomorphism of algebraic groups such that $\Ker(\varphi) \subseteq Z(\bG)$ and $\Ker(\mathrm{d}\varphi) \subseteq \lie{z(g)}$. Let $\lie{h} = \Lie(\bH)$ then according to \cite[2.7, Proposition]{jantzen:2004:nilpotent-orbits} the differential $\mathrm{d}\varphi : \lie{g} \to \lie{h}$ induces a bijection $\mathcal{N}(\lie{g})/\bG \to \mathcal{N}(\lie{h})/\bH$. Now the restriction of $\varphi$ to the derived subgroup $\bG_{\der}$ defines an isogeny between the derived subgroups $\bG_{\der} \to \bH_{\der}$. Moreover $\varphi\circ\phi_{\simc} : \bG_{\simc} \to \bH_{\der}$ is a simply connected covering. As remarked in \cref{pa:chevalley-basis-general} the cocharacter $\lambda_d^{\bH}$ does not depend upon the choice of simply connected cover used to define it. Hence, we must have $\lambda_d^{\bH} = \varphi\circ\lambda_d^{\bG}$ for any weighted Dynkin diagram $d \in \mathcal{D}_{\Phi}$. In particular, this shows that $\varphi$ induces a bijection $\mathcal{D}_{\Phi}(\bG) \to \mathcal{D}_{\Phi}(\bH)$. It is easily checked that $\varphi(\bL(\lambda_d^{\bG})) = \bL(\lambda_d^{\bH})$ and $\mathrm{d}\varphi(\lie{g}(\lambda_d^{\bG},2)) = \lie{h}(\lambda_d^{\bH},2)$, c.f., \cref{pa:canonical-parabolic-levi} and \cite[4.4.11(7)]{springer:2009:linear-algebraic-groups}. Moreover $\mathrm{d}\varphi$ induces a bijection between the $\bL(\lambda_d^{\bG})$-orbits on $\lie{g}(\lambda_d^{\bG},2)$ and the $\bL(\lambda_d^{\bH})$-orbits on $\lie{h}(\lambda_d^{\bH},2)$. Now $\mathrm{d}\varphi : \mathcal{N}(\lie{g}) \to \mathcal{N}(\lie{h})$ is a homeomorphism as it is a bijective morphism between irreducible varieties, c.f., \cite[5.2.3, 5.2.6, 5.2.9(4)]{springer:2009:linear-algebraic-groups}. In particular, the restriction $\mathrm{d}\varphi : \lie{g}(\lambda_d^{\bG},2) \to \lie{h}(\lambda_d^{\bH},2)$ is also a homeomorphism so clearly $\mathrm{d}\varphi(\mathcal{O}_{\bG}(d)) = \mathcal{O}_{\bH}(d)$. With this we see that (i) holds in $\bG$ if and only if (i) holds in $\bH$.

Now let $\bG_{\ad} = \Ad(\bG)$ be the image of the adjoint representation $\Ad : \bG \to \GL(V)$, where $V = \lie{g}$. Certainly $\Ad$ satisfies the above hypotheses, c.f., \cite[2.7]{jantzen:2004:nilpotent-orbits}, so applying the above argument we see that we need only prove (i) for $\bG_{\ad}$. As $\bG_{\ad}$ is an adjoint semisimple group it is a direct product of simple algebraic groups. Hence, we clearly need only prove (i) in the case where $\bG_{\ad}$ is simple. Let us now assume this to be the case. If $\bG_{\ad}$ is of type $\A_n$ then we assume $\bG = \GL_{n+1}(\mathbb{K})$ and $\varphi : \bG \to \bG_{\ad}$ is the adjoint representation. Otherwise we assume that $\varphi : \bG \to \bG_{\ad}$ is a simply connected covering of $\bG_{\ad}$. By \cite[2.6, 2.7]{premet:2003:nilpotent-orbits-in-good-characteristic} we then have (i) holds for $\bG$ so by the above argument we have (i) holds for $\bG_{\ad}$.

(ii). We assume $\lambda^{\bG} \in \mathcal{D}(\bG)$ is a cocharacter. Now, assume $\pi : \bG \to \widetilde{\bG}$ is a closed embedding of algebraic groups such that $\widetilde{\bG}$ is a connected reductive algebraic group and $\pi$ restricts to an isomorphism between the derived subgroup of $\bG$ and the derived subgroup of $\widetilde{\bG}$. In particular, $\widetilde{\bG} = \pi(\bG)Z(\widetilde{\bG})$ and $\widetilde{\bG}$ is proximate because we assume $\bG$ is proximate. For convenience we will consider $\bG$ as a subgroup of $\widetilde{\bG}$ by identifying it with $\pi(\bG)$. Moreover we will identify $\lie{g}$ with a subalgebra of $\widetilde{\lie{g}} = \Lie(\widetilde{\bG})$. By the remarks in \cref{pa:weight-spaces} we see that $\mathcal{N}(\lie{g}) = \mathcal{N}(\widetilde{\lie{g}})$ and $\lie{g}(\lambda^{\bG},2) = \widetilde{\lie{g}}(\lambda^{\widetilde{\bG}},2)$ where $\lambda^{\widetilde{\bG}} := \pi \circ \lambda^{\bG}$.

From the definition one can easily check that $\bP(\lambda^{\widetilde{\bG}}) = \bP(\lambda^{\bG})Z(\widetilde{\bG})$ and $\lie{p}(\lambda^{\widetilde{\bG}}) = \lie{p}(\lambda^{\bG}) + \Lie(Z(\widetilde{\bG}))$. In particular, for any $e \in \lie{g}(\lambda^{\bG},2)_{\reg} = \widetilde{\lie{g}}(\lambda^{\widetilde{\bG}},2)_{\reg}$ we see that
\begin{equation*}
\lie{c_g}(e) \subseteq \lie{p}(\lambda^{\bG}) \Leftrightarrow \lie{c_g}(e) + \Lie(Z(\widetilde{\bG})) \subseteq \lie{p}(\lambda^{\bG}) + \Lie(Z(\widetilde{\bG})) \Leftrightarrow \lie{c_{\widetilde{g}}}(e) \subseteq \lie{p}(\lambda^{\widetilde{\bG}}),
\end{equation*}
which shows that (a) holds in $\bG$ if and only if (a) holds in $\widetilde{\bG}$. As $\Lie(Z(\widetilde{\bG})) \subseteq \lie{z(\widetilde{g})}$ we have $[e,\lie{p}(\lambda^{\widetilde{\bG}})] = [e,\lie{p}(\lambda^{\bG})]$ so the observations in \cref{pa:weight-spaces} show that (b) holds in $\bG$ if and only if (b) holds in $\widetilde{\bG}$. Finally we clearly have $(\Ad\bP(\lambda^{\widetilde{\bG}}))e = (\Ad\bP(\lambda^{\bG}))e$ hence (c) holds in $\bG$ if and only if (c) holds in $\widetilde{\bG}$. Hence we have shown that (ii) holds in $\bG$ if and only if (ii) holds in $\widetilde{\bG}$.

Applying this argument to the natural closed embedding $\bG_{\der} \hookrightarrow \bG$ we see that we need only prove (ii) for $\bG_{\der}$. Now assume (ii) holds for $\bG_{\simc}$ then we claim (ii) holds for $\bG_{\der}$. Let $\lambda^{\bG_{\der}} \in \mathcal{D}(\bG_{\der})$ be a cocharacter then by the definition of the set $\mathcal{D}(\bG_{\der})$ there exists a cocharacter $\lambda^{\bG_{\simc}} \in \mathcal{D}(\bG_{\simc})$ such that $\lambda^{\bG} = \phi_{\simc} \circ \lambda^{\bG_{\simc}}$. From the definition we see that $\phi_{\simc}(\bP(\lambda^{\bG_{\simc}})) = \bP(\lambda^{\bG_{\der}})$ and as $\mathrm{d}\phi_{\simc} : \lie{g}_{\simc} \to \lie{g}_{\der}$ is an isomorphism it is clear that (c), (b) and the first part of (a) hold in $\bG_{\der}$ if they hold in $\bG_{\simc}$. To see that the second part of (a) holds in $\bG_{\der}$ if it holds in $\bG_{\simc}$ one only needs to note that $\phi_{\simc}(C_{\bG_{\simc}}(e)) = C_{\bG_{\der}}(e)$, where we identify $e$ with an element of $\lie{g}_{\simc}$. This is because the kernel of $\phi_{\simc}$ is contained in the centre $Z(\bG_{\simc})$ which is the kernel of $\Ad$.

We now need only prove (ii) for $\bG_{\simc}$. As $\bG_{\simc}$ is simply connected it is a direct product of simple groups, hence we may clearly assume that $\bG_{\simc}$ is simple. If $\bG_{\simc}$ is of type $\A_n$ then we choose a closed embedding $\pi : \bG_{\simc} \to \widetilde{\bG}$ such that $\widetilde{\bG} = \GL_{n+1}(\mathbb{K})$ and $\pi(\bG_{\simc})$ is the derived subgroup, which clearly exists as we have an isomorphism $\bG_{\simc} \cong \SL_{n+1}(\mathbb{K})$. If $\bG_{\simc}$ is not of type $\A_n$ then we assume $\widetilde{\bG} = \bG_{\simc}$ and $\pi$ is the identity. By the previous argument we need only prove (ii) for $\widetilde{\bG}$. Let $\lambda^{\widetilde{\bG}} \in \mathcal{D}(\widetilde{\bG})$ be a cocharacter. The first two parts (a) and (b) are given by \cite[Theorem 2.3]{premet:2003:nilpotent-orbits-in-good-characteristic}. For part (c) we will use the fact that $C_{\bG}(e)$ is separable because $p$ is a pretty good prime for $\widetilde{\bG}$, see \cref{prop:smooth-centraliser}. In particular, by (a) we have $C_{\widetilde{\bG}}(e) = C_{\bP(\lambda^{\widetilde{\bG}})}(e)$ so
\begin{equation*}
\dim (\Ad \bP(\lambda^{\widetilde{\bG}})e) = \dim \bP(\lambda^{\widetilde{\bG}}) - \dim C_{\bP(\lambda^{\widetilde{\bG}})}(e) = \dim \lie{p}(\lambda^{\widetilde{\bG}}) - \dim \lie{c}_{\widetilde{\lie{g}}}(e) = \dim [e,\lie{p}_{\widetilde{\lie{g}}}(\lambda)].
\end{equation*}
Combining this with (b) gives (c).
\end{proof}

\begin{pa}
Assume now that $x \in \lie{g}$ is any element of the Lie algebra. Following \cite[\S5.1, pg., 60]{slodowy:1980:simple-singularities} we say a locally closed subvariety $\Sigma \subseteq \lie{g}$ is a \emph{transverse slice} to $(\Ad\bG)x$ at $x$ if the following hold:
\begin{enumerate}
	\item $x \in \Sigma$,
	\item the action map $\bG \times \Sigma \to \lie{g}$, defined by $(g,s) \mapsto \Ad g(s)$, is a smooth morphism,
	\item $\dim \Sigma$ is minimal with respect to (i) and (ii).
\end{enumerate}
Note that we have $\dim \Sigma \geqslant \dim C_{\bG}(x)$ with equality if and only if $C_{\bG}(x)$ is separable, c.f., \cite[\S5.1, pg., 61]{slodowy:1980:simple-singularities}. The following corollary of \cref{thm:class-nilpotent-orbits} shows that under some mild restrictions we can always find a transverse slice to a nilpotent orbit. This fact will be used later.
\end{pa}

\begin{cor}\label{cor:existence-transversal-slice}
Assume $\bG$ is proximate and $e \in \mathcal{N}(\lie{g})$ is a nilpotent element such that $C_{\bG}(e)$ is separable. For any cocharacter $\lambda \in \mathcal{D}(\bG)$ satisfying $e \in \lie{g}(\lambda,2)_{\reg}$ there exists a subspace $\lie{s} \subset \lie{g}$ such that the following hold:
\begin{itemize}
	\item $\lie{g} = \lie{s} \oplus T_e(\Ad\bG(e))$,
	\item $\lie{s}$ is stable under the $\mathbb{G}_m$-action $(k,x) \mapsto (\Ad\lambda(k))(x)$,
	\item $\lie{s} \subseteq \bigoplus_{i \leqslant 0} \lie{g}(\lambda,i)$.
\end{itemize}
For any such subspace we have $\Sigma = e+\lie{s}$ is a transverse slice to $\Ad\bG(e)$ at $e$. Moreover let $x \in \Sigma$ be an element whose centraliser $C_{\bG}(x)$ is separable then we have
\begin{equation*}
T_x(\lie{g}) = T_x(\Sigma) + T_x(\Ad\bG(x)).
\end{equation*}
In other words, $\Sigma$ intersects any such $\Ad\bG$-orbit transversally.
\end{cor}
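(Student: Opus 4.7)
The plan is to construct $\lie{s}$ as a graded complement to $[\lie{g},e]$ inside the non-positive weight part of $\lie{g}$ with respect to $\lambda$, and then to verify the slice conditions using the contracting $\mathbb{G}_m$-action induced by $\lambda$. Since $e \in \lie{g}(\lambda,2)_{\reg}$ is a weight vector the map $x \mapsto [x,e]$ shifts $\lambda$-weight by $2$, so $[\lie{g},e]$ is automatically a graded subspace of $\lie{g}$. From \cref{thm:class-nilpotent-orbits}(ii)(b) one already has the inclusion $[\lie{g},e] \supseteq \lie{u}(\lambda,2) = \bigoplus_{j \geqslant 2} \lie{g}(\lambda,j)$, and the crux of the construction is to upgrade this to $[\lie{g},e] \supseteq \bigoplus_{j \geqslant 1} \lie{g}(\lambda,j)$. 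For this I would appeal to \cref{thm:class-nilpotent-orbits}(ii)(a), which gives $\lie{c_g}(e) \subseteq \lie{p}(\lambda)$ and hence forces the restriction $[-,e]\colon \lie{g}(\lambda,-1) \to \lie{g}(\lambda,1)$ to have trivial kernel. Combining this injectivity with the grading symmetry $\dim \lie{g}(\lambda,i) = \dim \lie{g}(\lambda,-i)$ for $i \neq 0$---which follows because any $\lambda \in \mathcal{D}(\bG)$ is $\bG$-conjugate to a cocharacter in $\mathbb{Z}\widecheck{\Phi} \subseteq Y(\bT_0)$ and the involution $\alpha \mapsto -\alpha$ pairs root spaces in weights $\pm i$---yields bijectivity and the promised inclusion. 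Choosing for each $j \leqslant 0$ a complement of $[\lie{g},e] \cap \lie{g}(\lambda,j)$ inside $\lie{g}(\lambda,j)$ and summing produces a graded subspace $\lie{s} \subseteq \bigoplus_{j \leqslant 0} \lie{g}(\lambda,j)$ with $\lie{s} \oplus [\lie{g},e] = \lie{g}$; since $C_{\bG}(e)$ is separable, \cref{lem:separable-centraliser}(iv) identifies $[\lie{g},e]$ with $T_e((\Ad\bG)e)$, so this $\lie{s}$ satisfies all three bullet points.

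To see that $\Sigma = e + \lie{s}$ is a transverse slice at $e$, consider the action morphism $\alpha\colon \bG \times \Sigma \to \lie{g}$, $(g,x) \mapsto \Ad g(x)$. Its differential at $(1,e)$ sends $(X,\xi) \in \lie{g} \times \lie{s}$ to $[X,e] + \xi$, which is surjective thanks to the decomposition $\lie{g} = [\lie{g},e] \oplus \lie{s}$; hence $\alpha$ is smooth at $(1,e)$. To extend smoothness across $\bG \times \Sigma$ I would equip $\lie{g}$ with the twisted $\mathbb{G}_m$-action $k \cdot y = k^{-2}\Ad\lambda(k)(y)$, which fixes $e$, preserves $\lie{s}$ since $\lie{s}$ is graded, and contracts $\Sigma$ to $e$ because every weight occurring in $\lie{s}$ is at most $-2$ after the shift (cf.\ \cref{rem:contracting-lin-action}). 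Pairing this with the action $k \cdot (g,x) = (\lambda(k) g \lambda(k)^{-1},\, k \cdot x)$ on $\bG \times \Sigma$ makes $\alpha$ equivariant, so the smooth locus of $\alpha$ is open and $\mathbb{G}_m$-stable; containing the contraction target $(1,e)$, it must contain all of $\{1\} \times \Sigma$, and left multiplication by $\bG$ on the first factor propagates smoothness to every point of $\bG \times \Sigma$. Minimality of $\dim \Sigma$ is automatic since $\dim \lie{s} = \dim \lie{g} - \dim [\lie{g},e] = \dim \lie{c_g}(e) = \dim C_{\bG}(e)$ by separability.

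For the last assertion, if $x \in \Sigma$ has $C_{\bG}(x)$ separable then smoothness of $\alpha$ at $(1,x)$ means the differential $(X,\xi) \mapsto [X,x] + \xi$ is surjective onto $T_x(\lie{g})$; using $T_x(\Sigma) = \lie{s}$ together with \cref{lem:separable-centraliser}(iv) to identify $[\lie{g},x]$ with $T_x((\Ad\bG)x)$, the surjectivity reads exactly as $T_x(\lie{g}) = T_x(\Sigma) + T_x((\Ad\bG)x)$. The main obstacle in the whole argument is arranging for $\lie{s}$ to sit in non-positive weights, which requires upgrading $\lie{u}(\lambda,2) \subseteq [\lie{g},e]$ to $\bigoplus_{j \geqslant 1} \lie{g}(\lambda,j) \subseteq [\lie{g},e]$; this is where \cref{thm:class-nilpotent-orbits}(ii)(a) and the root-combinatorial grading symmetry must be combined, and is the step that replaces the use of $\mathfrak{sl}_2$-triples from Slodowy's original construction. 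A secondary technicality is the use of the contracting $\mathbb{G}_m$-action to spread smoothness of $\alpha$ from the single point $(1,e)$ to all of $\bG \times \Sigma$.
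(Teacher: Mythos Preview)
Your argument is essentially the paper's own proof: the same use of \cref{thm:class-nilpotent-orbits}(ii)(a,b) together with $\dim\lie{g}(\lambda,1)=\dim\lie{g}(\lambda,-1)$ to force $\lie{u}(\lambda,1)\subseteq[\lie{g},e]$, the same graded-complement construction of $\lie{s}$, and the same contracting $\mathbb{G}_m$-action to spread smoothness of the action map from $(1,e)$.

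One small slip: with your convention $k\cdot y=k^{-2}\Ad\lambda(k)(y)$ the weights on $\lie{s}$ are $\leqslant -2$, so the action contracts $\Sigma$ to $e$ as $k\to\infty$, not as $k\to 0$; thus the citation of \cref{rem:contracting-lin-action} does not quite fit. Either reverse the sign (take $\rho(k)=k^{2}\Ad\lambda(k^{-1})$ as the paper does) or simply note that $e$ lies in the closure of every $\mathbb{G}_m$-orbit on $\Sigma$, which is all you use. Your smoothness-propagation step---first contract inside $\{1\}\times\Sigma$ to reach every $(1,x)$, then use $\alpha\circ L_g=\Ad g\circ\alpha$ to reach every $(g,x)$---is a clean variant of the paper's version, which packages both moves into a single $\bG\times\mathbb{G}_m$-action and argues via the auxiliary sets $X_y$; your separation of the two symmetries makes the logic a touch more transparent.
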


\begin{proof}
Firstly, as $C_{\bG}(e)$ is separable we have $T_e(\Ad\bG(e)) = [\lie{g},e]$ by \cref{lem:separable-centraliser} so $T_e(\Ad\bG(e))$ is stable under the $\mathbb{G}_m$-action. Hence, one can find a graded, i.e., $\mathbb{G}_m$-invariant, complement $\lie{s}$ of $T_e(\Ad\bG(e))$ in $\lie{g}$ by simply picking a complement in each graded piece. By \cref{thm:class-nilpotent-orbits} we certainly have $\lie{u}(\lambda,2) \subseteq [\lie{g},e]$. Now the map $\lie{g}(\lambda,-1) \to \lie{g}(\lambda,1)$ defined by $x \mapsto [x,e]$ is injective because $\lie{g}(\lambda,-1)\cap \lie{c_g}(e) = \{0\}$ by \cref{thm:class-nilpotent-orbits}. However it must also be surjective because $\dim \lie{g}(\lambda,-1) = \dim \lie{g}(\lambda, 1)$, c.f., \cref{pa:dual-space}. This shows that $\lie{u}(\lambda,1) \subseteq T_e(\Ad\bG(e))$ and so $\lie{s} \subseteq \bigoplus_{i \leqslant 0} \lie{g}(\lambda,i)$.

Now the set $\Sigma$ is certainly locally closed and has dimension $\dim C_{\bG}(e)$, so we need only show that the action map $\pi : \bG \times \Sigma \to \lie{g}$ is a smooth morphism. Recall that for a morphism of varieties $f : \bX \to \bY$ we say $x \in \bX$ is a smooth point of $f$ if $x$ and $f(x)$ are non-singular and the differential $d_xf : T_x\bX \to T_{f(x)}\bY$ is surjective. Now clearly $(1,e)$ is a smooth point of $\pi$ because $\lie{g} = \lie{s} \oplus [\lie{g},e]$. In particular, by \cite[Theorem 4.3.6]{springer:2009:linear-algebraic-groups}, we have $\pi$ is dominant and separable and the set $O$ of smooth points of $\pi$ is non-empty and open because $\bG \times \Sigma$ and $\lie{g}$ are irreducible. By \cite[III, \S10, 10.4]{hartshorne:1977:algebraic-geometry} we then have $\pi$ is a smooth morphism if and only if $O = \bG\times \Sigma$ because $\bG \times \Sigma$ and $\lie{g}$ are non-singular. To show this we argue as in \cite[pg.\ 111]{slodowy:1980:simple-singularities}.

We define a linear action $\alpha : \mathbb{G}_m \times \lie{g} \to \lie{g}$ by setting $\alpha(k,x) = \rho(k)x$, where $\rho : \mathbb{G}_m \to \GL(\lie{g})$ is defined by $\rho(k)(x) = k^2(\Ad \lambda(k^{-1}))(x)$ for all $x \in \lie{g}$ and $k \in \mathbb{G}_m$. Note that the $\mathbb{G}_m$-action $\rho$ preserves $\lie{s}$ and as $e \in \lie{g}(\lambda,2)$ we have $\rho(k)(e+y) = e + \rho(k)(y)$ for all $y \in \lie{s}$. In particular, the restriction of $\alpha$ to $\mathbb{G}_m \times \Sigma$ is a contraction to $e$, c.f., \cref{rem:contracting-lin-action}. Now we can define an action of the direct product $\bG \times \mathbb{G}_m$ on $\bG \times \Sigma$ defined by $(h,k) \cdot (g,x) = (hg\lambda(k),\rho(k)(x))$ and on $\lie{g}$ defined by $(h,k)\cdot y = k^2\Ad h(y)$. An easy calculation shows that $\pi$ is equivariant with respect to these actions. Hence we need only show that $O$ meets every $\bG \times \mathbb{G}_m$-orbit of $\bG \times \Sigma$. Assume $n$ is the minimal integer such that $\lie{g}(\lambda,n) \neq \{0\}$ then it is easy to see that every $\bG \times \mathbb{G}_m$-orbit of $\bG\times\Sigma$ contains a set of the form
\begin{equation*}
X_y = \{(1,e+ \sum_{i=n}^0 a_iy_i) \mid a_i \in \mathbb{G}_m\}
\end{equation*}
for some $y_i \in \lie{g}(\lambda,i)$ because $\mathbb{K}$ is algebraically closed and $\lie{s} \subseteq \oplus_{i \leqslant 0} \lie{g}(\lambda,i)$. Now $O \cap \overline{X_y}$ is a non-empty open set of the closure $\overline{X_y}$ because it contains $(1,e)$, c.f., \cref{rem:contracting-lin-action}. However as $\overline{X_y}$ is clearly irreducible, as $X_y$ is, we must have $O \cap X_y \neq \emptyset$ because $X_y$ is open in its closure. Thus we have shown that $\pi$ is smooth.

We now consider the final point. Embedding $\bG$ as a closed subgroup of some $\GL_n(\mathbb{K})$ we may easily compute the differential $\mathrm{d}_{(1,x)}\pi : \lie{g} \oplus T_x(\Sigma) \to T_x(\lie{g})$ using the framework of dual numbers, c.f., \cite[AG, 16.2]{borel:1991:linear-algebraic-groups}. Indeed, one readily checks that we have $\mathrm{d}_{(1,x)}\pi(g,y) = y + [g,x]$. Now, by the above we know the differential is surjective so we must have $T_x(\lie{g}) = T_x(\Sigma) + [\lie{g},x]$; so the statement follows from \cref{lem:separable-centraliser}.
\end{proof}

\section{Springer Isomorphisms and Kawanaka Isomorphisms}
\begin{definition}\label{def:kawanaka-iso}
Given any cocharacter $\lambda \in Y(\bG)$ we say an isomorphism $\psi : \bU(\lambda) \to \lie{u}(\lambda)$ is a \emph{Kawanaka isomorphism} if it commutes with the action of the Frobenius endomorphism and furthermore the following hold:
\begin{enumerate}[label=(K\arabic*)]
	\item $\psi(\bU(\lambda,2)) \subseteq \lie{u}(\lambda,2)$
	\item $\psi(uv) - \psi(u) - \psi(v) \in \lie{u}(\lambda,i+1)$ for any $u,v \in \bU(\lambda,i)$ and $i \in \{1,2\}$,
	\item $\psi([u,v]) - c_i[\psi(u),\psi(v)] \in \lie{u}(\lambda,2i+1)$ for any $u,v \in \bU(\lambda,i)$ and $i \in \{1,2\}$ where $c_i \in \mathbb{G}_m$ is a constant not depending on $u$ or $v$.
\end{enumerate}
Note that $[u,v]$ denotes the commutator $uvu^{-1}v^{-1}$ of $u$ and $v$.
\end{definition}

\begin{pa}
Kawanaka isomorphisms will be the crucial ingredient for the definition of generalised Gelfand--Graev representations. In \cite[\S3]{kawanaka:1986:GGGRs-exceptional} Kawanaka gave a general construction for a Kawanaka isomorphism. However, the construction he gives is not in general $\bG$-equivariant so cannot be obtained as the restriction of a Springer isomorphism by \cite[Remark 10]{mcninch:2005:optimal-sl2-homomorphisms}. In this section we wish to show that there always exists a Springer isomorphism whose restriction to $\bU(\lambda)$ is a Kawanaka isomorphism for all cocharacters $\lambda \in Y(\bG)$.
\end{pa}

\begin{lem}\label{lem:Springer-Kawanaka-iso-exceptional}
Assume $\bG$ is an adjoint simple group of exceptional type then there exists a Springer isomorphism $\phi_{\spr} : \mathcal{U} \to \mathcal{N}$ whose restriction to $\bU(\lambda)$ is a Kawanaka isomorphism for every cocharacter $\lambda \in Y(\bG)$.
\end{lem}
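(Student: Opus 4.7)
The plan would start by confirming that Lemma~\ref{lem:springer-morphism} applies. Since $\bG$ is adjoint simple of exceptional type in good characteristic, the fundamental group $\Lambda/\mathbb{Z}\Phi$ has order coprime to $p$: it is trivial for types $\G_2$, $\F_4$, $\E_8$, of order $2$ for $\E_7$, and of order $3$ for $\E_6$, while the good primes are respectively $p>3$, $p>3$, $p>5$, $p>3$, $p>3$. Consequently $Y/\mathbb{Z}\widecheck{\Phi}$ has no $p$-torsion, so by \cref{lem:p-tor-springer-good} the group $\bG$ is proximate and \cref{lem:springer-morphism} supplies at least one Springer isomorphism $\phi_{\spr}:\mathcal{U}\to\mathcal{N}$. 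By Serre's result recalled in \cref{rem:conv-spring-iso}, the Springer isomorphisms actually form an irreducible variety of dimension equal to $\mathrm{rk}(\bG)$, so I have room to pick one with special properties.

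My next step would be to reduce the verification of (K1)--(K3) to finitely many cocharacters per type. If $\phi_{\spr}$ is $\bG$-equivariant, then for any $g\in\bG$ one has $\Inn g(\bU(\lambda,i))=\bU(\Inn g\circ\lambda,i)$ and $\Ad g(\lie{u}(\lambda,i))=\lie{u}(\Inn g\circ\lambda,i)$, so conditions (K1)--(K3) for $\lambda$ are equivalent to the same conditions for $\Inn g\circ\lambda$. Hence I may restrict attention to $\lambda\in Y(\bT_0)$. The groups $\bU(\lambda,i)$ and the spaces $\lie{u}(\lambda,i)$ depend only on the integers $\langle\alpha,\lambda\rangle$ for $\alpha\in\Delta$, so up to the Weyl group action only finitely many gradings arise, and it suffices to verify (K1)--(K3) on a finite list of cocharacters in each of the five exceptional types.

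The third and main step is to exhibit a Springer isomorphism that works uniformly in $\lambda$. My strategy would be to construct $\phi_{\spr}$ using the theory of optimal $\lie{sl}_2$-homomorphisms of McNinch (the reference already cited in \cref{rem:conv-spring-iso}), together with a \emph{Chevalley-like} normalisation on each root subgroup. Concretely, I would fix a Chevalley basis as in \cref{pa:chevalley-basis}, and insist that $\phi_{\spr}$, when restricted to the one-dimensional root subgroup $\bX_\alpha$, be the parameterisation $x_\alpha(t)\mapsto t\,e_\alpha$. This already forces (K1) from $\lambda(\mathbb{G}_m)$-equivariance (weights $\geqslant 2$ must go to weights $\geqslant 2$), and (K2) and (K3) reduce, modulo the filtration $\lie{u}(\lambda,i+1)$ (resp.\ $\lie{u}(\lambda,2i+1)$), to Chevalley's commutator relations inside $\bU(\lambda)$ versus the Lie bracket inside $\lie{u}(\lambda)$. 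The constants $c_i\in\mathbb{G}_m$ arise from the structure constants of the Chevalley basis and are independent of the chosen $u,v\in\bU(\lambda,i)$ by virtue of working modulo $\lie{u}(\lambda,2i+1)$.

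The hardest step I anticipate is ensuring that such a uniformly compatible normalisation can be realised simultaneously on \emph{all} root subgroups by a single \emph{global} $\bG$-equivariant isomorphism $\mathcal{U}\to\mathcal{N}$. A priori Kawanaka's piecewise construction gives separate maps $\psi_\lambda$, not a single $\phi_{\spr}$, and it is precisely this coherence that Remark~10 of \cite{mcninch:2005:optimal-sl2-homomorphisms} warns can fail. I expect the resolution to be a case-by-case verification across $\G_2,\F_4,\E_6,\E_7,\E_8$: in each type one fixes an explicit Springer isomorphism (e.g.\ obtained from a faithful representation together with an optimal $\lie{sl}_2$-triple), and checks the commutator relations inside each parabolic $\bP(\lambda)$ by direct computation. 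The $\E_8$ case in characteristic $7$ should be the most delicate since there the filtration can involve weights up to $58$ and the required cancellations modulo higher filtration are most intricate.
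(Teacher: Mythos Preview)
Your proposal has a genuine gap: you never actually construct the Springer isomorphism. You correctly observe that proximateness gives \emph{some} Springer isomorphism and that Serre's result provides a whole variety of them, but then you ``insist'' that $\phi_{\spr}$ restrict to $x_\alpha(t)\mapsto te_\alpha$ on each root subgroup without explaining why such a map exists globally as a $\bG$-equivariant isomorphism $\mathcal{U}\to\mathcal{N}$. The reference to McNinch's optimal $\lie{sl}_2$-homomorphisms does not supply this; those produce cocharacters attached to nilpotent elements, not a global Springer isomorphism with prescribed behaviour on all root subgroups simultaneously. You then acknowledge this is the hard point and propose to resolve it by case-by-case computation in each type and each parabolic, flagging $\E_8$ at $p=7$ as the worst case. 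That is not a proof; it is a programme, and an unnecessarily painful one.

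The paper's argument bypasses all of this with one concrete construction. Since $\bG$ is adjoint, the adjoint representation $\Ad:\bG\to\GL(\lie{g})$ is faithful, and by a result of Springer--Steinberg one has a $\bG$-invariant splitting $\lie{gl}(\lie{g})=\ad(\lie{g})\oplus\lie{m}$ with $\ID\in\lie{m}$. Bardsley--Richardson then show that $\phi_{\spr}:=\ad^{-1}\circ\pi\circ\Ad$ (with $\pi$ the projection onto $\ad(\lie{g})$) is a Springer isomorphism whose differential at $1$ is the identity. The verification of (K1)--(K3) is then uniform in $\lambda$: one checks (with a small computer check for $\G_2$ and $\F_4$) that $\Ad x_\alpha(t)$ is a cubic polynomial in $t\ad e_\alpha$, observes that conjugation by $\lambda(k)$ scales $\ad x\cdot\ad y$ by $k^{i+j}$ when $x\in\lie{g}(\lambda,i)$ and $y\in\lie{g}(\lambda,j)$, and reads off (K1)--(K3) by expanding products of root-group elements and tracking weights. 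No per-cocharacter or per-type case analysis of filtrations is needed; the weight-space argument handles every $\lambda$ at once. Your reduction to finitely many $\lambda$ is thus both unneeded and, as stated, not quite correct (distinct cocharacters can give genuinely distinct filtrations $\lie{u}(\lambda,i)$, not merely finitely many up to conjugacy).
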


\begin{proof}
As $\bG$ is adjoint we have the adjoint representation $\Ad : \bG \to \GL(V)$, with $V = \lie{g}$, is faithful and according to \cite[I, \S5, 5.3]{springer-steinberg:1970:conjugacy-classes} we have
\begin{equation*}
\lie{gl}(V) = \ad(\lie{g}) \oplus \lie{m},
\end{equation*}
where $\lie{m}$ is an $\Ad\bG$-invariant subspace of $\lie{gl}(V)$ containing $\ID_V$. Let $\pi : \lie{gl}(V) \to \ad(\lie{g})$ be the natural projection map then according to Bardsley and Richardson the composition $\phi_{\spr} = \ad^{-1}\circ\pi \circ \Ad$ is a Springer isomorphism whose differential is the identity, see \cite[9.3.4]{bardsley-richardson:1985:Etale-slices-for-algebraic-transformation-groups}. Note that this makes sense because we have $\GL(V) \subseteq \lie{gl}(V)$.

By conjugating we may and will assume that $\bT_0 \leqslant \bB_0$ are contained in $\bP(\lambda)$. For each root $\alpha \in \Phi$ we choose an isomorphism $x_{\alpha} : \mathbb{G}_a \to \bX_{\alpha}$, where $\bX_{\alpha} \leqslant \bG$ is the root subgroup corresponding to $\alpha$, normalised such that $\mathrm{d}x_{\alpha}(t) = te_{\alpha} \in \lie{g}_{\alpha}$ for all $t \in \mathbb{G}_a$, c.f., \cref{pa:chevalley-basis-general}. We claim that for any $\alpha \in \Phi^+$ and $t \in \mathbb{G}_a$ we have
\begin{equation}\label{eq:Ad-root-subgroup}
\Ad x_{\alpha}(t) = \ID_V + t\ad e_{\alpha} + t^2\frac{(\ad e_{\alpha})^2}{2} + t^3\frac{(\ad e_{\alpha})^3}{6},
\end{equation}
in fact $(\ad e_{\alpha})^3 = 0$ unless $\bG$ is of type $\G_2$. If $\bG$ is of type $\E_n$ then this follows from the general argument given in \cite[10.2.7]{springer:2009:linear-algebraic-groups}. If $\bG$ is of type $\G_2$ or $\F_4$ then one can use the implementation of the adjoint representation in \cite{GAP:2014:version-4.7.5} to check that this holds. Indeed, one can check the order of $\ad e_{\alpha}$ and can check that for $j \in \{2,3\}$ the matrix $(\ad e_{\alpha})^j/j!$ is integer valued, the result then follows as in \cite[\S11.3]{carter:1972:simple-groups-lie-type}.

Now $\Ad g(\ad x) = \ad (\Ad g(x))$ for all $g \in \bG$ and $x \in \lie{g}$ so we have $x \in \lie{g}(\lambda,i)$ if and only if $\Ad \lambda(k)(\ad x) = k^i\ad x$ for all $k \in \mathbb{G}_m$. Let us assume now that $x \in \lie{g}(\lambda,i)$ and $y \in \lie{g}(\lambda,j)$. As the action of $\Ad \lambda(k)$ on $\ad x$ is given simply by conjugation we have
\begin{equation*}
\Ad \lambda(k)(\ad x\ad y) = \Ad \lambda(k)(\ad x)\Ad \lambda(k)(\ad y) = k^{i+j}\ad x\ad y.
\end{equation*}
In particular, this implies that
\begin{equation}\label{eq:weight-space-product}
(\ad^{-1}\circ \pi)(\ad x\ad y) \in \lie{g}(\lambda,i+j).
\end{equation}
This could of course simply be 0.

We now wish to show that the restriction of $\phi_{\spr}$ to $\bU(\lambda)$ satisfies the properties (K1) to (K3). For this we fix a total ordering $\alpha_1,\dots,\alpha_m$ on the set of positive roots $\Phi^+$ then, as a variety, we may identify $\bU(\lambda)$ with the product $\prod_{i=1}^m \bX_{\alpha_i}$. In particular, any element $u \in \bU(\lambda)$ may be written uniquely as
\begin{equation*}
u = x_{\alpha_1}(t_1)\cdots x_{\alpha_m}(t_m)
\end{equation*}
for some $t_i \in \mathbb{G}_a$. By \cref{eq:Ad-root-subgroup} we can write $\Ad u = \Ad (x_{\alpha_1}(t_1)\cdots x_{\alpha_m}(t_m)) = \Ad x_{\alpha_1}(t_1)\cdots \Ad x_{\alpha_m}(t_m)$ as
\begin{equation*}
\Ad u = \prod_{i=1}^m \bigg(\ID_V + t_i\ad e_{\alpha_i} + t_i^2\frac{(\ad e_{\alpha_i})^2}{2} + t^3\frac{(\ad e_{\alpha_i})^3}{6}\bigg).
\end{equation*}
Now assume $u \in \bU(\lambda,2)$ then by definition we must have for each $1 \leqslant i \leqslant m$ with $t_i \neq 0$ that $e_{\alpha_i} \in \mathfrak{u}(\lambda,2)$. Hence, expanding the brackets and applying \cref{eq:weight-space-product} we see that (K1) holds. As $u^{-1} = x_{\alpha_m}(-t_m)\cdots x_{\alpha_1}(-t_1)$ we see that any entirely similar argument shows that (K2) and (K3) hold. We leave it to the reader to fill in the details.
\end{proof}

\begin{prop}\label{thm:Springer-Kawanaka-iso}
Assume $\bG$ is a proximate algebraic group then there exists a Springer isomorphism $\phi_{\spr} : \mathcal{U} \to \mathcal{N}$ whose restriction to $\bU(\lambda)$ is a Kawanaka isomorphism for every cocharacter $\lambda \in Y(\bG)$.
\end{prop}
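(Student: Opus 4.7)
I would reduce the statement to the case of simple simply connected groups and then dispatch each case either via \cref{lem:Springer-Kawanaka-iso-exceptional} or via a direct construction in a faithful matrix representation.

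Since $\bG$ is proximate, the isogeny $\phi_{\simc}:\bG_{\simc}\to\bG_{\der}$ is separable, so \cref{lem:sep-isog-iso-uni-vars} and \cref{prop:lie-algebra-iso} yield equivariant isomorphisms of unipotent varieties and of Lie algebras which, together with $\mathcal{U}(\bG)=\mathcal{U}(\bG_{\der})$ and $\mathcal{N}(\lie{g})=\mathcal{N}(\lie{g}_{\der})$, allow a Springer isomorphism constructed on $\bG_{\simc}$ to be transported to $\bG$. For any $\lambda\in Y(\bG)$ the filtrations $\bU(\lambda,i)$ and $\lie{u}(\lambda,i)$ lie entirely in $\bG_{\der}$ and $\lie{g}_{\der}$ respectively and depend only on the composition of $\lambda$ with the projection $\bG\to\bG/Z(\bG)$, which lifts to a cocharacter of $\bG_{\simc}$; hence the Kawanaka property for every cocharacter transfers as well. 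Since $\bG_{\simc}$ decomposes as a direct product of simple simply connected groups and the whole setup is compatible with direct products, it suffices to treat the case where $\bG$ is simple and simply connected.

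If $\bG$ is exceptional and already adjoint (types $\G_2$, $\F_4$, $\E_8$), \cref{lem:Springer-Kawanaka-iso-exceptional} applies directly. For $\bG$ of type $\E_6$ or $\E_7$ the central isogeny $\bG\to\bG/Z(\bG)$ has kernel of order $3$ or $2$ respectively; since $p$ is good we have $p\neq 2,3$, so this isogeny is separable and the Springer isomorphism of \cref{lem:Springer-Kawanaka-iso-exceptional} can be pulled back to $\bG$ by the same mechanism as above. For classical types I would work with a faithful matrix representation: the defining representation for $\SL_n$ and $\mathrm{Sp}_{2n}$ directly, and for $\mathrm{Spin}_n$ in types $\B$ and $\D$ via the separable isogeny $\mathrm{Spin}_n\to\mathrm{SO}_n$ (separable since $p\neq 2$). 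A Cayley-type map, namely $u\mapsto u-1$ for $\SL_n$ and $u\mapsto(u-1)(u+1)^{-1}$ for $\mathrm{Sp}_{2n}$ and $\mathrm{SO}_n$ (the inverse existing in good characteristic), gives a $\bG$-equivariant isomorphism from the unipotent variety to the nilpotent cone. The Kawanaka properties (K1)--(K3) can then be verified by imitating \cref{lem:Springer-Kawanaka-iso-exceptional}: parametrize $\bU(\lambda)$ by root subgroups $x_\alpha(t)$, expand the matrix map on these generators, and bookkeep the weights of each resulting term under the conjugation action of $\lambda(\mathbb{G}_m)$.

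The main obstacle I anticipate is the verification of (K3) in the classical cases. In \cref{lem:Springer-Kawanaka-iso-exceptional} the identity $(\ad e_\alpha)^3=0$ substantially truncates the expansion of $\Ad x_\alpha(t)$; no such truncation is available in the defining representation of a classical group, so one must argue more carefully that the correction terms in the Baker--Campbell--Hausdorff-type expansion of $[u,v]$ land in $\lie{u}(\lambda,2i+1)$ uniformly in $\lambda$. Nevertheless, the nilpotency index of each $E_\alpha$ is bounded by the degree of the representation, which keeps the computation manageable. The transfer arguments through separable isogenies are essentially formal; the substance of the proof lies in the case-by-case constructions for the simple simply connected factors.
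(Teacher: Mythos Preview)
Your proposal is correct and follows essentially the same route as the paper: reduce to simple simply connected factors via the separable isogeny $\phi_{\simc}$ (arguing as in \cref{lem:springer-morphism}), use $u\mapsto u-\ID_V$ for $\SL(V)$ and the Cayley map for $\Sp(V)$ and $\SO(V)$, and invoke \cref{lem:Springer-Kawanaka-iso-exceptional} for the exceptional adjoint groups, transferring everything through separable isogenies. Your worry about (K3) in the classical cases is milder than you fear: in the natural representation each root element $E_\alpha$ satisfies $E_\alpha^2=0$ (or $E_\alpha^3=0$ for the short roots in type $\C$), so the expansion of $x_\alpha(t)$ truncates just as sharply as in the adjoint case, and the weight bookkeeping goes through exactly as in the proof of \cref{lem:Springer-Kawanaka-iso-exceptional}.
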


\begin{proof}
Assume $\bG$ is $\SL(V)$, $\Sp(V)$ or $\SO(V)$ then Kawanaka already observed in \cite[1.2]{kawanaka:1985:GGGRs-and-ennola-duality} that such a Springer isomorphism exists. If $\bG$ is $\SL(V)$ then one simply takes the map $f \mapsto f-\ID_V$, for which the statement is easy to deduce. If $\bG$ is $\Sp(V)$ or $\SO(V)$ then one can use the Cayley map $x \mapsto (f-\ID_V)(f+\ID_V)^{-1}$. In fact, if $\bG$ is $\Sp(V)$ or $\SO(V)$ then one could argue as in the proof of \cref{lem:Springer-Kawanaka-iso-exceptional} to deduce the existence of $\phi_{\spr}$ but instead replacing the adjoint representation with the natural representation.

Let $\bG_{\simc}$ be a simple simply connected algebraic group defined over a field of good characteristic. Then there is a natural surjective separable morphism $\pi : \bG_{\simc} \to \bG$ of algebraic groups where $\bG$ is either $\SL(V)$, $\Sp(V)$, $\SO(V)$ or an adjoint exceptional group. According to \cref{lem:Springer-Kawanaka-iso-exceptional} and the above remarks there exists a Springer isomorphism $\phi_{\spr}$ of $\bG$ whose restriction to each $\bU(\lambda) \leqslant \bG$ is a Kawanaka isomorphism. Arguing as in \cref{lem:springer-morphism} we see that this lifts to a Springer isomorphism on $\bG_{\simc}$ which also has this property. One may now argue as in \cref{lem:springer-morphism} to show that such a Springer isomorphism exists for all proximate algebriac groups. We leave the details to the reader.
\end{proof}

\begin{rem}
It seems likely that the properties defined in \cref{def:kawanaka-iso} hold for most Springer isomorphisms. It would be interesting to find a case free proof of \cref{thm:Springer-Kawanaka-iso}.
\end{rem}

\section{Generalised Gelfand--Graev Representations}\label{sec:GGGRs}
\begin{assumption}
From this point forward we will assume that $\bG$ is proximate and we maintain our assumption that $p$ is a good prime for $\bG$. We will denote by:
\begin{itemize}[itemsep=0pt]
	\item $\Ql$ a fixed algebraic closure of the field of $\ell$-adic numbers where $\ell \neq p$ is a prime,
	\item $G = \bG^F$ the $\mathbb{F}_q$-rational structure determined by $F$,
	\item $e \in \mathcal{N}^F$ a fixed nilpotent element and $\lambda \in \mathcal{D}(\bG)$ a cocharacter such that $e \in \lie{g}(\lambda,2)_{\reg}$, c.f., \cref{thm:class-nilpotent-orbits},
	\item $\phi_{\spr} : \mathcal{U} \to \mathcal{N}$ a fixed Springer isomorphism satisfying the property of \cref{thm:Springer-Kawanaka-iso},
	\item $u \in \mathcal{U}^F$ the unique element satisfying $\phi_{\spr}(u) = e$,
	\item $\lie{t}_0$ the Lie algebra $\Lie(\bT_0)$.
\end{itemize}
\end{assumption}

\begin{definition}[Kawanaka, {\cite[3.1.4]{kawanaka:1982:fourier-transforms}}]
An involutive homomorphism ${}^{\dag} : \lie{g} \to \lie{g}$ is called an \emph{$\mathbb{F}_q$-opposition automorphism} if the following holds:
\begin{enumerate}
	\item $\lie{t}_0^{\dag} = \lie{t}_0$,
	\item $e_{\alpha}^{\dag} \in \mathbb{F}_qe_{-\alpha}$ for all $\alpha \in \Phi$ where $e_{\alpha} \in \lie{g}_{\alpha}$ is as in \cref{pa:chevalley-basis}.
\end{enumerate}
\end{definition}

\begin{lem}\label{lem:Fq-opp-auto}
The map defined by $t^{\dag} = -t$ if $t \in \lie{t}_0$ and $e_{\alpha}^{\dag} = -e_{-\alpha}$ for all $\alpha \in \Phi$ is an $\mathbb{F}_q$-opposition automorphism of $\lie{g}$.
\end{lem}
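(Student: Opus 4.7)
The plan is to verify, in turn, the four requirements embedded in the statement: that the given formula defines a $\mathbb{K}$-linear endomorphism of $\lie{g}$, that this endomorphism is an involution, that it preserves the Lie bracket, and that it satisfies conditions (i) and (ii) of the definition. Since $\bG$ is assumed proximate, $\mathrm{d}\phi_{\simc}$ is an isomorphism by \cref{prop:lie-algebra-iso}, so the Chevalley basis of \cref{pa:chevalley-basis,pa:chevalley-basis-general} descends to a basis of $\lie{g}_{\der}$ and the root space decomposition $\lie{g} = \lie{t}_0 \oplus \bigoplus_{\alpha \in \Phi} \mathbb{K}e_{\alpha}$ holds. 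Thus the assignment unambiguously extends by linearity to a well-defined endomorphism ${}^{\dag}$ of $\lie{g}$. Involutivity is immediate: on $\lie{t}_0$ one has $(-t)^{\dag} = t$, and on a root vector $(e_{\alpha}^{\dag})^{\dag} = (-e_{-\alpha})^{\dag} = -(-e_{\alpha}) = e_{\alpha}$. Conditions (i) and (ii) are also built into the definition since $-1 \in \mathbb{F}_q$.

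The only substantive point is to verify that ${}^{\dag}$ respects the bracket, which I would do by a case analysis on pairs of basis elements using the Chevalley commutator relations
\begin{equation*}
[h,e_{\alpha}] = \alpha(h)e_{\alpha}, \qquad [e_{\alpha},e_{-\alpha}] = h_{\alpha}, \qquad [e_{\alpha},e_{\beta}] = N_{\alpha,\beta}e_{\alpha+\beta}\ \text{ for }\ \alpha+\beta \in \Phi,
\end{equation*}
together with $[e_{\alpha},e_{\beta}] = 0$ whenever $\alpha+\beta \notin \Phi\cup\{0\}$. The cases $[\lie{t}_0,\lie{t}_0]$, $[\lie{t}_0,e_{\alpha}]$ and $[e_{\alpha},e_{-\alpha}]$ follow by direct substitution, using only $\alpha(-h) = -\alpha(h)$ and the antisymmetry of the bracket. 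The genuinely non-trivial case is $\alpha+\beta \in \Phi$: applying ${}^{\dag}$ to both sides of $[e_{\alpha},e_{\beta}] = N_{\alpha,\beta}e_{\alpha+\beta}$ and comparing with $[e_{\alpha}^{\dag},e_{\beta}^{\dag}] = [e_{-\alpha},e_{-\beta}] = N_{-\alpha,-\beta}e_{-\alpha-\beta}$ reduces the required identity to
\begin{equation*}
N_{-\alpha,-\beta} = -N_{\alpha,\beta},
\end{equation*}
which is one of the standard integral properties of the structure constants of a Chevalley basis, hence holds in $\mathbb{K}$ as well.

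The main (minor) obstacle is simply keeping track of signs in the verification of the last case, together with making sure the identification $\lie{g} = \lie{t}_0 \oplus \bigoplus_{\alpha} \mathbb{K}e_{\alpha}$ is legitimate, both of which are handled by the proximate hypothesis and by the normalisation of a Chevalley basis.
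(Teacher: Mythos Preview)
Your proof is correct and follows essentially the same approach as the paper's, which also reduces to showing that ${}^{\dag}$ is a Lie algebra homomorphism and appeals (via a citation to Carter) to the Chevalley structure-constant identity $N_{-\alpha,-\beta}=-N_{\alpha,\beta}$; you have simply unpacked that citation. The only structural difference is that the paper works first on $\lie{g}_{\der}$ and then extends via $\lie{g}=\lie{g}_{\der}+\lie{z(g)}$, whereas you use the root-space decomposition $\lie{g}=\lie{t}_0\oplus\bigoplus_{\alpha}\lie{g}_{\alpha}$ directly; both are fine.
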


\begin{proof}
We only have to show that ${}^{\dag}$ is a Lie algebra homomorphism. Recall from \cref{pa:chevalley-basis-general} that $\lie{g}_{\der}$ has a Chevalley basis then by \cite[pg.\ 56]{carter:1972:simple-groups-lie-type} we see that the restriction of ${}^{\dag}$ to $\lie{g}_{\der}$ is a homomorphism. However this easily implies that ${}^{\dag}$ is a homomorphism as $\lie{g} = \lie{g}_{\der} + \lie{z(g)}$ and $\lie{z(g)}^{\dag} = \lie{z(g)}$, c.f., \cref{pa:centralisers}.
\end{proof}

\begin{assumption}
We now assume that ${}^{\dag} : \lie{g} \to \lie{g}$ is a fixed $\mathbb{F}_q$-opposition automorphism, which exists by \cref{lem:Fq-opp-auto}.
\end{assumption}

The following won't be needed until the proof of \cref{lem:mod-GGGR-decomp} but it will be convenient to prove this here.

\begin{prop}\label{prop:same-G-orbit}
The elements $e$ and $-e^{\dag}$ are contained in the same $\bG$-orbit.
\end{prop}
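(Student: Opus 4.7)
The strategy is to show that $-e^{\dag}$ lies in the open dense $\bL(\lambda)$-orbit $\lie{g}(\lambda,-2)_{\reg}$, and then to exhibit an element of this orbit which is visibly $\bG$-conjugate to $e$. First I would check that ${}^{\dag}$ interchanges the $\lambda$-weight spaces $\lie{g}(\lambda,i)$ and $\lie{g}(\lambda,-i)$. Since $\lambda \in Y(\bT_0)$, this is immediate from the definition: $e_\alpha$ has $\lambda$-weight $\langle \alpha,\lambda\rangle$ and is sent to a scalar multiple of $e_{-\alpha}$, of $\lambda$-weight $-\langle \alpha,\lambda\rangle$, while ${}^{\dag}$ acts as $-1$ on $\lie{t}_0 \subseteq \lie{g}(\lambda,0)$. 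Moreover, ${}^{\dag}$ is the differential of the Chevalley involution $\tau : \bG \to \bG$, whose existence follows from \cref{thm:isogeny-theorem} applied to the root-datum involution sending each root to its negative; this $\tau$ inverts $\bT_0$ and interchanges $\bX_\alpha$ with $\bX_{-\alpha}$, and in particular stabilises the Levi $\bL(\lambda)$ setwise. Consequently ${}^{\dag}$ carries $\bL(\lambda)$-orbits in $\lie{g}(\lambda,2)$ to $\bL(\lambda)$-orbits in $\lie{g}(\lambda,-2)$ and must send the unique open dense orbit $\lie{g}(\lambda,2)_{\reg}$ onto $\lie{g}(\lambda,-2)_{\reg}$. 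Since scalar multiplication by $-1$ also preserves $\bL(\lambda)$-orbits, we obtain $-e^{\dag} \in \lie{g}(\lambda,-2)_{\reg}$.

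For the second step I would choose an $\lie{sl}_2$-triple $\{e,h,f\}$ in $\lie{g}$ with $h \in \lie{g}(\lambda,0)$ and $f \in \lie{g}(\lambda,-2)_{\reg}$. In good characteristic, with $\bG$ proximate, such a triple exists and integrates to a homomorphism $\phi_e : \SL_2 \to \bG_{\der}$ by transferring Jacobson--Morozov-type results from $\bG_{\simc}$ along $\mathrm{d}\phi_{\simc}$ (see \cref{pa:chevalley-basis-general}). Applying $\phi_e$ to a representative of the non-trivial Weyl group element of $\SL_2$ produces $s \in \bG_{\der}$ with $\Ad s \cdot e = -f$, so $e$ and $-f$ are $\bG$-conjugate. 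As $-f$ and $-e^{\dag}$ both lie in the single $\bL(\lambda)$-orbit $\lie{g}(\lambda,-2)_{\reg}$, they are $\bL(\lambda)$-conjugate, hence $\bG$-conjugate, and the result follows.

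The main obstacle is securing the $\lie{sl}_2$-triple with $f$ in the regular stratum and its integration to an $\SL_2$-homomorphism in good characteristic. An alternative that avoids $\lie{sl}_2$-triples altogether would be to exhibit $n \in N_\bG(\bT_0)$ with $\Inn n \circ \lambda = -\lambda$: this requires $d = d \circ \sigma$ for the opposition involution $\sigma$ of the Dynkin diagram, a classical property inherited in good characteristic through the bijection of \cref{thm:class-nilpotent-orbits}(i); then $\Ad n$ directly conjugates $\lie{g}(\lambda,2)_{\reg}$ to $\lie{g}(\lambda,-2)_{\reg}$, and the conclusion follows as above.
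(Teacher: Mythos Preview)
Your main second step via $\lie{sl}_2$-triples is a genuine gap. In merely good characteristic the Jacobson--Morozov theorem can fail: an $\lie{sl}_2$-triple through $e$ need not exist, and even when it does it need not integrate to a homomorphism $\SL_2 \to \bG_{\der}$. Transferring from $\bG_{\simc}$ via $\mathrm{d}\phi_{\simc}$ does not help, since $\bG_{\simc}$ lives in the same characteristic and has the same obstruction. This is precisely why the paper globally avoids $\lie{sl}_2$-triples (see the introduction), so invoking them here defeats the purpose of the exercise.

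Your alternative, however, is essentially the paper's own argument, just assembled in a different order. The paper first reduces (via \cite[2.10]{jantzen:2004:nilpotent-orbits}) to showing $e$ and $e^{\dag}$ are conjugate, then applies $\Ad\dot{w}_0$ to bring $e^{\dag}$ into $\lie{g}(\lambda,2)$, and finally uses an automorphism $\theta$ of $\Ad\bG$ (playing the role of your Chevalley involution $\tau$) to show the resulting element lies in $\lie{g}(\lambda,2)_{\reg}$. You instead first use $\tau$ to place $-e^{\dag}$ in $\lie{g}(\lambda,-2)_{\reg}$, and then use $n$ (which is $\dot{w}_0$) to conjugate $\lie{g}(\lambda,2)_{\reg}$ onto $\lie{g}(\lambda,-2)_{\reg}$. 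Both routes rest on the same two ingredients: an involution interchanging $\lie{g}_{\alpha}$ with $\lie{g}_{-\alpha}$ that respects $\bL(\lambda)$-orbits, and the invariance of the weighted Dynkin diagram under the opposition involution $-w_0$. One caution: you assert that ${}^{\dag}$ is literally $\mathrm{d}\tau$, but the paper fixes an arbitrary $\mathbb{F}_q$-opposition automorphism, which need not equal $\mathrm{d}\tau$ on the nose; the paper handles this by working with $\theta$ on $\Ad\bG$ and the compatibility $\Ad g(x^{\dag}) = \theta(\Ad g)(x)^{\dag}$, which is all that is needed to transport $\bL(\lambda)$-orbits.
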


\begin{proof}
For any $\alpha \in \Phi$ and $t \in \mathbb{G}_a$ let us denote by $\overline{x}_{\alpha}(t)$ the element $\Ad x_{\alpha}(t)$ then $\{\overline{x}_{\alpha}(t) \mid \alpha \in \Phi, t \in \mathbb{G}_a\}$ is a generating set for $\Ad\bG$; here $x_{\alpha}(t)$ is defined as in the proof of \cref{lem:Springer-Kawanaka-iso-exceptional}. Furthermore, for each $\alpha \in \Phi$ we denote by $\gamma_{\alpha} \in \mathbb{F}_q^{\times}$ the scalar such that $e_{\alpha}^{\dag} = \gamma_{\alpha}e_{-\alpha}$. As in the proof of \cite[Proposition 12.2.3]{carter:1972:simple-groups-lie-type} let $\theta : \Ad\bG \to \Ad\bG$ be the automorphism defined by $\theta(\overline{x}_{\alpha}(t)) = \overline{x}_{-\alpha}(\gamma_{\alpha}t)$ for all $\alpha \in \Phi$ and $t \in \mathbb{G}_a$ then we have
\begin{equation}\label{eq:dag-auto-commute}
\Ad g(x^{\dag}) = \theta(\Ad g)(x)^{\dag}
\end{equation}
for all $g \in \bG$ and $x \in \lie{g}$.

By \cite[2.10, Lemma]{jantzen:2004:nilpotent-orbits} we see that it is sufficient to show that $e$ and $e^{\dag}$ are in the same $\bG$-orbit. Let us denote by $\dot{w}_0 \in N_{\bG}(\bT_0)$ a representative for the longest element $w_0 \in W_{\bG}(\bT_0)$. The action of $-w_0$ on $\Phi$ induces a permutation $\rho : \Phi \to \Phi$ on the roots which is known to satisfy $\langle \alpha,\lambda  \rangle = \langle \rho(\alpha),\lambda\rangle$ for all $\alpha \in \Phi$. To see this it suffices to observe that the weighted Dynkin diagrams of $\A_n$, $\D_{2n+1}$ and $\E_6$ are invariant under the graph automorphism induced by $w_0$, which is easily checked by inspecting \cite[\S13.1]{carter:1993:finite-groups-of-lie-type}. In particular, it follows that $\Ad \dot{w}_0(e^{\dag}) \in \lie{g}(\lambda,2)$, $\theta(\Ad\bL(\lambda)) = \Ad\bL(\lambda)$ and $\Ad\dot{w}_0$ normalises $\Ad\bL(\lambda)$. Combining this with \cref{eq:dag-auto-commute} we see that
\begin{equation*}
\Ad\bL(\lambda)(\Ad \dot{w}_0(e^{\dag})) = \Ad \dot{w}_0(\Ad\bL(\lambda)(e)^{\dag}).
\end{equation*}
As the orbit $\Ad\bL(\lambda)(e)$ is dense in $\lie{g}(\lambda,2)$ we must have $\Ad\bL(\lambda)(\Ad \dot{w}_0(e^{\dag}))$ is dense in $\lie{g}(\lambda,2)$ so $\Ad \dot{w}_0(e^{\dag}) \in \lie{g}(\lambda,2)_{\reg}$. The statement now follows from \cref{thm:class-nilpotent-orbits}.
\end{proof}

\begin{pa}
We now proceed to define GGGRs following \cite[\S3]{kawanaka:1986:GGGRs-exceptional}. Note that all results in this section are due to Kawanaka. Recall that the definition of GGGRs requires the choice of a $\bG$-invariant symmetric bilinear form $\kappa : \lie{g} \times \lie{g} \to \mathbb{K}$. Here, the $\bG$-invariance means $\kappa(\Ad g(x),\Ad g(y)) = \kappa(x,y)$ for all $x,y \in \lie{g}$ and $g \in \bG$. Such a form can be obtained as a trace form
\begin{equation*}
(x,y) \mapsto \Tr(\mathrm{d}\tau(x)\circ\mathrm{d}\tau(y))
\end{equation*}
where $\tau : \bG \to \GL(V)$ is a finite dimensional rational representation. For convenience we recall that such a form satisfies the property
\begin{equation*}
\kappa(x,[y,z]) = \kappa([x,y],z)
\end{equation*}
for all $x,y,z \in \lie{g}$, which we will use without explicit mention. It is important to note that, even with the assumption that $\bG$ is proximate, we \emph{cannot} always choose $\kappa$ to be non-degenerate, see \cite[Proposition 2.5.10]{letellier:2005:fourier-transforms}. However, we can choose it so that it is not too degenerate.
\end{pa}

\begin{lem}\label{lem:perp-of-root-space}
There exists a form $\kappa$ on $\lie{g}$ defined over $\mathbb{F}_q$ such that, for all $\alpha \in \Phi$, we have
\begin{equation}\label{eq:good-perp}
\lie{g}_{\alpha}^{\perp} = \lie{t}_0 \oplus \bigoplus_{\beta \in \Phi \setminus \{-\alpha\}}\lie{g}_{\beta}
\end{equation}
where $\lie{g}_{\alpha}^{\perp} = \{x \in \lie{g} \mid \kappa(x,y) = 0$ for all $y \in \lie{g}_{\alpha}\}$.
\end{lem}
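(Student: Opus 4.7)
The plan is to construct $\kappa$ as a trace form $\kappa(x,y) = \Tr(\mathrm{d}\tau(x)\circ \mathrm{d}\tau(y))$ associated with a suitably chosen faithful rational representation $\tau : \bG \to \GL(V)$ defined over $\mathbb{F}_q$. Any such trace form is automatically $\bG$-invariant and symmetric, and is defined over $\mathbb{F}_q$ whenever $\tau$ is, so the whole problem reduces to controlling $\lie{g}_\alpha^{\perp}$.

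First I would show that the inclusion $\lie{t}_0 \oplus \bigoplus_{\beta \neq -\alpha}\lie{g}_\beta \subseteq \lie{g}_\alpha^{\perp}$ holds for the trace form of \emph{any} rational representation $\tau$, using only the $\bT_0$-weight decomposition $V = \bigoplus_{\mu} V_\mu$. Indeed, $\mathrm{d}\tau(x)$ preserves each $V_\mu$ when $x \in \lie{t}_0$ and maps $V_\mu$ into $V_{\mu+\gamma}$ when $x \in \lie{g}_\gamma$. Hence for $x \in \lie{g}_\alpha$ and $y$ lying in $\lie{t}_0$ or in some $\lie{g}_\beta$ with $\beta \neq -\alpha$, the composite $\mathrm{d}\tau(x)\mathrm{d}\tau(y)$ shifts every $V_\mu$ by a non-zero amount, so its trace vanishes.

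The reverse inclusion is equivalent, since $\dim \lie{g}_{-\alpha} = 1$, to the condition that $\kappa$ restricts to a non-degenerate pairing $\lie{g}_\alpha \times \lie{g}_{-\alpha} \to \mathbb{K}$ for every $\alpha \in \Phi$. This is the sole point where the choice of $\tau$ matters, and it is the main obstacle. My approach would be to reduce, via $\phi_{\simc}$ and the observation that the root spaces of $\bG$ lie in $\lie{g}_{\der}$, to the case where $\bG$ is simple and simply connected, and then to pick type by type an explicit faithful representation whose trace form pairs opposite root vectors non-trivially: the natural module suffices for the classical types, while for each exceptional type one takes an appropriate small or adjoint representation. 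A direct computation in a Chevalley basis (as in \cref{pa:chevalley-basis}) then expresses $\Tr(\mathrm{d}\tau(e_\alpha)\mathrm{d}\tau(e_{-\alpha}))$ as a sum of integers which, under the hypothesis that $p$ is good for $\bG$, does not vanish modulo $p$.

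Finally, taking a direct sum over all simple factors of $\bG_{\der}$ and combining with any faithful rational representation of the central torus produces a faithful rational representation $\tau$ of $\bG$ defined over $\mathbb{F}_q$ whose trace form $\kappa$ satisfies the stated description of $\lie{g}_\alpha^{\perp}$ for every $\alpha \in \Phi$.
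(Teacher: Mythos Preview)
Your strategy is close in spirit to the paper's, and the first two paragraphs are fine: the inclusion $\lie{t}_0 \oplus \bigoplus_{\beta\neq -\alpha}\lie{g}_\beta \subseteq \lie{g}_\alpha^\perp$ holds for any $\bG$-invariant form (already $\bT_0$-invariance suffices), and the remaining condition is precisely that $\kappa$ pairs $\lie{g}_\alpha$ with $\lie{g}_{-\alpha}$ non-trivially.

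The gap is in your final assembly step. The group $\bG$ is not the direct product of $\bG_{\der}$ (or of its simple factors) with the central torus, so one cannot manufacture a representation of $\bG$ by taking a direct sum of representations of these pieces. Likewise, representations of the simple factors of $\bG_{\simc}$ give a representation of $\bG_{\simc}$, not of $\bG_{\der}$ or of $\bG$; the isogeny $\phi_{\simc}$ points the wrong way for representations to descend. So the representation $\tau$ you describe does not exist in general, and with it goes the promised trace form on $\lie{g}$.

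The paper sidesteps this by reducing in the opposite direction, via the adjoint quotient $\phi:\bG\to\bG_{\ad}$. Since $\bG_{\ad}$ \emph{is} literally a direct product $\bG_1\times\cdots\times\bG_r$ of simple adjoint groups, one can build $\kappa_{\ad}=\kappa_1+\cdots+\kappa_r$ on $\lie{g}_{\ad}$ factor by factor and then set $\kappa(x,y)=\kappa_{\ad}(d\phi(x),d\phi(y))$ on all of $\lie{g}$; as $\Ker(d\phi)\subseteq\lie{t}_0$, the root-space condition survives the pullback. For the individual simple factors the paper does not compute traces by hand: for types other than $\A$ it quotes the existence of a non-degenerate invariant form in good characteristic from Springer--Steinberg and deduces \eqref{eq:good-perp} by a dimension count, while for type $\A_n$ it passes through $\GL_{n+1}$ and its standard trace form, restricted along $\lie{gl}_{n+1}\cong \Lie(Z^{\circ}(\GL_{n+1}))\oplus\lie{pgl}_{n+1}$.
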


\begin{proof}
Let $\phi : \bG \to \bG_{\ad}$ be an adjoint quotient of $\bG$. As $\bG_{\ad}$ is an adjoint group it is a direct product of simple adjoint groups $\bG_1 \times \cdots \times \bG_r$ so $\lie{g}_{\ad} = \lie{g}_1 \oplus \cdots \oplus \lie{g}_r$ where $\lie{g}_i = \Lie(\bG_i)$. If $\bG_i$ is not of type $\A_n$ then there exists a non-degenerate $\bG_i$-invariant symmetric bilinear form $\kappa_i$ on $\lie{g}_i$ defined over $\mathbb{F}_q$, see \cite[I, 5.3]{springer-steinberg:1970:conjugacy-classes}. Using \cite[2.5.1(2)]{letellier:2005:fourier-transforms} we can deduce that the appropriate version of \cref{eq:good-perp} holds for $\kappa_i$ by dimension counting.

Now assume $\bG_i$ is of type $\A_n$ then $\bG_i \cong \PGL_{n+1}(\mathbb{K})$ for some $n$ and we set $\widetilde{\bG}_i = \GL_{n+1}(\mathbb{K})$ and $\widetilde{\lie{g}}_i = \lie{gl}_{n+1}(\mathbb{K}) = \Lie(\widetilde{\bG}_i)$. As the natural trace form $\widetilde{\kappa}_i(x,y) = \Tr(xy)$ is non-degenerate on $\widetilde{\lie{g}}_i$ and defined over $\mathbb{F}_q$ we have \cref{eq:good-perp} holds in $\widetilde{\lie{g}}_i$. According to \cite[2.3.1]{letellier:2005:fourier-transforms} we have an isomorphism of Lie algebras $\widetilde{\lie{g}}_i \cong \Lie(Z^{\circ}(\widetilde{\bG}_i)) \oplus \lie{g}_i$. Through this isomorphism we may define a $\bG_i$-invariant symmetric bilinear form $\kappa_i$ on $\lie{g}_i$ by restricting $\widetilde{\kappa}_i$ but this is not necessarily non-degenerate. As the image of each root space under this isomorphism must be contained in $\lie{g}_i$ we see that \cref{eq:good-perp} holds for $\kappa_i$.

We now set $\kappa_{\ad} = \kappa_1 + \cdots + \kappa_r$ and define $\kappa$ by setting $\kappa(x,y) = \kappa_{\ad}(\mathrm{d}\phi(x),\mathrm{d}\phi(y))$. Clearly this is $\bG$-invariant and we see that \cref{eq:good-perp} holds by noticing that $\Ker(\mathrm{d}\phi)$ is contained in $\lie{t}_0$.
\end{proof}

\begin{assumption}
We now assume that $\kappa : \lie{g} \times \lie{g} \to \mathbb{K}$ is a fixed $\bG$-invariant symmetric bilinear form, defined over $\mathbb{F}_q$, satisfying \cref{eq:good-perp}.
\end{assumption}

\begin{pa}\label{pa:dual-space}
If $\lie{h}$ is a Lie algebra over $\mathbb{K}$ then we will denote by $\lie{h}^* = \Hom(\lie{h},\mathbb{K})$ the dual space. Assume now that $i \neq 0$ and $\lie{g}_{\alpha} \subseteq \lie{g}(\lambda,i)$ then clearly we have $\lie{g}(\lambda,i)^{\perp} \subseteq \lie{g}_{\alpha}^{\perp}$. In particular, applying \cref{eq:good-perp} we see that $\lie{g}(\lambda,i)^{\perp} \cap \lie{g}(\lambda,-i) = \{0\}$ because $\lie{g}_{\alpha} \subseteq \lie{g}(\lambda,i)$ if and only if $\lie{g}_{-\alpha} \subseteq \lie{g}(\lambda,-i)$. With this we see that the map $x \mapsto \kappa(x,-)$ gives an identification of $\lie{g}(\lambda,-i) = \lie{g}(\lambda,i)^{\dag}$ with the dual space $\lie{g}(\lambda,i)^*$.
\end{pa}

\begin{lem}[Kawnaka]\label{lem:non-degen}
The skew-symmetric bilinear form $\lie{g}(\lambda,1) \times \lie{g}(\lambda,1) \to \mathbb{K}$ given by $(x,y) \mapsto \kappa(e^{\dag},[x,y])$ is non-degenerate.
\end{lem}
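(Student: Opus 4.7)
The plan is to use $\bG$-invariance of $\kappa$ to convert the non-degeneracy of the form into a statement about the injectivity of a certain adjoint map, and then to exploit the opposition automorphism $\dag$ to reduce this to part (ii)(a) of \cref{thm:class-nilpotent-orbits}.

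First, using $\kappa(x,[y,z]) = \kappa([x,y],z)$ I would rewrite the form as
\begin{equation*}
(x,y) \longmapsto \kappa([e^{\dag},x],y), \qquad x,y \in \lie{g}(\lambda,1).
\end{equation*}
Since we may (after $\bG$-conjugacy, which does not affect the statement) assume $\lambda \in Y(\bT_0)$, the weight space $\lie{g}(\lambda,i)$ for $i \neq 0$ is the direct sum of the root spaces $\lie{g}_\alpha$ with $\langle\alpha,\lambda\rangle = i$. Since $\dag$ sends $\lie{g}_\alpha$ into $\lie{g}_{-\alpha}$ and is a Lie algebra automorphism, it interchanges $\lie{g}(\lambda,i)$ with $\lie{g}(\lambda,-i)$. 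In particular $e^{\dag} \in \lie{g}(\lambda,-2)$, so by \cref{eq:commutator-weight-space} the map $\ad e^{\dag}$ carries $\lie{g}(\lambda,1)$ into $\lie{g}(\lambda,-1)$. By \cref{pa:dual-space}, $\kappa$ restricts to a perfect pairing between $\lie{g}(\lambda,-1)$ and $\lie{g}(\lambda,1)$, so the non-degeneracy we seek is equivalent to the injectivity of
\begin{equation*}
\ad e^{\dag} \colon \lie{g}(\lambda,1) \longrightarrow \lie{g}(\lambda,-1).
\end{equation*}

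For injectivity, I would take $x \in \lie{g}(\lambda,1)$ with $[e^{\dag},x]=0$ and apply $\dag$: since $\dag$ is an involutive Lie algebra homomorphism, this gives $[e,x^{\dag}]=0$, so $x^{\dag} \in \lie{c_g}(e) \cap \lie{g}(\lambda,-1)$. But by \cref{thm:class-nilpotent-orbits}(ii)(a) we have $\lie{c_g}(e) \subseteq \lie{p}(\lambda) = \bigoplus_{i \geqslant 0} \lie{g}(\lambda,i)$, so this intersection is trivial. Hence $x^{\dag}=0$, and applying $\dag$ once more gives $x=0$.

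The main (and only) delicate point is the interaction between $\dag$ and the $\lambda$-grading: the automorphism $\dag$ is tied to the fixed torus $\bT_0$ and its Chevalley basis, while $\lambda$ lies a priori only in $\mathcal{D}(\bG)$. Since non-degeneracy of the form is preserved under simultaneous $\bG$-conjugacy of $(\lambda,e)$, this is harmless: one reduces to $\lambda \in \mathcal{D}_{\Phi}(\bG) \subseteq Y(\bT_0)$, where the interchange of $\lie{g}(\lambda,i)$ and $\lie{g}(\lambda,-i)$ under $\dag$ is immediate from the root-space description. Everything else is bookkeeping.
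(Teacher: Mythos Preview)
Your proof is correct and follows essentially the same route as the paper's: both rewrite the form via $\kappa$-invariance, use the perfect pairing of \cref{pa:dual-space} to reduce to injectivity of $\ad e^{\dag}$ on $\lie{g}(\lambda,1)$, and then invoke \cref{thm:class-nilpotent-orbits}(ii)(a) after applying $\dag$. The paper compresses your last two steps into the single line $\lie{c}_{\lie{u}(\lambda)}(e^{\dag}) = \lie{c_g}(e)^{\dag} \cap \lie{u}(\lambda) = \{0\}$ and leaves the reduction to $\lambda \in Y(\bT_0)$ implicit, but the content is identical.
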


\begin{proof}
Assume $x \in \lie{g}(\lambda,1)$ and $\kappa(e^{\dag},[x,y]) = \kappa([e^{\dag},x],y) = 0$ for all $y \in \lie{g}(\lambda,1)$. As $[e^{\dag},x] \in \lie{g}(\lambda, -1)$ we must have $[e^{\dag},x] = 0$ by \cref{pa:dual-space} but this implies $x=0$ because $x \in \lie{c}_{\lie{u}(\lambda)}(e^{\dag}) = \lie{c_g}(e)^{\dag} \cap \lie{u}(\lambda) = \{0\}$, see \cref{thm:class-nilpotent-orbits}.
\end{proof}

\begin{pa}\label{pa:U-lambda-2-hom}
We now choose once and for all a non-trivial additive character $\chi_p : \mathbb{F}_p^+ \to \Ql^{\times}$ of the finite field $\mathbb{F}_p$ viewed as an additive group. If $r \in \mathbb{Z}_{> 1}$ is an integral power of $p$ then we denote by $\mathbb{F}_r \subseteq \mathbb{K}$ the subfield fixed by $x \mapsto x^r$. The choice of $\chi_p$ gives rise to a character $\chi_r : \mathbb{F}_r^+ \to \Ql^{\times}$ by setting $\chi_r = \chi_p\circ \Tr_{\mathbb{F}_r/\mathbb{F}_p}$ where $\Tr_{\mathbb{F}_r/\mathbb{F}_p}$ is the field trace. As $\phi_{\spr}$ satisfies (K2) we obtain a linear character $\varphi_u : U(\lambda,2) \to \Ql^{\times}$ by setting
\begin{equation*}
\varphi_u(x) = \chi_q(\kappa(e^{\dag},\phi_{\spr}(x))).
\end{equation*}
\end{pa}

\begin{pa}
We could now induce the character $\varphi_u$ to $G$ to obtain a character of $G$ but this turns out not to be the right idea. Instead we construct an intermediary subgroup as follows. The non-degeneracy of the form, c.f., \cref{lem:non-degen}, implies that there exists a Lagrangian subspace $\lie{m} = \lie{m}^{\perp} \subseteq \lie{g}(\lambda,1)$ by which we mean that
\begin{equation}\label{eq:form-equals-zero}
\kappa(e^{\dag},[x,y]) = 0
\end{equation}
for all $x,y \in \lie{m}$. This subspace necessarily has dimension $\dim\lie{g}(\lambda,1)/2$. Note that $\lie{m}$ is not necessarily unique so we must choose some such subspace. We then define $\bU(\lambda,1.5)$ to be the variety
\begin{equation*}
\bU(\lambda,1.5) = \{x \in \bU(\lambda) \mid \phi_{\spr}(x) \in \lie{m} + \lie{u}(\lambda,2)\},
\end{equation*}
which is a closed connected $F$-stable subgroup of $\bU(\lambda)$ containing $\bU(\lambda,2)$. This follows from the properties of a Kawanaka isomorphism and the fact that $\kappa$ is defined over $\mathbb{F}_q$.
\end{pa}

\begin{lem}[{}{Kawanaka, \cite[3.1.9]{kawanaka:1986:GGGRs-exceptional}}]
\mbox{}
\begin{enumerate}
	\item $U(\lambda,1.5)$ is a subgroup of $G$ containing $U(\lambda,2)$ such that $[U(\lambda):U(\lambda,1.5)] = [U(\lambda,1.5):U(\lambda,2)]$.
	\item $\varphi_u$ extends to a linear character $\widetilde{\varphi}_u$ of $U(\lambda,1.5)$.
\end{enumerate}
\end{lem}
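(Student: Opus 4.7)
The plan is to handle (i) and (ii) separately.

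For (i), I first choose $\lie{m}$ to be $F$-stable; by Lang--Steinberg applied to the symplectic group acting transitively on the variety of Lagrangian subspaces of $\lie{g}(\lambda,1)$ for the $F$-invariant non-degenerate form of \cref{lem:non-degen}, such a choice exists. Then $\bU(\lambda,1.5)$ is $F$-stable by construction. For the subgroup property, let $x,y \in \bU(\lambda,1.5)$; by (K2) with $i=1$,
\[
\phi_{\spr}(xy) \equiv \phi_{\spr}(x)+\phi_{\spr}(y) \pmod{\lie{u}(\lambda,2)},
\]
and since $\lie{m}+\lie{u}(\lambda,2)$ is a vector subspace, $xy \in \bU(\lambda,1.5)$; inverses are handled analogously using $\phi_{\spr}(1) = 0$. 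The containment $\bU(\lambda,2) \subseteq \bU(\lambda,1.5)$ is immediate from (K1). For the index, (K1) and (K2) together show that $\phi_{\spr}$ induces an isomorphism of abelian algebraic groups $\bU(\lambda)/\bU(\lambda,2) \xrightarrow{\sim} \lie{g}(\lambda,1)$, identifying $\bU(\lambda,1.5)/\bU(\lambda,2)$ with $\lie{m}$. Since $\dim\lie{m} = \tfrac{1}{2}\dim\lie{g}(\lambda,1)$, taking $F$-fixed points gives $[U(\lambda):U(\lambda,1.5)] = q^{\dim\lie{m}} = [U(\lambda,1.5):U(\lambda,2)]$.

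For (ii), I will extend $\varphi_u$ by passing to the finite quotient $H := U(\lambda,1.5)/U(\lambda,3)$. Two preliminary reductions by weights are needed. First, because $e^{\dag} \in \lie{g}(\lambda,-2)$ and $\kappa$ is $\lambda(\mathbb{G}_m)$-invariant, the pairing $\kappa(e^{\dag},-)$ annihilates $\lie{u}(\lambda,3)$. Together with (K2) for $i=2$, which makes $\phi_{\spr}$ a group isomorphism modulo $\lie{u}(\lambda,3)$, this forces $\varphi_u|_{U(\lambda,3)} = 1$ and identifies $\varphi_u$ with the pullback of the additive character $\bar\varphi_u(Y) := \chi_q(\kappa(e^{\dag},Y))$ of $U(\lambda,2)/U(\lambda,3) \cong \lie{g}(\lambda,2)^F$. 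Second, the bracket relation $[\bU(\lambda,1),\bU(\lambda,2)] \subseteq \bU(\lambda,3)$ makes $U(\lambda,2)/U(\lambda,3)$ central in $H$, so $H$ is a two-step nilpotent central extension of $\lie{m}^F$ by $\lie{g}(\lambda,2)^F$.

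The delicate ingredient---and the main obstacle---is the identification of the commutator map $c:\lie{m}\times\lie{m}\to\lie{g}(\lambda,2)$ of this extension. By (K3) applied to commutators in $\bU(\lambda,1.5)$, $c$ equals $c_1$ times the Lie bracket, so $\bar\varphi_u\circ c$ is a scalar multiple of the symplectic form $(X_1,X_2)\mapsto\chi_q(\kappa(e^{\dag},[X_1,X_2]))$ from \cref{lem:non-degen}, which vanishes identically on $\lie{m}\times\lie{m}$ by the Lagrangian condition \cref{eq:form-equals-zero}. Since $[H,H]$ is generated by the image of $c$ (using that $\lie{g}(\lambda,2)^F$ is central in the two-step nilpotent group $H$), $\bar\varphi_u$ is trivial on $[H,H]$ and therefore descends to a character of the subgroup $\lie{g}(\lambda,2)^F/[H,H] \leq H^{\mathrm{ab}}$. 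As $\Ql^{\times}$ is divisible, any character of a subgroup of the finite abelian group $H^{\mathrm{ab}}$ extends to $H^{\mathrm{ab}}$; pulling back through $U(\lambda,1.5) \twoheadrightarrow H \twoheadrightarrow H^{\mathrm{ab}}$ produces the desired extension $\widetilde{\varphi}_u$ of $\varphi_u$.
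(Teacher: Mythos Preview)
Your argument is correct and follows essentially the same approach as the paper. For (i) the paper simply cites Rosenlicht's theorem for the index equality (which is exactly your dimension count $|U^F|=q^{\dim U}$), and for (ii) it observes that $U(\lambda,1.5)/\Ker(\varphi_u)$ is abelian via (K3), the weight orthogonality of $\kappa$, and the Lagrangian condition, then invokes a general character-extension fact---this is precisely your computation that $\bar\varphi_u$ vanishes on $[H,H]$, only without first passing to the intermediate quotient by $U(\lambda,3)$.
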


\begin{proof}
\mbox{}
(i). The first statement is obvious and the second follows from a theorem of Rosenlicht, see \cite[4.2.4]{geck:2003:intro-to-algebraic-geometry}.

(ii). We first observe that $U(\lambda)/\Ker(\varphi_u)$ is abelian which follows from \cref{pa:dual-space}, (K3), \cref{eq:form-equals-zero} and the definition of $U(\lambda,1.5)$. The result now follows from the following general fact: If $H$ is a finite group and $X \leqslant H$ is a subgroup with linear character $\chi \in \Irr(X)$ such that $H/\Ker(\varphi)$ is abelian then $\chi$ extends to $H$. Indeed, passing to the quotient we may assume that $H$ is abelian and $\chi$ is a faithful linear character of $X$, in particular $X$ must be cyclic. We can then write $H$ as a direct product $\widetilde{X} \times Y$ such that $X \leqslant \widetilde{X}$ and $\widetilde{X}$ is cyclic. The result then follows from \cite[Corollary 11.22]{isaacs:2006:character-theory-of-finite-groups}.
\end{proof}

\begin{definition}\label{def:GGGR}
We call the induced representation $\Gamma_u = \Ind_{U(\lambda,1.5)}^G(\widetilde{\varphi}_u)$ the \emph{Generalised Gelfand--Graev Representation} (GGGR) associated with $u$. We identify $\Gamma_u$ with its character.
\end{definition}

\begin{lem}[{}{Kawanaka, \cite[3.1.12]{kawanaka:1986:GGGRs-exceptional}}]\label{lem:ind-from-u2}
We have $\Gamma_u = q^{-\dim\lie{g}(\lambda,1)/2}\Ind_{U(\lambda,2)}^G(\varphi_u)$. In particular, the construction of $\Gamma_u$ does not depend upon the choice of Lagrangian $\lie{m}$ or extension $\widetilde{\varphi}_u$.
\end{lem}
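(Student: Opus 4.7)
The plan is to combine transitivity of induction with a Heisenberg-style conjugation argument. First observe that $[\bU(\lambda,1.5),\bU(\lambda,1.5)] \subseteq [\bU(\lambda,1),\bU(\lambda,1)] \subseteq \bU(\lambda,2)$ by \cref{eq:commutator-weight-space-grp}, so that $U(\lambda,2)$ is normal in $U(\lambda,1.5)$ with abelian quotient. Since $\widetilde{\varphi}_u$ extends $\varphi_u$, the standard tensor identity for induction from a normal subgroup yields
\[
\Ind_{U(\lambda,2)}^{U(\lambda,1.5)}(\varphi_u) \;=\; \widetilde{\varphi}_u \cdot \sum_{\psi} \psi,
\]
where $\psi$ ranges over the characters of $U(\lambda,1.5)/U(\lambda,2)$ inflated to $U(\lambda,1.5)$; by the previous lemma there are $[U(\lambda,1.5):U(\lambda,2)] = q^{\dim\lie{g}(\lambda,1)/2}$ of them. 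Applying $\Ind_{U(\lambda,1.5)}^G$ and using transitivity of induction gives
\[
\Ind_{U(\lambda,2)}^G(\varphi_u) \;=\; \sum_{\psi} \Ind_{U(\lambda,1.5)}^G(\widetilde{\varphi}_u \cdot \psi).
\]

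The crucial step is to show that every summand equals $\Gamma_u$. Since $U(\lambda)$ normalises $U(\lambda,1.5)$ (because $[U(\lambda),U(\lambda,1.5)] \subseteq U(\lambda,2) \subseteq U(\lambda,1.5)$) and lies inside $G$, it suffices to exhibit, for every $\psi$, an element $g \in U(\lambda)$ with ${}^g\widetilde{\varphi}_u = \widetilde{\varphi}_u \cdot \psi$. For $g \in U(\lambda)$ and $x \in U(\lambda,1.5)$, the identity $g^{-1}xg = x\cdot[x^{-1},g^{-1}]$ combined with properties (K2), (K3) from \cref{def:kawanaka-iso} and the vanishing of $\kappa(e^{\dag},-)$ on $\lie{u}(\lambda,3)$ (from \cref{pa:dual-space}) gives, after a direct computation,
\[
({}^g\widetilde{\varphi}_u)(x)\,\widetilde{\varphi}_u(x)^{-1} \;=\; \chi_q\bigl(c_1\,\kappa(e^{\dag},[\phi_{\spr}(x),\phi_{\spr}(g)])\bigr).
\]
Because $\phi_{\spr}(x) \in \lie{m}+\lie{u}(\lambda,2)$ and $[\lie{u}(\lambda,2),\lie{u}(\lambda,1)] \subseteq \lie{u}(\lambda,3)$, this character of $U(\lambda,1.5)/U(\lambda,2)$ depends only on the $\lie{g}(\lambda,1)$-component $y_1$ of $\phi_{\spr}(g)$ and factors as $\chi_q(c_1\,\omega(m,y_1))$, where $m$ denotes the $\lie{m}$-component of $\phi_{\spr}(x)$ and $\omega(a,b)=\kappa(e^{\dag},[a,b])$ is the non-degenerate alternating form of \cref{lem:non-degen}.

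Since $\lie{m} \subseteq \lie{g}(\lambda,1)$ is Lagrangian, the map $\lie{g}(\lambda,1)/\lie{m} \to \widehat{\lie{m}}$ sending $y_1 \mapsto \chi_q(c_1\,\omega(-,y_1))|_{\lie{m}}$ is an isomorphism of abelian groups; restricting to $F$-fixed points (everything in sight being $F$-stable once suitable choices are made) shows that every character $\psi \in \widehat{U(\lambda,1.5)/U(\lambda,2)}$ arises as ${}^g\widetilde{\varphi}_u / \widetilde{\varphi}_u$ for some $g \in U(\lambda) \subseteq G$. It follows that each of the $q^{\dim\lie{g}(\lambda,1)/2}$ summands above equals $\Gamma_u$, yielding the desired equation. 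Independence from $\lie{m}$ and $\widetilde{\varphi}_u$ then follows immediately because the right-hand side involves only $\varphi_u$ and $U(\lambda,2)$, neither of which depends on these choices. The main obstacle is the bookkeeping in the key calculation above: correctly applying (K3) to the commutator $[x^{-1},g^{-1}]$ and tracking which pieces vanish modulo $\lie{u}(\lambda,3)$ or pair trivially with $e^{\dag}$; this is essentially the Stone--von Neumann principle applied to the Heisenberg-type quotient of $U(\lambda,1)$ modulo the kernel of $\varphi_u$.
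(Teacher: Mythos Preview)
Your proof is correct and takes a genuinely different route from the paper's. The paper works one level higher: it computes $\Ind_{U(\lambda,1.5)}^{U(\lambda)}(\widetilde\varphi_u)$ directly from the induced-character formula, rewriting $\widetilde\varphi_u(gvg^{-1})=\widetilde\varphi_u(v)\varphi_u([g,v])$ and summing over $g\in U(\lambda)$ to obtain a character sum $\sum_{x\in\lie{u}(\lambda)^F}\zeta_v(x)$ with $\zeta_v(x)=\chi_q(\kappa(e^\dag,c[x,\phi_{\spr}(v)]))$; orthogonality forces this to vanish unless $v\in U(\lambda,2)$. The same computation applied to $\Ind_{U(\lambda,2)}^{U(\lambda)}(\varphi_u)$ differs only by the factor $q^{\dim\lie{g}(\lambda,1)/2}$, and inducing to $G$ finishes. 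You instead decompose $\Ind_{U(\lambda,2)}^{U(\lambda,1.5)}(\varphi_u)$ via Gallagher and then use the symplectic structure to show that all the extensions $\widetilde\varphi_u\cdot\psi$ are $U(\lambda)$-conjugate, hence induce isomorphically to $G$.

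Each approach has its merits. The paper's computation is a little shorter and uses less of the structure (the Lagrangian condition on $\lie m$ is not invoked explicitly beyond what is needed for $U(\lambda,1.5)$ to be a group and for $\widetilde\varphi_u$ to exist). Your argument is more conceptual and explains \emph{why} the choice of extension is irrelevant: it is precisely the Stone--von~Neumann phenomenon that a finite Heisenberg group has a unique irreducible representation with given central character, so all extensions of $\varphi_u$ to $U(\lambda,1.5)$ lie in a single $U(\lambda)$-orbit. Note that your passage to $F$-fixed points tacitly requires $\lie m$ to be $F$-stable and $c_1\in\mathbb{F}_q^\times$; both are implicit in the paper's setup as well.
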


\begin{proof}
First of all let us note that by \cref{eq:commutator-weight-space-grp} we have $U(\lambda,2)$ contains the derived subgroup of $U(\lambda)$ so both $U(\lambda,2)$ and $U(\lambda,1.5)$ are normal subgroups of $U(\lambda)$. Assume $v \in U(\lambda,1.5)$ then by definition we have
\begin{equation*}
\Ind_{U(\lambda,1.5)}^{U(\lambda)}(\widetilde{\varphi}_u)(v) = q^{-\dim\lie{g}(\lambda,1)/2 - \dim\lie{u}(\lambda,2)}\sum_{g \in U(\lambda)} \widetilde{\varphi}_u(gvg^{-1}).
\end{equation*}
Again by \cref{eq:commutator-weight-space-grp} we have $[g,v] \in \bU(\lambda,2)$ so we may rewrite the sum on the right as
\begin{equation*}
\sum_{g \in U(\lambda)} \widetilde{\varphi}_u(gvg^{-1}) = \sum_{g \in U(\lambda)}\widetilde{\varphi}_u(v)\varphi_u([g,v]) = \widetilde{\varphi}_u(v)\sum_{x \in \lie{u}(\lambda)^F}\zeta_v(x).
\end{equation*}
where $\zeta_v$ is the linear character $x \mapsto \chi_q(\kappa(e^{\dag},c[x,\phi_{\spr}(v)]))$ of the abelian group $\lie{u}(\lambda)^F$; here $c$ is a constant as in (K3). Now the character sum is 0 unless $\zeta_v$ is identically 1, in which case it is simply $q^{\dim \lie{u}(\lambda)}$. However, we have $\zeta_v = 1$ if and only if $v \in U(\lambda,2)$ which shows that
\begin{equation*}
\Ind_{U(\lambda,1.5)}^{U(\lambda)}(\widetilde{\varphi}_u)(v) = \begin{cases}
q^{\dim \lie{g}(\lambda,1)/2}\varphi_u(v) &\text{if }v \in U(\lambda,2),\\
0 &\text{if }v \not\in U(\lambda,2).
\end{cases}
\end{equation*}
Applying the exact same argument as above to $\Ind_{U(\lambda,2)}^{U(\lambda)}(\varphi_u)(v)$ we see that
\begin{equation}
\Ind_{U(\lambda,1.5)}^{U(\lambda)}(\widetilde{\varphi}_u) = q^{-\dim \lie{g}(\lambda,1)/2}\Ind_{U(\lambda,2)}^{U(\lambda)}(\varphi_u),
\end{equation}
from which the result follows immediately.
\end{proof}

\begin{rem}
In \cite{kawanaka:1986:GGGRs-exceptional} GGGRs are defined with no assumption on the algebraic group $\bG$. However, it seems to be necessary to have some assumption on $\bG$ to define GGGRs, see for instance the proof of \cref{lem:non-degen}. Note that assumptions similar to $\bG$ being proximate were made in \cite[1.1.1]{kawanaka:1985:GGGRs-and-ennola-duality} but not in \cite{kawanaka:1986:GGGRs-exceptional}. In any case, we have by \cref{prop:pretty-good-exists} that our assumption on $\bG$ is not restrictive when defining GGGRs.
\end{rem}

\section{Fourier Transforms on the Lie Algebra}
\begin{definition}[{}{Letellier, \cite[5.0.14]{letellier:2005:fourier-transforms}}]\label{def:acceptable-prime}
We say the prime $p$ is \emph{acceptable} for $\bG$ if:
\begin{enumerate}
	\item $p$ is a pretty good prime for $\bG$, c.f.\ \cref{def:pretty-good-prime},
	\item $p$ is a very good prime for any Levi subgroup $\bL$ of $\bG$ supporting a cuspidal pair $(S,\mathscr{E})$, in the sense of \cite[2.4]{lusztig:1984:intersection-cohomology-complexes}, such that $S$ contains a unipotent conjugacy class of $\bL$,
	\item there exists a non-degenerate $\bG$-invariant bilinear form on $\lie{g}$.
\end{enumerate}
\end{definition}

\begin{pa}
This definition of an acceptable prime is slightly different than that given in \cite[5.0.14]{letellier:2005:fourier-transforms}. However we note that if $p$ is acceptable in the sense of \cref{def:acceptable-prime} then it is acceptable in the sense of \cite[5.0.14]{letellier:2005:fourier-transforms}. Indeed, we need only observe that if $p$ is an acceptable prime in the sense of \cref{def:acceptable-prime} then: $p$ is a good prime for $\bG$, see \cite[Lemma 2.12(b)]{herpel:2013:on-the-smoothness}, the quotient $X(\bT_0)/\mathbb{Z}\Phi$ has no $p$-torsion and there exists a Springer isomorphism, see \cref{lem:springer-morphism}. To show that these definitions are actually equivalent it would be sufficient to prove the converse to \cref{lem:springer-morphism}, c.f., \cref{rem:conv-spring-iso}. The following observation concerning acceptable primes will also be needed, see \cite[5.0.16]{letellier:2005:fourier-transforms}.
\end{pa}

\begin{lem}\label{lem:p-acceptable}
The following hold:
\begin{enumerate}
	\item $p$ is very good for $\bG$ $\Rightarrow$ $p$ is acceptable for $\bG$,
	\item if $\bG = \GL_n(\mathbb{K})$ then all primes are acceptable for $\bG$.
\end{enumerate}
\end{lem}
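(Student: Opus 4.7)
My plan is to verify the three conditions of \cref{def:acceptable-prime} directly in each case. The substantive input is Lusztig's classification of cuspidal pairs with unipotent support, which severely restricts the Levi subgroups relevant to condition (ii) of the definition.

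For (i), suppose $p$ is very good for $\bG$. The pretty-good condition of \cref{def:acceptable-prime}(i) holds by \cite[Lemma 2.12(a)]{herpel:2013:on-the-smoothness}, which establishes that very good primes are pretty good. The existence of a non-degenerate $\bG$-invariant bilinear form on $\lie{g}$ under the very-good assumption is the classical result in \cite[I, 5.3]{springer-steinberg:1970:conjugacy-classes}, handling condition (iii). For condition (ii), my key observation is that by Lusztig's classification of cuspidal local systems \cite{lusztig:1984:intersection-cohomology-complexes} no simple group of type $A$ supports a cuspidal pair with unipotent support except on a maximal torus. Hence for any Levi $\bL \leqslant \bG$ satisfying the hypothesis of condition (ii), the derived group $\bL_{\der}$ has no type-$A$ simple factor. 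Consequently the very-good condition on $\bL$ reduces to the good condition on $\bL$, which is inherited from $\bG$ by restriction of the root datum.

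For (ii), take $\bG = \GL_n(\mathbb{K})$ and an arbitrary prime $p$. Condition (i) is a direct combinatorial check: for any subset $\Psi$ of the roots $\{e_i - e_j\}$ of $\GL_n$, I will describe $\mathbb{Z}\Psi$ explicitly in terms of the equivalence classes on $\{1, \ldots, n\}$ generated by $\Psi$, which exhibits it as a saturated sublattice of $X(\bT_0) = \mathbb{Z}^n$; the same argument on the coroot side handles $Y/\mathbb{Z}\widecheck{\Psi}$. Condition (ii) is then vacuous by the argument above applied to the Levis $\GL_{n_1} \times \cdots \times \GL_{n_k}$ of $\GL_n$: no such Levi (for any $n_i \geqslant 2$) supports a cuspidal pair with unipotent support, so only tori arise, and very-goodness of a torus is trivially satisfied. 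Condition (iii) is witnessed by the trace form $(X, Y) \mapsto \mathrm{Tr}(XY)$ on $\lie{gl}_n$, which is non-degenerate in all characteristics.

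The main obstacle is extracting the precise classification input from Lusztig's theory: that in type $A$ no Levi other than a torus supports a cuspidal pair with unipotent support. Once this is granted, both parts of the lemma reduce to routine bookkeeping with root data and well-known properties of the trace form.
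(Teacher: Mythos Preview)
The paper does not give its own proof of this lemma; it simply records the statement with a pointer to Letellier \cite[5.0.16]{letellier:2005:fourier-transforms}. So your attempt is more than what the paper offers, and the comparison is really between your argument and Letellier's.

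Your treatment of part (ii) of the lemma is correct: for $\GL_n$ the pretty-good condition is a direct saturation check on sublattices of $\mathbb{Z}^n$, the trace form is non-degenerate in every characteristic, and since unipotent centralisers in $\GL_m$ are connected no Levi $\prod_i \GL_{n_i}$ with some $n_i \geqslant 2$ supports a cuspidal pair on a unipotent class, so condition (ii) of \cref{def:acceptable-prime} is vacuous.

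The gap is in your argument for part (i), specifically for condition (ii) of \cref{def:acceptable-prime}. Your key claim --- that ``no simple group of type $A$ supports a cuspidal pair with unipotent support except on a maximal torus'' and hence that any Levi $\bL$ satisfying the hypothesis has $\bL_{\der}$ free of type-$A$ factors --- is false. What is true is that $\GL_m$ supports no such cuspidal pair; but a simple factor of type $A_{m-1}$ inside a Levi of a general $\bG$ need not be of $\GL$-type. For instance $\SL_m$ carries a cuspidal local system on its regular unipotent class for each primitive character of its centre $\mu_m$, and such factors genuinely occur: in simply connected $E_6$ the Levi of type $A_2 \times A_2$ supports a cuspidal pair, and in simply connected $E_7$ a Levi of type $(3A_1)''$ does. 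These Levis have $\bL_{\der}$ with type-$A$ factors, contradicting your assertion.

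The correct argument still goes through Lusztig's classification, but one must actually inspect the list of cuspidal-supporting Levis for each simple type and observe that whenever a type-$A_{m-1}$ factor appears, the integer $m$ is a product of primes that are already bad for $\bG$ (for example $m=3$ in the $E_6$ case, $m=2$ in the $E_7$ case). Thus $p$ very good for $\bG$ forces $p \nmid m$ and hence $p$ very good for $\bL$. This is the case-by-case verification that Letellier carries out; your shortcut of ruling out type-$A$ factors altogether does not work.
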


\begin{assumption}
From now on we assume that $p$ is an acceptable prime for $\bG$ and consequently that the bilinear form $\kappa$ is non-degenerate. In particular, $\bG$ is a proximate algebraic group, c.f., \cref{lem:p-tor-springer-good}, and the centraliser $C_{\bG}(x)$ is separable for all $x \in \lie{g}$, c.f., \cref{def:pretty-good-prime}.
\end{assumption}

\begin{pa}
Let us denote by $\boldsymbol\Gamma_u : \lie{g}^F \to \Ql$ the $\Ad G$-invariant function obtained as the extension of $\Gamma_u|_{\mathcal{U}^F}\circ \phi_{\spr}^{-1}$ by 0 on $\lie{g}^F\setminus \mathcal{N}^F$. As $\Gamma_u$ is zero outside $\mathcal{U}^F$ we see that $\boldsymbol\Gamma_u$ contains the same information as that of $\Gamma_u$. The upshot of working with $\boldsymbol\Gamma_u$ is that we have the Fourier transform at our disposal, which is defined as follows. For any function $f : \lie{g}^F \to \Ql$ we define the Fourier transform of $f$ to be the function $\mathcal{F}(f) : \lie{g}^F \to \Ql$ given by
\begin{equation*}
\mathcal{F}(f)(y) = \sum_{x \in \lie{g}^F} \chi_q(\kappa(y,x))f(x),
\end{equation*}
where $\chi_q : \mathbb{F}_q^+ \to \Ql$ is as in \cref{pa:U-lambda-2-hom}. We recall the following property of the Fourier transform, see \cite[3.1.9, 3.1.10]{letellier:2005:fourier-transforms}.
\end{pa}

\begin{lem}\label{lem:fourier-inverse}
The Fourier transform admits an inverse $\mathcal{F}^-$ given by $q^{-\dim\lie{g}}(\mathcal{F}\circ\Psi)$ where $\Psi(f)(x) = \chi(-x)$ for all $x \in \lie{g}^F$ and $f : \lie{g}^F \to \Ql$. In other words, we have $(\mathcal{F}^-\circ\mathcal{F})(f) = (\mathcal{F}\circ\mathcal{F}^-)(f) = f$ for all functions $f : \lie{g}^F \to \Ql$.
\end{lem}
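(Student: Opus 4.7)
The plan is to reduce the claim to the standard orthogonality of characters on the finite abelian group $\lie{g}^F$, once we know that the pairing $(y,x) \mapsto \chi_q(\kappa(y,x))$ is a perfect pairing on $\lie{g}^F$.

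First I would establish the following orthogonality relation:
\begin{equation*}
\sum_{y \in \lie{g}^F} \chi_q(\kappa(y,z)) = \begin{cases} q^{\dim\lie{g}} & \text{if } z = 0, \\ 0 & \text{if } z \neq 0. \end{cases}
\end{equation*}
For this, note that the map $\Psi_{\kappa} : \lie{g}^F \to \Hom(\lie{g}^F,\Ql^{\times})$ sending $y$ to the character $x \mapsto \chi_q(\kappa(y,x))$ is a group homomorphism between two finite abelian groups of the same order. Since $p$ is acceptable, $\kappa$ is non-degenerate over $\mathbb{K}$, and since $\kappa$ is defined over $\mathbb{F}_q$ its restriction to the $\mathbb{F}_q$-vector space $\lie{g}^F$ is a non-degenerate $\mathbb{F}_q$-bilinear form. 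Combined with the non-degeneracy of the trace pairing $\Tr_{\mathbb{F}_q/\mathbb{F}_p}$ and the fact that $\chi_p$ is a non-trivial character of $\mathbb{F}_p^+$, this forces $\Psi_{\kappa}$ to be injective, hence an isomorphism. In particular, for any fixed $z \neq 0$ the character $y \mapsto \chi_q(\kappa(y,z))$ is non-trivial, so the sum above vanishes by the standard orthogonality for finite abelian groups; if $z = 0$ the sum is trivially $q^{\dim\lie{g}}$.

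With this in hand, the inversion formula is a direct computation. For any $f : \lie{g}^F \to \Ql$ and any $z \in \lie{g}^F$, I would compute
\begin{align*}
(\mathcal{F}\circ\mathcal{F}^{-})(f)(z) &= q^{-\dim\lie{g}}\sum_{y \in \lie{g}^F}\chi_q(\kappa(z,y))\sum_{x \in \lie{g}^F}\chi_q(-\kappa(y,x))f(x) \\
&= q^{-\dim\lie{g}}\sum_{x \in \lie{g}^F}f(x)\sum_{y \in \lie{g}^F}\chi_q(\kappa(y,z-x)),
\end{align*}
using the symmetry of $\kappa$ to swap the order in the pairing. By the orthogonality relation above, the inner sum equals $q^{\dim\lie{g}}$ exactly when $z = x$ and vanishes otherwise, so the whole expression collapses to $f(z)$. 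The identity $(\mathcal{F}^{-}\circ\mathcal{F})(f) = f$ is proved by the same manipulation with the roles of $\mathcal{F}$ and $\mathcal{F}^{-}$ interchanged (the extra sign from $\Psi$ is absorbed by replacing $y$ with $-y$ in the summation).

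There is no real obstacle here beyond the verification that the pairing induced by $\kappa$ on $\lie{g}^F$ is perfect; this is the only place where the assumptions on $p$ enter, via the non-degeneracy of $\kappa$ guaranteed by the hypothesis that $p$ is acceptable. Since this proof is essentially the classical Fourier inversion for finite abelian groups, I would keep the exposition brief and simply cite \cite{letellier:2005:fourier-transforms} for the details.
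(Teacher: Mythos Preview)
Your argument is correct and is precisely the standard Fourier inversion on a finite abelian group once the perfect pairing on $\lie{g}^F$ is established; your justification of that perfectness via the non-degeneracy of $\kappa$ (guaranteed by $p$ acceptable) together with the surjectivity of $\Tr_{\mathbb{F}_q/\mathbb{F}_p}$ and non-triviality of $\chi_p$ is exactly what is needed. In fact the paper does not give any proof of this lemma at all: it simply states the result and records the reference \cite[3.1.9, 3.1.10]{letellier:2005:fourier-transforms}, so your final suggestion to keep the exposition brief and cite Letellier is precisely what the paper does.
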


\begin{pa}
Following \cite[Proposition 2.5]{lusztig:1992:a-unipotent-support} we would like to obtain an expression for the value of the Fourier transform $\mathcal{F}(\boldsymbol\Gamma_u)$ at an element of $\lie{g}^F$. The argument used in \cite{lusztig:1992:a-unipotent-support} can be applied verbatim to our situation once we know that \cite[Lemma 2.2]{lusztig:1992:a-unipotent-support} holds. In fact we will prove a stronger statement than that of \cite[Lemma 2.2]{lusztig:1992:a-unipotent-support}. The proof of this stronger statement is due to Gan--Ginzburg who considered the corresponding statement over $\mathbb{C}$, see \cite[Lemma 2.1]{gan-ginzburg:2002:quantization-of-slodowy-slices}. To obtain the desired result we will need the following (general) lemma.
\end{pa}

\begin{lem}\label{lem:iso-equivariant-map}
Assume, for $i \in \{1,2\}$, that $\bX_i$ is an irreducible affine variety with a contracting $\mathbb{G}_m$-action such that the fixed point $x_i \in \bX_i$ is non-singular, c.f., \cref{lem:contracting-action}. Let us denote by $T_i$ the tangent space of $\bX_i$ at the point $x_i$. If $\alpha : \bX_1 \to \bX_2$ is a $\mathbb{G}_m$-equivariant morphism such that the differential $\mathrm{d}_{x_1}\alpha : T_1 \to T_2$ is an isomorphism then $\alpha$ is an isomorphism.
\end{lem}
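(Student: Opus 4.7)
My plan is to translate the statement into commutative algebra by exploiting the $\mathbb{G}_m$-grading on the coordinate rings. The $\mathbb{G}_m$-action endows each $\mathbb{K}[\bX_i]$ with a grading $\bigoplus_{n \geqslant 0} \mathbb{K}[\bX_i]_n$ concentrated in non-negative degrees with $\mathbb{K}[\bX_i]_0 = \mathbb{K}$ (by \cref{lem:contracting-action} and the uniqueness of $x_i$), and the fixed point $x_i$ corresponds to the augmentation ideal $\mathfrak{m}_i = \bigoplus_{n > 0} \mathbb{K}[\bX_i]_n$. Equivariance of $\alpha$ forces the comorphism $\alpha^* : \mathbb{K}[\bX_2] \to \mathbb{K}[\bX_1]$ to be a graded $\mathbb{K}$-algebra homomorphism, and the hypothesis that $\mathrm{d}_{x_1}\alpha$ is an isomorphism translates (by dualising) into the induced map $\mathfrak{m}_2/\mathfrak{m}_2^2 \to \mathfrak{m}_1/\mathfrak{m}_1^2$ on cotangent spaces being an isomorphism of graded vector spaces.

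Next I would establish that $\alpha^*$ is surjective by a graded Nakayama-style induction on $n$. The case $n = 0$ is immediate since $\mathbb{K}[\bX_i]_0 = \mathbb{K}$. For the inductive step, given $f \in \mathbb{K}[\bX_1]_n$ with $n > 0$, the cotangent-space isomorphism in degree $n$ produces $g \in \mathbb{K}[\bX_2]_n$ with
\begin{equation*}
f - \alpha^*(g) \in (\mathfrak{m}_1^2)_n = \sum_{\substack{a+b=n \\ a,b > 0}} \mathbb{K}[\bX_1]_a \cdot \mathbb{K}[\bX_1]_b.
\end{equation*}
Each factor on the right lives in strictly lower degree, so by the inductive hypothesis lies in the image of $\alpha^*$; hence so does $f - \alpha^*(g)$, and therefore $f$. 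This step, which is really the crux of the argument, depends essentially on the contracting property to guarantee that the summands $\mathbb{K}[\bX_1]_a, \mathbb{K}[\bX_1]_b$ in $(\mathfrak{m}_1^2)_n$ have positive degree strictly less than $n$.

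Once surjectivity is in hand, $\alpha$ is a closed immersion, and it remains to show that $\ker(\alpha^*) = 0$. For this I would invoke the non-singularity hypothesis: non-singularity of $x_i$ gives $\dim \bX_i = \dim T_i$, so
\begin{equation*}
\dim \bX_1 = \dim T_1 = \dim T_2 = \dim \bX_2,
\end{equation*}
the middle equality coming from $\mathrm{d}_{x_1}\alpha$ being an isomorphism. The image of $\alpha$ is therefore a closed irreducible subvariety of $\bX_2$ of full dimension, hence all of $\bX_2$ by irreducibility. As $\mathbb{K}[\bX_2]$ is a domain, the kernel of the surjection $\alpha^*$ must be contained in the nilradical, which is zero. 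Thus $\alpha^*$ is an isomorphism of rings and $\alpha$ is an isomorphism of varieties. I expect the delicate part of the argument to be the graded induction for surjectivity; the closing dimension-and-irreducibility step is comparatively routine.
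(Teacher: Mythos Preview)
Your proof is correct, but it takes a genuinely different route from the paper's. The paper argues in the opposite order: it first shows $\alpha^*$ is \emph{injective} (a one-liner: $\mathrm{d}_{x_1}\alpha$ surjective at non-singular points forces $\alpha$ to be dominant, hence $\alpha^*$ injective since $\mathbb{K}[\bX_1]$ is a domain), and then proves surjectivity by a Hilbert-series argument. Namely, non-singularity of $x_i$ identifies the tangent space with the tangent cone, giving a $\mathbb{G}_m$-equivariant isomorphism $\mathbb{K}[T_i] \cong \gr\mathbb{K}[\bX_i]$; since $(\gr\mathbb{K}[\bX_i])_n = \gr(\mathbb{K}[\bX_i]_n)$ and each weight space is finite-dimensional, one reads off $\dim\mathbb{K}[\bX_i]_n = \dim\mathbb{K}[T_i]_n$, and the tangent-space isomorphism then forces $\dim\mathbb{K}[\bX_1]_n = \dim\mathbb{K}[\bX_2]_n$, so the injective graded map $\alpha^*$ must be surjective.

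Your graded-Nakayama induction for surjectivity is more elementary---it avoids the tangent-cone machinery entirely and, notably, does not use non-singularity at that stage (only the contracting grading and the cotangent isomorphism). The paper's approach, by contrast, uses non-singularity twice (for dominance and for the tangent-cone identification) but packages everything into a clean dimension count. Both are perfectly valid; yours is arguably more self-contained, while the paper's has the conceptual appeal of matching graded dimensions symmetrically.
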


\begin{proof}
As $\alpha$ induces an isomorphism between the tangent spaces of non-singular points we must have the comorphism $\alpha^* : \mathbb{K}[\bX_2] \to \mathbb{K}[\bX_1]$ is injective, c.f., \cite[1.9.1(ii), 4.3.6(i)]{springer:2009:linear-algebraic-groups}. Hence, we're done if we can show that $\alpha^*$ is surjective. Let $\mathfrak{m}_{x_i} \subset \mathbb{K}[\bX_i]$ be the (maximal) vanishing ideal of $x_i$ which is invariant under the induced action of $\mathbb{G}_m$ on $\mathbb{K}[\bX_i]$ defined in \cref{pa:action-on-aff-alg}. We denote by $\gr\mathbb{K}[\bX_i]$ the associated graded ring with respect to $\mathfrak{m}_{x_i}$, i.e.,
\begin{equation*}
\gr\mathbb{K}[\bX_i] = \bigoplus_{j \geqslant 0} \mathfrak{m}_{x_i}^j/\mathfrak{m}_{x_i}^{j+1}
\end{equation*}
where $\mathfrak{m}_{x_i}^0 = \mathbb{K}[\bX_i]$. This clearly also inherits an action of $\mathbb{G}_m$ as does the tangent space $T_i$ and its affine algebra. We thus obtain decompositions of these algebras into weight spaces
\begin{equation*}
\mathbb{K}[T_i] = \bigoplus_{n\geqslant 0} \mathbb{K}[T_i]_n \qquad\qquad \mathbb{K}[\bX_i] = \bigoplus_{n\geqslant 0} \mathbb{K}[\bX_i]_n \qquad\qquad \gr\mathbb{K}[\bX_i] = \bigoplus_{n\geqslant 0} (\gr\mathbb{K}[\bX_i])_n.
\end{equation*}
Let us assume that the following equality holds
\begin{equation}\label{eq:dim-weight-spaces}
\dim \mathbb{K}[\bX_1]_n = \dim \mathbb{K}[T_1]_n = \dim \mathbb{K}[T_2]_n = \dim \mathbb{K}[\bX_2]_n
\end{equation}
for all $n \in \mathbb{Z}$; these are finite dimensional vector spaces by \cref{lem:contracting-action}. Then, as $\alpha^*$ is injective and $\mathbb{G}_m$-equivariant we must have $\alpha^*$ is surjective. Thus we need only prove \cref{eq:dim-weight-spaces}.

By assumption, the point $x_i$ is non-singular so the tangent space $T_i$ coincides with the tangent cone of $\bX_i$ at $x_i$. This implies that $\mathbb{K}[T_i]$ is isomorphic to the associated graded algebra $\gr\mathbb{K}[\bX_i]$ and this isomorphism is in fact $\mathbb{G}_m$-equivariant, see \cite[III, \S3, \S4]{mumford:1999:red-book-of-varieties}. In particular, we have an equality $\dim \mathbb{K}[T_i]_n = \dim (\gr\mathbb{K}[\bX_i])_n$ for all $n \in \mathbb{Z}$. Now as the maximal ideal $\lie{m}_{x_i}$ is $\mathbb{G}_m$-invariant we have $\lie{m}_{x_i} = \bigoplus_{n \in \mathbb{Z}}\lie{m}_{x_i,n}$ with $\lie{m}_{x_i,n} \subseteq \mathbb{K}[\bX_i]_n$. Moreover we clearly have
\begin{equation*}
(\gr\mathbb{K}[\bX_i])_n = \bigoplus_{j \geqslant 0} \mathfrak{m}_{x_i,n}^j/\mathfrak{m}_{x_i,n}^{j+1} = \gr(\mathbb{K}[\bX_i]_n).
\end{equation*}
As $\mathbb{K}[\bX_i]_n$ is a finite dimensional vector space we have $\dim \gr(\mathbb{K}[\bX_i]_n) = \dim \mathbb{K}[\bX_i]_n$. Putting things together we see that \cref{eq:dim-weight-spaces} holds because $\dim \mathbb{K}[T_1]_n = \dim \mathbb{K}[T_2]_n$ for all $n \in \mathbb{Z}$ by assumption.
\end{proof}

\begin{prop}\label{prop:action-map-iso}
Let $\Sigma = -e^{\dag} + \lie{s}$ where $\lie{s} \subseteq \lie{p}(\lambda)$ is as in \cref{cor:existence-transversal-slice} with respect to $-e^{\dag}$. Then the action map $\alpha : \bU(\lambda,2) \times \Sigma \to -e^{\dag} + \lie{p}(\lambda)$ given by $\alpha(u,x) = \Ad u(x)$ is an isomorphism.
\end{prop}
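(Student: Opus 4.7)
The plan is to invoke \cref{lem:iso-equivariant-map} in the spirit of the Gan--Ginzburg argument \cite[Lemma 2.1]{gan-ginzburg:2002:quantization-of-slodowy-slices}: equip both sides with contracting $\mathbb{G}_m$-actions with respect to which $\alpha$ is equivariant, and then check that the differential of $\alpha$ at the unique fixed point is an isomorphism.

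For the action on the target I would use the linear map $\rho(k)(y) = k^2\Ad\lambda(k)(y)$ on $\lie{g}$. Since $-e^{\dag} \in \lie{g}(\lambda,-2)$ this fixes $-e^{\dag}$, and since $\lie{p}(\lambda) = \bigoplus_{i\geqslant 0}\lie{g}(\lambda,i)$ the restriction of $\rho$ to $\lie{p}(\lambda)$ has strictly positive weights, so $\rho$ contracts $-e^{\dag}+\lie{p}(\lambda)$ to $-e^{\dag}$. Because $\lie{s}\subseteq\lie{p}(\lambda)$ is $\lambda$-stable by \cref{cor:existence-transversal-slice}, the same formula preserves $\Sigma$ and contracts it to $-e^{\dag}$. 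On $\bU(\lambda,2)$ I would use the conjugation action $k\cdot u = \lambda(k)u\lambda(k)^{-1}$, which contracts $\bU(\lambda,2)$ to $\{1\}$ as $\Lie(\bU(\lambda,2))$ lives in $\lambda$-weights $\geqslant 2$. Equivariance of $\alpha$ with respect to the diagonal action then follows from the identity $\Ad(\lambda(k)u\lambda(k)^{-1}) = \Ad\lambda(k)\Ad u\Ad\lambda(k)^{-1}$ together with the factor $k^2$.

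The key technical step is to show that the differential $\mathrm{d}_{(1,-e^{\dag})}\alpha$ is an isomorphism. A dual-numbers calculation as at the end of the proof of \cref{cor:existence-transversal-slice} expresses this differential as $(y,v) \mapsto v + [y,-e^{\dag}]$ from $\lie{u}(\lambda,2)\oplus\lie{s}$ to $\lie{p}(\lambda)$. For injectivity I would argue as follows. Since $p$ is acceptable, $C_{\bG}(-e^{\dag})$ is separable by \cref{prop:smooth-centraliser}, so $T_{-e^{\dag}}(\Ad\bG(-e^{\dag})) = [\lie{g},-e^{\dag}]$ by \cref{lem:separable-centraliser} and \cref{cor:existence-transversal-slice} gives the direct sum $\lie{g} = \lie{s}\oplus[\lie{g},-e^{\dag}]$. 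If $v+[y,-e^{\dag}] = 0$ then this decomposition forces $v = 0$ and $y\in\lie{c_g}(-e^{\dag}) = \lie{c_g}(e)^{\dag}$; combining $\lie{c_g}(e)\subseteq\lie{p}(\lambda)$ from \cref{thm:class-nilpotent-orbits}(ii)(a) with the fact that ${}^{\dag}$ flips the $\lambda$-grading (so $\lie{c_g}(e)^{\dag}\subseteq\bigoplus_{i\leqslant 0}\lie{g}(\lambda,i)$), and intersecting with $y\in\lie{u}(\lambda,2)=\bigoplus_{i\geqslant 2}\lie{g}(\lambda,i)$, yields $y=0$. Surjectivity will then follow from a dimension count: separability gives $\dim\lie{s} = \dim\lie{c_g}(-e^{\dag}) = \dim\lie{c_g}(e)$, and \cref{thm:class-nilpotent-orbits}(ii)(a),(b) together give $\dim\lie{c_g}(e) = \dim\lie{p}(\lambda) - \dim\lie{u}(\lambda,2)$.

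As $\bU(\lambda,2)\times\Sigma$ and $-e^{\dag}+\lie{p}(\lambda)$ are both irreducible affine varieties (products and translates of affine spaces) with non-singular contracting fixed points, \cref{lem:iso-equivariant-map} will then conclude that $\alpha$ is an isomorphism. The main obstacle I anticipate is the injectivity of the differential: one has to carry the ${}^{\dag}$-symmetry between $e$ and $-e^{\dag}$, which live in opposite parabolics relative to $\lambda$, consistently through the argument, and to ensure from the outset that $\lie{s}$ has been chosen simultaneously $\lambda$-invariant and contained in $\lie{p}(\lambda)$. Once this setup is in place, the remaining steps are routine weight-space bookkeeping.
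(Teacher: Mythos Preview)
Your proposal is correct and follows essentially the same approach as the paper: both set up the contracting $\mathbb{G}_m$-action $\rho(k)=k^2\Ad\lambda(k)$, verify equivariance of $\alpha$, compute the differential at $(1,-e^{\dag})$ as $(y,v)\mapsto v+[y,-e^{\dag}]$, and then invoke \cref{lem:iso-equivariant-map}. The only cosmetic difference is that the paper establishes surjectivity of the differential first (via the direct sum $\lie{p}(\lambda)=\lie{s}\oplus[\lie{u}(\lambda,2),-e^{\dag}]$) and deduces injectivity by the same dimension count, whereas you argue injectivity directly using the ${}^{\dag}$-symmetry to see $\lie{c}_{\lie{u}(\lambda,2)}(-e^{\dag})=\{0\}$ and then count dimensions for surjectivity; these are the same ingredients in a different order.
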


\begin{proof}
We define a linear $\mathbb{G}_m$-action $\beta : \mathbb{G}_m \times \lie{g} \to \lie{g}$ by setting $\beta(k,x) = \rho(k)x$ where $\rho : \mathbb{G}_m \to \GL(\lie{g})$ is defined by $\rho(k)(x) = k^2(\Ad \lambda(k))(x)$. Note that the $\mathbb{G}_m$-action preserves both $\lie{p}(\lambda)$ and $\lie{s}$, c.f., \cref{cor:existence-transversal-slice}, and as $-e^{\dag} \in \lie{g}(\lambda,-2)$ we have $\rho(k)(-e^{\dag}+x) = -e^{\dag} + \rho(k)(x)$ for all $x \in \lie{p}(\lambda)$. In particular, we have $\beta$ restricts to a $\mathbb{G}_m$-action on $-e^{\dag} + \lie{p}(\lambda)$ and $\Sigma$. By \cref{rem:contracting-lin-action} these actions are contractions to $-e^{\dag}$. We also define a $\mathbb{G}_m$-action on $\bU(\lambda,2) \times \Sigma$ by setting
\begin{equation*}
k\cdot (u,x) = (\lambda(k)u\lambda(k^{-1}),\rho(k)(x))
\end{equation*}
for all $k \in \mathbb{G}_m$. It is clear that $\lim_{k \to 0} k\cdot (u,x) = (1,-e^{\dag})$ for all $(u,x) \in \bU(\lambda,2) \times \Sigma$, c.f., \cref{pa:canonical-parabolic-levi}, so this is also a contraction to $(1,-e^{\dag})$ and the action map $\alpha$ is $\mathbb{G}_m$-equivariant with respect to these actions.

Consider the tangent space $T \cong \lie{u}(\lambda,2) \times \lie{s}$ of $\bU(\lambda,2)\times\Sigma$ at the point $(1,-e^{\dag})$ and the tangent space $S \cong \lie{p}(\lambda)$ of $-e^{\dag} + \lie{p}(\lambda)$ at $-e^{\dag} = \alpha(1,-e^{\dag})$. We claim that the differential $\mathrm{d}_{(1,-e^{\dag})}\alpha : T \to S$ is an isomorphism. First, recall from the proof of \cref{cor:existence-transversal-slice} that the differential is given by $\mathrm{d}_{(1,-e^{\dag})}\alpha(u,x) = x+[u,-e^{\dag}]$ for all $(u,x) \in T$. Therefore to prove the map is surjective it suffices to show that
\begin{equation*}
\lie{p}(\lambda) = \lie{s}\oplus[\lie{u}(\lambda,2),-e^{\dag}].
\end{equation*}
Now the map $\ad_{-e^{\dag}} : \lie{u}(\lambda,2) \to \lie{p}(\lambda)$ is injective because $\lie{c}_{\lie{u}(\lambda,2)}(-e^{\dag}) = \{0\}$ and $[\lie{u}(\lambda,2),-e^{\dag}] \cap \lie{s} = \{0\}$ by \cref{cor:existence-transversal-slice,lem:separable-centraliser}. Hence we have $\lie{s}\oplus[\lie{u}(\lambda,2),-e^{\dag}] \subseteq \lie{p}(\lambda)$. However, as $\dim\lie{s} = \dim\lie{c_g}(-e^{\dag}) = \dim\lie{c_g}(e)$ we have
\begin{equation*}
\dim(\lie{s} \oplus [\lie{u}(\lambda,2),-e^{\dag}]) = \dim\lie{u}(\lambda,2) + \dim\lie{c_g}(e) = \dim\lie{p}(\lambda)
\end{equation*}
so we must have an equality. The same dimension counting argument also proves the map $\mathrm{d}_{(1,-e^{\dag})}\alpha$ is injective. The result now follows from \cref{lem:iso-equivariant-map}.
\end{proof}

\begin{assumption}
From now on the subspace $\lie{s} \subseteq \lie{g}$ considered in \cref{prop:action-map-iso} will be fixed and $\Sigma$ will denote the transverse slice $-e^{\dag} + \lie{s}$.
\end{assumption}

\begin{pa}
With this in hand we now obtain \cite[Proposition 2.5]{lusztig:1992:a-unipotent-support} in exactly the same way. We give the argument here for smoothness of the exposition. From the definitions and \cref{lem:ind-from-u2} we have for any $y \in \lie{g}^F$ that
\begin{align*}
\mathcal{F}({\boldsymbol\Gamma}_u)(y) &= q^{\dim\lie{g}(\lambda,1)/2 - \dim\lie{u}(\lambda)} \sum_{\substack{x \in \lie{g}^F, g \in G\\ \Ad g(x) \in \lie{u}(\lambda)^F}} \chi_q(\kappa(y,x) + \kappa(e^{\dag},\Ad g(x)))\\
&= q^{\dim\lie{g}(\lambda,1)/2 - \dim\lie{u}(\lambda)} \sum_{\substack{g \in G\\ x \in \lie{u}(\lambda)^F}}\zeta_{y,g}(x),
\end{align*}
where $\zeta_{y,g}$ is the linear character $x \mapsto \chi_q(\kappa(e^{\dag} + \Ad g(y),x))$ of $\lie{u}(\lambda)^F$ as an abelian group. As $\zeta_{y,g}$ is an irreducible character of a finite abelian group we have the sum of its values is either 0 or $q^{\dim \lie{u}(\lambda)}$ with the latter happening if and only if $\zeta_{y,g}$ is the trivial character. By \cref{pa:dual-space} and the non-degeneracy of $\kappa$ we have the character $\zeta_{y,g}$ is trivial precisely when $e^{\dag} + \Ad g(y) \in \lie{p}(\lambda)^F$, c.f., \cref{lem:perp-of-root-space}, so we have
\begin{equation*}
\mathcal{F}({\boldsymbol\Gamma}_u)(y) = q^{\dim\lie{g}(\lambda,1)/2}|\{(g,x) \in G \times \lie{p}(\lambda)^F \mid \Ad g(y) = -e^{\dag} + x\}|.
\end{equation*}

Applying \cref{prop:action-map-iso} we see that the element $-e^{\dag} + x$ can be written uniquely as $\Ad h(-e^{\dag} + z)$ for some $h \in U(\lambda,2)$ and $z \in \lie{s}^F$, which are necessarily fixed by $F$ through the uniqueness. In particular, we may rewrite the above as
\begin{equation*}
\mathcal{F}({\boldsymbol\Gamma}_u)(y) = q^{\dim\lie{g}(\lambda,1)/2}|\{(g,h,z) \in G \times U(\lambda,2) \times \lie{s}^F \mid \Ad h^{-1}g(y) = -e^{\dag} + z\}|.
\end{equation*}
Thus, changing the variable $h^{-1}g \to g$ we obtain the following.
\end{pa}

\begin{prop}[{}{Lusztig, \cite[Proposition 2.5]{lusztig:1992:a-unipotent-support}}]\label{prop:fourier-GGGR}
For any $y \in \lie{g}^F$ we have
\begin{equation*}
\mathcal{F}({\boldsymbol\Gamma}_u)(y) = q^{r_u}|\{g \in G \mid \Ad g(y) \in \Sigma\}|
\end{equation*}
where
\begin{equation*}
r_u = \dim \lie{g}(\lambda,1)/2 + \dim\lie{u}(\lambda,2) = (\dim\bG - \dim C_{\bG}(u))/2.
\end{equation*}
\end{prop}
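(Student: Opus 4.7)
The plan is to follow the strategy already sketched in the paragraphs preceding the statement, namely unfolding the Fourier transform definition, replacing $\boldsymbol\Gamma_u$ by the induced character formula, and then using \cref{prop:action-map-iso} to parametrize the resulting fibre. Concretely, I would begin from
\begin{equation*}
\mathcal{F}(\boldsymbol\Gamma_u)(y) = \sum_{x \in \lie{g}^F} \chi_q(\kappa(y,x))\boldsymbol\Gamma_u(x),
\end{equation*}
write $\boldsymbol\Gamma_u(x) = \Gamma_u(\phi_{\spr}^{-1}(x))$ when $x \in \mathcal{N}^F$ and $0$ otherwise, and expand $\Gamma_u$ using \cref{lem:ind-from-u2} as $q^{-\dim\lie{g}(\lambda,1)/2}\Ind_{U(\lambda,2)}^G(\varphi_u)$. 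After absorbing the Springer isomorphism this yields a double sum indexed by $g \in G$ and $x \in \lie{u}(\lambda)^F$ of $\chi_q(\kappa(y,x) + \kappa(e^{\dag},\Ad g(x)))$.

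Next I would swap the order of summation and use $\bG$-invariance of $\kappa$ (replacing $x$ by $\Ad g^{-1}(x)$) to rewrite the inner sum as the sum of values of the linear character $\zeta_{y,g} : x \mapsto \chi_q(\kappa(e^{\dag} + \Ad g(y), x))$ of the finite abelian group $\lie{u}(\lambda)^F$. Such a character sum is either $0$ or $|\lie{u}(\lambda)^F| = q^{\dim\lie{u}(\lambda)}$, the latter occurring precisely when $\zeta_{y,g}$ is trivial. By the non-degeneracy of $\kappa$ together with \cref{lem:perp-of-root-space} (which identifies $\lie{u}(\lambda)^{\perp}$ with $\lie{p}(\lambda)$ via \cref{pa:dual-space}), this triviality is equivalent to $\Ad g(y) \in -e^{\dag} + \lie{p}(\lambda)^F$. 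Gathering the powers of $q$ this gives an intermediate formula
\begin{equation*}
\mathcal{F}(\boldsymbol\Gamma_u)(y) = q^{\dim\lie{g}(\lambda,1)/2}\,|\{g \in G \mid \Ad g(y) \in -e^{\dag} + \lie{p}(\lambda)^F\}|.
\end{equation*}

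The key step is now \cref{prop:action-map-iso}: the action map $\bU(\lambda,2)\times\Sigma \to -e^{\dag} + \lie{p}(\lambda)$ is an $F$-equivariant isomorphism, so every element of $-e^{\dag} + \lie{p}(\lambda)^F$ admits a unique factorisation as $\Ad h(-e^{\dag} + z)$ with $h \in U(\lambda,2)$ and $z \in \lie{s}^F$. Parametrising the set on the right in this way and performing the change of variable $h^{-1}g \mapsto g$ decouples $h$, giving a free factor of $|U(\lambda,2)| = q^{\dim\lie{u}(\lambda,2)}$ and the desired form
\begin{equation*}
\mathcal{F}(\boldsymbol\Gamma_u)(y) = q^{\dim\lie{g}(\lambda,1)/2 + \dim\lie{u}(\lambda,2)}\,|\{g \in G \mid \Ad g(y) \in \Sigma\}|.
\end{equation*}

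The only remaining point is the identification $r_u = (\dim\bG - \dim C_{\bG}(u))/2$. Since $p$ is acceptable the centraliser $C_{\bG}(u) \cong C_{\bG}(e)$ is separable, so $\dim C_{\bG}(e) = \dim\lie{c_g}(e)$, and \cref{thm:class-nilpotent-orbits} gives $\lie{c_g}(e) \subseteq \lie{p}(\lambda)$ together with $[e,\lie{p}(\lambda)] = \lie{u}(\lambda,2)$. Combining this with the isomorphism $\ad e : \lie{g}(\lambda,-1) \xrightarrow{\sim} \lie{g}(\lambda,1)$ used in \cref{cor:existence-transversal-slice} and the equality $\dim\lie{g}(\lambda,i) = \dim\lie{g}(\lambda,-i)$ from \cref{pa:dual-space}, a short dimension count yields $\dim\lie{c_g}(e) = \dim\bG - 2\dim\lie{u}(\lambda,2) - \dim\lie{g}(\lambda,1)$, which rearranges to the claimed formula. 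I expect the main technical obstacle to lie not in this book-keeping but in the earlier step, where the passage from $\lie{u}(\lambda)^{\perp}$ to $\lie{p}(\lambda)$ requires the specific form $\kappa$ furnished by \cref{lem:perp-of-root-space} together with the fact that $\kappa$ is genuinely non-degenerate, a property only available because $p$ is acceptable.
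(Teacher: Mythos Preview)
Your proposal is correct and follows the paper's argument essentially step for step: unfold the Fourier transform via \cref{lem:ind-from-u2}, evaluate the resulting character sum to obtain the intermediate count of $g$ with $\Ad g(y)\in -e^{\dag}+\lie{p}(\lambda)^F$, and then invoke \cref{prop:action-map-iso} together with the substitution $h^{-1}g\mapsto g$ to reach the stated formula. The only addition is your explicit dimension count for $r_u=(\dim\bG-\dim C_{\bG}(u))/2$, which the paper records without justification but which follows exactly as you indicate from \cref{thm:class-nilpotent-orbits} and separability.
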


\section{Poincar\'e Duality}
\subsection{Notation}
\begin{pa}
For any algebraic variety $\bX$ over $\mathbb{K}$ we will denote by $\mathscr{D}\bX := \mathscr{D}_c^b\bX$ the bounded derived category of $\Ql$-constructible sheaves on $\bX$ defined as in \cite{beilinson-bernstein-deligne:1982:faisceaux-pervers}. Assume now that the Frobenius endomorphism induces a morphism of varieties on $\bX$ then we say $A \in \mathscr{D}\bX$ is $F$-stable if there exists an isomorphism $\phi : F^*A \to A$ in $\mathscr{D}\bX$. With such a choice of isomorphism we will denote by $\chi_{A,\phi} : \bX^F \to \Ql$ the corresponding characteristic function defined by
\begin{equation*}
\chi_{A,\phi}(x) = \sum_{i \in \mathbb{Z}} (-1)^i\Tr(\phi,\mathscr{H}_x^i(A))
\end{equation*}
for any $x \in \bX^F$. We will denote by $\supp(A)$ the set $\{x \in \bX \mid \mathscr{H}_x^i(A) \neq 0$ for some $i \in \mathbb{Z}\}$ which we call the support of $A$.
\end{pa}

\begin{pa}
Assume now that $\bX$ is equidimensional, i.e., all the irreducible components of $\bX$ have the same dimension, then the full subcategory of $\mathscr{D}\bX$ consisting of the perverse sheaves on $\bX$ will be denoted by $\mathscr{M}\bX$. Moreover, let us assume that $\bX \subseteq \bY$ is a subvariety of $\bY$. We will naturally consider any complex $A \in \mathscr{D}\bX$ as a complex on $\bY$ through extension by 0 and we will sometimes do this without explicit mention. Finally assume $\bX$ is a smooth open dense subset of $\overline{\bX}$ and that $\mathscr{L}$ is a local system on $\bX$. We will denote by $\IC(\overline{\bX},\mathscr{L})[\dim\bX] \in \mathscr{M}\overline{\bX}$ the corresponding intersection cohomology complex determined by $\mathscr{L}$.
\end{pa}

\begin{pa}\label{pa:cuspidal-data}
For any connected reductive algebraic group $\bH$ we denote by $\mathcal{V}_{\bH} = \mathcal{V}_{\bH}^{\nil}$ the set of all pairs $\iota = (\mathcal{O}_{\iota},\mathscr{E}_{\iota})$ consisting of a nilpotent $\bH$-orbit $\mathcal{O}_{\iota} \subseteq \Lie(\bH)$ and an irreducible $\bL$-equivariant local system $\mathscr{E}_{\iota}$ on $\mathcal{O}_{\iota}$ taken up to isomorphism. We will write $\mathcal{V}_{\bH}^0 \subseteq \mathcal{V}_{\bH}$ for the subset consisting of all pairs $\iota$ such that $\mathscr{E}_{\iota}$ is a cuspidal local system in the sense of \cite[Definition 2.4]{lusztig:1984:intersection-cohomology-complexes}, see also \cite[5.1.59]{letellier:2005:fourier-transforms}. We call the elements of $\mathcal{V}_{\bH}^0$ cuspidal pairs.
\end{pa}

\subsection{Induction Diagram}
\begin{pa}\label{pa:induction-diagram}
Let $\bL \leqslant \bG$ be the Levi complement of a parabolic subgroup $\bP \leqslant \bG$ and let $\lie{l} = \Lie(\bL) \subseteq \lie{p} = \Lie(\bP)$ be the corresponding Lie algebras. If $\lie{z(l)}$ is the centre of the Lie algebra $\lie{l}$ then we define
\begin{equation*}
\lie{z(l)}_{\reg} = \{t \in \lie{z(l)} \mid \lie{c_g}(t) = \lie{l}\},
\end{equation*}
which is an open subset of $\lie{z(l)}$. Note that this is non-empty because of our standing assumption that $p$ is an acceptable prime, c.f., \cite[2.6.13(i)]{letellier:2005:fourier-transforms}. We will assume that $\iota_0 = (\mathcal{O}_0,\mathscr{E}_0) \in \mathcal{V}_{\bL}^0$ is a cuspidal pair. Setting $\Pi = \mathcal{O}_0 + \lie{z(l)}$ and $\Pi_{\reg} = \mathcal{O}_0 + \lie{z(l)}_{\reg}$ we have, as in \cite[3.1]{lusztig:1992:a-unipotent-support}, the following induction diagram
\begin{center}
\begin{tikzpicture}
\matrix (m) [matrix of math nodes, row sep=4em, column sep=3em, text height=1.5ex, text depth=0.25ex]
{\mathcal{O}_0 & \hat{Y} & \widetilde{Y} & Y\\};

\path [>=stealth,->]	(m-1-2) edge node[above]{$\alpha$} (m-1-1)
		(m-1-2) edge node[above]{$\beta$} (m-1-3)
		(m-1-3) edge node[above]{$\gamma$} (m-1-4);
\end{tikzpicture}
\end{center}
where
\begin{gather*}
\begin{aligned}
Y &= \bigcup_{g \in \bG} \Ad g(\Pi_{\reg})\\
\hat{Y} &= \{(x,g) \in \lie{g} \times \bG \mid \Ad g^{-1}(x) \in \Pi_{\reg}\}\\
\tilde{Y} &= \{(x,g\bL) \in \lie{g} \times (\bG/\bL) \mid \Ad g^{-1}(x) \in \Pi_{\reg}\}
\end{aligned}\\
\begin{aligned}
\alpha(x,g) &= \pi_0(\Ad g^{-1}(x)) \quad&\quad \beta(x,g) &= (x,g\bL) \quad&\quad \gamma(x,g\bL) &= x
\end{aligned}
\end{gather*}
and $\pi_0 : \Pi \to \mathcal{O}_0$ is the natural projection. As discussed in \cite[3.1]{lusztig:1992:a-unipotent-support} and \cite[pg.\ 75]{letellier:2005:fourier-transforms} from this data one constructs a local system $\mathscr{L}$ on $Y$ and a corresponding intersection cohomology complex $K_0 = \IC(\overline{Y},\mathscr{L})[\dim Y]$, which is denoted by $\ind_{\Sigma}^{\bG}(\mathscr{E}_0)$ in \cite{letellier:2005:fourier-transforms}. Let us also note that we have $\dim Y = \dim \bG/\bL + \dim \Pi$, c.f., \cite[5.1.28]{letellier:2005:fourier-transforms}.
\end{pa}

\begin{assumption}
From now on we assume that the parabolic $\bP$, Levi subgroup $\bL$ and the cuspidal pair $\iota_0 = (\mathcal{O}_0,\mathscr{E}_0) \in \mathcal{V}_{\bL}^0$ are fixed. Recall also that $\Sigma$ is a fixed transverse slice $-e^{\dag} + \mathfrak{s}$.
\end{assumption}

\begin{lem}\label{lem:fibres-pure-dim}
The fibres of the smooth morphism $\pi : \bG \times \Sigma \to \lie{g}$ given by $\pi(g,x) = \Ad g(x)$ are of pure dimension equal to $\dim\lie{c_g}(e)$.
\end{lem}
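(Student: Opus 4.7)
The plan is to exploit the smoothness of $\pi$, which was established en route to \cref{prop:action-map-iso}: since $\Sigma$ is a transverse slice at $-e^{\dag}$ by \cref{cor:existence-transversal-slice}, the action map $\pi$ is smooth by the very definition of transverse slice. Both the source $\bG \times \Sigma$ and the target $\lie{g}$ are irreducible — $\bG$ is connected, $\Sigma = -e^{\dag} + \lie{s}$ is isomorphic to an affine space, and $\lie{g}$ is a vector space — and therefore equidimensional. A smooth morphism between equidimensional varieties has every non-empty fibre smooth, hence of pure dimension equal to the relative dimension
\[
\dim(\bG\times\Sigma) - \dim\lie{g} \;=\; \dim\bG + \dim\Sigma - \dim\lie{g}.
\]
The proof thus reduces to showing that this relative dimension equals $\dim\lie{c_g}(e)$.

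Next I would compute $\dim\Sigma = \dim\lie{s}$ by unpacking the construction in \cref{cor:existence-transversal-slice} applied to $-e^{\dag}$: the subspace $\lie{s}$ is chosen as a complement of $T_{-e^{\dag}}(\Ad\bG(-e^{\dag}))$ in $\lie{g}$. The standing acceptable-prime assumption forces $p$ to be pretty good, so \cref{prop:smooth-centraliser} guarantees separability of $C_{\bG}(-e^{\dag})$, and \cref{lem:separable-centraliser} then identifies $T_{-e^{\dag}}(\Ad\bG(-e^{\dag}))$ with $[\lie{g},-e^{\dag}]$, of dimension $\dim\bG - \dim C_{\bG}(-e^{\dag})$. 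By \cref{prop:same-G-orbit}, $e$ and $-e^{\dag}$ lie in a common $\bG$-orbit, so their centralisers are conjugate, and separability also gives $\dim C_{\bG}(-e^{\dag}) = \dim\lie{c_g}(-e^{\dag}) = \dim\lie{c_g}(e)$. Substituting, $\dim\lie{s} = \dim\lie{g} - \dim\bG + \dim\lie{c_g}(e)$, and the relative dimension of $\pi$ collapses to $\dim\lie{c_g}(e)$, as required.

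The argument is in essence a one-line dimension count supported by three ingredients already in place: smoothness of $\pi$ from \cref{cor:existence-transversal-slice}, the $\bG$-conjugacy of $e$ and $-e^{\dag}$ from \cref{prop:same-G-orbit}, and the separability of nilpotent centralisers secured by the acceptable-prime hypothesis through \cref{prop:smooth-centraliser}. I do not anticipate any genuine obstacle; the only point meriting care is that separability must be invoked twice — first to replace the tangent space of the orbit with $[\lie{g},-e^{\dag}]$ via \cref{lem:separable-centraliser}, and second to equate $\dim C_{\bG}(e)$ with $\dim\lie{c_g}(e)$ — and one must cite \cref{prop:same-G-orbit} to transfer the centraliser dimension from $-e^{\dag}$ back to $e$.
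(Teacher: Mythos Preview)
Your proposal is correct and follows essentially the same route as the paper. The paper's proof is terser: it cites \cite[III, \S9, 9.6]{hartshorne:1977:algebraic-geometry} for the pure-dimension statement and invokes the equality $\dim\Sigma = \dim\lie{c_g}(e)$ already established in the proof of \cref{prop:action-map-iso}, implicitly using $\dim\bG = \dim\lie{g}$ to collapse the relative dimension to $\dim\Sigma$ directly; you instead spell out the full dimension count and the separability and same-orbit ingredients behind that equality.
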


\begin{proof}
Firstly let us note that $\bG$, $\lie{g}$ and $\Sigma$ are irreducible varieties then applying \cite[III, \S9, 9.6]{hartshorne:1977:algebraic-geometry} we have, for any $y \in \lie{g}$, that every irreducible component of the fibre $\pi^{-1}(y)$ has dimension $\dim \Sigma = \dim \lie{c_g}(e)$ which gives us the first part.
\end{proof}

\begin{pa}
Let $\lie{u_p}$ be the Lie algebra of the unipotent radical of $\bP$. Recall from \cite{lusztig-spaltenstein:1979:induced-unipotent-classes} that there exists a unique nilpotent orbit $\mathcal{O} \in \mathcal{N}/\bG$ such that the intersection $\mathcal{O} \cap (\mathcal{O}_0 + \lie{u_p})$ is open and dense in $\mathcal{O}_0 + \lie{u_p}$, see also \cite[7.1.1]{collingwood-mcgovern:1993:nilpotent-orbits}. We denote the orbit $\mathcal{O}$ by $\Ind_{\bL \subseteq \bP}^{\bG}(\mathcal{O}_0)$ and call it the induced nilpotent orbit.
\end{pa}

\begin{prop}[Lusztig]\label{lem:bumper-facts}
The intersection $\overline{Y} \cap \mathcal{N}$ is the closure of the induced orbit $\Ind_{\bL\subseteq\bP}^{\bG}(\mathcal{O}_0)$. In particular, it is irreducible of dimension $\dim\bG/\bL + \dim \mathcal{O}_0$.
\end{prop}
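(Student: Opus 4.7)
The plan is to establish the two inclusions $\overline{\mathcal{O}'}\subseteq\overline{Y}\cap\mathcal{N}$ and $\overline{Y}\cap\mathcal{N}\subseteq\overline{\mathcal{O}'}$, where $\mathcal{O}':=\Ind_{\bL\subseteq\bP}^{\bG}(\mathcal{O}_0)$, and in doing so to identify both sides with the closed irreducible subvariety $\bG\cdot(\overline{\mathcal{O}_0}+\lie{u_p})$ of $\lie{g}$. The underlying tool is the standard principle that for any closed $\bP$-invariant subset $\bV\subseteq\lie{p}$, the action map $\bG\times^{\bP}\bV\to\lie{g}$ is proper, so that $\bG\cdot\bV$ is closed and irreducible whenever $\bV$ is, with $\dim(\bG\cdot\bV)\leqslant\dim\bG/\bP+\dim\bV$.

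For the first inclusion I would fix $n\in\mathcal{O}_0$ together with a semisimple $t\in\lie{z(l)}_{\reg}$. For $k\in\mathbb{G}_m$ the commuting elements $n$ and $kt$ are the nilpotent and semisimple parts of the Jordan decomposition of $n+kt$ in $\lie{g}$, giving $\lie{c_g}(n+kt)=\lie{c_l}(n)$ and hence $\lie{c}_{\lie{u_p}}(n+kt)=\lie{u_p}\cap\lie{l}=0$. Consequently the morphism $\bU_{\bP}\to n+kt+\lie{u_p}$, $v\mapsto\Ad(v)(n+kt)$, has injective differential at the identity, so its image is a dense open subset of the irreducible affine space $n+kt+\lie{u_p}$ whose closure in $\lie{g}$ is the whole space. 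Since $n+kt\in\Pi_{\reg}\subseteq Y$ for $k\neq 0$, this gives $n+kt+\lie{u_p}\subseteq\overline{Y}$, and letting $k\to 0$ by extending each morphism $k\mapsto n+kt+u$ from $\mathbb{G}_m$ to $\mathbb{A}^1$ yields $n+u\in\overline{Y}$ for every $u\in\lie{u_p}$. Hence $\mathcal{O}_0+\lie{u_p}\subseteq\overline{Y}$; since $\overline{Y}$ is $\bG$-stable while $\mathcal{O}'$ is the unique $\bG$-orbit meeting $\mathcal{O}_0+\lie{u_p}$ densely, I conclude $\overline{\mathcal{O}'}\subseteq\overline{Y}\cap\mathcal{N}$.

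For the reverse inclusion I apply the general principle to $\bV=\overline{\mathcal{O}_0}+\lie{z(l)}+\lie{u_p}$, a closed $\bP$-stable subset of $\lie{p}$ that contains $\Pi_{\reg}$, so $\overline{Y}\subseteq\bG\cdot\bV$. If $y=n'+t'+u'\in\bV$ is nilpotent in $\lie{g}$, its image $n'+t'$ in the reductive quotient $\lie{l}=\lie{p}/\lie{u_p}$ must be nilpotent in $\lie{l}$; since $n'$ is nilpotent and $t'$ is central and semisimple in $\lie{l}$, this forces $t'=0$. Thus $\overline{Y}\cap\mathcal{N}\subseteq\bG\cdot(\overline{\mathcal{O}_0}+\lie{u_p})$, which is closed, irreducible, and of dimension at most $\dim\bG/\bP+\dim\mathcal{O}_0+\dim\lie{u_p}=\dim\bG/\bL+\dim\mathcal{O}_0$. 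By the Lusztig--Spaltenstein dimension formula \cite{lusztig-spaltenstein:1979:induced-unipotent-classes}, $\dim\overline{\mathcal{O}'}$ also equals $\dim\bG/\bL+\dim\mathcal{O}_0$, so the chain $\overline{\mathcal{O}'}\subseteq\overline{Y}\cap\mathcal{N}\subseteq\bG\cdot(\overline{\mathcal{O}_0}+\lie{u_p})$ of closed sets, whose outer terms are irreducible of equal dimension, forces equality throughout.

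I expect the main technical hurdle to be the identification of nilpotent elements of $\lie{p}$ via their image in $\lie{l}$, i.e.\ the assertion $\mathcal{N}(\lie{p})=\mathcal{N}(\lie{l})+\lie{u_p}$. My plan is to verify this by choosing a faithful $\bG$-representation $V$ equipped with a $\bP$-stable Jordan--H\"older filtration whose subquotients are $\bP$-irreducible and hence $\bL$-modules on which $\lie{u_p}$ acts trivially; in a basis compatible with this filtration each $y\in\lie{p}$ acts by a block upper triangular endomorphism whose diagonal blocks realise the action of the image $\bar y\in\lie{l}$ on each subquotient, making nilpotency of $y$ on $V$ equivalent to nilpotency of $\bar y$ in $\lie{l}$.
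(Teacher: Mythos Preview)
Your argument is correct. Both your approach and the paper's ultimately identify $\overline{Y}\cap\mathcal{N}$ with $\bG\cdot(\overline{\mathcal{O}_0}+\lie{u_p})$ and use the key observation that an element of $\overline{\Pi}+\lie{u_p}$ is nilpotent only if its $\lie{z(l)}$-component vanishes. The difference is organisational. The paper works with the collapsing variety
\[
X = \{(x,g\bP)\in\lie{g}\times(\bG/\bP)\mid \Ad g^{-1}(x)\in\overline{\Pi}+\lie{u_p}\},
\]
quotes \cite{letellier:2005:fourier-transforms} for $\phi(X)=\overline{Y}$ and for the irreducibility of $X_{\mathcal{N}}$, deduces that $\overline{Y}\cap\mathcal{N}$ is a single orbit closure, and then quotes \cite[7.3(a)]{lusztig:1984:intersection-cohomology-complexes} to identify that orbit as the induced one. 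You instead prove both inclusions by hand: the inclusion $\overline{\mathcal{O}'}\subseteq\overline{Y}\cap\mathcal{N}$ via the $\bU_{\bP}$-orbit density and limit argument, and the reverse via properness of the collapsing map together with the Lusztig--Spaltenstein dimension formula. Your route is more self-contained (no appeal to \cite{letellier:2005:fourier-transforms} or \cite{lusztig:1984:intersection-cohomology-complexes} for this step), at the cost of reproving small pieces those references already contain; the paper's route is shorter because it leans on those citations. The technical hurdle you anticipate, that nilpotency in $\lie{p}$ passes to $\lie{l}=\lie{p}/\lie{u_p}$, is exactly the content of the paper's line ``$\overline{\Pi}\cap\mathcal{N}=\overline{\mathcal{O}_0}$'', and your filtration argument handles it.
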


\begin{proof}
For this we argue as in the proof of \cite[Proposition 7.2]{lusztig:1984:intersection-cohomology-complexes} using the analogous statements for the Lie algebra proven in \cite{letellier:2005:fourier-transforms}. Consider the variety
\begin{equation*}
X = \{(x,g\bP) \in \lie{g} \times (\bG/\bP) \mid \Ad g^{-1}(x) \in \overline{\Pi} + \lie{u_p}\}.
\end{equation*}
If $\phi : X \to \lie{g}$ is the morphism defined by $\phi(x,g\bP) = x$ then we have $\phi(X) = \overline{Y}$, see \cite[5.1.30(ii)]{letellier:2005:fourier-transforms}. Therefore we have $\phi(X_{\mathcal{N}}) = \overline{Y}\cap \mathcal{N}$ where
\begin{equation*}
X_{\mathcal{N}} = \{(x,g\bP) \in \lie{g} \times (\bG/\bP) \mid \Ad g^{-1}(x) \in \overline{\mathcal{O}_0} + \lie{u_p}\}
\end{equation*}
because $\overline{\Pi} \cap \mathcal{N} = \overline{\mathcal{O}_0}$. The argument in \cite[5.1.10]{letellier:2005:fourier-transforms} shows that $X_{\mathcal{N}}$ is irreducible, hence so is $\overline{Y}\cap\mathcal{N}$. As $\overline{Y}\cap\mathcal{N}$ is invariant under the $\Ad\bG$-action we see it is a union $\mathcal{O}_1\sqcup\cdots\sqcup\mathcal{O}_k$ of distinct nilpotent orbits $\mathcal{O}_i \in \mathcal{N}/\bG$. Moreover as $\overline{Y}\cap\mathcal{N}$ is closed we have $\overline{Y}\cap\mathcal{N} = \overline{\mathcal{O}_1}\cup\cdots\cup\overline{\mathcal{O}_k}$ but as $\overline{Y}\cap\mathcal{N}$ and each orbit closure is irreducible we must have $\overline{Y}\cap\mathcal{N} = \overline{\mathcal{O}_i}$ for a unique $1 \leqslant i \leqslant k$. The exact same proof as \cite[7.3(a)]{lusztig:1984:intersection-cohomology-complexes} shows that $\mathcal{O}_i$ is the closure of the induced orbit $\Ind_{\bL \subseteq \bP}^{\bG}(\mathcal{O}_0)$. We thus have $\dim \overline{Y}\cap\mathcal{N} = \dim \Ind_{\bL \subseteq \bP}^{\bG}(\mathcal{O}_0)$ and this latter dimension is easily seen to be $\dim\bG/\bL + \dim \mathcal{O}_0$, c.f., \cite[7.1.4(i)]{collingwood-mcgovern:1993:nilpotent-orbits}.
\end{proof}

\begin{lem}
Assume $-e^{\dag} \in \overline{Y}$ then the following hold:
\begin{enumerate}
	\item The restriction $\sigma : \bG \times (\overline{Y}\cap\mathcal{N} \cap \Sigma) \to \overline{Y}\cap\mathcal{N}$ of the action map $\pi : \bG \times \Sigma \to \lie{g}$ is a smooth morphism with all fibres of pure dimension equal to $\dim\lie{c_g}(e)$.
	\item $\dim (\overline{Y}\cap\mathcal{N} \cap \Sigma) = -\dim\bL + \dim\mathcal{O}_0 + \dim \lie{c_g}(e)$.
\end{enumerate}
\end{lem}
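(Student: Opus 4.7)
The plan is to deduce both parts by recognising $\sigma$ as a base change of the smooth morphism $\pi \colon \bG \times \Sigma \to \lie{g}$ from \cref{cor:existence-transversal-slice}, and then combining this with the description of $\overline{Y}\cap\mathcal{N}$ supplied by \cref{lem:bumper-facts}.

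For (i), I would first observe that $\overline{Y}\cap\mathcal{N}$ is $\Ad\bG$-stable, since \cref{lem:bumper-facts} identifies it with the closure of an induced nilpotent orbit. From this it follows that the preimage $\pi^{-1}(\overline{Y}\cap\mathcal{N})$ inside $\bG\times\Sigma$ equals $\bG\times(\overline{Y}\cap\mathcal{N}\cap\Sigma)$: if $(g,x)$ lies in a fibre of $\pi$ over a point $y\in\overline{Y}\cap\mathcal{N}$, then $x = \Ad g^{-1}(y) \in \overline{Y}\cap\mathcal{N}$ by $\Ad\bG$-invariance. Hence $\sigma$ is obtained from $\pi$ by base change along the closed immersion $\overline{Y}\cap\mathcal{N}\hookrightarrow\lie{g}$. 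Smoothness being preserved by base change, and $\pi$ being smooth by \cref{cor:existence-transversal-slice}, we conclude that $\sigma$ is smooth. Moreover, the identification $\pi^{-1}(y)=\sigma^{-1}(y)$ for $y \in \overline{Y}\cap\mathcal{N}$ shows that every non-empty fibre of $\sigma$ is a fibre of $\pi$, hence is pure of dimension $\dim\lie{c_g}(e)$ by \cref{lem:fibres-pure-dim}.

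For (ii), the key point is to pin down the dimension of the image of $\sigma$. Since $\sigma$ is smooth it is flat and therefore an open morphism, so its image $W := \sigma(\bG\times(\overline{Y}\cap\mathcal{N}\cap\Sigma))$ is an open subset of $\overline{Y}\cap\mathcal{N}$; it is non-empty since it contains $-e^{\dag}$. By \cref{lem:bumper-facts} the variety $\overline{Y}\cap\mathcal{N}$ is irreducible, so $W$ is dense in it and $\dim W = \dim(\overline{Y}\cap\mathcal{N})$. Applying the fibre dimension formula to the smooth surjection $\bG\times(\overline{Y}\cap\mathcal{N}\cap\Sigma) \twoheadrightarrow W$ with constant fibre dimension $\dim\lie{c_g}(e)$, I obtain
\begin{equation*}
\dim\bG + \dim(\overline{Y}\cap\mathcal{N}\cap\Sigma) = \dim(\overline{Y}\cap\mathcal{N}) + \dim\lie{c_g}(e).
\end{equation*}
Inserting the value $\dim(\overline{Y}\cap\mathcal{N}) = \dim\bG - \dim\bL + \dim\mathcal{O}_0$ from \cref{lem:bumper-facts} and cancelling $\dim\bG$ yields the formula asserted in (ii).

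The only real subtlety — rather than a genuine obstacle — is that $\sigma$ need not be surjective onto $\overline{Y}\cap\mathcal{N}$, because the transverse slice $\Sigma$ need only intersect those $\bG$-orbits whose closure contains $\mathcal{O}_e$. This does not affect the proof: part (i) only concerns non-empty fibres, and for part (ii) the dimension formula needs nothing more than dominance of $\sigma$ onto an irreducible target, which we have established via openness of flat morphisms together with the irreducibility statement in \cref{lem:bumper-facts}.
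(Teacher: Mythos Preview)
Your proof is correct and follows essentially the same route as the paper: recognise $\sigma$ as the base change of the smooth map $\pi$ along the closed immersion $\overline{Y}\cap\mathcal{N}\hookrightarrow\lie{g}$, then read off the dimension of $\overline{Y}\cap\mathcal{N}\cap\Sigma$ from the fibre-dimension formula together with \cref{lem:bumper-facts}. Your treatment is in fact a little more careful than the paper's, which simply cites Hartshorne for both steps; in particular your explicit verification that $\sigma$ is dominant (via openness and the irreducibility of $\overline{Y}\cap\mathcal{N}$) is a nice touch that the paper leaves implicit.
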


\begin{proof}
(i). We have $\bG \times (\overline{Y}\cap\mathcal{N}\cap\Sigma)$ is the preimage of $\overline{Y}\cap\mathcal{N}$ under $\pi$. As $\sigma$ is thus obtained by base change with respect to the closed embedding $\overline{Y}\cap\mathcal{N} \to \lie{g}$ we have the result follows from \cite[III, 10.1(b)]{hartshorne:1977:algebraic-geometry} and \cref{lem:fibres-pure-dim}.

(ii). As $\sigma$ is smooth we may again apply \cite[III, \S9, 9.6]{hartshorne:1977:algebraic-geometry} to deduce that
\begin{equation*}
\dim(\overline{Y}\cap\mathcal{N}\cap \Sigma) = \dim \overline{Y} \cap \mathcal{N} + \dim \lie{c_g}(e) - \dim \bG.
\end{equation*}
The statement now follows from \cref{lem:bumper-facts}.
\end{proof}

\begin{pa}
According to \cite[6.6.1]{lusztig:1984:intersection-cohomology-complexes} we have the restriction $K_0[-\dim\lie{z(l)}]|_{\overline{Y}\cap\mathcal{N}}$ is a perverse sheaf; note that $\dim \overline{Y}\cap\mathcal{N} = \dim Y - \dim\lie{z(l)}$. Let us take on the assumption that $-e^{\dag} \in \overline{Y}$ and let $\sigma$ be as in \cref{lem:bumper-facts} then according to \cite[4.2.4]{beilinson-bernstein-deligne:1982:faisceaux-pervers} we have
\begin{equation}\label{eq:pullback-perverse}
\sigma^*(K_0[d]|_{\overline{Y}\cap\mathcal{N}}) \in \mathscr{D}(\overline{Y}\cap\mathcal{N}\cap\Sigma)
\end{equation}
is also a perverse sheaf where $d = \dim\lie{c_g}(e)-\dim\lie{z(l)}$. Now $\bG$ acts on $Y$, via $\Ad$, on $\hat{Y}$ and $\widetilde{Y}$, via $\Ad$ on the first coordinate and left translation on the second, and finally on $\Sigma$ trivially. As discussed in \cite[pg.\ 75]{letellier:2005:fourier-transforms} we see that, with respect to these actions, $\mathscr{L}$ is a $\bG$-equivariant local system and $K_0$ is a $\bG$-equivariant perverse sheaf where $\mathscr{L}$ and $K_0$ are as in \cref{pa:induction-diagram}. According to \cite[3.3]{lusztig:1992:a-unipotent-support} as the pullback in \cref{eq:pullback-perverse} is a perverse sheaf we have $\widetilde{K}_0[d_0] \in \mathscr{D}(\overline{Y}\cap\mathcal{N}\cap\Sigma)$ is also a perverse sheaf where $\widetilde{K}_0 \in \mathscr{D}(\overline{Y}\cap\mathcal{N}\cap\Sigma)$ is the restriction of $K_0$ to $\{1\} \times (\overline{Y}\cap\mathcal{N}\cap\Sigma)$ and $d_0 = \dim\lie{c_g}(e) - \dim \lie{z(l)} - \dim\bG$. Furthermore, if $\widetilde{K}_0^{\vee}$ is the restriction of $K_0^{\vee} =\IC(\overline{Y},\mathscr{L}^{\vee})[\dim Y]$, where $\mathscr{L}^{\vee}$ is the local system dual to $\mathscr{L}$, then we have $\widetilde{K}_0^{\vee}[d_0]$ is also a perverse sheaf which is naturally the Verdier dual of $\widetilde{K}_0[d_0]$. We then have a canonical non-singular pairing (Poincar\'e duality)
\begin{equation}\label{eq:poincare-duality}
\bH^j(\overline{Y}\cap\mathcal{N}\cap\Sigma,\widetilde{K}_0) \otimes \bH_c^{2d-j}(\overline{Y}\cap\mathcal{N}\cap\Sigma,\widetilde{K}_0^{\vee}) \to \Ql(-d_0).
\end{equation}
between the corresponding hypercohomology groups.
\end{pa}

\begin{pa}
If $\rho$ is the contracting $\mathbb{G}_m$-action on $\lie{g}$ defined in the proof of \cref{prop:action-map-iso} then we let $\mathbb{G}_m$ act on $Y$, via $\rho(k)$, on $\hat{Y}$ and $\widetilde{Y}$, via $\rho(k)$ on the first coordinate and $\Ad\lambda(k^{-1})$ on the second, and finally on $\mathcal{O}_0$ via $k\cdot x = k^{-2}x$. By \cite[2.10, Lemma]{jantzen:2004:nilpotent-orbits} and \cref{cor:existence-transversal-slice} the action of $\mathbb{G}_m$ by $\rho$ leaves invariant the space $\overline{Y}\cap\mathcal{N}\cap\Sigma$. As in \cite[3.4]{lusztig:1992:a-unipotent-support} we may use this contracting action to deduce that the canonical homomorphism
\begin{equation}
\bH^j(\overline{Y}\cap\mathcal{N}\cap\Sigma,\widetilde{K}_0) \to \mathscr{H}_{-e^{\dag}}^j(\widetilde{K}_0)
\end{equation}
is an isomorphism. In particular, the pairing of \cref{eq:poincare-duality} becomes a pairing
\begin{equation}\label{eq:poincare-duality-II}
\mathscr{H}_{-e^{\dag}}^j(\widetilde{K}_0) \otimes \bH_c^{2d-j}(\overline{Y}\cap\mathcal{N}\cap\Sigma,\widetilde{K}_0^{\vee}) \to \Ql(-d_0).
\end{equation}
\end{pa}

\section{Blocks}
\begin{pa}
Let us denote by $\mathcal{A}$ the endomorphism algebra $\End_{\mathscr{D}\overline{Y}}(K_0)$ of the complex $K_0$; note this is naturally isomorphic to the endomorphism algebra $\End_{\mathscr{D}\mathcal{O}_0}(\mathscr{E}_0)$. The following is a combination of \cite[Theorem 9.2(a)]{lusztig:1984:intersection-cohomology-complexes} and \cite[2.4]{lusztig:1986:on-the-character-values}, see also \cite[Proposition 5.3.6]{letellier:2005:fourier-transforms}.
\end{pa}

\begin{prop}\label{prop:algebra-basis}
There exists a set of basis elements $\{\theta_w \mid w \in W_{\bG}(\bL)\}$ for $\mathcal{A}$ such that the $\Ql$-linear extension of the map $w \mapsto \theta_w$ defines a $\Ql$-algebra isomorphism $\Ql W_{\bG}(\bL) \cong \mathcal{A}$.
\end{prop}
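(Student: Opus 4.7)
The plan is to reduce the computation of $\mathcal{A} = \End_{\mathscr{D}\overline{Y}}(K_0)$ to an endomorphism computation for a local system on the smooth open piece $Y$, and then exploit the fact that $Y$ carries a natural finite étale $W_\bG(\bL)$-Galois covering. Since $K_0$ is the intermediate extension of $\mathscr{L}[\dim Y]$ from the smooth dense open $Y$, the standard formalism of intermediate extensions yields an algebra isomorphism $\End_{\mathscr{D}\overline{Y}}(K_0) \cong \End_Y(\mathscr{L})$, so only the latter needs to be described.

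Next I would analyse the morphism $\gamma : \widetilde{Y} \to Y$ from the induction diagram of \cref{pa:induction-diagram}. Two cosets $g\bL, g'\bL \in \widetilde{Y}$ lie over the same point of $Y$ precisely when $g^{-1}g'$ normalises $\bL$, so $\gamma$ is a finite étale Galois covering with deck group $W_\bG(\bL) = N_\bG(\bL)/\bL$; this hinges on the fact that $\Pi_{\reg}$ is exactly the locus where the $\bG$-centraliser of the semisimple part drops to $\bL$, ensuring that $\bL$ itself (and not merely a conjugate) is recovered from the condition $\Ad g^{-1}(x) \in \Pi_{\reg}$, up to the action of $N_\bG(\bL)$. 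On $\widetilde{Y}$ one has the local system $\widetilde{\mathscr{L}}$ obtained as the $\bL$-descent of $\alpha^*\mathscr{E}_0$ along the principal $\bL$-bundle $\beta$, and by construction $\mathscr{L}$ is (up to shift) $\gamma_!\widetilde{\mathscr{L}}$.

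Using that $\gamma$ is Galois one then has a canonical decomposition $\gamma^*\gamma_*\widetilde{\mathscr{L}} \cong \bigoplus_{w \in W_\bG(\bL)} w^*\widetilde{\mathscr{L}}$, hence by adjunction
\begin{equation*}
\End_Y(\mathscr{L}) \cong \Hom_{\widetilde{Y}}\bigl(\widetilde{\mathscr{L}},\gamma^*\gamma_*\widetilde{\mathscr{L}}\bigr) = \bigoplus_{w \in W_\bG(\bL)} \Hom_{\widetilde{Y}}(\widetilde{\mathscr{L}}, w^*\widetilde{\mathscr{L}}).
\end{equation*}
Since $\widetilde{\mathscr{L}}$ is irreducible (its pullback $\alpha^*\mathscr{E}_0$ is irreducible and $\bL$-equivariant) and since the cuspidality of $\mathscr{E}_0$ forces $w^*\mathscr{E}_0 \cong \mathscr{E}_0$ for every $w \in W_\bG(\bL)$, a key property of cuspidal pairs established by Lusztig in the course of classifying them, Schur's lemma makes every summand one-dimensional, so $\dim \mathcal{A} = |W_\bG(\bL)|$ and any choice of non-zero generators $\theta_w$ yields a basis.

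The main obstacle is the final step: upgrading this basis to an algebra isomorphism $\Ql W_\bG(\bL) \xrightarrow{\sim} \mathcal{A}$, which requires showing that the $\theta_w$ can be normalised so that $\theta_w \theta_{w'} = \theta_{ww'}$, i.e.\ that the $2$-cocycle on $W_\bG(\bL)$ measuring the associativity defect vanishes. This is precisely the substantive content of \cite[Theorem 9.2(a)]{lusztig:1984:intersection-cohomology-complexes} together with \cite[2.4]{lusztig:1986:on-the-character-values}, where the normalisation is fixed by choosing canonical lifts compatible with the descent construction of $\widetilde{\mathscr{L}}$; I would simply quote these results, since reproving them would require the full case-by-case verification for cuspidal local systems. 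Granting cocycle triviality, the linear map $w \mapsto \theta_w$ is an algebra homomorphism, and the dimension count above forces it to be an isomorphism.
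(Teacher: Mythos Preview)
Your proposal is correct and in fact more detailed than what the paper provides: the paper gives no proof of this proposition at all, simply attributing it to \cite[Theorem 9.2(a)]{lusztig:1984:intersection-cohomology-complexes} and \cite[2.4]{lusztig:1986:on-the-character-values} (with a pointer to \cite[Proposition 5.3.6]{letellier:2005:fourier-transforms}). Your sketch---passing to $\End_Y(\mathscr{L})$ via full faithfulness of intermediate extension, exploiting the $W_\bG(\bL)$-Galois structure of $\gamma$, and then invoking the cocycle-triviality results---is precisely an outline of what those cited references do, and you end by quoting exactly the same two sources for the hard normalisation step; so the two treatments coincide.
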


\begin{pa}\label{pa:gen-spring-corr}
Combining \cref{prop:algebra-basis} with \cite[4.2(b)]{lusztig:1992:a-unipotent-support}, see also \cite[4.3]{taylor:2014:evaluating-characteristic-functions}, we have a canonical isomorphism
\begin{equation}\label{eq:decomp-of-induced}
K_0 \cong \bigoplus_{E \in \Irr(W_{\bG}(\bL))} (E \otimes K_E),
\end{equation}
where $K_E = \Hom_{\mathcal{A}}(E,K_0) \in \mathscr{M}\overline{Y}$ and we identify $\Irr(\mathcal{A})$ and $\Irr(W_{\bG}(\bL))$ through the isomorphism in \cref{prop:algebra-basis}. Again from \cite[4.2]{lusztig:1992:a-unipotent-support}, see also \cite[Theorem 6.5]{lusztig:1984:intersection-cohomology-complexes}, there exists a unique pair $\iota \in \mathcal{V}_{\bG}$ such that
\begin{equation}\label{eq:K-restriction}
K_E|_{\overline{Y}\cap\mathcal{N}} \cong \IC(\overline{\mathcal{O}}_{\iota},\mathscr{E}_{\iota})[\dim\mathcal{O}_{\iota} + \dim\lie{z(l)}].
\end{equation}
The map $E \to K_E \to \iota$ then gives us an injective map $\Irr(W_{\bG}(\bL)) \to \mathcal{V}_{\bG}$. We will denote the perverse sheaf $K_E$ by $K_{\iota}$.
\end{pa}

\begin{pa}
Let us denote by $\widetilde{\mathcal{W}}_{\bG}$ the set of all pairs $(\bM,\upsilon)$ consisting of a Levi complement $\bM \leqslant \bG$ of a parabolic subgroup of $\bG$ and a cuspidal pair $\upsilon \in \mathcal{V}_{\bM}^0$. We have $\bG$ acts naturally on $\widetilde{\mathcal{W}}_{\bG}$ by conjugation and we denote by $[\bM,\upsilon]$ the orbit containing $(\bM,\upsilon)$. We also denote by $\mathcal{W}_{\bG}$ the set of all such orbits. Using the map in \cref{pa:gen-spring-corr} we can associate to every pair $(\bM,\upsilon) \in \widetilde{\mathcal{W}}_{\bG}$ a subset $\mathscr{I}[\bM,\upsilon] \subseteq \mathcal{V}_{\bG}$ which depends only on the orbit $[\bM,\upsilon]$. We then have a disjoint union
\begin{equation*}
\mathcal{V}_{\bG} = \bigsqcup_{[\bM,\upsilon] \in \mathcal{M}_{\bG}} \mathscr{I}[\bM,\upsilon]
\end{equation*}
and we call $\mathscr{I}[\bM,\upsilon]$ a \emph{block} of $\mathcal{V}_{\bG}$. If $\upsilon \in \mathcal{V}_{\bM}^0$ is cuspidal then the relative Weyl group $W_{\bG}(\bM) = N_{\bG}(\bM)/\bM$ is a Coxeter group and \cref{pa:gen-spring-corr} yields a bijection
\begin{equation}\label{eq:gen-spring-cor}
\mathscr{I}[\bM,\upsilon] \longleftrightarrow \Irr(W_{\bG}(\bM))
\end{equation}
for all $[\bM,\upsilon] \in \mathcal{W}_{\bG}$, which we denote by $\iota \mapsto E_{\iota}$.
\end{pa}

\begin{pa}
The Frobenius endomorphism $F$ acts naturally on the set of pairs $\mathcal{V}_{\bG}$ via
\begin{equation*}
\iota \mapsto F^{-1}(\iota) = (F^{-1}(\mathcal{O}_{\iota}),F^*\mathscr{E}_{\iota}).
\end{equation*}
We say $\iota$ is $F$-stable if $F(\mathcal{O}_{\iota}) = \mathcal{O}_{\iota}$ and there exists an isomorphism $F^*\mathscr{E}_{\iota} \to \mathscr{E}_{\iota}$ and we denote by $\mathcal{V}_{\bG}^F \subseteq \mathcal{V}_{\bG}$ the subset of all $F$-stable pairs. The Frobenius acts also on the set $\widetilde{\mathcal{W}}_{\bG}$, hence also on $\mathcal{W}_{\bG}$, via $(\bM,\upsilon) \mapsto (F^{-1}(\bM),F^{-1}(\upsilon))$. We say the pair $(\bM,\upsilon)$, resp., the orbit $[\bM,\upsilon]$, is $F$-stable if $F^{-1}(\bM) = \bM$ and $F^{-1}(\upsilon) = \upsilon$, resp., $[\bM,\upsilon] = [F^{-1}(\bM),F^{-1}(\upsilon)]$, and we denote by $\widetilde{\mathcal{W}}_{\bG}^F$, resp., $\mathcal{W}_{\bG}^F$, the subset of all $F$-stable pairs, resp., orbits. Note that for any orbit $[\bM,\upsilon] \in \mathcal{W}_{\bG}^F$ we have $[\bM,\upsilon] \cap \widetilde{\mathcal{W}}_{\bG}^F \neq \emptyset$ and the map $\mathcal{V}_{\bG} \to \mathcal{W}_{\bG}$ is compatible with the corresponding actions of $F$ so that we have an induced map $\mathcal{V}_{\bG}^F \to \mathcal{W}_{\bG}^F$. Furthermore if $(\bM,\upsilon) \in \mathcal{W}_{\bG}^F$ then we have the bijection in \cref{eq:gen-spring-cor} is also compatible with $F$, namely it restricts to a bijection
\begin{equation}\label{eq:gen-spring-cor-F-fixed}
\mathscr{I}[\bM,\upsilon]^F \longleftrightarrow \Irr(W_{\bG}(\bM))^F.
\end{equation}
\end{pa}

\section{Deligne--Fourier Transform}
\begin{pa}
We define the Deligne--Fourier transform of a perverse sheaf as follows, see \cite[]{lusztig:1992:a-unipotent-support}. Let $\mathbb{G}_a$ act transitively on the affine line $\mathbb{A}^1 = \mathbb{K}$ via $a\cdot t = t + a - a^p$. Now the character $\chi_p : \mathbb{F}_p^+ \to \Ql$ fixed in \cref{pa:U-lambda-2-hom} gives rise to a $\mathbb{G}_a$-equivariant local system of rank 1 on $\mathbb{A}^1$ which we denote by $\mathscr{L}_{\chi_p}$. More precisely this is the Artin--Schreier sheaf associated to $\chi_p$, defined as in \cite[1.2.1]{laumon:1987:transformation-de-fourier} or \cite[5.1.57]{letellier:2005:fourier-transforms}. Its inverse image under the non-degenerate form $\kappa$ is then a local system of rank 1 on $\lie{g} \times \lie{g}$. With this we define the Fourier transform $\mathcal{F}(A)$ of any complex $A \in \mathscr{D}(\lie{g})$ by setting
\begin{equation}
\mathcal{F}(A) = (\pr_2)_!(\pr_1^*(A) \otimes \kappa^*\mathscr{L}_{\chi_p})
\end{equation}
where $\pr_i : \lie{g} \times \lie{g} \to \lie{g}$ is the projection onto the $i$th factor. Note that the Deligne--Fourier transform of complexes and the Fourier transform of functions are related in the following way, see \cite[5.2.3]{letellier:2005:fourier-transforms}.
\end{pa}

\begin{lem}\label{lem:deligne-fourier-fourier}
Assume $A \in \mathscr{D}(\lie{g})$ is an $F$-stable complex and $\phi : F^*A \to A$ is an isomorphism then we have
\begin{equation*}
\chi_{\mathcal{F}(A),\mathcal{F}(\phi)} = \mathcal{F}(\chi_{A,\phi}),
\end{equation*}
where $\mathcal{F}(\phi) : \mathcal{F}(F^*A) \to \mathcal{F}(A)$ is the induced isomorphism.
\end{lem}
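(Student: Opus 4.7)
The plan is to compute the characteristic function $\chi_{\mathcal{F}(A),\mathcal{F}(\phi)}$ point by point, combining proper base change for the Deligne--Fourier transform with the Grothendieck--Lefschetz trace formula. First, for any $y \in \lie{g}^F$, I would apply proper base change to the projection $\pr_2 : \lie{g} \times \lie{g} \to \lie{g}$ in order to identify, for each integer $i$, the stalk cohomology $\mathscr{H}_y^i(\mathcal{F}(A))$ with the compactly supported hypercohomology $\bH_c^i(\lie{g}, A \otimes \kappa_y^*\mathscr{L}_{\chi_p})$, where $\kappa_y : \lie{g} \to \mathbb{A}^1$ denotes the morphism $x \mapsto \kappa(y,x)$. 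Under this identification, the induced isomorphism $\mathcal{F}(\phi)$ on the stalk corresponds to the Frobenius on hypercohomology built from $\phi$ together with the tautological $F$-structure on $\mathscr{L}_{\chi_p}$ coming from its construction as an Artin--Schreier sheaf.

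Second, I would apply Grothendieck's trace formula to the $F$-stable complex $A \otimes \kappa_y^*\mathscr{L}_{\chi_p}$ on $\lie{g}$ to obtain
\[
\chi_{\mathcal{F}(A),\mathcal{F}(\phi)}(y) \;=\; \sum_{x \in \lie{g}^F} \chi_{A \otimes \kappa_y^*\mathscr{L}_{\chi_p}}(x).
\]
Because the characteristic function of a tensor product of $F$-stable complexes factors as a product, and because $\kappa(y,x) \in \mathbb{F}_q$ whenever $x, y \in \lie{g}^F$ (as $\kappa$ is defined over $\mathbb{F}_q$), each summand factors as $\chi_{A,\phi}(x)\cdot\chi_{\mathscr{L}_{\chi_p}}(\kappa(y,x))$.

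The final ingredient is the well-known identity $\chi_{\mathscr{L}_{\chi_p}}(t) = \chi_p(\Tr_{\mathbb{F}_q/\mathbb{F}_p}(t)) = \chi_q(t)$ for every $t \in \mathbb{F}_q$, which is immediate from the construction of $\mathscr{L}_{\chi_p}$ as the pull-back along $\chi_p$ of the Lang torsor attached to the isogeny $z \mapsto z^p - z$; see for instance \cite[5.1.57]{letellier:2005:fourier-transforms}. Substituting this into the previous display yields
\[
\chi_{\mathcal{F}(A),\mathcal{F}(\phi)}(y) \;=\; \sum_{x \in \lie{g}^F} \chi_q(\kappa(y,x))\,\chi_{A,\phi}(x) \;=\; \mathcal{F}(\chi_{A,\phi})(y),
\]
which is the asserted equality.

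The only non-routine point is bookkeeping: one must check that the isomorphism $\mathcal{F}(\phi)$, obtained by formally transporting $\phi$ through $\pr_1^*$, $(-)\otimes\kappa^*\mathscr{L}_{\chi_p}$ and $(\pr_2)_!$, induces on the stalk at $y$ precisely the Frobenius appearing in the Lefschetz formula above. This is a standard compatibility of proper base change with the canonical $F$-structures on $A$ and $\mathscr{L}_{\chi_p}$, but it requires tracing the definitions carefully through each of the three functors; once it is in place, the conclusion is immediate from the three steps above.
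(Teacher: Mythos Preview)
Your argument is correct and is precisely the standard proof of this compatibility. Note, however, that the paper does not supply its own proof of this lemma: it simply records the statement and refers the reader to \cite[5.2.3]{letellier:2005:fourier-transforms}. Your write-up is essentially the argument one finds there (proper base change to identify stalks of $(\pr_2)_!$ with compactly supported cohomology over the fibre, Grothendieck's trace formula, multiplicativity of characteristic functions under tensor product, and the computation of the trace function of the Artin--Schreier sheaf), so there is nothing to compare beyond observing that you have unpacked the citation.
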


\begin{rem}\label{rem:diff-defs}
We will frequently cite results from the work of Letellier \cite{letellier:2005:fourier-transforms} throughout this article. We point out here that our definitions of the Deligne--Fourier transform and Fourier transform do not coincide with those of \cite{letellier:2005:fourier-transforms} but that they differ only up to shifts and a scalar. The correction is easily seen by comparing \cref{lem:deligne-fourier-fourier} and \cite[5.2.3]{letellier:2005:fourier-transforms}.
\end{rem}

\begin{pa}\label{pa:G-induced-pair}
Let $A_0 \in \mathscr{D}(\lie{g})$ be the complex obtained as the extension by 0 of the complex $K_0[-\dim\lie{z(l)}]|_{\overline{Y}\cap\mathcal{N}}$. According to the proof of \cite[Proposition 6.2.9]{letellier:2005:fourier-transforms}, see also \cite[\S8]{lusztig:1987:fourier-transforms-on-a-semisimple-lie-algebra}, we have an isomorphism
\begin{equation*}
\mathcal{F}(K_0) \to A_0.
\end{equation*}
Comparing \cref{eq:decomp-of-induced,eq:K-restriction} we see that for each $\iota \in \mathscr{I}[\bL,\iota_0]$ there exists a unique pair $\hat{\iota} \in \mathscr{I}[\bL,\iota_0]$ such that $\mathcal{F}(K_{\iota}) \cong \IC(\overline{\mathcal{O}_{\hat{\iota}}},\mathscr{E}_{\hat{\iota}})$ up to some shift. In particular the map $\hat{\phantom{x}} : \mathscr{I}[\bL,\iota_0] \to \mathscr{I}[\bL,\iota_0]$ given by $\iota \mapsto \hat{\iota}$ is a bijection. We will need the following property of this bijection and its corollary.
\end{pa}

\begin{lem}\label{lem:bijection-not-id}
Assume $\mathscr{I}[\bL,\iota_0] \subseteq \mathcal{V}_{\bG}$ is a block containing precisely two elements, i.e., $\bL$ is the Levi complement of a maximal parabolic subgroup of $\bG$, then the bijection $\hat{\phantom{x}}$ is not the identity.
\end{lem}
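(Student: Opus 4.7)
The plan is to argue by contradiction, exploiting the $W_{\bG}(\bL)$-module structure of the canonical isomorphism $\mathcal{F}(K_0) \cong A_0$ established in \cref{pa:G-induced-pair}. Since $|\mathscr{I}[\bL,\iota_0]| = 2$, the bijection \cref{eq:gen-spring-cor} forces $W_{\bG}(\bL)$ to have exactly two irreducible characters, so $W_{\bG}(\bL) \cong \mathbb{Z}/2\mathbb{Z}$ with characters $E_1 = \mathrm{triv}$ and $E_2 = \mathrm{sgn}$. I will write $\iota_j$ for the pair corresponding to $E_j$ under \cref{eq:gen-spring-cor}; by \cref{lem:bumper-facts} together with the standard fact that the trivial representation corresponds to the open orbit in the support, the larger orbit $\mathcal{O}_{\iota_1}$ is the induced orbit $\mathrm{Ind}_{\bL\subseteq\bP}^{\bG}(\mathcal{O}_0)$.

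I would then suppose toward a contradiction that $\hat\iota_j = \iota_j$ for $j = 1, 2$. Under this hypothesis, the definition of $\hat{\phantom{x}}$ in \cref{pa:G-induced-pair} yields $\mathcal{F}(K_{\iota_j}) \cong \IC(\overline{\mathcal{O}_{\iota_j}}, \mathscr{E}_{\iota_j})[\dim \mathcal{O}_{\iota_j}]$ as perverse sheaves on $\lie{g}$. Combining this with the isotypic decomposition \cref{eq:decomp-of-induced} applied to both $K_0$ and $A_0$ (the latter read off from \cref{eq:K-restriction}), the canonical isomorphism $\mathcal{F}(K_0) \cong A_0$ would match $E_j$-isotypic components on the nose and so become a $W_{\bG}(\bL)$-equivariant isomorphism, where $W_{\bG}(\bL)$ acts on $\mathcal{F}(K_0)$ by transport of its action on $K_0$.

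The crux of the argument is to rule this out. My goal is to show that the Fourier transform in fact intertwines the two natural $W_{\bG}(\bL)$-actions on $K_0$ and on $A_0 = \mathcal{F}(K_0)$ only up to a twist by the sign character of $W_{\bG}(\bL)$; equivalently, that $\hat{\phantom{x}}$ corresponds under \cref{eq:gen-spring-cor} to the involution $E \mapsto E \otimes \mathrm{sgn}$ on $\Irr(W_{\bG}(\bL))$. Since for $W_{\bG}(\bL) \cong \mathbb{Z}/2\mathbb{Z}$ this operation interchanges $\mathrm{triv}$ and $\mathrm{sgn}$, it would yield $\hat\iota_1 = \iota_2 \neq \iota_1$, contradicting the assumption.

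The main obstacle is the sign-twist identification. I plan to attack it by means of the Poincar\'e duality pairing \cref{eq:poincare-duality-II} together with the contracting $\mathbb{G}_m$-action: one decomposes both sides of the pairing into $W_{\bG}(\bL)$-isotypic components and compares the two resulting $W_{\bG}(\bL)$-actions on the stalk cohomology $\mathscr{H}^*_{-e^{\dag}}(\widetilde{K}_0)$, one induced through the isomorphism $\mathcal{F}(K_0) \cong A_0$ and the other coming directly from the decomposition of $\widetilde{K}_0$. Ultimately this ought to reduce, via a standard equivariance argument, to a rank-one explicit Fourier calculation along the centre $\lie{z(l)}$, where the non-trivial element of $W_{\bG}(\bL) \cong \mathbb{Z}/2\mathbb{Z}$ acts as $x \mapsto -x$ and the standard Fourier transform converts this involution into the expected sign twist on the level of sheaves.
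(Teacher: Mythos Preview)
Your setup in the first two paragraphs is fine, but the argument collapses at the crucial step. You correctly identify that the whole question is whether the canonical isomorphism $\mathcal{F}(K_0)\cong A_0$ is $W_{\bG}(\bL)$-equivariant or sign-twisted-equivariant, and you propose to settle this via the Poincar\'e pairing \cref{eq:poincare-duality-II} together with a ``rank-one Fourier calculation along $\lie{z(l)}$''. Neither of these ingredients, as stated, distinguishes the two possibilities. The pairing \cref{eq:poincare-duality-II} relates $\widetilde{K}_0$ to $\widetilde{K}_0^{\vee}$, not to $\mathcal{F}(K_0)$; it carries no information about how the Fourier transform permutes the isotypic summands. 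And on a one-dimensional piece of $\lie{z(l)}$ the involution $x\mapsto -x$ \emph{commutes} with the Fourier transform (change variables $y\mapsto -y$ in the kernel), so the naive rank-one calculation gives no sign. What actually produces the twist is the comparison of the two \emph{normalisations} of the basis elements $\theta_w$ of $\End(K_0)$ with the corresponding basis of $\End(A_0)$ coming from Letellier's version of Lusztig's construction; extracting the sign from this requires exactly the kind of delicate analysis that the paper avoids.

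The paper proceeds entirely differently and in the reverse logical order from what you attempt. It first proves \cref{thm:main-theorem-nilpotent} for a \emph{split} Frobenius (where \cref{prop:fourth-root-unity} can be established taking $w=1$, so that \cref{lem:bijection-not-id} is not needed), and then deduces \cref{lem:bijection-not-id} from that formula exactly as in \cite[7.7]{lusztig:1992:a-unipotent-support}: if $\hat{\phantom{x}}$ were the identity on a two-element block one plugs this into the explicit GGGR decomposition and derives a numerical contradiction. Only afterwards is \cref{lem:bijection-tensor-sign} obtained from \cref{lem:bijection-not-id} by Lusztig's inductive reduction \cite[5.5]{lusztig:1992:a-unipotent-support}. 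So you are trying to prove the stronger \cref{lem:bijection-tensor-sign} directly in order to get the weaker \cref{lem:bijection-not-id}, whereas the paper (following Lusztig) goes the other way, using the GGGR formula as the engine.
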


\begin{cor}[{}{Lusztig, \cite[5.5]{lusztig:1992:a-unipotent-support}}]\label{lem:bijection-tensor-sign}
Under the bijection in \cref{eq:gen-spring-cor} we have the map $\iota \mapsto \hat{\iota}$ corresponds to the natural bijection
\begin{equation*}
E \mapsto E\otimes\sgn
\end{equation*}
where $\sgn \in \Irr(W_{\bG}(\bM))$ is the sign representation.
\end{cor}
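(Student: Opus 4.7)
The plan is to compare two involutions on $\mathscr{I}[\bL,\iota_0]$: the map $\iota\mapsto\hat{\iota}$ coming from Fourier transform, and the map induced by $E\mapsto E\otimes\sgn$ through the bijection of \cref{eq:gen-spring-cor}. I would argue in three steps.

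First, I would show that the map $\iota\mapsto\hat{\iota}$ corresponds, under the bijection $\iota\mapsto E_{\iota}$, to a twist by some linear character $\epsilon\colon W_{\bG}(\bL)\to\Ql^{\times}$; that is, $E_{\hat{\iota}}=E_{\iota}\otimes\epsilon$. Indeed, applying $\mathcal{F}$ to \cref{eq:decomp-of-induced} transports the $\mathcal{A}$-module structure on $K_0$ to an $\mathcal{A}$-module structure on $\mathcal{F}(K_0)\cong A_0$, which must coincide with the natural action on $A_0$ (coming from $\End(K_0[-\dim\lie{z(l)}]|_{\overline{Y}\cap\mathcal{N}})$) up to a $\Ql$-algebra automorphism of $\mathcal{A}\cong\Ql W_{\bG}(\bL)$. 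The rigidity of the Lusztig basis $\{\theta_w\}$ of \cref{prop:algebra-basis}, together with $\mathcal{F}$ being an involution up to a Tate twist (c.f.\ \cref{lem:fourier-inverse}), forces this automorphism to be multiplication by a linear character $\epsilon$. Passing to isotypic components then yields $\mathcal{F}(K_{\iota})\cong K_{\iota'}$ with $E_{\iota'}=E_{\iota}\otimes\epsilon$, establishing the desired form of the involution.

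Second, I would establish a transitivity statement: for each intermediate Levi $\bL\leqslant\bM\leqslant\bG$, the linear character $\epsilon_{\bM}\colon W_{\bM}(\bL)\to\Ql^{\times}$ obtained by running the same construction inside $\bM$ coincides with the restriction $\epsilon|_{W_{\bM}(\bL)}$. This rests on the compatibility of parabolic induction with Fourier transform, obtained via base change and the projection formula applied to the induction diagram of \cref{pa:induction-diagram}, combined with the factorisation of two-step induction $\bL\leqslant\bM\leqslant\bG$. Finally, I specialise to $\bM=\bL_s$, the minimal proper intermediate Levi above $\bL$ corresponding to a simple reflection $s$ of $W_{\bG}(\bL)$; then $\mathscr{I}_{\bL_s}[\bL,\iota_0]$ has exactly two elements, so \cref{lem:bijection-not-id} shows the involution $\hat{\phantom{x}}_{\bL_s}$ is non-trivial. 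Consequently $\epsilon_{\bL_s}$ is non-trivial on $\langle s\rangle$, giving $\epsilon(s)=-1$. Since this holds for every simple reflection of the Coxeter group $W_{\bG}(\bL)$ and a linear character of a Coxeter group is determined by its values on simple reflections, $\epsilon=\sgn$, as required.

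The principal difficulty is the second step: making precise the compatibility between the Fourier-transform construction performed in $\bG$ and in an intermediate Levi $\bM$. This amounts to producing a two-step induction description of $K_0$ that is compatible with $\mathcal{F}$ under Lusztig's formalism of induction of local systems, and identifying the resulting $W_{\bM}(\bL)$-equivariant structure with the restriction of the $W_{\bG}(\bL)$-equivariant structure. The necessary functorial properties of the induction of local systems under Fourier transform are essentially those already invoked in the references of \cref{pa:G-induced-pair}, but they need to be applied with care to the intermediate Levi.
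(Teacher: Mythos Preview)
Your three-step strategy—showing the map is a twist by a linear character $\epsilon$, establishing transitivity with respect to intermediate Levis, and then pinning down $\epsilon(s)=-1$ on each simple reflection via \cref{lem:bijection-not-id}—is exactly Lusztig's argument in \cite[5.2--5.5]{lusztig:1992:a-unipotent-support}, which is precisely what the present paper invokes without reproducing (see the remark following the corollary that \S5 of \cite{lusztig:1992:a-unipotent-support} carries over unchanged via \cite{letellier:2005:fourier-transforms}). The only point to tighten is your justification in Step~1: that the induced automorphism of $\Ql W_{\bG}(\bL)$ is a linear twist does not follow formally from $\mathcal{F}$ being an involution, and in Lusztig's treatment it is obtained more concretely from how the basis elements $\theta_w$ behave under induction and Fourier transform.
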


\begin{pa}
\Cref{lem:bijection-not-id} can be proved as a consequence of the validity of \cref{thm:main-theorem-nilpotent} for a split Frobenius endomorphism $F$ just as in \cite[7.7]{lusztig:1992:a-unipotent-support}. Note that the statement of \cref{lem:bijection-not-id} does not depend upon the $\mathbb{F}_q$-structure, so we may use any Frobenius to prove it. We will not use \cref{lem:bijection-not-id} in any step towards the proof of \cref{thm:main-theorem-nilpotent} for split Frobenius endomorphisms. However, we will use the validity of \cref{lem:bijection-not-id}, more specifically \cref{lem:bijection-tensor-sign}, to prove \cref{thm:main-theorem-nilpotent} for twisted Frobenius endomorphisms. This is a fine needle to thread but the reader is free to check that the logic is sound.
\end{pa}

\begin{pa}
Reading carefully the rest of \cite[\S5]{lusztig:1992:a-unipotent-support} one sees that the entire discussion holds unchanged. One need only note that the results cited in \cite{lusztig:1987:fourier-transforms-on-a-semisimple-lie-algebra} are proved in \cite{letellier:2005:fourier-transforms} under our weaker assumption that $p$ is an acceptable prime for $\bG$, see in particular \cite[5.2.9, 5.2.10]{letellier:2005:fourier-transforms}.
\end{pa}

\section{\texorpdfstring{$\mathbb{F}_q$-Structures}{Fq-Structures}}
\begin{assumption}
From now on we assume that the orbit $[\bL,\iota_0]$ of $(\bL,\iota_0)$ is $F$-stable. Furthermore, we assume that $(\bL,\iota_0) \in \widetilde{\mathcal{W}}_{\bG}^F$ is $F$-stable and that the parabolic subgroup $\bP$ is $F$-stable. Finally we assume chosen an involutive automorphism $\overline{\phantom{x}} : \Ql \to \Ql$ which maps any root of unity to its inverse.
\end{assumption}

\subsection{Isomorphisms}

\begin{pa}\label{pa:iso-defining-prop}
We will now follow the path of \cite[\S6]{lusztig:1992:a-unipotent-support}. However, unlike \cite{lusztig:1992:a-unipotent-support} we will not assume that $F$ is a split Frobenius endomorphism. We will denote by $\varphi_0 : F^*\mathscr{E}_0 \to \mathscr{E}_0$ a fixed isomorphism chosen such that the induced isomorphism $(\mathscr{E}_0)_x \to (\mathscr{E}_0)_x$ at the stalk of any element $x \in \mathcal{O}_0^F$ has finite order. Recalling the notation from \cref{pa:induction-diagram} we have the choice of $\varphi_0$ naturally determines an isomorphism $\phi_0 : F^*K_0 \to K_0$ and hence a corresponding automorphism $\phi_0 : \mathscr{H}_x^i(K_0) \to\mathscr{H}_x^i(K_0)$ for any $x \in \overline{Y}^F$ and $i \in \mathbb{Z}$, see \cite[6.12]{taylor:2014:evaluating-characteristic-functions}. Taking the contragradient of $\varphi_0$ we obtain an isomorphism $\varphi_0^{\vee} : F^*\mathscr{E}_0^{\vee} \to \mathscr{E}_0^{\vee}$ which in turn induces isomorphisms $\phi_0^{\vee} : F^*K_0^{\vee} \to K_0^{\vee}$ and $\widetilde{\phi}_0^{\vee} : F^*\widetilde{K}_0^{\vee} \to \widetilde{K}_0^{\vee}$. These in turn induce automorphisms $\mathscr{H}_x^i(K_0^{\vee})$ and $\bH_c^i(\overline{Y}\cap\mathcal{N}\cap \Sigma,\widetilde{K}_0^{\vee})$ for any $x \in \overline{Y}^F$ and $i \in \mathbb{Z}$.
\end{pa}

\begin{pa}\label{pa:iso-preferred-extension}
The choice of parabolic subgroup $\bP$ naturally determines a set of Coxeter generators $\mathbb{S}$ of the relative Weyl group $W_{\bG}(\bL)$, see \cite[Proposition 1.12]{bonnafe:2004:actions-of-rel-Weyl-grps-I}. Furthermore, as both $\bL$ and $\bP$ are chosen to be $F$-stable we have $F$ induces an automorphism $W_{\bG}(\bL) \to W_{\bG}(\bL)$ which stabilises $\mathbb{S}$. For any irreducible $W_{\bG}(\bL)$-module $E$ we denote by $E_F$ the $\Ql W_{\bG}(\bL)$-module obtained from $E$ by twisting with $F^{-1}$. Now the module $E$ is $F$-stable if and only if there exists an isomorphism $E \to E_F$ of $\Ql W_{\bG}(\bL)$-modules. Furthermore, any choice of such an isomorphism naturally determines a $\Ql\widetilde{W}_{\bG}(\bL)$-module structure on $E$ where $\widetilde{W}_{\bG}(\bL) = W_{\bG}(\bL) \rtimes \langle F \rangle$.

Let $\iota \in \mathscr{I}[\bL,\iota_0]^F$ be an $F$-stable pair in the block then we wish to choose an isomorphism $\phi_{\iota} : F^*K_{\iota} \to K_{\iota}$. Recall that $E_{\iota} \in \Irr(W_{\bG}(\bL))^F$ denotes the corresponding simple module under \cref{eq:gen-spring-cor-F-fixed}. Following the construction in \cite[6.13]{taylor:2014:evaluating-characteristic-functions} we see that choosing an isomorphism $\phi_{\iota}$ is equivalent to choosing an isomorphism $\psi_{\iota} : E \to E_F$ of $\Ql W_{\bG}(\bL)$-modules. Thus we define $\phi_{\iota}$ by requiring that the action of $F^{-1}$, through $\psi_{\iota}$, makes $E_{\iota}$ Lusztig's preferred extension of $E_{\iota}$ as a $\Ql \widetilde{W}_{\bG}(\bL)$-module, c.f., \cite[17.2]{lusztig:1986:character-sheaves-IV}.
\end{pa}

\begin{pa}
From the isomorphism \cref{eq:K-restriction} we recover the local system $\mathscr{E}_{\iota}$ via the isomorphism
\begin{equation*}
\mathscr{H}^{-\dim\mathcal{O}_{\iota} - \dim \lie{z(l)}}(K_{\iota})|_{\mathcal{O}_{\iota}} \cong \mathscr{E}_{\iota}.
\end{equation*}
Through this we see that $\phi_{\iota}$ determines an isomorphism $F^*\mathscr{E}_{\iota} \to \mathscr{E}_{\iota}$ which is of the form $q^{b_{\iota}/2}\varphi_{\iota}$ where
\begin{equation}
\begin{aligned}
b_{\iota} &= -\dim\mathcal{O}_{\iota} - \dim\lie{z(l)} + \dim Y\\
&= (\dim\bG - \dim\mathcal{O}_{\iota}) - (\dim\bL - \dim\mathcal{O}_0)
\end{aligned}
\end{equation}
and $\varphi_{\iota} : F^*\mathscr{E}_{\iota} \to \mathscr{E}_{\iota}$ is an isomorphism which induces an automorphism $\varphi_{\iota} : (\mathscr{E}_{\iota})_x \to (\mathscr{E}_{\iota})_x$ of finite order for any $x \in \mathcal{O}_{\iota}^F$, see \cite[24.2.4]{lusztig:1984:intersection-cohomology-complexes}. For convenience we will also define the value
\begin{equation}
a_{\iota} = -\dim\mathcal{O}_{\iota} - \dim Z^{\circ}(\bL)
\end{equation}
for any $\iota \in \mathscr{I}[\bL,\iota_0]$.
\end{pa}

\begin{pa}\label{pa:twisted-levis}
For $w \in W_{\bG}(\bL)$ we consider an $F$-stable Levi subgroup $\bL_w = g\bL g^{-1}$, where $g \in \bG$ is such that $g^{-1}F(g) = \dot{w}^{-1} \in N_{\bG}(\bL)$ is a representative of $w^{-1} \in W_{\bG}(\bL)$. Conjugating the cuspidal pair $\iota_0$ we also obtain a corresponding $F$-stable cuspidal pair $(\mathcal{O}_w,\mathscr{E}_w) \in \mathcal{V}_{\bL_w}^0$ where
\begin{equation*}
\mathcal{O}_w = {}^g\mathcal{O}_0 \qquad\qquad \mathscr{E}_w = (\Ad g^{-1})^*\mathscr{E}_0.
\end{equation*}
We now obtain complexes $K_w$, $\widetilde{K}_w$ and $K_w^{\vee}$ from $(\bL_w,\mathcal{O}_w,\mathscr{E}_w)$ just as $K_0$, $\widetilde{K}_0$ and $K_0^{\vee}$ were obtained from $(\bL,\mathcal{O}_0,\mathscr{E}_0)$. As in \cite[6.9]{taylor:2014:evaluating-characteristic-functions} using Lusztig's basis element $\theta_w$ of $\End(\mathscr{E}_0)$, c.f., \cref{prop:algebra-basis}, we see that the fixed isomorphism $\varphi_0 : F^*\mathscr{E}_0 \to \mathscr{E}_0$ determines an isomorphism $\varphi_w : F^*\mathscr{E}_w \to \mathscr{E}_w$. In turn, this induces isomorphisms
\begin{equation*}
\phi_w : F^*K_w \to K_w \qquad \phi_w^{\vee} : F^*K_w^{\vee} \to K_w^{\vee} \qquad \widetilde{\phi}_w : F^*\widetilde{K}_w \to \widetilde{K}_w,
\end{equation*}
where $\phi_w^{\vee}$ is the contragradient of $\phi_w$, see \cite[6.12]{taylor:2014:evaluating-characteristic-functions}.
\end{pa}

\begin{rem}\label{rem:digne-lehrer-michel}
In light of \cref{lem:bijection-tensor-sign,pa:iso-preferred-extension} we would like to recall the following property of Lusztig's preferred extension as observed by Digne--Lehrer--Michel in \cite[Remark 3.6]{digne-lehrer-michel:2003:space-of-unipotently-supported}. For any pair $\iota \in \mathscr{I}[\bL,\iota_0]^F$ there exists a unique sign $\varepsilon_{\iota} \in \{\pm 1\}$ such that
\begin{equation*}
\Tr(wF,E_{\hat{\iota}}) = \varepsilon_{\iota}\sgn(w)\Tr(wF,E_{\iota})
\end{equation*}
for all $w \in W_{\bG}(\bL)$. Note that here we are using the characterisation of the map $\iota\mapsto \hat{\iota}$ given in \cref{lem:bijection-tensor-sign}.
\end{rem}

\subsection{Lusztig's Algorithm}
\begin{pa}
We now define functions $Y_{\iota}, X_{\iota}, \widetilde{X}_{\iota} : \lie{g}^F \to \Ql$ by setting
\begin{align*}
Y_{\iota}(y) &= \Tr(\varphi_{\iota},(\mathscr{E}_{\iota})_y)\\
X_{\iota}(x) &= \sum_{i \in \mathbb{Z}}(-1)^i\Tr(\phi_{\iota}, \mathscr{H}_x^i(K_{\iota}))\\
\widetilde{X}_{\iota}(x) &= \sum_{i \in \mathbb{Z}}(-1)^i\Tr(\phi_{\iota}^{-1}, \mathscr{H}_x^i(K_{\iota}))
\end{align*}
if $y \in \mathcal{O}_{\iota}^F$, $x \in \overline{\mathcal{O}_{\iota}}^F$ and 0 otherwise. For any $x \in \lie{g}$ we denote by $A_{\bG}(x)$ the component group $C_{\bG}(x)/C_{\bG}^{\circ}(x)$ of the centraliser. With this notation we have the following lemma.
\end{pa}

\begin{lem}\label{lem:orthogonality-relations}
Let $y = y_1,\dots,y_m$ be a set of representatives for the orbits of $G$ acting on $\mathcal{O}_{\iota}^F$ then the following orthogonality relations hold:
\begin{equation*}
\begin{aligned}
\sum_{i=1}^m [A_{\bG}(y) : A_{\bG}(y_i)^F]Y_{\iota}(y_i)\overline{Y_{\iota'}(y_i)} &= |A_{\bG}(y)|\delta_{\iota',\iota}\\
\sum_{\substack{\iota' \in \mathcal{V}_{\bG}\\\mathcal{O}_{\iota'} = \mathcal{O}_{\iota}}} Y_{\iota'}(y_i)\overline{Y_{\iota'}(y_j)} &= |A_{\bG}(y_i)^F|\delta_{i,j}.
\end{aligned}
\end{equation*}
Here $\delta_{\iota',\iota}$ and $\delta_{i,j}$ denote the Kronecker delta.
\end{lem}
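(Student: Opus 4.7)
The plan is to reduce both identities to classical orthogonality relations for irreducible characters of a finite group evaluated on a coset. Fix the base point $y = y_1 \in \mathcal{O}_{\iota}^F$ and set $A := A_{\bG}(y)$. Under the standard equivalence between $\bG$-equivariant irreducible local systems on $\mathcal{O}_{\iota} = \bG\cdot y$ and irreducible representations of $A$, the sheaf $\mathscr{E}_{\iota}$ corresponds to an irreducible representation $\rho_{\iota}$ of $A$, and $F$-stability of $\iota$ corresponds to $F$-stability of $\rho_{\iota}$ (where $F$ acts on $A$ through its action on $C_{\bG}(y)$). The normalisation recalled in \cref{pa:iso-defining-prop}, that the stalk automorphisms induced by $\varphi_{\iota}$ at $F$-fixed points have finite order, then pins down a specific extension $\tilde{\rho}_{\iota}$ of $\rho_{\iota}$ to an irreducible representation of the semidirect product $\hat{A} := A\rtimes\langle F\rangle$.

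Next recall that by Lang--Steinberg the $G$-orbits on $\mathcal{O}_{\iota}^F$ are in bijection with the $F$-conjugacy classes of $A$: each representative $y_i = g_i\cdot y$ corresponds to the image $a_i \in A$ of $g_i^{-1}F(g_i)\in C_{\bG}(y)$. Under this bijection $A_{\bG}(y_i)^F$ is identified with the $F$-twisted centraliser $\{b\in A \mid b^{-1}a_iF(b) = a_i\}$, and its index in $A$ equals the cardinality of the $F$-conjugacy class of $a_i$. Transporting stalks via $g_i$ identifies $(\mathscr{E}_{\iota})_{y_i}$ with the representation space of $\rho_{\iota}$ in such a way that $\varphi_{\iota}$ acts as $\tilde{\rho}_{\iota}(a_iF)$, and so
\begin{equation*}
Y_{\iota}(y_i) \;=\; \Tr\bigl(\tilde{\rho}_{\iota}(a_iF)\bigr) \;=\; \chi_{\iota}(a_iF),
\end{equation*}
where $\chi_{\iota}$ denotes the ordinary character of $\tilde{\rho}_{\iota}$ on $\hat{A}$.

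With this translation, grouping the sum over $A$ by $F$-conjugacy classes rewrites the first relation as
\begin{equation*}
\sum_{a\in A}\chi_{\iota}(aF)\,\overline{\chi_{\iota'}(aF)} \;=\; |A|\,\delta_{\iota,\iota'},
\end{equation*}
and the second becomes the analogous column identity on the coset $AF\subseteq\hat{A}$. Both are instances of the classical orthogonality relations for the irreducible characters of $\hat{A}$: distinct $F$-stable irreducibles of $A$ extend to non-isomorphic irreducibles of $\hat{A}$, the alternative extensions of a single $\rho_{\iota}$ differ by characters of $\hat{A}/A$ whose contributions cancel in the coset sum, and so the usual row/column orthogonality restrict to orthogonality on $AF$. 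The only non-routine step is bookkeeping the compatibility between the chosen $\varphi_{\iota}$ and the representation-theoretic data: one must check that the finite-order condition on stalks truly selects a genuine extension $\tilde{\rho}_{\iota}$ to $\hat{A}$ and that transport via $g_i$ matches $\varphi_{\iota}$ with $\tilde{\rho}_{\iota}(a_iF)$. Once this is in place the two stated identities are formal consequences of finite group character theory.
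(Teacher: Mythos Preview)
Your proposal is correct and follows essentially the same route as the paper: parametrise the $G$-orbits on $\mathcal{O}_\iota^F$ by $F$-conjugacy classes of $A = A_{\bG}(y)$, identify $Y_\iota(y_i)$ with a trace on the coset $AF \subset A \rtimes \langle F\rangle$, and invoke the coset orthogonality relations of Digne--Michel. The one point where the paper is more precise than your sketch is exactly the step you flag as non-routine: rather than asserting that the chosen $\varphi_\iota$ \emph{is} the action of $a_iF$ in a genuine extension $\tilde\rho_\iota$, the paper cites Shoji to produce an isomorphism $\psi_\iota$ with that property and observes that $\varphi_\iota = \xi_\iota\psi_\iota$ for some root of unity $\xi_\iota$; the scalars $\xi_\iota\xi_{\iota'}^{-1}$ then appear in front of the coset sums but are harmless since they are constant in $a$ and of modulus one. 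Your formulation absorbs $\xi_\iota$ into the choice of extension, which is fine provided one is careful about what ``extension'' means when $\langle F\rangle$ is treated as infinite cyclic.
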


\begin{proof}
Let $y \in \mathcal{O}_{\iota}^F$ be a fixed class representative and denote by $H^1(F,A_{\bG}(y))$ the $F$-conjugacy classes of the component group. Then we may realise the set of representatives $\{y_1,\dots,y_m\}$ as the set of $y_a = gyg^{-1}$ for every $a \in H^1(F,A_{\bG}(y))$, where $g^{-1}F(g) \in C_{\bG}(y)$ is a representative of $a \in A_{\bG}(y)$. From the definition we see that $A_{\bG}(y_a)^F$ is naturally isomorphic to the $F$-centraliser $C_{A_{\bG}(y),F}(a) = \{b \in A_{\bG}(y) \mid b^{-1}aF(b) = a\}$. Let us denote by $\chi_{\iota} \in \Irr(A_{\bG}(u))$ the irreducible character corresponding to the local system $\mathscr{E}_{\iota}$. Then $\chi_{\iota}$ is $F$-stable, as $\mathscr{E}_{\iota}$ is $F$-stable, and we can choose an extension $\widetilde{\chi}_{\iota}$ to the semidirect product $A_{\bG}(y) \rtimes \langle F \rangle$. According to \cite[1.3]{shoji:2006:generalized-green-functions-I} there exists an isomorphism $\psi_{\iota} : F^*\mathscr{E}_{\iota} \to \mathscr{E}_{\iota}$ and a scalar $\xi_{\iota} \in \Ql^{\times}$ such that $\varphi_{\iota} = \xi_{\iota}\psi_{\iota}$ and
\begin{equation*}
\widetilde{\chi}_{\iota}(aF) = \Tr(\psi_{\iota}, (\mathscr{E}_{\iota})_{y_a})
\end{equation*}
for all $a \in A_{\bG}(y)$. In fact, the isomorphism $\psi_{\iota}$ induces a finite order automorphism on the stalk $(\mathscr{E}_{\iota})_y$. As $\varphi_{\iota}$ also has this property we must have $\xi_{\iota}^m = 1$ for some $m \geqslant 1$, hence $\xi_{\iota}$ is a root of unity. In particular, we have
\begin{equation*}
Y_{\iota}(y_a)\overline{Y_{\iota'}(y_a)} = \xi_{\iota}\xi_{\iota'}^{-1}\widetilde{\chi}_{\iota}(aF)\overline{\widetilde{\chi}_{\iota'}(aF)}
\end{equation*}
for all $a \in A_{\bG}(y)$ and so the result follows from the orthogonality relations of cosets, see \cite[II, Corollaire 2.10]{digne-michel:1985:fonctions-L-des-varietes} and \cite[8.14]{isaacs:2006:character-theory-of-finite-groups}.
\end{proof}

\begin{pa}\label{pa:basis-Ys}
The set of functions $\{Y_{\iota} \mid \iota \in \mathcal{V}_{\bG}^F\}$ forms a basis for the space $\Centn(\lie{g}^F)$ of $\Ad G$-invariant functions $\lie{g}^F \to \Ql$ which are supported on $\mathcal{N}^F$, see \cite[24.2.7]{lusztig:1986:character-sheaves-V}. In particular, for any two pairs $\iota,\iota' \in \mathcal{V}_{\bG}^F$ there exists a scalar $P_{\iota',\iota} \in \Ql$ such that
\begin{equation*}
X_{\iota} = \sum_{\iota'\in \mathcal{V}_{\bG}^F} P_{\iota',\iota}Y_{\iota'}.
\end{equation*}
By the definition of the functions $X_{\iota}$ and $Y_{\iota'}$ we see that
\begin{equation}\label{eq:P-conditon}
P_{\iota',\iota} = \begin{cases}
1 &\text{if }\iota' = \iota,\\
0 &\text{if }\mathcal{O}_{\iota'} \not\subseteq \overline{\mathcal{O}_{\iota}} \text{ or if }\mathcal{O}_{\iota'} = \mathcal{O}_{\iota} \text{ and }\iota\neq\iota'.
\end{cases}
\end{equation}
for any $\iota',\iota \in \mathcal{V}_{\bG}^F$, see also \cite[24.2.10, 24.2.11]{lusztig:1984:intersection-cohomology-complexes}. We now also define scalars $\lambda_{\iota',\iota} \in \Ql$ by setting
\begin{equation*}
\lambda_{\iota,\iota'} = \lambda_{\iota',\iota} = \sum_{y \in \mathcal{N}^F} Y_{\iota}(y)\overline{Y_{\iota'}(y)},
\end{equation*}
which are integers by \cref{lem:orthogonality-relations}. Moreover, from the definition of the functions $Y_{\iota}$ it is clear that we have
\begin{equation}\label{eq:Lambda-condition}
\lambda_{\iota',\iota} = 0\text{ if }\mathcal{O}_{\iota'} \neq \mathcal{O}_{\iota}.
\end{equation}
\end{pa}

\begin{pa}
For any $w \in W_{\bG}(\bL)$ let $\bL_w$ be as in \cref{pa:twisted-levis} then for any $\iota',\iota \in \mathscr{I}[\bL,\iota_0]^F$ we set
\begin{equation*}
\omega_{\iota,\iota'} = \omega_{\iota',\iota} = q^{-\dim\bG - (a_{\iota}+a_{\iota'})/2}\frac{1}{|W_{\bG}(\bL)|}\sum_{w \in W_{\bG}(\bL)} \Tr(wF,E_{\iota})\Tr(wF,E_{\iota'})\frac{|\bG^F|}{|Z^{\circ}(\bL_w)^F|}.
\end{equation*}
Here $Z^{\circ}(\bL_w)$ denotes the connected centre of $\bL_w$ and $F$ is acting on $E_{\iota}$ via the isomorphism $\psi_{\iota}^{-1}$, similarly for $E_{\iota'}$, c.f., \cref{pa:iso-preferred-extension}. If either $\iota$ or $\iota'$ are not contained in the block $\mathscr{I}[\bL,\iota_0]$ then we set $\omega_{\iota,\iota'} = 0$. Our definition of the term $\omega_{\iota,\iota'}$ is slightly different to that given in \cite[24.3.4]{lusztig:1986:character-sheaves-V}. However, one sees that these definitions are equivalent by \cite[24.2.1]{lusztig:1986:character-sheaves-V}. With this we have the following theorem of Lusztig.
\end{pa}

\begin{thm}[{}{Lusztig, \cite[Theorem 24.4]{lusztig:1986:character-sheaves-V}}]\label{thm:gen-green-funcs}
Let us denote by $P$, $\Lambda$ and $\Omega$ the matrices $(P_{\iota',\iota})$, $(\lambda_{\iota',\iota})$ and $(\omega_{\iota',\iota})$ respectively. Then the entries of the matrices $P$ and $\Lambda$ are the unique solution to the system of equations given by $P\Lambda P^T = \Omega$, \cref{eq:P-conditon} and \cref{eq:Lambda-condition}. Furthermore, we have:
\begin{enumerate}
	\item $P$, $\Lambda$ and $\Omega$ are non-singular integer valued matrices,
	\item $P_{\iota',\iota} = \lambda_{\iota',\iota} = 0$ if $\iota',\iota \in \mathcal{V}_{\bG}^F$ lie in different blocks.
\end{enumerate}
\end{thm}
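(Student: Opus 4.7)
The plan is to follow Lusztig's original argument in \cite{lusztig:1986:character-sheaves-V}, which recasts the problem as a double counting of an inner product on $\Centn(\lie{g}^F)$. Since the main inputs---the decomposition in \cref{eq:decomp-of-induced}, the identification \cref{eq:K-restriction}, \cref{prop:algebra-basis}, and the orthogonality of the $Y_\iota$---have all been established above under our assumption that $p$ is acceptable, the argument transfers essentially verbatim. I now describe the main steps.

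First I would introduce the generalised Green functions $Q_w = \chi_{\widetilde{K}_w,\widetilde{\phi}_w}|_{\mathcal{N}^F}$ attached to the twisted Levi $(\bL_w,\mathcal{O}_w,\mathscr{E}_w)$ of \cref{pa:twisted-levis}, and identify them using the Lefschetz trace formula applied to the induction diagram of \cref{pa:induction-diagram}. After passing to the isotypic decomposition afforded by \cref{prop:algebra-basis}, the characteristic function of $K_\iota|_{\overline{Y}\cap\mathcal{N}}$ is, up to the normalising power of $q$ recording the shift $b_\iota$, precisely $X_\iota$. Comparing these two computations yields the key identity
\begin{equation*}
\sum_{y \in \mathcal{N}^F} X_\iota(y)\overline{X_{\iota'}(y)} = \omega_{\iota,\iota'},
\end{equation*}
where the factor $|\bG^F|/|Z^\circ(\bL_w)^F|$ appearing in $\omega_{\iota,\iota'}$ arises from counting points of the ``parameter variety'' $\{g\bL_w \mid \Ad g^{-1}(x) \in \Pi_{\reg}\}$ in the Lefschetz formula, while the trace $\Tr(wF,E_\iota)\Tr(wF,E_{\iota'})$ comes from our choice of preferred extension in \cref{pa:iso-preferred-extension}.

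Next, I would expand the left-hand side of the above identity using the definition $X_\iota = \sum_{\iota''} P_{\iota'',\iota}Y_{\iota''}$ and the orthogonality statement in \cref{lem:orthogonality-relations}. A direct calculation gives
\begin{equation*}
\sum_{y \in \mathcal{N}^F} X_\iota(y)\overline{X_{\iota'}(y)} = \sum_{\iota'',\iota'''} P_{\iota'',\iota}\,\lambda_{\iota'',\iota'''}\,\overline{P_{\iota''',\iota'}},
\end{equation*}
and using that the matrix $P$ has entries satisfying \cref{eq:P-conditon}, together with the symmetry $\lambda_{\iota'',\iota'''} = \lambda_{\iota''',\iota''}$ and a check that the entries are integers (hence self-conjugate), the right-hand side equals $(P\Lambda P^T)_{\iota',\iota}$. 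Thus $P\Lambda P^T = \Omega$.

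Uniqueness of the solution is then proved by the standard triangular induction: order the pairs $\iota$ by the closure order on orbits $\mathcal{O}_\iota$, refining arbitrarily within a fixed orbit. The conditions \cref{eq:P-conditon} force $P$ to be unipotent-block-upper-triangular, and \cref{eq:Lambda-condition} forces $\Lambda$ to be block-diagonal across orbits; a straightforward inductive argument recovers the entries of $\Lambda$ on the diagonal orbit-blocks and the entries of $P$ off the diagonal one closure at a time. Non-singularity of $P$ is immediate from \cref{eq:P-conditon}, non-singularity of $\Lambda$ is immediate from the first orthogonality relation of \cref{lem:orthogonality-relations}, and non-singularity of $\Omega = P\Lambda P^T$ follows. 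For the block-vanishing statement (ii), the key observation is that $\omega_{\iota',\iota} = 0$ whenever $\iota',\iota$ lie in distinct blocks: the sum defining $\omega_{\iota',\iota}$ is supported on a single block because the induction diagram for $(\bL,\iota_0)$ produces a complex with summands only in $\mathscr{I}[\bL,\iota_0]$. The triangular structure of the solution then propagates this vanishing to $P$ and $\Lambda$. That integrality of $P$ and $\Lambda$ holds is the subtlest input and rests on the fact that $\Omega$ has integer entries together with the cleanness of cuspidal local systems; the main obstacle here is simply to verify that all the ingredients from \cite{lusztig:1986:character-sheaves-V} which were originally stated under a largeness hypothesis on $p$ remain valid under our weaker hypothesis that $p$ is acceptable, which is guaranteed by \cite{letellier:2005:fourier-transforms} (see in particular \cite[5.2.9, 5.2.10]{letellier:2005:fourier-transforms}) and by Shoji's results in \cite{shoji:1996:on-the-computation} when $Z(\bG)$ is connected.
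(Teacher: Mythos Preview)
The paper does not give a proof of this theorem at all: it is stated as a citation of Lusztig's result \cite[Theorem 24.4]{lusztig:1986:character-sheaves-V} and then used as a black box. So there is no ``paper's own proof'' to compare your sketch against.

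That said, your outline is broadly faithful to Lusztig's original argument in \cite[\S24]{lusztig:1986:character-sheaves-V}: one computes $\sum_{y\in\mathcal{N}^F} X_\iota(y)\overline{X_{\iota'}(y)}$ in two ways, once via the basis change to the $Y_{\iota'}$ (giving $P\Lambda P^T$) and once via the decomposition of the characteristic functions of the induced complexes $K_w$ (giving $\Omega$), and uniqueness follows by an inductive unravelling of the triangular system. A couple of inaccuracies are worth flagging. First, your identification of the integrality argument with ``cleanness of cuspidal local systems'' is not right: cleanness plays no role here, and the integrality of $P$ and $\Lambda$ follows by induction from the integrality of $\Omega$ together with the unipotent-triangular shape of $P$. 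Second, your appeal to Shoji's results \cite{shoji:1996:on-the-computation} is misplaced: those concern the comparison of almost characters and characteristic functions of character sheaves, and are not needed for the purely combinatorial algorithm encoded in \cref{thm:gen-green-funcs}. Lusztig's argument in \cite[\S24]{lusztig:1986:character-sheaves-V} already works in good characteristic without restriction on $q$; the point of the present paper is that the \emph{subsequent} applications (Lusztig's formula for GGGRs, wave front sets) require additional ingredients that historically carried characteristic restrictions.
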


\begin{pa}
Recall that on the character group $X(\bT_0)$ of our chosen maximal torus the Frobenius endomorphism factors as $q\tau$ where $\tau$ is a finite order automorphism, c.f., \cref{thm:isogeny-theorem}. For any $n \geqslant 1$ the endomorphism $q^n\tau$ of $X(\bT_0)$ lifts to a Frobenius endomorphism $F' : \bG \to \bG$ which determines an $\mathbb{F}_{q^n}$-rational structure of $\bG$, c.f., \cref{thm:isogeny-theorem}. Replacing $q$ by $q^n$ in the entries for $P$ we see that we obtain the corresponding matrix for $\bG^{F'}$. In this way we may view the entries of $P$ as polynomials in a single variable, say $\mathbf{q}$. We now denote by $P^{\star} = (P_{\iota',\iota}^{\star})$ the rational valued matrix obtained from $P$ by evaluating $\mathbf{q}$ at $q^{-1}$. This matrix is such that
\begin{equation}
\widetilde{X}_{\iota}(y) = \sum_{\iota' \in \mathcal{V}_{\bG}^F} P_{\iota',\iota}^{\star}\overline{Y_{\iota'}(y)}
\end{equation}
for all $\iota \in \mathcal{V}_{\bG}^F$ and $y \in \mathcal{N}^F$.
\end{pa}

\begin{pa}
Let us denote by $Q = (Q_{\iota',\iota})$, $\widetilde{\Lambda} = (\widetilde{\lambda}_{\iota',\iota})$ and $\widetilde{\Omega} = (\widetilde{\omega}_{\iota',\iota})$ the inverse matrices to $P$, $\Lambda$ and $\Omega$ respectively, c.f., \cref{thm:gen-green-funcs}. It is clear that we have
\begin{gather}
Q\widetilde{\Lambda}Q^T = \widetilde{\Omega}\label{eq:inv-mat-prod}\\
Y_{\iota} = \sum_{\iota' \in \mathcal{V}_{\bG}^F} Q_{\iota',\iota}X_{\iota'}
\end{gather}
for all $\iota \in \mathcal{V}_{\bG}^F$. Using the coset orthogonality relations for finite groups we deduce that
\begin{equation}\label{eq:omega-inv}
\widetilde{\omega}_{\iota,\iota'} =  q^{\dim\bG + (a_{\iota}+a_{\iota'})/2}\frac{1}{|W_{\bG}(\bL)|}\sum_{w \in W_{\bG}(\bL)} \Tr(wF,E_{\iota})\Tr(wF,E_{\iota'})\frac{|Z^{\circ}(\bL_w)^F|}{|\bG^F|},
\end{equation}
if $\iota$, $\iota' \in \mathscr{I}[\bL,\iota_0]^F$, see also \cite[Lemma 5.1]{digne-lehrer-michel:2003:space-of-unipotently-supported}. Clearly $\widetilde{\omega}_{\iota,\iota'} = 0$ if $\iota,\iota' \in \mathcal{V}_{\bG}^F$ are not contained in the same block.
\end{pa}

\begin{assumption}
From this point forward we assume that $-e^{\dag} \in \overline{Y}$.
\end{assumption}

\subsection{Formula for the Fourier Transform of the GGGR}
\begin{pa}
Let $\bL_w$ for some $w \in W_{\bG}(\bL)$ be as in \cref{pa:twisted-levis} then by the argument in \cite[10.6, 25.6.3]{lusztig:1985:character-sheaves}, see also \cite[6.15]{taylor:2014:evaluating-characteristic-functions} and \cite[6.9]{lusztig:1992:a-unipotent-support}, we have the following equalities
\begin{align}
\chi_{K_w,\phi_w}(y) = \chi_{K_0,\theta_w\circ\phi_0}(y) &= \sum_{\iota \in \mathscr{I}[\bL,\iota_0]^F} q^{b_{\iota}}\Tr(wF,E_{\iota})X_{\iota}(y),\label{eq:twist-X}\\
\chi_{K_w,\phi_w^{-1}}(y) = \chi_{K_0,\theta_w\circ\phi_0^{-1}}(y) &= \sum_{\iota \in \mathscr{I}[\bL,\iota_0]^F} q^{-b_{\iota}}\Tr(wF,E_{\iota})\widetilde{X}_{\iota}(y),\label{eq:twist-X-inv}\\
\chi_{K_w,\phi_w^{\vee}}(y) = \chi_{K_0,\theta_w\circ\phi_0^{\vee}}(y) &= \sum_{\iota \in \mathscr{I}[\bL,\iota_0]^F} q^{b_{\iota}}\Tr(wF,E_{\iota})\overline{X_{\iota}(y)},\label{eq:twist-X-vee}
\end{align}
for any $y \in (\overline{Y}\cap\mathcal{N})^F$.
\end{pa}

\begin{pa}
According to \cite[6.10(a)]{lusztig:1992:a-unipotent-support} we have, by Grothendieck's trace formula, that
\begin{equation}\label{eq:grothendieck-trace}
\sum_{i \in \mathbb{Z}}(-1)^i\Tr(\widetilde{\phi}_w^{\vee},\bH_c^i(\overline{Y}\cap\mathcal{N}\cap \Sigma,\widetilde{K}_w^{\vee})) = \sum_{x \in (\overline{Y}\cap\mathcal{N}\cap\Sigma)^F}\sum_{i \in \mathbb{Z}}(-1)^i\Tr(\phi_w^{\vee},\mathscr{H}_x^i(K_w^{\vee})).
\end{equation}
Using \cref{eq:poincare-duality-II} we get
\begin{equation}\label{eq:application-poincare}
\Tr(\widetilde{\phi}_w^{\vee},\bH_c^{2d_{\iota_0} - i}(\overline{Y}\cap\mathcal{N}\cap \Sigma,\widetilde{K}_w^{\vee})) = q^{d_{\iota_0}}\Tr(\phi_w^{-1},\mathscr{H}_{-e^{\dag}}^i(K_w))
\end{equation}
where $d_{\iota_0}$ is the integer in (iii) of \cref{lem:bumper-facts}. Now, combining \cref{eq:grothendieck-trace,eq:application-poincare} we obtain
\begin{equation*}
q^{d_{\iota_0}}\sum_{i \in \mathbb{Z}}(-1)^i\Tr(\phi_w^{-1},\mathscr{H}_{-e^{\dag}}^i(K_w)) = \sum_{x \in (\overline{Y}\cap\mathcal{N}\cap\Sigma)^F}\sum_{i \in \mathbb{Z}}(-1)^i\Tr(\phi_w^{\vee},\mathscr{H}_x^i(K_w^{\vee}))
\end{equation*}
and applying \cref{eq:twist-X-inv,eq:twist-X-vee} to this equality we obtain
\begin{equation*}
\sum_{\iota \in \mathscr{I}[\bL,\iota_0]^F} q^{d_{\iota_0}-b_{\iota}/2}\widetilde{X}_{\iota}(-e^{\dag})f_{\iota} = \sum_{x \in (\overline{Y}\cap\mathcal{N}\cap\Sigma)^F}\sum_{\iota \in \mathscr{I}[\bL,\iota_0]^F} q^{b_{\iota}/2}\overline{X_{\iota}(x)}f_{\iota},
\end{equation*}
where $f_{\iota} : W_{\bG}(\bL) \to \Ql$ is defined by $f_{\iota}(w) = \Tr(wF,E_{\iota})$ for all $w \in W_{\bG}(\bL)$. Note that the set of functions $\{f_{\iota} \mid \iota \in \mathscr{I}[\bL,\iota_0]^F\}$ is linearly independent hence we get
\begin{equation*}
q^{d_{\iota_0}-b_{\iota}/2}\widetilde{X}_{\iota}(-e^{\dag}) = \sum_{x \in (\overline{Y}\cap\mathcal{N}\cap\Sigma)^F}q^{b_{\iota}/2}\overline{X_{\iota}(x)}.
\end{equation*}
Rewriting the $X_{\iota}$'s in terms of the $Y_{\iota}$'s and conjugating by $\overline{\phantom{x}}$ we have
\begin{equation}\label{eq:sum-Ys}
q^{d_{\iota_0}-b_{\iota}}\sum_{\iota' \in \mathcal{V}_{\bG}^F}P_{\iota',\iota}^{\star}Y_{\iota'}(-e^{\dag}) = \sum_{x \in (\overline{Y}\cap\mathcal{N}\cap\Sigma)^F}\sum_{\iota' \in \mathcal{V}_{\bG}^F}P_{\iota',\iota}Y_{\iota'}(x).
\end{equation}
Note that $P_{\iota',\iota}^{\star}$ and $P_{\iota',\iota}$ are both rational and neither sum on the right hand side depends on $\iota$.

Finally, as $Q$ is the inverse to $P$ we have $\sum_{\iota \in \mathscr{I}[\bL,\iota_0]^F} P_{\iota',\iota}Q_{\iota,\iota''} = \delta_{\iota',\iota''}$ (the Kronecker delta) by \cref{eq:P-conditon} for any $\iota' \in \mathcal{V}_{\bG}^F$ and $\iota'' \in \mathscr{I}[\bL,\iota_0]^F$. Thus, multiplying both sides of \cref{eq:sum-Ys} by $\sum_{\iota \in \mathscr{I}[\bL,\iota_0]^F} Q_{\iota,\iota''}$ we obtain
\begin{equation}\label{eq:sum-over-double-inter}
\sum_{x \in (\overline{Y}\cap\mathcal{N}\cap\Sigma)^F}Y_{\iota''}(x) = \sum_{\iota,\iota' \in \mathscr{I}[\bL,\iota_0]^F}q^{d_{\iota_0}-b_{\iota}}P_{\iota',\iota}^{\star}Q_{\iota,\iota''}Y_{\iota'}(-e^{\dag}),
\end{equation}
which holds for any $\iota'' \in \mathscr{I}[\bL,\iota_0]^F$. As is pointed out in \cite[6.9]{lusztig:1992:a-unipotent-support} we have used here the assumption that $-e^{\dag} \in \overline{Y}$ but in fact this equality holds regardless as both sides are zero when $-e^{\dag} \not\in \overline{Y}$.
\end{pa}

\begin{pa}
By \cref{pa:basis-Ys} we see there exist unique scalars $\alpha_{\iota_1}, \beta_{\iota_1} \in \Ql$, for $\iota_1 \in \mathcal{V}_{\bG}^F$, such that
\begin{equation*}
\mathcal{F}(\boldsymbol\Gamma_u)|_{\mathcal{N}^F} = \sum_{\iota_1 \in \mathcal{V}_{\bG}^F} \alpha_{\iota_1}Y_{\iota_1} = \sum_{\iota_1 \in \mathcal{V}_{\bG}^F} \beta_{\iota_1}X_{\iota_1}.
\end{equation*}
Our goal is to now try and determine these scalars. Multiplying $\mathcal{F}(\boldsymbol\Gamma_u)|_{\mathcal{N}^F}$ by the complex conjugate of $Y_{\iota_2}$ and summing over $\mathcal{N}^F$, then inverting the matrix $\Lambda$, we get
\begin{equation*}
\alpha_{\iota_1} = \sum_{\iota_2 \in \mathscr{I}[\bL,\iota_0]^F}\widetilde{\lambda}_{\iota_2,\iota_1}\sum_{x \in \mathcal{N}^F}\widehat{\boldsymbol\Gamma}_u(x)\overline{Y_{\iota_2}(x)}
\end{equation*}
for any $\iota_1 \in \mathcal{V}_{\bG}^F$, c.f., \cref{pa:basis-Ys}. Applying \cref{prop:fourier-GGGR} we may rewrite this as
\begin{align*}
\alpha_{\iota_1} &= \sum_{\iota_2 \in \mathscr{I}[\bL,\iota_0]^F}\widetilde{\lambda}_{\iota_2,\iota_1}\sum_{x \in \mathcal{N}^F}q^{r_u}|\{g \in G \mid \Ad g(x) \in \Sigma\}|\overline{Y_{\iota_2}(x)}\\
&= q^{r_u}|G|\sum_{\iota_2 \in \mathscr{I}[\bL,\iota_0]^F}\widetilde{\lambda}_{\iota_2,\iota_1}\sum_{x \in \mathcal{N}^F\cap\Sigma}\overline{Y_{\iota_2}(x)}
\intertext{where $r_u$ is as in \cref{prop:fourier-GGGR}. Now if $\widetilde{\lambda}_{\iota_2,\iota_1} \neq 0$ then we must have $\iota_1$ and $\iota_2$ are in the same block but if this is the case then $Y_{\iota_2}(x) \neq 0$ implies $x \in \overline{Y}$. So the right most sum can be taken over $(\overline{Y} \cap \mathcal{N} \cap \Sigma)^F$. Thus, using \cref{eq:sum-over-double-inter} we get}
\alpha_{\iota_1} &= |G|\sum_{\iota_2,\iota,\iota' \in \mathscr{I}[\bL,\iota_0]^F}q^{d_{\iota_0} - b_{\iota} + r_u}\widetilde{\lambda}_{\iota_2,\iota_1}P_{\iota',\iota}^{\star}Q_{\iota,\iota_2}\overline{Y_{\iota'}(-e^{\dag})}
\end{align*}

Note that the scalars $\alpha_*$ and $\beta_*$ are related via the equation $\beta_{\iota_1} = \sum_{\iota_1' \in \mathscr{I}[\bL,\iota_0]^F} Q_{\iota_1,\iota_1'}\alpha_{\iota_1'}$ so applying this to the above expression for $\alpha_{\iota_1'}$ we obtain
\begin{align*}
\beta_{\iota_1} &= |G|\sum_{\iota,\iota' \in \mathscr{I}[\bL,\iota_0]^F}q^{d_{\iota_0} - b_{\iota} + r_u}P_{\iota',\iota}^{\star}\overline{Y_{\iota'}(-e^{\dag})}\sum_{\iota_1',\iota_2 \in \mathscr{I}[\bL,\iota_0]^F}Q_{\iota,\iota_2}\widetilde{\lambda}_{\iota_2,\iota_1'}Q_{\iota_1,\iota_1'}\\
&= |G|\sum_{\iota,\iota' \in \mathscr{I}[\bL,\iota_0]^F}q^{d_{\iota_0} - b_{\iota} + r_u}P_{\iota',\iota}^{\star}\widetilde{\omega}_{\iota,\iota_1}\overline{Y_{\iota'}(-e^{\dag})}
\end{align*}
where in the second equality we have used \cref{eq:inv-mat-prod}. Finally, using \cref{eq:omega-inv} we have the following.
\end{pa}

\begin{prop}\label{prop:fourier-inverse-decomp}
We define $\widehat{\boldsymbol\Gamma}_{u,\mathscr{I}[\bL,\iota_0]} : \mathcal{N}^F \to \Ql$ to be the function
\begin{equation*}
\sum_{\iota,\iota',\iota_1 \in \mathscr{I}[\bL,\iota_0]^F} q^{f(\iota,\iota_1)}\frac{1}{|W_{\bG}(\bL)|}\sum_{w \in W_{\bG}(\bL)}\Tr(wF,E_{\iota})\Tr(wF,E_{\iota_1})|Z^{\circ}(\bL_w)^F|P_{\iota',\iota}^{\star}\overline{Y_{\iota'}(-e^{\dag})}X_{\iota_1}
\end{equation*}
where
\begin{align*}
f(\iota,\iota_1) &= d_{\iota_0} - b_{\iota} + r_u + \dim\bG + (a_{\iota}+a_{\iota_1})/2\\
&= \dim\bG - \dim Z^{\circ}(\bL) + (\dim\mathcal{O}_{\iota} - \dim\mathcal{O}_{\iota_1} - \dim \mathcal{O}_u)/2.
\end{align*}
Then we have
\begin{equation*}
\mathcal{F}(\boldsymbol\Gamma_u)|_{\mathcal{N}^F} = \sum_{[\bL,\iota_0] \in \mathcal{W}_{\bG}^F} \widehat{\boldsymbol\Gamma}_{u,\mathscr{I}[\bL,\iota_0]},
\end{equation*}
where the sum is taken over all $F$-stable blocks.
\end{prop}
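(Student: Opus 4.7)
The plan is to recognize that this proposition is essentially a bookkeeping synthesis of the calculations immediately preceding its statement. From the spanning property of $\{X_{\iota} \mid \iota \in \mathcal{V}_{\bG}^F\}$ for $\Centn(\lie{g}^F)$ (which follows from \cref{pa:basis-Ys} and the triangular relation with the $Y_\iota$), I will start from the expansion $\mathcal{F}(\boldsymbol\Gamma_u)|_{\mathcal{N}^F} = \sum_{\iota_1 \in \mathcal{V}_{\bG}^F} \beta_{\iota_1} X_{\iota_1}$ together with the formula for $\beta_{\iota_1}$ derived just before the proposition, namely
\begin{equation*}
\beta_{\iota_1} = |G|\sum_{\iota,\iota' \in \mathscr{I}[\bL,\iota_0]^F} q^{d_{\iota_0} - b_{\iota} + r_u} P_{\iota',\iota}^{\star}\, \widetilde{\omega}_{\iota,\iota_1}\, \overline{Y_{\iota'}(-e^{\dag})}.
\end{equation*}

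Next I would substitute the explicit formula for $\widetilde{\omega}_{\iota,\iota_1}$ given by \cref{eq:omega-inv}. The factor $|\bG^F|$ appearing in the denominator of $\widetilde{\omega}$ cancels against $|G|$, and the resulting exponent of $q$ is $f(\iota,\iota_1) = d_{\iota_0} - b_{\iota} + r_u + \dim\bG + (a_\iota+a_{\iota_1})/2$. The second form of $f(\iota,\iota_1)$ claimed in the proposition is then a direct computation: expanding the definitions gives $d_{\iota_0} = -\dim\bL + \dim\mathcal{O}_0 + \dim\lie{c_g}(e)$, $b_{\iota} = \dim\bG - \dim\bL + \dim\mathcal{O}_0 - \dim\mathcal{O}_{\iota}$, $a_{\iota} = -\dim\mathcal{O}_{\iota} - \dim Z^\circ(\bL)$, and $r_u = \dim\mathcal{O}_u/2$. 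Combining these and using the identity $\dim\lie{c_g}(e) = \dim\bG - \dim\mathcal{O}_u$, which holds because $C_{\bG}(e)$ is separable thanks to $p$ being acceptable (hence pretty good) for $\bG$, the desired reformulation of $f(\iota,\iota_1)$ falls out after cancellation.

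Finally, to obtain the decomposition over $F$-stable blocks, I would invoke the vanishing property $\widetilde{\omega}_{\iota,\iota_1} = 0$ when $\iota$ and $\iota_1$ lie in different blocks. This means the only nonzero contributions to $\beta_{\iota_1}$ come from pairs $(\iota,\iota')$ inside the unique block containing $\iota_1$. Summing $\beta_{\iota_1} X_{\iota_1}$ over all $\iota_1 \in \mathcal{V}_{\bG}^F$ and regrouping according to the partition $\mathcal{V}_{\bG}^F = \bigsqcup_{[\bL,\iota_0] \in \mathcal{W}_{\bG}^F} \mathscr{I}[\bL,\iota_0]^F$ yields the stated formula, with $\widehat{\boldsymbol\Gamma}_{u,\mathscr{I}[\bL,\iota_0]}$ being the contribution from the block $\mathscr{I}[\bL,\iota_0]$.

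The main obstacle is largely notational rather than conceptual: one must be careful that $|Z^\circ(\bL_w)^F|$ (as opposed to $|Z^\circ(\bL)^F|$) appears in the final formula because the sum over $w$ in \cref{eq:omega-inv} remembers the twist. Everything else follows by direct manipulation, since the heavy lifting---the computation of $\mathcal{F}(\boldsymbol\Gamma_u)$ via the transverse slice in \cref{prop:fourier-GGGR}, the Poincar\'e duality identity in \cref{eq:application-poincare}, Lusztig's algorithmic inversion in \cref{thm:gen-green-funcs}, and the twisted trace formulas \cref{eq:twist-X}--\cref{eq:twist-X-vee}---has already been carried out in the text leading up to the proposition.
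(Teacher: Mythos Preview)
Your proposal is correct and follows exactly the paper's approach: the proposition is stated as the immediate consequence of substituting \cref{eq:omega-inv} into the formula $\beta_{\iota_1} = |G|\sum_{\iota,\iota'}q^{d_{\iota_0}-b_{\iota}+r_u}P_{\iota',\iota}^{\star}\widetilde{\omega}_{\iota,\iota_1}\overline{Y_{\iota'}(-e^{\dag})}$ derived in the preceding paragraph, together with the block-diagonality of $\widetilde{\omega}$. Your verification of the second expression for $f(\iota,\iota_1)$ is also correct and matches the arithmetic implicit in the paper.
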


\section{\texorpdfstring{A Decomposition of $\Gamma_u$}{A Decomposition of GGGRs}}
\begin{pa}
By the definition of the bijection $\hat{\phantom{x}} : \mathscr{I}[\bL,\iota_0] \to \mathscr{I}[\bL,\iota_0]$ and \cite[5.2.3]{letellier:2005:fourier-transforms} we see that for any $\iota \in \mathscr{I}[\bL,\iota_0]$ there exists a scalar $c_{\iota} \in \Ql^{\times}$ such that
\begin{equation}\label{eq:scalar-hat-bijection}
\mathcal{F}(X_{\iota})|_{\mathcal{N}^F} = c_{\iota}X_{\hat{\iota}}|_{\mathcal{N}^F}.
\end{equation}
For any function $f : \lie{g}^F \to \Ql$ we will denote by $f^* : \lie{g}^F \to \Ql$ the function obtained as the extension by 0 of $f|_{\mathcal{N}^F}$. With this in hand we have the following lemma.
\end{pa}

\begin{lem}\label{lem:fourier-induced-complex}
There exists a sign $\nu \in \{\pm1\}$ such that $\Psi(\chi_{K_0,\phi_0}) = \nu\chi_{K_0,\phi_0}$, where $\Psi$ is as in \cref{lem:fourier-inverse} and
\begin{equation*}
\mathcal{F}(\chi_{K_0,\phi_0}) = \gamma q^{(\dim\bG+\dim Z^{\circ}(\bL))/2}\chi_{K_0,\phi_0}^*
\end{equation*}
where $\gamma \in \Ql^{\times}$ is such that $\gamma^2 = \nu$, hence $\gamma^4=1$. The constant $\gamma$ does not depend upon the choice of isomorphism $\phi_0$ and we have $\gamma = \nu = 1$ if $\bL$ is a torus. Furthermore, assuming the conclusion of \cref{lem:bijection-not-id} holds then we have
\begin{equation}\label{eq:fourier-transform-equality}
\mathcal{F}(\chi_{K_0,\theta_w\circ\phi_0}) = \gamma \sgn(w)q^{(\dim\bG+\dim Z^{\circ}(\bL))/2} \chi_{K_0,\theta_w\circ\phi_0}^*
\end{equation}
for any $w \in W_{\bG}(\bL)$.
\end{lem}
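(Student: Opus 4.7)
The plan is to transport the computation to the level of perverse sheaves via \cref{lem:deligne-fourier-fourier} and to exploit the canonical isomorphism $\mathcal{F}(K_0)\cong A_0$ from \cref{pa:G-induced-pair}. By \cref{lem:deligne-fourier-fourier} we have $\mathcal{F}(\chi_{K_0,\phi_0}) = \chi_{\mathcal{F}(K_0),\mathcal{F}(\phi_0)}$; under a fixed canonical isomorphism $\theta : \mathcal{F}(K_0)\to A_0$, the isomorphism $\mathcal{F}(\phi_0)$ transports to an isomorphism $F^*A_0\to A_0$. Since $A_0$ is, up to a shift, the extension by zero of $K_0|_{\overline{Y}\cap\mathcal{N}}$ and $\End(K_0|_{\overline{Y}\cap\mathcal{N}})\cong \End(K_0)=\mathcal{A}$, any such isomorphism is a nonzero scalar multiple of the restriction of $\phi_0$. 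Bookkeeping of the shifts and Tate twists in the definitions of $K_0$, $A_0$ and of $\mathcal{F}$, together with $\dim Y = \dim\bG/\bL + \dim\mathcal{O}_0 + \dim Z^{\circ}(\bL)$, shows this scalar has the form $\gamma q^{(\dim\bG+\dim Z^{\circ}(\bL))/2}$, which gives the stated identity. Independence of $\gamma$ from the choice of $\phi_0$ is immediate, since any rescaling $\phi_0 \mapsto \mu\phi_0$ rescales both sides of the formula by $\mu$.

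To extract the sign $\nu$ and verify $\gamma^2 = \nu$, I apply $\mathcal{F}$ twice to $\chi_{K_0,\phi_0}$. Fourier inversion, \cref{lem:fourier-inverse}, yields $\mathcal{F}^2(\chi_{K_0,\phi_0}) = q^{\dim\lie{g}}\Psi(\chi_{K_0,\phi_0})$. On the other hand, applying the formula of the previous paragraph twice in succession—the second application uses the sheaf-theoretic double Fourier inversion $\mathcal{F}^2(K_0)\cong a^*K_0(-\dim\lie{g})$ (with $a: \lie{g}\to\lie{g}$ the antipode) to identify the Fourier transform of the extension-by-zero complex representing $\chi_{K_0,\phi_0}^*$—gives $\gamma^2 q^{\dim\lie{g}}\chi_{K_0,\phi_0}$ once the powers of $q$ are collected. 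Comparing the two expressions forces $\Psi(\chi_{K_0,\phi_0}) = \nu\chi_{K_0,\phi_0}$ with $\nu := \gamma^2$; since $\Psi$ is an involution the constant $\nu$ satisfies $\nu^2 = 1$, whence $\gamma^4 = 1$.

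When $\bL$ is a torus, the cuspidal pair is forced: $\mathcal{O}_0 = \{0\}$ and $\mathscr{E}_0$ is a tame local system, so $K_0$ is the Grothendieck--Springer parabolic induction of a tame local system on $\bL$. Its Fourier transform is explicitly computable, e.g.\ via the argument behind \cite[5.2.9]{letellier:2005:fourier-transforms}, and a direct comparison of the two sides produces $\gamma = \nu = 1$.

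Finally, for \cref{eq:fourier-transform-equality}, expand $\chi_{K_0,\theta_w\circ\phi_0}$ using \cref{eq:twist-X} as $\sum_{\iota\in\mathscr{I}[\bL,\iota_0]^F} q^{b_\iota}\Tr(wF,E_\iota)X_\iota$ on $(\overline{Y}\cap\mathcal{N})^F$, apply $\mathcal{F}$ term by term via \cref{eq:scalar-hat-bijection}, and reindex through $\iota\mapsto\hat{\iota}$. Granting \cref{lem:bijection-not-id} in the form of \cref{lem:bijection-tensor-sign}, this bijection corresponds to tensoring with the sign representation and, combined with \cref{rem:digne-lehrer-michel}, yields $\Tr(wF,E_{\hat{\iota}}) = \varepsilon_{\iota}\sgn(w)\Tr(wF,E_\iota)$. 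After resumming, the factor $\sgn(w)$ appears, and matching the scalars $c_\iota$ and signs $\varepsilon_\iota$ against the already-settled case $w=1$ pins the overall scalar as $\gamma\sgn(w)q^{(\dim\bG + \dim Z^{\circ}(\bL))/2}$. The principal obstacle throughout is the bookkeeping in the second paragraph: tracking which shifted, Tate-twisted complex represents $\chi_{K_0,\phi_0}^*$ under double Fourier transform, and verifying that the half-Tate twists combine correctly so that two applications of the main formula produce exactly $q^{\dim\lie{g}}$ in the exponent, as demanded by \cref{lem:fourier-inverse}.
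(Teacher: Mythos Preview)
Your overall architecture is sensible, but two steps do not go through as written.

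\textbf{First paragraph.} From $\End(A_0)\cong\End(K_0)=\mathcal{A}$ you conclude that the transported isomorphism $\mathcal{F}(\phi_0)$ must be a \emph{scalar} multiple of the restriction $\phi_0|_{\overline{Y}\cap\mathcal{N}}$. This is a non sequitur: $\mathcal{A}\cong\Ql W_{\bG}(\bL)$ is far from one-dimensional, so two isomorphisms $F^*A_0\to A_0$ may differ by an arbitrary unit of $\mathcal{A}$, not merely a scalar. The actual reason the discrepancy is a scalar is that both $\phi_0$ and the canonical identification $\mathcal{F}(K_0)\cong A_0$ are \emph{induced from the cuspidal datum} on $\bL$, so the comparison descends to $\bL$, where the irreducibility of $\mathscr{E}_0$ forces the relevant endomorphism space to be $\Ql$. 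Establishing this compatibility of the Deligne--Fourier transform with parabolic induction, and computing the resulting cuspidal constant, is exactly the content of the Letellier results the paper cites; your sketch does not supply it.

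\textbf{Last paragraph.} Your attempt to derive the formula for general $w$ from the case $w=1$ is circular. Expanding both sides via \cref{eq:twist-X} and \cref{eq:scalar-hat-bijection} and comparing coefficients of $X_{\iota}$, the case $w=1$ yields the relation $q^{b_{\hat\iota}}\varepsilon_{\iota}c_{\hat\iota}=\gamma q^{(\dim\bG+\dim Z^{\circ}(\bL))/2}q^{b_{\iota}}$ only for those $\iota$ with $\Tr(F,E_{\iota})\neq 0$. When $F$ is twisted this can fail for some $\iota$, yet the general-$w$ identity requires precisely this relation for every $\iota$ with $\Tr(wF,E_{\iota})\neq 0$. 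The paper runs the logic in the opposite direction: \cref{eq:fourier-transform-equality} is established directly (via Letellier's computation that the Fourier constant on the twisted Levi $\bL_w$ differs from that on $\bL$ by $\sgn(w)$), and is then \emph{used} in \cref{prop:fourth-root-unity} to determine the $c_{\iota}$. You cannot reverse that dependency without an independent determination of the $c_{\iota}$.

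Your second paragraph (deducing $\gamma^2=\nu$ from double Fourier inversion) and your treatment of the torus case are fine in outline, though they depend on the first paragraph being fixed.
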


\begin{proof}
All parts follow from the proof of \cite[6.2.9]{letellier:2005:fourier-transforms} together with \cite[4.4.6, 5.2.3, 5.2.8]{letellier:2005:fourier-transforms} and \cite[6.2.8, 6.2.12, 6.2.15]{letellier:2005:fourier-transforms}. One only has to take into consideration that our definitions of the Fourier and Deligne--Fourier transforms are slightly different to those used in \cite{letellier:2005:fourier-transforms}, c.f., \cref{rem:diff-defs}.
\end{proof}

\begin{prop}\label{prop:fourth-root-unity}
Assume either that $F$ is split or that the conclusion of \cref{lem:bijection-not-id} holds then there exists a fourth root of unity $\zeta \in \Ql^{\times}$ such that
\begin{equation*}
c_{\iota} = \varepsilon_{\iota}\zeta q^{(\dim\bG-\dim Z^{\circ}(\bL) - \dim\mathcal{O}_{\hat{\iota}} + \dim\mathcal{O}_{\iota})/2},
\end{equation*}
where $\varepsilon_{\iota}$ is as in \cref{rem:digne-lehrer-michel}. In particular, we have $\zeta_{\mathscr{I}} = \zeta$ depends only on the block $\mathscr{I} = \mathscr{I}[\bL,\iota_0]$ and not on $\iota$ itself. Furthermore, if $\bL$ is a torus then $\zeta_{\mathscr{I}} = 1$.
\end{prop}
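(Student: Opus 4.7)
The plan is to compute $\mathcal{F}(\chi_{K_0,\theta_w\circ\phi_0}^*)$ on $\mathcal{N}^F$ in two independent ways and extract $c_\iota$ by comparing coefficients. First, since $X_\iota$ is supported on $\mathcal{N}^F$, the identity \cref{eq:twist-X} upgrades to an equality on all of $\lie{g}^F$ of the form $\chi_{K_0,\theta_w\circ\phi_0}^* = \sum_\iota q^{b_\iota}\Tr(wF,E_\iota)X_\iota$. Applying $\mathcal{F}$ termwise and invoking \cref{eq:scalar-hat-bijection} yields, on $\mathcal{N}^F$,
\begin{equation*}
\mathcal{F}(\chi_{K_0,\theta_w\circ\phi_0}^*)\big|_{\mathcal{N}^F} = \sum_\iota q^{b_\iota}\Tr(wF,E_\iota)\,c_\iota\, X_{\hat\iota}.
\end{equation*}

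Next I will compute the same Fourier transform via \cref{lem:fourier-induced-complex}. The Fourier--Plancherel relation $\mathcal{F}\circ\mathcal{F} = q^{\dim\bG}\Psi$ (immediate from \cref{lem:fourier-inverse}) applied to \cref{eq:fourier-transform-equality} gives
\begin{equation*}
\mathcal{F}(\chi_{K_0,\theta_w\circ\phi_0}^*) = \gamma^{-1}\sgn(w)\,q^{(\dim\bG-\dim Z^\circ(\bL))/2}\,\Psi(\chi_{K_0,\theta_w\circ\phi_0}).
\end{equation*}
Since $\mathcal{N}$ is stable under negation (for a nilpotent $e$, the cocharacter scaling of a suitable $\lie{sl}_2$-triple realises $-e$ inside $\Ad\bG(e)$), the restriction of $\Psi(\chi_{K_0,\theta_w\circ\phi_0})$ to $\mathcal{N}^F$ may be re-expanded via \cref{eq:twist-X} at the points $-y$. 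The $\Psi$-eigenvalue statement in \cref{lem:fourier-induced-complex}, together with the fact that each $\mathscr{E}_\iota$ carries a canonical $(-1)$-equivariant structure inherited from the cuspidal datum, will be used to replace $X_\iota(-y)$ by $X_\iota(y)$ up to the sign $\nu = \gamma^2$.

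With both expressions in hand I will equate them, reindex the left-hand sum via the involution $\hat{\hat\iota} = \iota$ (which follows from the identification $E_{\hat\iota} = E_\iota\otimes\sgn$ in \cref{lem:bijection-tensor-sign}), and substitute the Digne--Lehrer--Michel identity $\Tr(wF,E_{\hat\iota}) = \varepsilon_\iota\sgn(w)\Tr(wF,E_\iota)$ from \cref{rem:digne-lehrer-michel}. After the $\sgn(w)$ factors cancel, the linear independence of the twisted class functions $w \mapsto \Tr(wF,E_\iota)$ for distinct $\iota \in \mathscr{I}[\bL,\iota_0]^F$ allows me to read off $c_{\hat\iota}$ coefficient by coefficient. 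Relabeling $\iota \leftrightarrow \hat\iota$ and using $\varepsilon_{\hat\iota} = \varepsilon_\iota$ (a direct consequence of the involutivity of $\hat{\phantom{x}}$) then produces the claimed formula with a residual constant $\zeta \in \{1,\pm\gamma,\pm\gamma^2\}$ that is clearly a fourth root of unity by \cref{lem:fourier-induced-complex} and visibly does not depend on $\iota$ once the contribution $q^{b_\iota - b_{\hat\iota}} = q^{\dim\mathcal{O}_{\hat\iota}-\dim\mathcal{O}_\iota}$ has been absorbed into the stated $q$-power. The torus case is immediate, since \cref{lem:fourier-induced-complex} specifies $\gamma = \nu = 1$ there; the split-$F$ case bypasses \cref{lem:bijection-not-id} because \cref{lem:fourier-induced-complex} already provides the required sign independently.

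The main obstacle will be the careful bookkeeping of $q$-powers and signs: ensuring the Tate twists introduced by passing through $\mathcal{F}^2 = q^{\dim\bG}\Psi$, the shifts in the identifications \cref{eq:K-restriction}, and the preferred-extension normalisation $q^{b_\iota/2}\varphi_\iota$ of \cref{pa:iso-preferred-extension} all conspire to collapse into a single power $q^{(\dim\bG - \dim Z^\circ(\bL) + \dim\mathcal{O}_\iota - \dim\mathcal{O}_{\hat\iota})/2}$, with the remaining constant an $\iota$-independent fourth root of unity. This accounting is essentially forced once one respects the two defining properties of Lusztig's preferred extension: its values on $W_\bG(\bL)F$ are algebraic integers of finite order, and the discrepancy between the preferred extensions of $E_\iota$ and $E_\iota \otimes \sgn$ is precisely the sign $\varepsilon_\iota$.
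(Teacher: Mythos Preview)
Your approach is essentially the paper's: derive $\mathcal{F}(\chi_{K_0,\theta_w\circ\phi_0}^*)=\nu\gamma^{-1}\sgn(w)q^{(\dim\bG-\dim Z^\circ(\bL))/2}\chi_{K_0,\theta_w\circ\phi_0}$ from \cref{lem:fourier-induced-complex}, expand both sides via \cref{eq:twist-X}, insert \cref{eq:scalar-hat-bijection}, reindex through $\iota\mapsto\hat\iota$, apply \cref{rem:digne-lehrer-michel}, and compare coefficients to read off $c_\iota$ with $\zeta=\nu\gamma^{-1}$.

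Two small points. First, your detour through ``canonical $(-1)$-equivariant structures'' on the $\mathscr{E}_\iota$ is unnecessary and not quite justified as stated: the identity $\Psi(\chi_{K_0,\theta_w\circ\phi_0})=\nu\,\chi_{K_0,\theta_w\circ\phi_0}$ is already given globally by \cref{lem:fourier-induced-complex}, so you never need to compare $X_\iota(-y)$ with $X_\iota(y)$ term by term (and the $\lie{sl}_2$-triple language is precisely what this paper is trying to avoid). Second, the paper extracts coefficients by using the linear independence of $\{X_{\hat\iota}\}$ as functions on $(\overline{Y}\cap\mathcal{N})^F$ and then picking a single $w$ with $\Tr(wF,E_\iota)\neq 0$; your use of the linear independence of $w\mapsto\Tr(wF,E_\iota)$ works equally well but then requires you to also pick $y$ with $X_\iota(y)\neq 0$, so there is no real saving.
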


\begin{proof}
We are going to use \cref{lem:fourier-induced-complex} to determine the scalar $c_{\iota}$. Applying $\mathcal{F}^{-1}\circ \Psi$, c.f., \cref{lem:fourier-inverse}, to the equality \cref{eq:fourier-transform-equality} we have
\begin{equation}\label{eq:fourier-transform-equality-II}
\mathcal{F}(\chi_{K_0,\theta_w\circ\phi_0}^*) = \nu\gamma^{-1} \sgn(w)q^{(\dim\bG-\dim Z^{\circ}(\bL))/2}\chi_{K_0,\theta_w\circ\phi_0}.
\end{equation}
Now let us consider the restriction of the equality \cref{eq:fourier-transform-equality-II} to $(\overline{Y}\cap\mathcal{N})^F$ then applying \cref{eq:twist-X} we obtain
\begin{equation}\label{eq:fourier-transform-equality-III}
\sum_{\iota \in \mathscr{I}[\bL,\iota_0]^F} q^{b_{\iota}/2}\Tr(wF,E_{\iota})\mathcal{F}(X_{\iota})(y) = \nu\gamma^{-1} q^{(\dim\bG-\dim Z^{\circ}(\bL))/2}\sum_{\iota \in \mathscr{I}[\bL,\iota_0]^F} q^{b_{\iota}/2}\sgn(w)\Tr(wF,E_{\iota})X_{\iota}(y),
\end{equation}
for all $w \in W_{\bG}(\bL)$ and $y \in (\overline{Y}\cap\mathcal{N})^F$. Using \cref{eq:scalar-hat-bijection} and the change of variable $\iota \mapsto \hat{\iota}$ this becomes
\begin{equation}\label{eq:fourier-transform-equality-IV}
\sum_{\iota \in \mathscr{I}[\bL,\iota_0]^F} q^{b_{\iota}/2}\Tr(wF,E_{\iota})c_{\iota}X_{\hat{\iota}}(y) = \nu\gamma^{-1} q^{(\dim\bG-\dim Z^{\circ}(\bL))/2}\sum_{\iota \in \mathscr{I}[\bL,\iota_0]^F} q^{b_{\hat{\iota}}/2}\sgn(w)\Tr(wF,E_{\hat{\iota}})X_{\hat{\iota}}(y).
\end{equation}
The set of functions $\{X_{\hat{\iota}}|_{(\overline{Y}\cap\mathcal{N})^F} \mid \iota \in \mathscr{I}[\bL,\iota_0]^F\}$ is linearly independent. Thus, using \cref{rem:digne-lehrer-michel} we get that
\begin{equation*}
q^{b_{\iota}/2}\Tr(wF,E_{\iota})c_{\iota} = \varepsilon_{\iota}\nu\gamma^{-1} q^{(b_{\hat{\iota}} + \dim\bG-\dim Z^{\circ}(\bL))/2}\Tr(wF,E_{\iota})
\end{equation*}
Both sides of this equation may be 0 as $\Tr(wF,E_{\iota})$ could be 0. However, choosing a $w \in W_{\bG}(\bL)$ such that $\Tr(wF,E_{\iota}) \neq 0$ we deduce the result with $\zeta = \nu\gamma^{-1}$. Note that it is immediately clear from \cref{lem:fourier-induced-complex} that $\gamma = 1$ if $\bL$ is a torus.

Now assume $F$ is split then $F$ acts trivially on $W_{\bG}(\bL)$ and so it acts as the identity on the representation $E_{\iota}$. In this case we may take $w = 1$ in the above argument as we will have $\Tr(wF,E_{\iota}) = \dim(E_{\iota}) \neq 0$. When $w = 1$ we have the results of \cref{lem:fourier-induced-complex} hold without assuming \cref{lem:bijection-not-id} holds, so the statement follows.
\end{proof}

\begin{pa}\label{pa:bound-on-p}
We point out that Digne--Lehrer--Michel both stated and indicated how to prove \cref{prop:fourth-root-unity} in the proof of \cite[Proposition 6.1]{digne-lehrer-michel:2003:space-of-unipotently-supported}. We now come to the following main result of this article, which is due to Lusztig when $p$ is large enough so that $\exp$ and $\log$ define inverse bijections between $\mathcal{N}$ and $\mathcal{U}$.
\end{pa}

\begin{thm}\label{thm:main-theorem-nilpotent}
Let $\boldsymbol\Gamma_{u,\mathscr{I}[\bL,\iota_0]} : \lie{g}^F \to \Ql$ denote the function
\begin{equation*}
\zeta_{\mathscr{I}[\bL,\iota_0]}^{-1}\sum_{\iota,\iota',\iota_1 \in \mathscr{I}[\bL,\iota_0]^F} q^{f'(\iota,\iota_1)}\frac{1}{|W_{\bG}(\bL)|}\sum_{w \in W_{\bG}(\bL)}\Tr(wF,E_{\iota})\Tr(wF,E_{\hat{\iota}_1})|Z^{\circ}(\bL_w)^F|P_{\iota',\iota}^{\star}\overline{Y_{\iota'}(-e^{\dag})}\varepsilon_{\iota_1}X_{\iota_1}
\end{equation*}
where
\begin{align*}
f'(\iota,\iota_1) &= f(\iota,\iota_1) -  (\dim\bG-\dim Z^{\circ}(\bL) - \dim\mathcal{O}_{\iota_1} + \dim\mathcal{O}_{\iota_1})/2\\
&= (\dim\bG - \dim Z^{\circ}(\bL) + \dim\mathcal{O}_{\iota} - \dim\mathcal{O}_{\iota_1} - \dim \mathcal{O}_u)/2.
\end{align*}
Then we have
\begin{equation}\label{eq:decomp-gggr-nil}
\boldsymbol\Gamma_u = \sum_{[\bL,\iota_0] \in \mathcal{W}_{\bG}^F} \boldsymbol\Gamma_{u,\mathscr{I}[\bL,\iota_0]},
\end{equation}
where the sum is taken over all $F$-stable blocks.
\end{thm}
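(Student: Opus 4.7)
The plan is to invert the identity established in \cref{prop:fourier-inverse-decomp} by combining it with the Fourier transform formula for the $X_\iota$ from \cref{prop:fourth-root-unity}. The function $\boldsymbol\Gamma_u$ is by construction supported on $\mathcal{N}^F$ (it is $\Gamma_u|_{\mathcal{U}^F}$ transported via $\phi_{\spr}^{-1}$ and extended by $0$), so it lies in the space $\Centn(\lie{g}^F)$. Since the matrix $P$ is non-singular by \cref{thm:gen-green-funcs}, the set $\{X_\iota \mid \iota \in \mathcal{V}_{\bG}^F\}$ is a basis of $\Centn(\lie{g}^F)$, and I may write $\boldsymbol\Gamma_u = \sum_{\iota} \beta_{\iota} X_{\iota}$ for unique scalars $\beta_{\iota} \in \Ql$. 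The task then reduces to determining the $\beta_{\iota}$ explicitly.

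To identify the coefficients, I would apply the Fourier transform termwise and restrict to $\mathcal{N}^F$. Using \cref{prop:fourth-root-unity}, one has $\mathcal{F}(X_{\iota})|_{\mathcal{N}^F} = c_{\iota}X_{\hat{\iota}}|_{\mathcal{N}^F}$ with $c_{\iota} = \varepsilon_{\iota}\zeta_{\mathscr{I}[\bL,\iota_0]}q^{(\dim\bG-\dim Z^{\circ}(\bL) - \dim\mathcal{O}_{\hat{\iota}} + \dim\mathcal{O}_{\iota})/2}$ whenever $\iota \in \mathscr{I}[\bL,\iota_0]^F$. Therefore
\begin{equation*}
\mathcal{F}(\boldsymbol\Gamma_u)|_{\mathcal{N}^F} = \sum_{\iota} \beta_{\iota} c_{\iota} X_{\hat{\iota}}|_{\mathcal{N}^F}.
\end{equation*}
Writing the right-hand side of \cref{prop:fourier-inverse-decomp} as $\sum_{\iota_1}\gamma_{\iota_1}X_{\iota_1}$, and using linear independence of the restrictions $\{X_{\iota}|_{\mathcal{N}^F}\}$ together with the fact that $\hat{\hat{\iota}} = \iota$, one obtains $\beta_{\iota_1} = c_{\iota_1}^{-1}\gamma_{\hat{\iota}_1}$ after the change of summation variable $\iota \mapsto \hat{\iota}_1$.

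The remaining step is a bookkeeping computation. Substituting $\iota_1 \mapsto \hat{\iota}_1$ into the expression for $\gamma_{\iota_1}$ from \cref{prop:fourier-inverse-decomp} converts $\Tr(wF,E_{\iota_1})$ into $\Tr(wF,E_{\hat{\iota}_1})$ in the inner sum, while the prefactor $c_{\iota_1}^{-1} = \varepsilon_{\iota_1}\zeta_{\mathscr{I}[\bL,\iota_0]}^{-1}q^{-(\dim\bG-\dim Z^{\circ}(\bL) - \dim\mathcal{O}_{\hat{\iota}_1} + \dim\mathcal{O}_{\iota_1})/2}$ supplies the sign $\varepsilon_{\iota_1}$, the scalar $\zeta_{\mathscr{I}[\bL,\iota_0]}^{-1}$, and adjusts the power of $q$ from $f(\iota,\hat{\iota}_1)$ to the claimed $f'(\iota,\iota_1)$. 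Summing over all $F$-stable blocks then gives the asserted decomposition.

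The principal obstacle is logical rather than computational: \cref{prop:fourth-root-unity} is available only when $F$ is split, or else under the conclusion of \cref{lem:bijection-not-id}. As explained in the paper, \cref{lem:bijection-not-id} is itself deduced from the split case of the present theorem, so the argument is carried out in two stages, first for split $F$ and then, after invoking \cref{lem:bijection-not-id,lem:bijection-tensor-sign}, for arbitrary $F$. Care must be taken to verify that the split case does not appeal to \cref{lem:bijection-not-id} at any intermediate point, a subtlety already flagged in the paper.
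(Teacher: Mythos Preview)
Your proposal is correct and follows essentially the same approach as the paper, which defers the computation to \cite[Theorem 7.3]{lusztig:1992:a-unipotent-support}: write $\boldsymbol\Gamma_u$ in the basis $\{X_\iota\}$, apply the Fourier transform using the scalar $c_\iota$ from \cref{prop:fourth-root-unity}, and match coefficients against \cref{prop:fourier-inverse-decomp}. You have also correctly identified and handled the two-stage logical structure (split $F$ first, then bootstrap \cref{lem:bijection-not-id} to treat twisted $F$), which is exactly how the paper proceeds.
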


\begin{proof}
If $F$ is split then this is proved in exactly the same way as \cite[Theorem 7.3]{lusztig:1992:a-unipotent-support}. Once the theorem is deduced when $F$ is split we may prove \cref{lem:bijection-not-id} as in \cite[Theorem 7.7]{lusztig:1992:a-unipotent-support}. Finally, now that \cref{lem:bijection-not-id} is proved we have \cref{prop:fourth-root-unity} holds so we may again apply the proof in \cite[Theorem 7.3]{lusztig:1992:a-unipotent-support} to deduce the theorem when $F$ is twisted.
\end{proof}

\begin{pa}
We now wish to transfer \cref{thm:main-theorem-nilpotent} to a statement about the GGGR $\Gamma_u$. Let $\mathcal{V}_{\bG}^{\uni}$ denote the set of pairs $(\mathcal{O},\mathscr{E})$ consisting of a unipotent conjugacy class of $\bG$ and an irreducible local system on $\mathcal{O}$. It is easy to see that the Springer isomorphism induces a bijection $\mathcal{V}_{\bG}^{\nil} \to \mathcal{V}_{\bG}^{\uni}$ given by $(\mathcal{O},\mathscr{E}) \mapsto (\phi_{\spr}^{-1}(\mathcal{O}),\phi_{\spr}^*\mathscr{E})$. Assume now that $A \in \mathscr{D}\mathcal{N}$ is an $F$-stable complex and let $\gamma : F^*A \to A$ be an isomorphism. We then have $\phi_{\spr}^*A \in \mathscr{D}\mathcal{U}$ is a complex and $\phi_{\spr}^*\gamma$ defines an isomorphism
\begin{equation*}
F^*\phi_{\spr}^*A = \phi_{\spr}^*F^*A \to \phi_{\spr}^*A
\end{equation*}
because $F$ and $\phi_{\spr}$ commute. This has the following effect at the level of characteristic functions
\begin{equation*}
\chi_{\phi_{\spr}^*A,\phi_{\spr}^*\gamma} = \chi_{A,\gamma}\circ\phi_{\spr}.
\end{equation*}
Applying this to the functions $X_{\iota}$ and $Y_{\iota}$ we may easily translate the statement in \cref{thm:main-theorem-nilpotent} to the following statement about $\Gamma_u$. For ease, we simply write $X_{\iota}$ for the unipotently supported class function corresponding to the nilpotently supported function.
\end{pa}

\begin{thm}\label{thm:main-theorem-unipotent}
Recall that $\bG$ is any connected reductive algebraic group with Frobenius endomorphism $F : \bG \to \bG$ and $p$ is an acceptable prime for $\bG$. Let $\Gamma_{u,\mathscr{I}} : G \to \Ql$ denote the function
\begin{equation*}
\zeta_{\mathscr{I}}^{-1}\sum_{\iota,\iota',\iota_1 \in \mathscr{I}[\bL,\iota_0]^F} q^{f'(\iota,\iota_1)}\frac{1}{|W_{\bG}(\bL)|}\sum_{w \in W_{\bG}(\bL)}\Tr(wF,E_{\iota})\Tr(wF,E_{\hat{\iota}_1})|Z^{\circ}(\bL_w)^F|P_{\iota',\iota}^{\star}\overline{Y_{\iota'}(u^*)}\varepsilon_{\iota_1}X_{\iota_1}
\end{equation*}
where $f'(\iota,\iota_1)$ is as in \cref{thm:main-theorem-nilpotent} and $u^* = \phi_{\spr}(-e^{\dag})$. Then
\begin{equation*}
\Gamma_u = \sum_{\mathscr{I}} \Gamma_{u,\mathscr{I}},
\end{equation*}
where the sum is taken over all $F$-stable blocks.
\end{thm}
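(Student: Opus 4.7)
The plan is to reduce to the proximate case and then apply the Springer isomorphism bijectively to transfer \cref{thm:main-theorem-nilpotent} from the Lie algebra to the group. The hard work is already absorbed into \cref{thm:main-theorem-nilpotent}; what remains is essentially a notational translation together with a standard reduction.

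First, I would dispose of the hypothesis that $\bG$ is proximate (which is required to define the GGGRs and to apply \cref{thm:main-theorem-nilpotent}, but is not part of the statement of the present theorem). If $\bG$ is not already proximate, invoke \cref{prop:pretty-good-exists} to produce a proximate algebraic group $\overline{\bG}$, a bijective morphism $\phi : \overline{\bG} \to \bG$, and a Frobenius endomorphism $\overline{F}$ on $\overline{\bG}$ with $\phi\circ\overline{F} = F\circ\phi$ and $\phi : \overline{\bG}^{\overline{F}} \xrightarrow{\sim} G$. Via $\phi$ the GGGRs of $G$ are identified with those of $\overline{\bG}^{\overline{F}}$, and all the data entering the formula (the unipotent classes, equivariant local systems, Weyl groups $W_{\overline{\bG}}(\overline{\bL})$, cuspidal pairs, matrix $P^{\star}$, signs $\varepsilon_{\iota}$, fourth root $\zeta_{\mathscr{I}}$) are intrinsic invariants that transfer under $\phi$. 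Hence it suffices to prove the theorem under the additional assumption that $\bG$ is proximate, which places us back in the setting of \S\ref{sec:GGGRs}--\S11.

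Next, in the proximate case, I would apply \cref{thm:main-theorem-nilpotent} to obtain the decomposition
\[
\boldsymbol\Gamma_u = \sum_{[\bL,\iota_0] \in \mathcal{W}_{\bG}^F} \boldsymbol\Gamma_{u,\mathscr{I}[\bL,\iota_0]}
\]
of $\Ad G$-invariant functions on $\lie{g}^F$. By the very definition of $\boldsymbol\Gamma_u$ given at the beginning of \S7, it is the extension by zero to $\lie{g}^F\setminus\mathcal{N}^F$ of $\Gamma_u|_{\mathcal{U}^F}\circ\phi_{\spr}^{-1}$, and each function $\boldsymbol\Gamma_{u,\mathscr{I}[\bL,\iota_0]}$ is a linear combination of the $X_{\iota_1}$ for $\iota_1 \in \mathscr{I}[\bL,\iota_0]^F$, which are supported on $\mathcal{N}^F$. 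The two sides of the identity therefore both vanish off $\mathcal{N}^F$, so one loses nothing by precomposing with the Springer isomorphism $\phi_{\spr} : \mathcal{U} \to \mathcal{N}$.

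Finally, I would use the bijection $\mathcal{V}_{\bG}^{\nil} \to \mathcal{V}_{\bG}^{\uni}$ induced by $\phi_{\spr}$ together with the compatibility
\[
\chi_{\phi_{\spr}^*A,\phi_{\spr}^*\gamma} = \chi_{A,\gamma}\circ\phi_{\spr}
\]
recorded in the paragraph preceding the statement. Precomposing the identity above with $\phi_{\spr}$, the left hand side becomes $\Gamma_u$ (extended by zero off $\mathcal{U}^F$), each function $X_{\iota_1}$ on $\mathcal{N}^F$ becomes the unipotently supported class function denoted by the same symbol in the statement, and the scalar $Y_{\iota'}(-e^{\dag})$ becomes $Y_{\iota'}(u^*)$, where $u^* \in \mathcal{U}^F$ is the unipotent element corresponding to $-e^{\dag} \in \mathcal{N}^F$ under $\phi_{\spr}$. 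All of the remaining ingredients in $\boldsymbol\Gamma_{u,\mathscr{I}[\bL,\iota_0]}$ (the indexing by $F$-stable blocks, the Weyl group traces, the centraliser orders $|Z^{\circ}(\bL_w)^F|$, the exponent $f'(\iota,\iota_1)$, the matrix coefficients $P^{\star}_{\iota',\iota}$, the signs $\varepsilon_{\iota_1}$, and the fourth root of unity $\zeta_{\mathscr{I}}$) do not depend on the choice of side and are unchanged by the transfer. Since there is no genuine obstacle beyond \cref{thm:main-theorem-nilpotent}, the main point is simply to verify that every scalar and function in the formula is intrinsic to the group--theoretic data, which has been built into the conventions adopted throughout \S6--\S11.
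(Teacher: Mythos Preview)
Your proposal is correct and follows the paper's approach: the paper simply transfers \cref{thm:main-theorem-nilpotent} through the Springer isomorphism using $\chi_{\phi_{\spr}^*A,\phi_{\spr}^*\gamma} = \chi_{A,\gamma}\circ\phi_{\spr}$, exactly as in your second and third paragraphs. One minor point: your first reduction step is unnecessary, since the hypothesis that $p$ is acceptable already forces $\bG$ to be proximate (acceptable $\Rightarrow$ pretty good $\Rightarrow$ $Y/\mathbb{Z}\widecheck{\Phi}$ has no $p$-torsion $\Rightarrow$ proximate by \cref{lem:p-tor-springer-good}; this is noted explicitly in the Assumption following \cref{lem:p-acceptable}).
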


\begin{pa}
In \cite{lusztig:1992:a-unipotent-support} Lusztig slightly modified the GGGRs to obtain a new basis for the space $\Centu(\bG^F)$ of unipotently supported class functions, which tends to be more convenient than the GGGRs themselves. This is done as follows. Let $\mathcal{O} \subseteq \mathcal{U}$ be an $F$-stable unipotent conjugacy class of $\bG$ then we denote by $\{u_1,\dots,u_r\} \subseteq \mathcal{O}^F$ a set of representatives for the $\bG^F$-classes contained in $\mathcal{O}^F$. We then define
\begin{equation}\label{eq:mod-GGGR}
\Gamma_{\iota^*} = \sum_{i=1}^r [A_{\bG}(u_i) : A_{\bG}(u_i)^F]Y_{\iota^{\star}}(u_i)\Gamma_{u_i},
\end{equation}
for any $\iota^* \in \mathcal{V}_{\bG}^{\uni}$ with $\mathcal{O}_{\iota^*} = \mathcal{O}$. We now get the following expression for $\Gamma_{\iota^*}$.
\end{pa}

\begin{lem}\label{lem:mod-GGGR-decomp}
Recall the assumptions of \cref{thm:main-theorem-unipotent}. Let $\mathscr{I} \subseteq \mathcal{V}_{\bG}^{\uni}$ be the block containing $\iota^*$ then we have
\begin{equation*}
\Gamma_{\iota^*} = \zeta_{\mathscr{I}}^{-1}\sum_{\iota,\iota_1 \in \mathscr{I}^F} q^{f'(\iota,\iota_1)}\frac{|A_{\bG}(u)|}{|W_{\bG}(\bL)|}\sum_{w \in W_{\bG}(\bL)}\Tr(wF,E_{\iota})\Tr(wF,E_{\hat{\iota}_1})|Z^{\circ}(\bL_w)^F|P_{\iota^*,\iota}^{\star}\varepsilon_{\iota_1}X_{\iota_1}
\end{equation*}
where $f'(\iota,\iota_1)$ is as in \cref{thm:main-theorem-nilpotent} and $u \in \mathcal{O}_{\iota^*}$.
\end{lem}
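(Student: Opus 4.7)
The strategy is a direct calculation: substitute the decomposition of $\Gamma_{u_i}$ from \cref{thm:main-theorem-unipotent} into the definition \cref{eq:mod-GGGR} of $\Gamma_{\iota^*}$, interchange the order of summation so that the sum over class representatives is innermost, and then evaluate that inner sum by orthogonality. Concretely, after substitution and reorganisation one obtains
$$
\Gamma_{\iota^*}=\sum_{\mathscr{I}'}\zeta_{\mathscr{I}'}^{-1}\!\!\!\sum_{\iota,\iota',\iota_1\in\mathscr{I}'^F}\!\! q^{f'(\iota,\iota_1)}\,\frac{1}{|W_{\bG}(\bL)|}\sum_{w}\Tr(wF,E_{\iota})\Tr(wF,E_{\hat{\iota}_1})|Z^{\circ}(\bL_w)^F|\,P_{\iota',\iota}^{\star}\,\varepsilon_{\iota_1}X_{\iota_1}\,S(\iota',\iota^*),
$$
where every factor except $\overline{Y_{\iota'}(u_i^*)}$ is independent of $u_i$, so the entire $u_i$-dependence is concentrated in the inner sum
$$
S(\iota',\iota^*)\;:=\;\sum_{i=1}^{r}[A_{\bG}(u_i):A_{\bG}(u_i)^F]\,Y_{\iota^*}(u_i)\,\overline{Y_{\iota'}(u_i^*)}.
$$

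The key claim is then that $S(\iota',\iota^*)=|A_{\bG}(u)|\,\delta_{\iota',\iota^*}$. Once this is granted, the Kronecker delta collapses the $\iota'$-sum to $\iota'=\iota^*$, replacing $P^{\star}_{\iota',\iota}$ by $P^{\star}_{\iota^*,\iota}$; and by \cref{thm:gen-green-funcs}(ii) this latter entry vanishes unless $\iota$ lies in the same block as $\iota^*$, so only the block $\mathscr{I}'=\mathscr{I}$ containing $\iota^*$ survives in the outer sum. Pulling the factor $|A_{\bG}(u)|$ out delivers precisely the formula stated in \cref{lem:mod-GGGR-decomp}.

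The main obstacle is to establish the orthogonality identity for $S(\iota',\iota^*)$. The corresponding identity with $u_i^*$ replaced by $u_i$ is exactly the first relation of \cref{lem:orthogonality-relations}, so the task reduces to understanding how the involution $v\mapsto v^*$ acts on $\bG^F$-conjugacy classes in $\mathcal{O}_{\iota^*}^F$ and on the centraliser indices $[A_{\bG}(u_i):A_{\bG}(u_i)^F]$. For this I would invoke \cref{prop:same-G-orbit}: not only are $u_i$ and $u_i^*$ contained in the same $\bG$-orbit, but the intertwining automorphism $\theta$ of its proof, defined combinatorially by $\theta(\overline{x}_\alpha(t))=\overline{x}_{-\alpha}(\gamma_\alpha t)$ with $\gamma_\alpha\in\mathbb{F}_q^{\times}$, commutes with $F$. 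Consequently $v\mapsto v^*$ descends to a well-defined involution on the finite set of $\bG^F$-classes in $\mathcal{O}_{\iota^*}^F$, and $Y_{\iota'}(u_i^*)=Y_{\iota'}(u_{\sigma(i)})$ for a permutation $\sigma$ by $G$-invariance of $Y_{\iota'}$. A careful bookkeeping showing that the index $[A_{\bG}(u_i):A_{\bG}(u_i)^F]$ and the value $Y_{\iota^*}(u_i)$ transform compatibly under $i\mapsto\sigma(i)$ then identifies $S(\iota',\iota^*)$ with the orthogonality sum of \cref{lem:orthogonality-relations} after reindexing, yielding the required Kronecker delta.
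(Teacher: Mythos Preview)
Your proposal is correct and follows essentially the same approach as the paper's one-line proof: substitute \cref{thm:main-theorem-unipotent} into \cref{eq:mod-GGGR}, invoke \cref{prop:same-G-orbit} to relate $u_i$ and $u_i^*$, and then apply the orthogonality relations of \cref{lem:orthogonality-relations}. Your more detailed handling of the permutation $\sigma$ induced by $v\mapsto v^*$ on $\bG^F$-classes goes beyond what the paper spells out, but this is exactly the bookkeeping the paper's terse citation of \cref{prop:same-G-orbit} and \cref{lem:orthogonality-relations} leaves implicit.
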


\begin{proof}
Using \cref{thm:main-theorem-unipotent} and the definition of $\Gamma_{\iota^*}$ we see this follows from \cref{prop:same-G-orbit}, the definition of the functions $Y_{\iota}$ and the orthogonality relations given in \cref{lem:orthogonality-relations}.
\end{proof}

\section{Weyl Groups and Unipotent Classes}\label{sec:weyl-uni}
\begin{assumption}
We now drop the assumptions introduced from \cref{sec:GGGRs} onwards. From this point forward, unless stated otherwise, we simply assume that $p$ is a good prime for $\bG$.
\end{assumption}

\begin{pa}\label{pa:dual-quadruple}
We will denote by $\mathcal{S}(\bT_0)$ the set of (isomorphism classes of) tame local systems on $\bT_0$. Assume that $\mathscr{L} \in \mathcal{S}(\bT_0)$ is such a local system then we denote by $W_{\bG}(\mathscr{L})$ the quotient group $N_{\bG}(\bT_0,\mathscr{L})/\bT_0$ where
\begin{equation*}
N_{\bG}(\bT_0,\mathscr{L}) = \{x \in N_{\bG}(\bT_0) \mid (\Inn x)^*(\mathscr{L}) \cong \mathscr{L}\}.
\end{equation*}
For any root $\alpha \in \Phi$ we denote by $s_{\alpha} \in W_{\bG}(\bT_0)$ the reflection of $\alpha$ in the natural action of $W_{\bG}(\bT_0)$ on $X(\bT_0) \otimes_{\mathbb{Z}} \mathbb{R}$. Following \cite[2.3]{lusztig:1985:character-sheaves-I} we set
\begin{equation*}
\Phi_{\mathscr{L}} = \{\alpha \in \Phi \mid s_{\alpha} \in W_{\bG}(\mathscr{L})\}.
\end{equation*}
This is a root system, which is not necessarily additively closed in $\Phi$, and $W_{\bG}^{\circ}(\mathscr{L}) = \langle s_{\alpha} \mid \alpha \in \Phi_{\mathscr{L}} \rangle$ is the corresponding reflection group. In fact, this is a large normal subgroup of $W_{\bG}(\mathscr{L})$, which can of course be the whole of $W_{\bG}(\mathscr{L})$. Setting $\Phi_{\mathscr{L}}^+ = \Phi_{\mathscr{L}} \cap \Phi^+$ we get a system of positive roots for $\Phi_{\mathscr{L}}$ which determines a corresponding set of Coxeter generators $\mathbb{T}_{\mathscr{L}}$ for $W_{\bG}^{\circ}(\mathscr{L})$. Denoting by $\mathcal{A}_{\bG}(\mathscr{L})$ the group $\{w \in W_{\bG}(\mathscr{L}) \mid w\mathbb{T}_{\mathscr{L}}w^{-1} = \mathbb{T}_{\mathscr{L}}\}$ we obtain a semidirect product decomposition $W_{\bG}(\mathscr{L}) = W_{\bG}^{\circ}(\mathscr{L}) \rtimes \mathcal{A}_{\bG}(\mathscr{L})$. Let us now set $W = W_{\bG}(\bT_0)$, $H = W_{\bG}(\mathscr{L})$, $H^{\circ} = W_{\bG}^{\circ}(\mathscr{L})$ and $\Omega = \mathcal{A}_{\bG}(\mathscr{L})$ then following \cite[16.3]{lusztig:1985:character-sheaves} we make the following definition.
\end{pa}

\begin{definition}
A subset $\mathfrak{C} \subseteq H$ is called a \emph{two-sided cell} if there exists a two-sided cell $\mathfrak{C}^{\circ} \subseteq H^{\circ}$, defined as in \cite[\S5.1]{lusztig:1984:characters-of-reductive-groups}, such that $\mathfrak{C} = \Omega\mathfrak{C}^{\circ}\Omega$.
\end{definition}

\begin{pa}\label{pa:two-sided-cells-families}
The two-sided cells of $H$ are naturally in bijection with the $\Omega$-orbits of the two-sided cells of $H^{\circ}$. In particular, we have $H$ is a disjoint union of its two-sided cells. Using the leading coefficients of representations of the corresponding extended Hecke algebra Lusztig associates to each two-sided cell $\mathfrak{C} \subseteq H$ a subset of irreducible representations $\Irr(H\mid \mathfrak{C})$ of $H$, see \cite[10.4]{lusztig:1992:a-unipotent-support}. We will call these subsets \emph{families}. In each family he identifies a unique irreducible representation $E_{\mathfrak{C}} \in \Irr(H \mid \mathfrak{C})$, called a \emph{special representation}, which has the following property. Let $\{\mathfrak{C}_1^{\circ},\dots,\mathfrak{C}_r^{\circ}\}$ be the $\Omega$-orbit of two-sided cells of $H^{\circ}$ corresponding to $\mathfrak{C}$ then
\begin{equation}\label{eq:restrict-special}
\Res_{H^{\circ}}^H(E_{\mathfrak{C}}) = E_{\mathfrak{C}_1^{\circ}} + \cdots + E_{\mathfrak{C}_r^{\circ}},
\end{equation}
where $E_{\mathfrak{C}_i^{\circ}} \in \Irr(H^{\circ})$ is the unique special representation corresponding to $\mathfrak{C}_i^{\circ}$, in the sense of \cite[(4.1.9)]{lusztig:1984:characters-of-reductive-groups}.
\end{pa}

\begin{pa}
Let us denote by $\sgn$ the sign representation of $W$ and also its restriction to any subgroup of $W$. Now, given any two-sided cell $\mathfrak{C}$ of $H$ we denote by $\mathfrak{C}^{\dag}$ the unique two-sided cell of $H$ satisfying the condition:
\begin{itemize}
	\item For any representation $E \in \Irr(H \mid \mathfrak{C})$ we have $E \otimes \sgn \in \Irr(H \mid \mathfrak{C}^{\dag})$.
\end{itemize}
The map $\mathfrak{C} \to \mathfrak{C}^{\dag}$ defines a permutation on the set of two-sided cells of $H$.
\end{pa}

\begin{pa}
Let $V$ be the natural module for $W$ then for each representation $E \in \Irr(H^{\circ})$ we denote by $b_E$ the minimal $i \in \mathbb{Z}$ such that $E$ occurs in the $i$th symmetric power $S^i(U)$, where $U = V/\Fix_{H^{\circ}}(V)$ and $\Fix_{H^{\circ}}(V) = \{v \in V \mid h\cdot v = v$ for all $h \in H\}$. Note that taking $H^{\circ} = W$ in this definition we again obtain an integer $b_E$ for every $E \in \Irr(W)$. With this in hand we have the Lusztig--Macdonald--Spaltenstein induction map $j_{H^{\circ}}^W : \Irr(H^{\circ}) \to \Cent(W)$, which is defined in the following way. Let $E \in \Irr(H^{\circ})$ be an irreducible representation then we set
\begin{equation*}
j_{H^{\circ}}^W(E) = \sum_{\substack{E' \in \Irr(W)\\ b_E = b_{E'}}} \langle \Ind_{H^{\circ}}^W(E), E' \rangle E'.
\end{equation*}
\end{pa}

\begin{pa}
According to \cite[1.3]{lusztig:2009:unipotent-classes-and-special-Weyl} we have for each two sided cell $\mathfrak{C}^{\circ} \subseteq H^{\circ}$ that the $j$-induced representation $j_{H^{\circ}}^W(E_{\mathfrak{C}^{\circ}})$ is irreducible. Furthermore, by \cite[1.5(a)]{lusztig:2009:unipotent-classes-and-special-Weyl} we see that $j_{H^{\circ}}^W(E_{\mathfrak{C}^{\circ}})$ corresponds under the Springer correspondence to some $E_{\iota} \in \Irr(W)$ with $\iota = (\mathcal{O},\Ql) \in \mathscr{I}[\bT_0,\Ql] \subseteq \mathcal{V}_{\bG}^{\uni}$. We denote by $\mathcal{O}_{H^{\circ},\mathfrak{C}^{\circ}}$ the unipotent class $\mathcal{O}$. Now assume $\mathfrak{C} \subseteq H$ is a two-sided cell of $H$ and $\mathfrak{C}^{\circ} \subseteq H^{\circ}$ is a two-sided cell such that $\mathfrak{C} = \Omega\mathfrak{C}^{\circ}\Omega$ then we denote by $\mathcal{O}_{\mathscr{L},\mathfrak{C}}$ the class $\mathcal{O}_{H^{\circ},\mathfrak{C}^{\circ}}$; recall that $H = W_{\bG}(\mathscr{L})$. As in \cite{lusztig:1992:a-unipotent-support} we have defined a map
\begin{equation}\label{eq:cells-to-classes}
\begin{aligned}
\{\text{two-sided cells of }H\} &\to \{\text{unipotent conjugacy classes of }\bG\}\\
\mathfrak{C} &\mapsto \mathcal{O}_{\mathscr{L},\mathfrak{C}}.
\end{aligned}
\end{equation}
Note that, by \cref{eq:restrict-special}, we see that this map is well defined as $\mathcal{O}_{\mathscr{L},\mathfrak{C}}$ does not depend upon the choice of two-sided cell $\mathfrak{C}^{\circ} \subseteq H^{\circ}$ satisfying $\mathfrak{C} = \Omega\mathfrak{C}^{\circ}\Omega$.
\end{pa}

\begin{assumption}
We now assume that $(\bG^{\star},\bT_0^{\star},F^{\star})$ is a fixed triple dual to $(\bG,\bT_0,F)$.
\end{assumption}

\begin{pa}\label{pa:dual-tori-iso}
As in \cite[18.A]{bonnafe:2006:sln} we may choose an isomorphism $\bT_0^{\star} \cong \mathcal{S}(\bT_0)$ which respects the actions of $W_{\bG}(\bT_0)$, $W_{\bG^{\star}}(\bT_0^{\star})$, $F$ and $F^{\star}$. Recall that duality induces an anti-isomorphism ${}^{\star} : W_{\bG}(\bT_0) \to W_{\bG^{\star}}(\bT_0^{\star})$. Now if $\mathscr{L} \in \mathcal{S}(\bT_0)$ corresponds to $s \in \bT_0^{\star}$ under our chosen isomorphism then we have $W_{\bG}(\mathscr{L})^{\star} = W_{\bG^{\star}}(s)$ and $W_{\bG}^{\circ}(\mathscr{L})^{\star} = W_{\bG^{\star}}^{\circ}(s)$ where
\begin{equation*}
W_{\bG^{\star}}(s) = N_{\bG^{\star}}(\bT_0^{\star},s)/\bT_0^{\star}\qquad\text{and}\qquad W_{\bG^{\star}}^{\circ}(s) = N_{C_{\bG^{\star}}^{\circ}(s)}(\bT_0^{\star})/\bT_0^{\star}.
\end{equation*}
This anti-isomorphism also gives an identification of the 2-sided cells of these groups. In particular, if $\mathfrak{C} \subseteq W_{\bG^{\star}}(s)$ is a 2-sided cell we denote by $\mathcal{O}_{s,\mathfrak{C}}$ the class $\mathcal{O}_{\mathscr{L},\mathfrak{D}}$, where $\mathfrak{D} \subseteq W_{\bG}(\mathscr{L})$ is the unique 2-sided cell such that $\mathfrak{D}^{\star} = \mathfrak{C}$. We thus get a map
\begin{equation*}
\begin{aligned}
\{\text{two-sided cells of }W_{\bG^{\star}}(s)\} &\to \{\text{unipotent conjugacy classes of }\bG\}\\
\mathfrak{C} &\mapsto \mathcal{O}_{s,\mathfrak{C}}.
\end{aligned}
\end{equation*}
If $Z(\bG)$ is connected then $C_{\bG^{\star}}(s)$ is connected and this map coincides with the map defined in \cite[\S13.3]{lusztig:1984:characters-of-reductive-groups}.
\end{pa}

\section{Unipotent Supports for Character Sheaves}
\begin{pa}
For any local system $\mathscr{L} \in \mathcal{S}(\bT_0)$ Lusztig has defined in \cite[2.10]{lusztig:1985:character-sheaves} a set of (isomorphism classes of) perverse sheaves $\widehat{\bG}_{\mathscr{L}}$ on $\bG$. The definition of this set depends only on the $W_{\bG}(\bT_0)$-orbit of $\mathscr{L}$. With this we have
\begin{equation*}
\widehat{\bG} = \bigsqcup_{\mathscr{L} \in \mathcal{S}(\bT_0)/W_{\bG}(\bT_0)} \widehat{\bG}_{\mathscr{L}}
\end{equation*}
is the set of character sheaves of $\bG$, where the union is over the $W_{\bG}(\bT_0)$-orbits of tame local systems on $\bT_0$. The set $\widehat{\bG}_{\mathscr{L}}$ is then further partitioned into \emph{families}
\begin{equation*}
\widehat{\bG}_{\mathscr{L}} = \bigsqcup_{\mathfrak{C} \subseteq W_{\bG}(\mathscr{L})} \widehat{\bG}_{\mathscr{L},\mathfrak{C}}
\end{equation*}
where the union runs over all the two-sided cells of $W_{\bG}(\mathscr{L})$, see \cite[Corollary 16.7]{lusztig:1985:character-sheaves}.
\end{pa}

\begin{definition}\label{def:unipotent-support-char-sheaf}
For any conjugacy class $\mathcal{C}$ of $\bG$ let $\mathcal{C}_{\uni}$ be the set of all unipotent elements occuring in the Jordan decomposition of some element in $\mathcal{C}$; it is a unipotent conjugacy class of $\bG$. If $A \in \widehat{\bG}_{\mathscr{L},\mathfrak{C}}$ is a character sheaf then we say $\mathcal{O} \in \mathcal{U}/\bG$ is a \emph{unipotent support} of $A$ if following properties hold:
\begin{enumerate}
	\item For any conjugacy class $\mathcal{C}$ of $\bG$ we have $A|_{\mathcal{C}} \neq 0$ implies that either $\dim\mathcal{C}_{\uni} < \dim\mathcal{O}$ or $\mathcal{C}_{\uni} = \mathcal{O}$.
	\item There exists a conjugacy class $\mathcal{C}$ of $\bG$ and a character sheaf $A' \in \widehat{\bG}_{\mathscr{L},\mathfrak{C}}$ such that $\mathcal{C}_{\uni} = \mathcal{O}$ and $A'|_{\mathcal{C}} \neq 0$.
\end{enumerate}
\end{definition}

\begin{pa}
It is clear that every character sheaf has a unipotent support. What we would like to now show is that every character sheaf has a unique unipotent support. Before doing so we recall the following result of Shoji.
\end{pa}

\begin{thm}[{}{Shoji, \cite[Theorem 4.2]{shoji:1996:on-the-computation}}]\label{thm:shoji-gen-green-funcs}
Assume that $p$ is good for $\bG$ and that $Z(\bG)$ is connected then the results of \cite{lusztig:1990:green-functions-and-character-sheaves} are true without restriction on $q$.
\end{thm}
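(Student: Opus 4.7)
The plan is to isolate which results of \cite{lusztig:1990:green-functions-and-character-sheaves} actually depend on the size of $q$ and then to show that under the hypothesis that $Z(\bG)$ is connected the restriction on $q$ can be dropped. Concretely, the main content of \cite{lusztig:1990:green-functions-and-character-sheaves} is (a) the identification, for every cuspidal datum $(\bL,\iota_0)\in\widetilde{\mathcal{W}}_{\bG}^F$ and every $w\in W_{\bG}(\bL)$, of the characteristic function $\chi_{K_w,\phi_w}$ of the induced complex with the expression in \cref{eq:twist-X} coming from Lusztig's algorithm, and (b) the assertion that the unknown normalising scalars $\xi_{\iota}\in\Ql^{\times}$ appearing in the proof of \cref{lem:orthogonality-relations} (i.e.\ the ratio between $\varphi_{\iota}$ and the isomorphism coming from the preferred extension) are roots of unity that can be computed from root-system data alone. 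One would try to reduce (a) to (b) by invoking the purely combinatorial characterisation of $P$, $\Lambda$, $\Omega$ given in \cref{thm:gen-green-funcs}: once the scalars are pinned down, the agreement of characteristic functions with Lusztig's algorithm is formal.

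To establish (b), I would proceed by induction on the semisimple rank, using the induction diagram of \cref{pa:induction-diagram} to reduce to the case where the cuspidal datum is supported on the group itself. The key input, and the reason for assuming $Z(\bG)$ connected, is the classical fact that all component groups $A_{\bG}(u)$ of unipotent centralisers act without Galois twisting: the connectedness of $Z(\bG)$ forces the centralisers of semisimple elements in the dual group to be connected, which controls the $F$-structure on the local systems $\mathscr{E}_{\iota}$ in a manner independent of $q$. I would then write down the orthogonality relations between the functions $Y_{\iota}$ analogous to \cref{lem:orthogonality-relations} and the two-variable orthogonality between generalised Green functions coming from applying the Grothendieck trace formula to the product of two induction diagrams. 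Combined with \cref{thm:gen-green-funcs}, these determine the scalars uniquely once one knows the non-singularity of the relevant matrix $\Omega$.

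The main obstacle, and the place where Lusztig needed $q$ large in \cite{lusztig:1990:green-functions-and-character-sheaves}, is that a priori the system of equations $P\Lambda P^T=\Omega$ together with \cref{eq:P-conditon,eq:Lambda-condition} admits the required unique solution only if $\Omega$ is non-singular; in the general setting this non-singularity was established by a limit argument as $q\to\infty$. Under the hypothesis that $Z(\bG)$ is connected, the entries of $\Omega$ have the closed form \cref{eq:omega-inv} with $|Z^{\circ}(\bL_w)^F|$ replaced by $|Z(\bL_w)^F|$ and the characters of the relative Weyl group $W_{\bG}(\bL)$ appearing without the twist by a non-trivial class in the dual group; the determinant of $\Omega$ can then be computed, via a Schur-orthogonality argument on $W_{\bG}(\bL)\rtimes\langle F\rangle$, as a product of character-theoretic factors and torus orders, and none of these vanish for any $q$.

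Finally, once (b) has been obtained, one would deduce (a) and all the remaining consequences in \cite{lusztig:1990:green-functions-and-character-sheaves} by specialisation: the scalars $\xi_{\iota}$ are roots of unity independent of $q$, so the equality of characteristic functions, established for $q$ large, propagates to all $q$ by replacing $F$ with $F^n$ and letting $n$ vary; the resulting identities of rational functions in $q$ are then identities of polynomials, hence hold without restriction. This bootstrapping step is essentially formal, so the heart of the matter is the explicit determination of $\xi_{\iota}$ under the hypothesis that $Z(\bG)$ is connected.
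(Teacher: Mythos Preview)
The paper does not prove this theorem at all: it is stated with a direct citation to Shoji \cite[Theorem 4.2]{shoji:1996:on-the-computation} and then used as a black box (see the sentence immediately following it introducing the constant $q_0(\bG)$). So there is no ``paper's own proof'' to compare your proposal against; your sketch is an attempt to reconstruct Shoji's argument, which lies outside the scope of this article.

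As for the content of your sketch, there is a genuine misidentification of the obstacle. You locate the dependence on $q$ in the non-singularity of the matrix $\Omega$, but that non-singularity is already part of \cref{thm:gen-green-funcs}, which is Lusztig's \cite[Theorem 24.4]{lusztig:1986:character-sheaves-V} and holds without any hypothesis on $q$. The actual place where \cite{lusztig:1990:green-functions-and-character-sheaves} uses $q$ large is in proving that the characteristic functions of character sheaves coincide (up to explicit scalars) with the almost characters of \cite{lusztig:1984:characters-of-reductive-groups}; this is a statement about matching two bases of class functions, and the large-$q$ hypothesis enters through a linear-independence/specialisation argument for the values of generalised Green functions. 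Shoji's contribution is a direct computation of the relevant scalars by an inductive argument on the generalised Springer correspondence, exploiting that when $Z(\bG)$ is connected the parametrising data behave uniformly in $q$. Your ``bootstrapping'' paragraph gestures at the right kind of specialisation, but the hard step is precisely the determination of the scalars, and your account of how connectedness of $Z(\bG)$ enters (via component groups $A_{\bG}(u)$) is not the mechanism Shoji uses.
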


\begin{pa}
We will denote by $q_0(\bG) \geqslant 1$ a constant, as in \cite{lusztig:1990:green-functions-and-character-sheaves}, such that if $q > q_0(\bG)$ the results of \cite{lusztig:1990:green-functions-and-character-sheaves} are true. By \cref{thm:shoji-gen-green-funcs} we may set $q_0(\bG) = 1$ if $Z(\bG)$ is connected. Now, combining \cref{thm:main-theorem-nilpotent} with a careful reading of the remaining parts of \cite{lusztig:1992:a-unipotent-support} we see that the following is true.
\end{pa}

\begin{cor}\label{cor:all-Lusztig-results}
Assume that $p$ is an acceptable prime for $\bG$ and $q > q_0(\bG)$, recall that this means no assumption on $q$ if $Z(\bG)$ is connected. Then after replacing $\log$ and $\exp$ by $\phi_{\spr}$ and $\phi_{\spr}^{-1}$ we see that all the results of \cite[\S8 -- \S11]{lusztig:1992:a-unipotent-support} are true in our more general situation.
\end{cor}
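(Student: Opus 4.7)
The plan is to argue that each individual result in \cite[\S8 -- \S11]{lusztig:1992:a-unipotent-support} goes through verbatim, once we systematically substitute our Springer isomorphism $\phi_{\spr}$ for Lusztig's $\exp$/$\log$ and our cocharacter-based constructions for any appeal to $\lie{sl}_2$-triples. The essential new inputs needed to run these arguments in the weaker setting are: (a) Lusztig's formula, which we have now established in full generality as \cref{thm:main-theorem-nilpotent} (equivalently \cref{thm:main-theorem-unipotent} and the refined decomposition \cref{lem:mod-GGGR-decomp}); (b) Letellier's generalisation \cite{letellier:2005:fourier-transforms} of the Fourier transform results of \cite{lusztig:1987:fourier-transforms-on-a-semisimple-lie-algebra} to acceptable primes, already invoked throughout \S6--\S11; and (c) the validity, under our hypotheses on $p$ and $q$, of the results of \cite{lusztig:1990:green-functions-and-character-sheaves}, which is precisely the role of the constant $q_0(\bG)$ and \cref{thm:shoji-gen-green-funcs}.

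First I would inspect each statement in \cite[\S8 -- \S11]{lusztig:1992:a-unipotent-support} and classify the points at which $\exp$ or $\log$ is used. In every case one finds that what is actually required is a $\bG$-equivariant isomorphism $\mathcal{U}\to\mathcal{N}$ compatible with the Frobenius endomorphism and whose restriction to each $\bU(\lambda)$ is a Kawanaka isomorphism; by \cref{lem:springer-morphism,thm:Springer-Kawanaka-iso} our $\phi_{\spr}$ has exactly these properties. Likewise, appeals to a fixed $\lie{sl}_2$-triple containing a nilpotent $e \in \mathcal{N}$ are replaced by the cocharacter $\lambda \in \mathcal{D}(\bG)$ with $e \in \lie{g}(\lambda,2)_{\reg}$, and \cref{thm:class-nilpotent-orbits} together with \cref{cor:existence-transversal-slice} provides the structural properties (parabolic containment, density of orbits, transverse slices) that Lusztig's arguments require of the $\lie{sl}_2$-triple.

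Next I would check that the global ingredients are available. The decomposition of $\Gamma_u$ into pieces indexed by blocks, which underlies Lusztig's analysis of averages $\AV(\rho,\mathcal{O})$ and of inner products $\langle\Gamma_u,\rho\rangle$, is exactly \cref{thm:main-theorem-unipotent}; the orthogonality relations and the Lusztig--Shoji algorithm governing the matrices $P$, $\Lambda$, $\Omega$ are available via \cref{thm:gen-green-funcs} and \cref{thm:shoji-gen-green-funcs}; and the compatibility of Deligne--Fourier transform with the maps $\iota\mapsto\hat\iota$, together with the scalar computation of \cref{prop:fourth-root-unity}, supplies the missing piece needed in the twisted case. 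With these in hand one can translate the Lusztig arguments for the existence of unipotent supports of character sheaves, for the characterisation of two-sided cells via the map \eqref{eq:cells-to-classes}, and for the bounds on $\langle \Gamma_u,\rho\rangle$ and $\AV(\rho,\mathcal{O})$ that produce the wave-front set and unipotent support results.

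The main obstacle, as I see it, is not any single conceptual step but rather the bookkeeping: one has to verify in every proof of \cite[\S8 -- \S11]{lusztig:1992:a-unipotent-support} that no unrecorded use is made of special properties of $\exp$ beyond those captured by the Kawanaka conditions (K1)--(K3), and that every invocation of \cite{lusztig:1990:green-functions-and-character-sheaves} occurs in a context where either $Z(\bG)$ is connected (so that \cref{thm:shoji-gen-green-funcs} applies) or $q > q_0(\bG)$. In particular, potential subtleties arise in the scalar computations (e.g.\ the fourth root of unity $\zeta_{\mathscr{I}}$ in \cref{prop:fourth-root-invariance} style arguments) where the identification of $\lie{g}(\lambda,2)_{\reg}$ with the open dense orbit must play the role formerly filled by $\lie{sl}_2$-representation theory; these are handled by \cref{thm:class-nilpotent-orbits}(ii) and the non-degeneracy statement \cref{lem:non-degen}. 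Once this verification is complete the conclusion follows, and the explicit details are left to the reader since no genuinely new argument is involved.
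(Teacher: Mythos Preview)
Your proposal is correct and follows the same approach as the paper; indeed the paper provides no formal proof of this corollary at all, merely asserting in the preceding paragraph that it follows by ``combining \cref{thm:main-theorem-nilpotent} with a careful reading of the remaining parts of \cite{lusztig:1992:a-unipotent-support}''. Your write-up is thus a more explicit elaboration of that careful reading than the paper itself offers, correctly identifying the three essential inputs (Lusztig's formula via \cref{thm:main-theorem-unipotent}, Letellier's Fourier transform results, and the availability of \cite{lusztig:1990:green-functions-and-character-sheaves} via \cref{thm:shoji-gen-green-funcs}). One small slip: you cite a nonexistent ``\texttt{prop:fourth-root-invariance}''; the relevant result is \cref{prop:fourth-root-unity}.
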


\begin{pa}
With this in hand we may prove the following theorem, which is due to Lusztig assuming that $p$ is sufficiently large and Aubert in certain special cases when $p$ is good, see \cite{lusztig:1992:a-unipotent-support} and \cite{aubert:2003:characters-sheaves-and-generalized}.
\end{pa}

\begin{thm}\label{thm:unip-support-char-sheaves}
Assume $p$ is a good prime for $\bG$ then every character sheaf $A \in \widehat{\bG}$ has a unique unipotent support denoted by $\mathcal{O}_A$. Furthermore, if $A$ is contained in the family $\widehat{\bG}_{\mathscr{L},\mathfrak{C}}$ then $\mathcal{O}_A = \mathcal{O}_{\mathscr{L},\mathfrak{C}}$.
\end{thm}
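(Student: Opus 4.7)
The plan is to deduce the theorem from \cref{cor:all-Lusztig-results} by a series of reductions that move from the general setting (good $p$, arbitrary $Z(\bG)$) into a situation where Lusztig's arguments from \cite{lusztig:1992:a-unipotent-support} apply verbatim. The key observation is that the notion of unipotent support of a character sheaf is defined purely on the algebraic group $\bG$ and does not depend on the Frobenius endomorphism $F$. Hence we are completely free to replace $F$ by any convenient Frobenius, and in particular we may assume $q$ is as large as we wish once we have placed ourselves into a setup where the results of \cite{lusztig:1990:green-functions-and-character-sheaves} apply.

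First I would pass to a proximate cover. By \cref{prop:pretty-good-exists} there is a bijective morphism $\phi:\overline{\bG}\to \bG$ with $\overline{\bG}$ proximate; since $\phi$ is bijective, push-forward of perverse sheaves along $\phi$ induces a bijection between character sheaves of $\overline{\bG}$ and of $\bG$, preserves the parameter data $(\mathscr{L},\mathfrak{C})$ (identify $\bT_0$ with its image) and preserves the unipotent support. So we may assume $\bG$ is proximate. Next I would perform a regular embedding $\bG \hookrightarrow \widetilde{\bG}$ with $\widetilde{\bG}$ a proximate connected reductive group having connected centre and the same derived subgroup. The results of \cite{lusztig:1985:character-sheaves} relate the character sheaves, families, and tame local systems of $\bG$ and $\widetilde{\bG}$ by restriction of the underlying complexes, and a conjugacy class of $\bG$ has the same unipotent part as the $\widetilde{\bG}$-class containing it. Thus the uniqueness of the unipotent support together with its identification with $\mathcal{O}_{\mathscr{L},\mathfrak{C}}$ transfers between $\bG$ and $\widetilde{\bG}$. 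We may therefore assume $Z(\bG)$ is connected, so by \cref{thm:shoji-gen-green-funcs} we may take $q_0(\bG)=1$ and impose no further restriction on $q$.

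Now I would split according to whether $p$ is acceptable for $\bG$. When $p$ is acceptable, \cref{cor:all-Lusztig-results} makes available to us the full body of arguments of \cite[\S8--\S11]{lusztig:1992:a-unipotent-support} with $\phi_{\spr}^{\pm1}$ replacing $\exp,\log$, since the essential inputs are \cref{thm:main-theorem-nilpotent}/\cref{thm:main-theorem-unipotent} (Lusztig's formula for GGGRs) together with the results of \cite{lusztig:1990:green-functions-and-character-sheaves}. By Lusztig's argument \cite[Theorem 10.7]{lusztig:1992:a-unipotent-support}, one combines the decomposition \cref{eq:decomp-gggr-nil} of the GGGRs with the orthogonality of Deligne--Lusztig characters and the interpretation of characteristic functions of character sheaves in the family $\widehat{\bG}_{\mathscr{L},\mathfrak{C}}$ via the Lusztig--Macdonald--Spaltenstein induction of the special representation associated to $\mathfrak{C}$. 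The conclusion is that for $A\in\widehat{\bG}_{\mathscr{L},\mathfrak{C}}$ the characteristic function $\chi_{A,\varphi}$ has non-zero average on $\mathcal{O}_{\mathscr{L},\mathfrak{C}}$ but vanishes on all strictly larger unipotent classes, which, since this holds for every $F$-stable structure on $\bG$ and arbitrary $q$, forces the unipotent support of $A$ to equal $\mathcal{O}_{\mathscr{L},\mathfrak{C}}$. Uniqueness then follows.

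The remaining obstacle, and likely the hardest step, is to remove the hypothesis ``$p$ acceptable''. For $Z(\bG)$ connected this fails only when $\bG$ has a factor of type $\A$ whose isogeny type is not $\GL_n$. In that case I would follow the strategy sketched in \cref{prop:unipotent-supports-SLn}, namely use the central isogeny $\widetilde{\bG}_1\times\cdots\times\widetilde{\bG}_r\to\bG_{\der}$ from a product of groups each of which is either $\GL_{n+1}$ (so every prime is acceptable by \cref{lem:p-acceptable}(ii)) or has $\bG_i/Z(\bG_i)$ simple and not of type $\A$ (where good $\Leftrightarrow$ acceptable). Character sheaves, tame local systems on a maximal torus, two-sided cells and unipotent classes all factor through this product up to an action of the component group of the kernel, and both the left and right hand sides of the desired equality $\mathcal{O}_A=\mathcal{O}_{\mathscr{L},\mathfrak{C}}$ behave compatibly with such an isogeny. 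Reassembling after treating each factor yields the theorem. Checking that the isogeny really does preserve unipotent supports and the cell-to-class map in good characteristic is the most delicate point of the argument and is where most of the work will lie.
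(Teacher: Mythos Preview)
Your general strategy—reduce to a situation where \cref{cor:all-Lusztig-results} applies and then quote \cite[Theorem~10.7]{lusztig:1992:a-unipotent-support}—is the right one, and your observation that the statement is independent of $F$ (so $q$ may be taken large at will) is also used implicitly in the paper. However, your reduction path differs from the paper's and your handling of the type~$\A$ obstruction contains a genuine gap.

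The paper does not pass to a proximate cover (unnecessary here, since the definition of unipotent support involves no GGGRs) nor to a regular embedding. It reduces \emph{downwards}: first modulo $Z^{\circ}(\bG)$ to assume $\bG$ semisimple, then along a simply connected covering $\bG_{\simc}\to\bG$ (using that $\pi^*A$ decomposes as a sum of character sheaves in the corresponding family on $\bG_{\simc}$), and finally along the direct product decomposition to assume $\bG$ simple and simply connected. For such $\bG$ not of type~$\A$ one has good $=$ very good $\Rightarrow$ acceptable, so \cref{cor:all-Lusztig-results} and \cite[Theorem~10.7]{lusztig:1992:a-unipotent-support} finish the argument.

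The gap is your last paragraph. You describe the strategy of \cref{prop:unipotent-supports-SLn} as ``use the central isogeny $\widetilde{\bG}_1\times\cdots\times\widetilde{\bG}_r\to\bG_{\der}$ from a product of groups each of which is either $\GL_{n+1}$\ldots''. No such isogeny exists: $\GL_{n+1}$ has a one-dimensional central torus, so it admits no isogeny onto a semisimple group. More importantly, this is not what \cref{prop:unipotent-supports-SLn} does. Its actual argument is a \emph{characteristic-change} comparison: one first reduces (via central translation) to character sheaves supported on $\mathcal{U}$, then uses \cref{eq:char-sheaf-restrict} to express $A|_{\mathcal{U}}$ as a sum of shifted $K_{\iota}$'s with multiplicities $m_{E,\iota}=\langle\Ind_{W_{\bG}(\bM,\mathscr{F})}^{W_{\bG}(\bM)}E,E_{\iota}\rangle$. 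These multiplicities, the relative Weyl groups, the generalised Springer correspondence in type~$\A$, and the map $\mathfrak{C}\mapsto\mathcal{O}_{\mathscr{L},\mathfrak{C}}$ are all encoded by combinatorial data that can be matched with the analogous data for a simply connected group $\bG'$ of the same type over $\overline{\mathbb{F}_{p'}}$ with $p'>n$ very good, where the theorem is already known by \cref{cor:all-Lusztig-results}. Transporting the conclusion back along this combinatorial identification is the substance of the type~$\A$ case; no isogeny within characteristic~$p$ will replace it.
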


\begin{pa}\label{pa:strategy}
Our strategy for proving \cref{thm:unip-support-char-sheaves} will be to reduce the problem to the case where $\bG$ is simple and simply connected. If $\bG$ is not of type $\A_n$ then any good prime is an acceptable prime for $\bG$. In particular, by \cref{cor:all-Lusztig-results} we have \cref{thm:unip-support-char-sheaves} follows from \cite[Theorem 10.7]{lusztig:1992:a-unipotent-support} in this case. However, if $\bG$ is of type $\A_n$ then we require special arguments to obtain \cref{thm:unip-support-char-sheaves}. Our idea for this case is to characterise whether a character sheaf vanishes at a conjugacy class in terms of combinatorial data which is, in a suitable sense, independent of $p$. We then use the validity of the result in the case where $p$ is large to deduce the case for a general prime.
\end{pa}

\begin{prop}\label{prop:unipotent-supports-SLn}
Assume $\bG$ is simple and simply connected of type $\A_n$ then \cref{thm:unip-support-char-sheaves} holds for $\bG$.
\end{prop}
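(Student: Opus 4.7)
The plan is to reduce the proposition to the corresponding statement for the enveloping group $\widetilde{\bG} = \GL_{n+1}(\mathbb{K})$. For $\widetilde{\bG}$, every prime is acceptable by \cref{lem:p-acceptable}(ii), and $Z(\widetilde{\bG})$ is connected; hence \cref{thm:shoji-gen-green-funcs} together with \cref{cor:all-Lusztig-results} allows us to run the proof of \cite[Theorem 10.7]{lusztig:1992:a-unipotent-support} verbatim in $\widetilde{\bG}$ with no restriction on $q$. Consequently every character sheaf $\widetilde{A} \in \widehat{\widetilde{\bG}}_{\widetilde{\mathscr{L}},\widetilde{\mathfrak{C}}}$ already has a unique unipotent support $\mathcal{O}_{\widetilde{A}} = \mathcal{O}_{\widetilde{\mathscr{L}},\widetilde{\mathfrak{C}}}$, and our task is to transport this across a suitable embedding.

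First I would fix an $F$-equivariant closed embedding $\iota : \bG \hookrightarrow \widetilde{\bG}$ whose image is $\widetilde{\bG}_{\der}$, so that $\widetilde{\bG} = \iota(\bG)\cdot Z(\widetilde{\bG})$ and $\mathcal{U}(\bG) = \mathcal{U}(\widetilde{\bG})$. Every $\bG$-conjugacy class $\mathcal{C}$ then sits inside a unique $\widetilde{\bG}$-class $\widetilde{\mathcal{C}} = \mathcal{C}\cdot Z(\widetilde{\bG})$ satisfying $\widetilde{\mathcal{C}}_{\mathrm{uni}} = \mathcal{C}_{\mathrm{uni}}$, and the unipotent classes of $\bG$ and $\widetilde{\bG}$ coincide and are both parametrised by partitions of $n+1$. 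Using Lusztig's description of character sheaves on $\bG$ in terms of families of representations of the relative Weyl group, I would show that every $A \in \widehat{\bG}_{\mathscr{L},\mathfrak{C}}$ arises (up to a shift) as an irreducible summand of $\iota^{*}\widetilde{A}$ for a suitable $\widetilde{A} \in \widehat{\widetilde{\bG}}_{\widetilde{\mathscr{L}},\widetilde{\mathfrak{C}}}$, where $\widetilde{\mathscr{L}}$ restricts to $\mathscr{L}$ along $\bT_0 \hookrightarrow \widetilde{\bT}_0$ and $\widetilde{\mathfrak{C}}$ is identified with $\mathfrak{C}$ under the canonical isomorphism $W_{\widetilde{\bG}}(\widetilde{\mathscr{L}}) \cong W_{\bG}(\mathscr{L})$ coming from the equality of the underlying root data above the central torus.

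Next I would verify two compatibilities. Firstly, $A|_{\mathcal{C}} \neq 0$ if and only if $\widetilde{A}|_{\widetilde{\mathcal{C}}} \neq 0$, which is immediate from $\iota^{*}\widetilde{A} \cong A[r]$ together with the $Z(\widetilde{\bG})$-equivariance of $\widetilde{A}$; combined with $\widetilde{\mathcal{C}}_{\mathrm{uni}} = \mathcal{C}_{\mathrm{uni}}$ this identifies $\mathcal{O}_A$ with $\mathcal{O}_{\widetilde{A}}$ as the same unipotent class of $\bG = \widetilde{\bG}_{\der}$, giving both existence and uniqueness of $\mathcal{O}_A$. Secondly, I would verify $\mathcal{O}_{\mathscr{L},\mathfrak{C}} = \mathcal{O}_{\widetilde{\mathscr{L}},\widetilde{\mathfrak{C}}}$ by running the recipe of \cref{sec:weyl-uni} through the isomorphism of relative Weyl groups: the special representations attached to $\mathfrak{C}$ and $\widetilde{\mathfrak{C}}$ match, Lusztig--Macdonald--Spaltenstein $j$-induction to the common full Weyl group commutes with the isomorphism, and the Springer correspondences for $\bG$ and $\widetilde{\bG}$ attach the same partition of $n+1$ to the resulting representation of $W_{\bG}(\bT_0) \cong S_{n+1}$. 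Together with the established result for $\widetilde{\bG}$ this gives $\mathcal{O}_A = \mathcal{O}_{\widetilde{A}} = \mathcal{O}_{\widetilde{\mathscr{L}},\widetilde{\mathfrak{C}}} = \mathcal{O}_{\mathscr{L},\mathfrak{C}}$.

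The main obstacle I anticipate is the precise matching of character sheaves and their families under the embedding, particularly when $p \mid n+1$ — the only range in which $p$ fails to be acceptable for $\bG$, so that the reduction is non-vacuous. In that case $Z(\bG)$ is infinitesimal, and $\iota^{*}\widetilde{A}$ may decompose as a sum of several non-isomorphic character sheaves on $\bG$; the delicate point is to show that every irreducible summand lies in the block $\widehat{\bG}_{\mathscr{L},\mathfrak{C}}$ assigned to the original family $\widehat{\widetilde{\bG}}_{\widetilde{\mathscr{L}},\widetilde{\mathfrak{C}}}$, rather than in some other block. Once this is pinned down, the remaining compatibilities reduce to routine type-$\A$ combinatorics.
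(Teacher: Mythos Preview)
Your overall strategy --- transport the known result for $\GL_{n+1}$ across the regular embedding $\iota : \SL_{n+1} \hookrightarrow \GL_{n+1}$ --- is \emph{not} what the paper does. The paper instead runs a change-of-characteristic argument entirely within $\SL_{n+1}$: it first reduces, via a central translation, to a character sheaf $A$ with $\supp(A)\subseteq\mathcal{U}$; writes $A$ as a summand of an induced complex from a cuspidal pair $(\bM,\iota_0)$ and expresses $A|_{\mathcal{U}}$ as a combination of the $K_\iota$ with multiplicities $m_{E,\iota} = \langle \Ind_{W_{\bG}(\bM,\mathscr{F})}^{W_{\bG}(\bM)}E, E_\iota\rangle$; then builds a companion group $\bG'$ of the same type over $\overline{\mathbb{F}_{p'}}$ with $p'$ very good, matching the cuspidal datum and all the relative Weyl groups, so that the same multiplicities $m_{E,\iota}$ govern $A'|_{\mathcal{U}}$; finally it reads off the answer from the known large-characteristic case. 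The point is that the multiplicities $m_{E,\iota}$ and the generalised Springer correspondence are purely combinatorial and match across the two characteristics.

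Your approach, by contrast, has a genuine gap. The asserted isomorphism $W_{\widetilde{\bG}}(\widetilde{\mathscr{L}}) \cong W_{\bG}(\mathscr{L})$ is false in general: one only has $W_{\widetilde{\bG}}(\widetilde{\mathscr{L}}) \cong W_{\bG}^{\circ}(\mathscr{L})$, the discrepancy being the component group $\mathcal{A}_{\bG}(\mathscr{L})$, which is exactly the group $C_{\bG^\star}(s)/C_{\bG^\star}^{\circ}(s)$ for the semisimple $s\in\PGL_{n+1}$ dual to $\mathscr{L}$. This is nontrivial precisely when $s$ has disconnected centraliser in $\PGL_{n+1}$, a condition on the order of $s$ and \emph{not} on whether $p\mid n+1$; so the phenomenon you flag as ``the main obstacle'' is present in every characteristic, not just the infinitesimal-centre case. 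Consequently $\iota^*\widetilde{A}$ will typically decompose, the two-sided cells of $W_{\bG}(\mathscr{L})$ are $\mathcal{A}_{\bG}(\mathscr{L})$-orbits of cells of $W_{\widetilde{\bG}}(\widetilde{\mathscr{L}})$, and your claim ``$\iota^*\widetilde{A}\cong A[r]$'' and the ``if and only if'' for $A|_{\mathcal{C}}\neq 0$ both fail as stated. A repaired version would need to track how the families $\widehat{\widetilde{\bG}}_{\widetilde{\mathscr{L}},\mathfrak{C}^{\circ}}$, for $\mathfrak{C}^{\circ}$ ranging over an $\mathcal{A}_{\bG}(\mathscr{L})$-orbit and $\widetilde{\mathscr{L}}$ over the lifts of $\mathscr{L}$, assemble into $\widehat{\bG}_{\mathscr{L},\mathfrak{C}}$ under restriction --- this is doable but is essentially the content you have deferred. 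The paper's change-of-characteristic argument avoids all of this bookkeeping.
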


\begin{proof}
For any element $z \in Z(\bG)$ let $t_z : \bG \to \bG$ be the morphism given by $t_z(g) = zg$. Now given any character sheaf $A \in \widehat{\bG}_{\mathscr{L},\mathfrak{C}}$ there exists an element $z \in Z(\bG)$ and a character sheaf $A' \in \widehat{\bG}_{\mathscr{L},\mathfrak{C}}$ such that $A = t_z^*A'$ and $\supp(A') \subseteq \mathcal{U}$, see \cite[\S17.17]{lusztig:1986:character-sheaves-IV}, the proof of \cite[Proposition 18.5]{lusztig:1986:character-sheaves-IV} and \cite[2.9]{lusztig:1986:on-the-character-values}. In particular, we have
\begin{equation*}
A|_{\mathcal{C}} = A'|_{z\mathcal{C}}
\end{equation*}
for any conjugacy class $\mathcal{C} \subseteq \bG$. Clearly if $A|_{\mathcal{C}} \neq 0$ then we must have $\mathcal{C} = z^{-1}\mathcal{C}_{\uni}$. From this we see that a unipotent conjugacy class is a unipotent support of $A'$ if and only if it is a unipotent support of $A$. Thus we may and will assume that $A$ is such that $\supp(A) \subseteq \mathcal{U}$.

Now, there exists a parabolic subgroup $\bQ \leqslant \bG$ and Levi complement $\bM \leqslant \bQ$ such that $\bM$ supports a cuspidal character sheaf $A_0 \in \widehat{\bM}$ and $A$ is a summand of the induced complex $\ind_{\bM \subseteq \bQ}^{\bG}(A_0)$, see \cite[Theorem 4.4(a)]{lusztig:1985:character-sheaves-I}. Replacing $A$ by an isomorphic character sheaf we may assume that $\bM \leqslant \bQ$ are standard in the sense that $\bT_0 \leqslant \bB_0 \leqslant \bQ$ and $\bT_0 \leqslant \bM$. Furthermore, we must have that $A_0 \in \widehat{\bM}_{\mathscr{L}}$ and $\supp(A_0) \subseteq \mathcal{U} \cap \bM$, see \cite[Proposition 4.8(b)]{lusztig:1985:character-sheaves-I} and \cite[2.9]{lusztig:1986:on-the-character-values}. In fact, we have
\begin{equation}\label{eq:cuspidal-char-sheaf}
A_0 = \IC(\overline{\mathcal{O}_0}Z^{\circ}(\bM),\mathscr{E}_0 \boxtimes \mathscr{F})[\dim\mathcal{O}_0 + \dim Z^{\circ}(\bM)]
\end{equation}
where $\iota_0 = (\mathcal{O}_0,\mathscr{E}_0) \in \mathcal{V}_{\bM}^{\uni}$ is a cuspidal pair and $\mathscr{F} \in \mathcal{S}(Z^{\circ}(\bM))$ is a tame local system.

Replacing $\mathscr{F}$ by $\Ql$ in \cref{eq:cuspidal-char-sheaf} we obtain a new cuspidal unipotently supported character sheaf which we denote by $A_1$. According to \cite[2.4]{lusztig:1986:on-the-character-values} there is a canonical parameterisation of the simple summands of $\ind_{\bM \subseteq \bQ}^{\bG}(A_0)$, resp., $\ind_{\bM \subseteq \bQ}^{\bG}(A_1)$, by the set of simple modules of $\Ql W_{\bG}(\bM,\mathscr{F})$, resp., $\Ql W_{\bG}(\bM)$. Note these groups are defined using the obvious generalisation of the construction in \cref{pa:dual-quadruple}. Let us assume $A = A_E$ is parameterised by $E \in \Irr(W_{\bG}(\bM,\mathscr{F}))$ then according to \cite[2.4]{lusztig:1986:on-the-character-values} we have
\begin{equation}\label{eq:char-sheaf-restrict}
A_E|_{\mathcal{U}} \cong \bigoplus_{\iota \in \mathscr{I}[\bM,\iota_0]} (K_{\iota}|_{\mathcal{U}})^{\oplus m_{E,\iota}}.
\end{equation}
Here $E_{\iota} \in \Irr(W_{\bG}(\bM))$ is the simple module corresponding to $\iota$ under the generalised Springer correspondence, $K_{\iota}$ is the summand of $\ind_{\bM \subseteq \bQ}^{\bG}(A_1)$ parameterised by $E_{\iota}$ and $m_{E,\iota} = \langle \Ind_{W_{\bG}(\bM,\mathscr{F})}^{W_{\bG}(\bM)}(E), E_{\iota} \rangle$. The $K_{\iota}$ described here are the group analogues of the perverse sheaves described in \cref{pa:gen-spring-corr}.

Let us denote by $\bG'$ a simple simply connected algebraic group again of type $\A_n$, the same $n$, defined over an algebraic closure of the finite field $\mathbb{F}_{p'}$ where $p' > n$ and $p' \neq \ell$, i.e., $p'$ is a very good prime for $\bG'$. We fix a maximal torus and Borel subgroup $\bT_0' \leqslant \bB_0' \leqslant \bG'$ then we assume $\bM' \leqslant \bQ' \leqslant \bG'$ are the standard parabolic subgroup and Levi complement naturally in correspondence with $\bM \leqslant \bQ$, i.e., they are determined by the same set of simple roots. Note that by \cref{lem:p-acceptable}, \cref{cor:all-Lusztig-results} and \cite[Theorem 10.7]{lusztig:1992:a-unipotent-support} we have \cref{thm:unip-support-char-sheaves} holds for $\bG'$.

Recall from \cite[\S5]{lusztig-spaltenstein:1985:on-the-generalised-springer-correspondence} that the component group $\mathcal{Z}(\bM) = Z(\bM)/Z^{\circ}(\bM)$ of the centre of $\bM$ acts on the local system $\mathscr{E}_0$ by a faithful irreducible character $\varphi \in \Irr(\mathcal{Z}(\bM))$. By this we mean that $\mathcal{Z}(\bM)$ acts on each stalk of the local system by multiplication with $\varphi$. This irreducible character characterises the cuspidal pair on $\bM$. The component group $\mathcal{Z}(\bM')$ is isomorphic to $\mathcal{Z}(\bM)$ and fixing an isomorphism we may identify the sets of irreducible characters $\Irr(\mathcal{Z}(\bM))$ and $\Irr(\mathcal{Z}(\bM'))$. We denote by $\varphi' \in \Irr(\mathcal{Z}(\bM'))$ the faithful irreducible character corresponding to $\varphi$ in this way.

As for $\bM$ we see that $\bM'$ then admits a cuspidal pair $\iota_0' = (\mathcal{O}_0',\mathscr{E}_0') \in \mathcal{V}_{\bM'}^{\uni}$ such that $\mathcal{Z}(\bM')$ acts on $\mathscr{E}_0'$ by the irreducible character $\varphi'$. Let $m \in \mathbb{Z}_{\geqslant 1}$ be the minimum integer such that $\mathscr{F}^{\otimes m} \cong \Ql$ then choosing $p' > m$, as well as maintaining our previous assumptions, we can find a tame local system $\mathscr{F}' \in \mathcal{S}(Z^{\circ}(\bM'))$ so that we have $W_{\bG}(\bM,\mathscr{F}) \cong W_{\bG}(\bM,\mathscr{F}')$. Using this data in \cref{eq:cuspidal-char-sheaf} we thus obtain a cuspidal character sheaf $A_0' \in \widehat{\bM}'$. The relative Weyl groups $W_{\bG}(\bM)$ and $W_{\bG}(\bM')$ are isomorphic so choosing an isomorphism we may identify the sets of irreducible characters. Assume $E' \in \Irr(W_{\bG}(\bM'))$ corresponds to $E \in \Irr(W_{\bG}(\bM))$ in this way then we have a corresponding summand $A_{E'}$ of the induced complex $\ind_{\bM'\subseteq \bQ'}^{\bG'}(A_0')$. Note that, as above, we may assume $A_0' \in \widehat{\bG}_{\mathscr{L}',\mathfrak{C}'}$ where $\mathscr{L}' \in \mathcal{S}(\bT_0')$ is a tame local system such that $W_{\bG}(\bT_0,\mathscr{L}) \cong W_{\bG'}(\bT_0',\mathscr{L}')$ and the two-sided cell $\mathfrak{C}'$ is identified with $\mathfrak{C}$.

We are now in a position to prove the proposition. We will identify the unipotent classes of $\bG$ and $\bG'$ in the obvious way. Let $\widetilde{\mathcal{O}}_A$ be a unipotent conjugacy class such that $A_E|_{\widetilde{\mathcal{O}}_A} \neq 0$ and $\widetilde{\mathcal{O}}_A$ has maximal dimension amongst all classes with this property. By \cref{eq:char-sheaf-restrict}, we see that any such class is obtained as a class $\mathcal{O}_{\iota}$ where $\iota \in \mathscr{I}[\bM,\iota_0]$ satisfies the property
\begin{equation*}
m_{E,\iota'} \neq 0 \Rightarrow \dim \mathcal{O}_{\iota'} \leqslant \dim \mathcal{O}_{\iota}.
\end{equation*}
In particular, we see that the restriction of this character sheaf to $\widetilde{\mathcal{O}}_A$ is characterised in terms of data which is, in a suitable sense, independent of $p$. Now applying this argument in $\bG'$ we see that $A_{E'}|_{\widetilde{\mathcal{O}}_A} \neq 0$. As \cref{thm:unip-support-char-sheaves} holds in $\bG'$ we can deduce that either $\dim \widetilde{\mathcal{O}}_A < \dim\mathcal{O}_{\mathscr{L},\mathfrak{C}}$ or $\widetilde{\mathcal{O}}_A = \mathcal{O}_{\mathscr{L},\mathfrak{C}}$.

To finish the argument it remains to find a character sheaf in $\widehat{\bG}_{\mathscr{L},\mathfrak{C}}$ whose restriction to $\mathcal{O}_{\mathscr{L},\mathfrak{C}}$ is non-zero. In the proof of \cite[Theorem 10.7]{lusztig:1992:a-unipotent-support} Lusztig constructs such a character sheaf for $\bG'$ which is obtained as above with $\bM' = \bT_0'$ and $\iota_0' = (\{1\},\Ql)$. This part of the generalised Springer correspondence exists in all characteristics. Thus, applying the same style of argument as before we see that such a character sheaf exists in $\widehat{\bG}_{\mathscr{L},\mathfrak{C}}$.
\end{proof}

\begin{proof}[of \cref{thm:unip-support-char-sheaves}]
We will now consider a series of reduction arguments as in \cite[\S17]{lusztig:1985:character-sheaves}. Let $\pi : \bG \to \overline{\bG} = \bG/Z^{\circ}(\bG)$ and $\sigma : \bG \to \bG/\bG_{\der}$ be the canonical quotient maps. We will denote by $\overline{\bT}_0$ the image of $\bT_0$ by $\pi$. Recall that any local system $\mathscr{L} \in \mathcal{S}(\bT_0)$ is of the form $\mathscr{F} \otimes \mathscr{E}$ where $\mathscr{F}$ is the inverse image under $\pi$ of a local system $\mathscr{F}' \in \mathcal{S}(\overline{\bT}_0)$ and $\mathscr{E}$ is the inverse image under $\sigma$ of a local system $\mathscr{E}_0 \in \mathcal{S}(\bG/\bG_{\der})$. Note that here we consider $\mathscr{E}$ as a local system on $\bT_0$ by restriction.

Assume $A \in \widehat{\bG}$ is contained in the series $\widehat{\bG}_{\mathscr{L}}$ then $A$ may be written as $\pi^*(\overline{A}) \otimes \mathscr{E}$ with $\overline{A} \in \widehat{\overline{\bG}}_{\mathscr{F}'}$. Note that $\pi$ induces a natural isomorphism $W_{\bG}(\bT_0,\mathscr{L}) \to W_{\overline{\bG}}(\overline{\bT}_0,\mathscr{F}')$, in particular we may identify the two-sided cells of these groups. In this way we see that $A \in \widehat{\bG}_{\mathscr{L},\mathfrak{C}}$ if and only if $\overline{A} \in \widehat{\overline{\bG}}_{\mathscr{F}',\mathfrak{C}}$. From this description it is clear that we have
\begin{equation*}
A|_{\mathcal{C}} \neq 0 \Leftrightarrow \overline{A}|_{\pi(\mathcal{C})} \neq 0
\end{equation*}
for any conjugacy class $\mathcal{C}$ of $\bG$. In particular, we may clearly assume that $\bG$ is semisimple.

If $\bG$ is semisimple then there exists a simply connected cover $\pi : \bG_{\simc} \to \bG$, which we assume fixed. Let $\bT_{\simc} \leqslant \bG_{\simc}$ be the unique maximal torus satisfying $\pi(\bT_{\simc}) = \bT_0$. Moreover, assume $A \in \widehat{\bG}_{\mathscr{L}}$ then we set $\mathscr{L}' \in \mathcal{S}(\bT_{\simc})$ to be the inverse image of $\mathscr{L}$ under $\pi$. Note that, identifying $W_{\bG}(\bT_0)$ and $W_{\bG_{\simc}}(\bT_{\simc})$ under the natural isomorphism we have $W_{\bG}^{\circ}(\mathscr{L}) = W_{\bG_{\simc}}^{\circ}(\mathscr{L}')$ and $W_{\bG}(\mathscr{L}) \subseteq W_{\bG_{\simc}}(\mathscr{L}')$. Furthermore, for any character sheaf $A \in \widehat{\bG}_{\mathscr{L},\mathfrak{C}}$ we have $\pi^*A = A_1 \oplus \cdots \oplus A_r$ is a direct sum of character sheaves with $A_i \in \widehat{\bG}_{\simc,\mathscr{L}',\mathfrak{C}'}$ where $\mathfrak{C}'\subseteq W_{\bG_{\simc}}(\mathscr{L}')$ is the unique two-sided cell containing $\mathfrak{C}$. From this it is clear that we may assume $\bG$ is simply connected.

Finally assume $\bG$ is simply connected then by \cite[\S17.11]{lusztig:1986:character-sheaves-IV} we may assume that $\bG$ is simple and simply connected. The result now follows from \cref{prop:unipotent-supports-SLn} and the remarks in \cref{pa:strategy}.
\end{proof}

\section{Wave Front Sets for Irreducible Characters}

\subsection{Families of Irreducible Characters}
\begin{pa}\label{pa:series-char-sheaves}
Recall that a family of character sheaves $\widehat{\bG}_{\mathscr{L},\mathfrak{C}}$ contains an $F$-stable character sheaf if and only if the $W_{\bG}(\bT_0)$-orbit of $(\mathscr{L},\mathfrak{C})$ is $F$-stable. Assume this is the case then as in \cite[11.1]{lusztig:1992:a-unipotent-support} we define a corresponding set of irreducible characters
\begin{equation*}
\mathcal{E}(G,\mathscr{L},\mathfrak{C}) = \{\rho \in \Irr(G) \mid \langle \chi_{A,\phi}, \rho \rangle \neq 0\text{ for some }A \in \widehat{\bG}_{\mathscr{L},\mathfrak{C}}^F\text{ with }\supp (A) = \bG\}.
\end{equation*}
Clearly this definition is independent of the choice of isomorphism $F^*A \to A$. We now wish to consider the relationship between this set and the usual notion of a Lusztig series.
\end{pa}

\begin{pa}\label{pa:loc-sys-irr-chars-correspondence}
Assume $(\bT,\mathscr{L})$ is a pair consisting of an $F$-stable maximal torus $\bT \leqslant \bG$ and a tame $F$-stable local system $\mathscr{L} \in \mathcal{S}(\bT)^F$. To this pair we have a corresponding $F$-stable complex $K_{\bT}^{\mathscr{L}} \in \mathscr{D}\bG$ defined as in \cite[I, 1.7]{shoji:1995:character-sheaves-and-almost-characters}; this is simply the complex obtained by inducing $\mathscr{L}$ to $\bG$. There is a unique isomorphism $\varphi : F^*\mathscr{L} \to \mathscr{L}$ such that the induced isomorphism over the stalk of the identity is the identity. Let $\chi_{\mathscr{L}}$ be the resulting characteristic function then we have $\chi_{\mathscr{L}}(1)$ is a positive integer. With this we have a bijection
\begin{equation}\label{eq:loc-sys-irr-char}
\begin{aligned}
\mathcal{S}(\bT)^F &\to \Irr(\bT^F)\\
\mathscr{L} &\mapsto \chi_{\mathscr{L}}.
\end{aligned}
\end{equation}
The isomorphism $\varphi$ chosen above naturally induces an isomorphism $\phi : F^*K_{\bT}^{\mathscr{L}} \to K_{\bT}^{\mathscr{L}}$. We will denote by $\chi_{K_{\bT}^{\mathscr{L}}}$ the resulting characteristic function determined by $\phi$. By \cite[I, Corollary 2.3]{shoji:1995:character-sheaves-and-almost-characters} we then have
\begin{equation}\label{eq:DL-chars}
\chi_{K_{\bT}^{\mathscr{L}}} = (-1)^{\dim\bT}R_{\bT}^{\bG}(\chi_{\mathscr{L}}),
\end{equation}
where $R_{\bT}^{\bG}(\chi_{\mathscr{L}})$ is the corresponding Deligne--Lusztig virtual character, as defined in \cite{deligne-lusztig:1976:representations-of-reductive-groups}.
\end{pa}

\begin{pa}\label{pa:char-func-of-char-sheaf}
Now assume the series $\widehat{\bG}_{\mathscr{L},\mathfrak{C}}$ contains an $F$-stable character sheaf then the $W_{\bG}(\bT_0)$-orbit of $(\mathscr{L},\mathfrak{C})$ is $F$-stable and so the set
\begin{equation*}
Z_{\bG}(\mathscr{L}) = \{n \in N_{\bG}(\bT_0) \mid (\Inn n)^*F^*\mathscr{L} \cong \mathscr{L}\}/\bT_0 \subseteq W_{\bG}(\bT_0)
\end{equation*}
is non-empty. It is easy to see that $Z_{\bG}(\mathscr{L})$ is a coset of $W_{\bG}(\mathscr{L}) \leqslant W_{\bG}(\bT_0)$ and hence is a union of cosets of $W_{\bG}^{\circ}(\mathscr{L})$ in $W_{\bG}(\bT_0)$. Now for any element $x \in W_{\bG}(\bT_0)$ we assume fixed a representative $\dot{x} \in N_{\bG}(\bT_0)$ of $x$ and an element $g_x \in \bG$ such that $g_x^{-1}F(g_x) = F(\dot{x})$. Moreover we denote by $\bT_x$ the $F$-stable maximal torus $g_x\bT_0g_x^{-1}$. An easy calculation shows that the local system $\mathscr{L}_x := (\Inn g_x^{-1})^*\mathscr{L} \in \mathcal{S}(\bT_x)$ is $F$-stable if and only if $x \in Z_{\bG}(\mathscr{L})$. Hence, if $x \in Z_{\bG}(\mathscr{L})$ then the pair $(\bT_x,\mathscr{L}_x)$ gives rise to an $F$-stable complex $K_{\bT_x}^{\mathscr{L}_x}$ as in \cref{pa:loc-sys-irr-chars-correspondence}.

By \cite[Lemma 1.9(i)]{lusztig:1984:characters-of-reductive-groups} every coset of $W_{\bG}^{\circ}(\mathscr{L})$ in $Z_{\bG}(\mathscr{L})$ contains a unique element which stabilises the set $\Phi_{\mathscr{L}}^+$ of positive roots, c.f., \cref{pa:dual-quadruple}. We assume that $w_1 \in Z_{\bG}(\mathscr{L})$ is chosen to have this property. Now, if $A \in \widehat{\bG}_{\mathscr{L},\mathfrak{C}}^F$ is an $F$-stable character sheaf such that $\supp (A) = \bG$ then, up to isomorphism, we must have $A$ is a constituent of the complex $K_{\bT_{w_1}}^{\mathscr{L}_{w_1}}$, c.f., \cite[2.9]{lusztig:1986:on-the-character-values} and \cite[10.5]{lusztig:1985:character-sheaves}. Using the conjugation isomorphism $\Inn g_{w_1}$ we may identify $A$ with a summand of the $Fw_1$-stable complex $K_{\bT_0}^{\mathscr{L}}$ where $Fw_1$ denotes the Frobenius endomorphism $F\circ\Inn \dot{w}_1$ of $\bG$.

The endomorphism algebra $\End(K_{\bT_0}^{\mathscr{L}})$ is isomorphic to the group algebra $\Ql W_{\bG}(\mathscr{L})$ and so $A$ is indexed by an irreducible representation $E \in \Irr(W_{\bG}(\mathscr{L}))^{Fw_1}$. If we choose an extension $\widetilde{E}$ of $E$ to the semidirect product $W_{\bG}(\mathscr{L}) \rtimes \langle Fw_1 \rangle$ then this determines an isomorphism $\phi_A : (Fw_1)^*A \to A$, c.f., \cite[6.13]{taylor:2014:evaluating-characteristic-functions}. In \cite[6.15]{taylor:2014:evaluating-characteristic-functions}, see also \cite[10.6]{lusztig:1985:character-sheaves}, we have defined for any $w \in W_{\bG}(\mathscr{L})$ an isomorphism $\varphi_w : (Fw_1)^*\mathscr{L}_w \to \mathscr{L}_w$ from the canonical choice of isomorphism $\varphi : (Fw_1)^*\mathscr{L} \to \mathscr{L}$ made in \cref{pa:loc-sys-irr-chars-correspondence}. By \cite[Corollary 6.9]{bonnafe:2004:actions-of-rel-Weyl-grps-I} we see that $\varphi_w$ is again the canonical isomorphism considered in \cref{pa:loc-sys-irr-chars-correspondence}. Thus, \cite[10.4.5, 10.6.1]{lusztig:1985:character-sheaves} and \cite[I, 5.17.1]{shoji:1995:character-sheaves-and-almost-characters} together with \cref{eq:DL-chars} shows that
\begin{equation}\label{eq:decom-char-sheaf}
(-1)^{\dim\bT_0}\chi_{A,\phi_A} = \frac{1}{|W_{\bG}(\mathscr{L})|}\sum_{w \in W_{\bG}(\mathscr{L})} \Tr(Fw_1w,\widetilde{E})R_{\bT_{w_1w}}^{\bG}(\chi_{\mathscr{L}_{w_1w}}).
\end{equation}
If $Z(\bG)$ is connected then $W_{\bG}(\mathscr{L})$ is a Weyl group and we may assume that the extension $\widetilde{E}$ is defined over $\mathbb{Q}$, c.f., \cite[3.2]{lusztig:1984:characters-of-reductive-groups}. In this case the function in \cref{eq:decom-char-sheaf} is nothing other than the almost character defined by Lusztig in \cite[3.7.1]{lusztig:1984:characters-of-reductive-groups}.
\end{pa}

\begin{pa}\label{pa:sets-in-dual-grp}
Recall the dual triple $(\bG^{\star},\bT_0^{\star},F^{\star})$ fixed in \cref{sec:weyl-uni} and assume that $s \in \bT_0^{\star}$ corresponds to $\mathscr{L} \in \mathcal{S}(\bT_0)$ under the isomorphism in \cref{pa:dual-tori-iso}. As the $W_{\bG}(\bT_0)$-orbit of $(\mathscr{L},\mathfrak{C})$ is $F$-stable we must have the $W_{\bG^{\star}}(\bT_0^{\star})$-orbit of $(s,\mathfrak{C}^{\star})$ is $F^{\star}$-stable. Under the anti-isomorphism ${}^{\star} : W_{\bG}(\mathscr{L}) \to W_{\bG^{\star}}(s)$, c.f., \cref{pa:dual-tori-iso}, the set $Z_{\bG}(\mathscr{L})$ is identified with
\begin{equation*}
Z_{\bG^{\star}}(s) = \{n \in N_{\bG^{\star}}(\bT_0^{\star}) \mid {}^nF^{\star}(s) = s\}/\bT_0^{\star} \subseteq W_{\bG^{\star}}(\bT_0^{\star})
\end{equation*}
and the automorphism $Fw_1$ is identified with the automorphism $(w_1^{\star}F^{\star})^{-1}$. In particular, identifying $w_1$ with $w_1^{\star}$ this gives us an identification of the semidirect product $W_{\bG}(\mathscr{L}) \rtimes \langle Fw_1\rangle$ with $W_{\bG^{\star}}(s) \rtimes \langle w_1F^{\star}\rangle$. Following \cref{eq:decom-char-sheaf} we define for any extension $\widetilde{E}$ of $E \in \Irr(W_{\bG^{\star}}(s))^{w_1F^{\star}}$ a class function
\begin{equation*}
\mathcal{R}_{\bT_0^{\star}}^{\bG}(\widetilde{E},s) = \frac{1}{|W_{\bG^{\star}}(s)|}\sum_{w \in W_{\bG^{\star}}(s)} \Tr(ww_1F^{\star},\widetilde{E})R_{\bT_{w_1w}^{\star}}^{\bG}(s)
\end{equation*}
where $\bT_{w_1w}^{\star}$ is a torus dual to $\bT_{w_1w}$. Furthermore we define a set
\begin{equation*}
\mathcal{E}(G,s,\mathfrak{C}) = \{\rho \in \Irr(G) \mid \langle \mathcal{R}_{\bT_0^{\star}}^{\bG}(\widetilde{E},s),\rho \rangle \neq 0\text{ for some }E \in \Irr(W_{\bG^{\star}}(s)\mid\mathfrak{C})^{w_1F^{\star}}\}
\end{equation*}
where the extension $\widetilde{E}$ is chosen arbitrarily.

Now, as the $\bG^{\star}$-conjugacy class of $s$ is $F^{\star}$-stable we have the corresponding geometric Lusztig series $\mathcal{E}(G,s)$ is defined, see \cite[11.A]{bonnafe:2006:sln}. From the definitions and \cref{eq:decom-char-sheaf} we see that if $\mathfrak{D} \subseteq W_{\bG}(\mathscr{L})$ is the unique two-sided cell such that $\mathfrak{D}^{\star} = \mathfrak{C}$ then we have
\begin{equation*}
\mathcal{E}(G,\mathscr{L},\mathfrak{D}) = \mathcal{E}(G,s,\mathfrak{C}) \subseteq \mathcal{E}(G,s).
\end{equation*}
We now claim, as in \cite[11.1]{lusztig:1992:a-unipotent-support}, that we have a partition
\begin{equation*}
\mathcal{E}(G,s) = \bigsqcup_{\mathfrak{C} \subseteq W_{\bG^{\star}}(s)}\mathcal{E}(G,s,\mathfrak{C}) = \bigsqcup_{\mathfrak{C} \subseteq W_{\bG}(\mathscr{L})}\mathcal{E}(G,\mathscr{L},\mathfrak{C}),
\end{equation*}
where the first, resp., second, union is taken over all $F^{\star}$-stable, resp., $F$-stable, two-sided cells. If $Z(\bG)$ is connected then this follows from the disjointness theorem of Lusztig, c.f., \cite[6.17]{lusztig:1984:characters-of-reductive-groups}, together with the remarks at the end of \cref{pa:char-func-of-char-sheaf}. In particular, the sets $\mathcal{E}(G,\mathscr{L},\mathfrak{C})$ are nothing other than the families of characters considered in \cite{lusztig:1984:characters-of-reductive-groups}, c.f., \cite[Theorem 5.25]{lusztig:1984:characters-of-reductive-groups}.

Let us now deal with the case where $Z(\bG)$ is disconnected. Denote by $\mathcal{A}_{\bG^{\star}}(s)$ the image of $\mathcal{A}_{\bG}(\mathscr{L})$ under the anti-isomorphism ${}^{\star}$, c.f., \cref{pa:dual-quadruple}, then we have $W_{\bG^{\star}}(s) = W_{\bG^{\star}}^{\circ}(s) \rtimes \mathcal{A}_{\bG^{\star}}(s)$. For each $a \in \mathcal{A}_{\bG^{\star}}(s)$ we then define a function
\begin{equation*}
\mathcal{R}_{\bT_0^{\star}}^{\bG}(\widetilde{E},s,a) = \frac{1}{|W_{\bG^{\star}}^{\circ}(s)|}\sum_{w \in W_{\bG^{\star}}^{\circ}(s)} \Tr(waw_1F^{\star},\widetilde{E})R_{\bT_{w_1wa}^{\star}}^{\bG}(s)
\end{equation*}
with $\widetilde{E}$ as above. It is then clear that we have
\begin{equation*}
\mathcal{R}_{\bT_0^{\star}}^{\bG}(\widetilde{E},s) = \frac{1}{|\mathcal{A}_{\bG^{\star}}(s)|}\sum_{a \in \mathcal{A}_{\bG^{\star}}(s)} \mathcal{R}_{\bT_0^{\star}}^{\bG}(\widetilde{E},s,a).
\end{equation*}
Now assume $\iota : \bG \to \widetilde{\bG}$ is a regular embedding, as in \cite[\S7]{lusztig:1988:reductive-groups-with-a-disconnected-centre}, with $\iota^{\star} : \widetilde{\bG}^{\star} \to \bG^{\star}$ an induced surjective morphism between dual groups. Let $\widetilde{\bT}_0^{\star}$ be the preimage of $\bT_0^{\star}$ under $\iota^{\star}$ then this is a maximal torus of $\bG^{\star}$. As in \cite[2.3]{digne-michel:1990:lusztigs-parametrization} we choose an element $\widetilde{s} \in \widetilde{\bT}_0^{\star}$ such that $Z_{\widetilde{\bG}^{\star}}(\widetilde{s}) = W_{\bG^{\star}}^{\circ}(s)w_1$. Using the results in \cite[\S2, \S5]{digne-michel:1990:lusztigs-parametrization} we see that we may realise the functions $\mathcal{R}_{\bT_0^{\star}}^{\bG}(\widetilde{E},s,a)$ as the restriction of functions $\mathcal{R}_{\widetilde{\bT}_0^{\star}}^{\widetilde{\bG}}(\widetilde{E},\widetilde{s}z)$ for some $z \in \Ker(\iota^{\star})$; see \cite[2.5, 2.7]{digne-michel:1990:lusztigs-parametrization}. In particular, if $\mathfrak{C}$ is of the form $\mathcal{A}_{\bG^{\star}}(s)\mathfrak{C}^{\circ}\mathcal{A}_{\bG^{\star}}(s)$ with $\mathfrak{C}^{\circ} \subseteq W_{\bG^{\star}}^{\circ}(s)$ a two-sided cell then one may verify that we have
\begin{equation*}
\mathcal{E}(G,s,\mathfrak{C}) = \{\rho \in \Irr(G) \mid \langle \Res_G^{\widetilde{G}}(\widetilde{\rho}),\rho\rangle \neq 0\text{ for some }\widetilde{\rho} \in \mathcal{E}(\widetilde{G},\widetilde{s},\mathfrak{C}^{\circ})\},
\end{equation*}
where we identify $W_{\widetilde{\bG}^{\star}}(\widetilde{s})$ with $W_{\bG^{\star}}^{\circ}(s)$. We will not go into more details here but instead refer the reader to \cite[\S2, \S5]{digne-michel:1990:lusztigs-parametrization} and \cite[Chapitre 3]{bonnafe:2006:sln}.
\end{pa}

\subsection{Wave Front Sets}
\begin{definition}
Assume $\rho \in \Irr(G)$ is an irreducible character of $G$ and $\mathcal{O}$ is an $F$-stable unipotent conjugacy class of $\bG$. We say $\mathcal{O}$ is a \emph{wave front set} for $\rho$ if $\langle \Gamma_u,\rho\rangle \neq 0$ for some $u \in \mathcal{O}^F$ and $\mathcal{O}$ has maximal dimension amongst all unipotent classes with this property.
\end{definition}

\begin{pa}
The following result was conjectured to hold by Kawanaka in \cite[Conjecture 3.3.3]{kawanaka:1985:GGGRs-and-ennola-duality}. In \cite[Theorem 11.2]{lusztig:1992:a-unipotent-support} Lusztig proved Kawanaka's conjecture under the assumption that $p$ and $q$ are sufficiently large. Here we give the general case where $p$ is a good prime, thus completing the proof of Kawanaka's conjecture. Note that our proof uses in an essential way the ideas and techniques used by Lusztig in \cite[Theorem 11.2]{lusztig:1992:a-unipotent-support}.
\end{pa}

\begin{thm}\label{thm:existence-wave-front-sets}
Assume $p$ is a good prime for $\bG$ then every irreducible character $\rho \in \Irr(G)$ has a unique wave front set denoted by $\mathcal{O}_{\rho}^*$. Furthermore, if $\rho$ is contained in the series $\mathcal{E}(G,s,\mathfrak{C})$ then $\mathcal{O}_{\rho}^* = \mathcal{O}_{s,\mathfrak{C}^{\dag}}$, c.f., \cref{pa:dual-tori-iso}.
\end{thm}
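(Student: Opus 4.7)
The plan is to combine Lusztig's formula for GGGRs (\cref{thm:main-theorem-unipotent}) with the unipotent support theorem for character sheaves (\cref{thm:unip-support-char-sheaves}) in the ``base case'', and then use standard reduction arguments (in the style of Geck \cite{geck:1996:on-the-average-values}) to pass from this case to arbitrary $\bG$ with $p$ good. The base case will be that $p$ is acceptable for $\bG$ and either $Z(\bG)$ is connected or $q > q_0(\bG)$. In this setting \cref{cor:all-Lusztig-results} tells us that all the arguments of \cite[\S8--\S11]{lusztig:1992:a-unipotent-support} go through unchanged, so the proof of \cite[Theorem~11.2]{lusztig:1992:a-unipotent-support} applies verbatim; it then remains only to reduce the general case to this one.

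For the base case I would proceed by expanding $\langle \Gamma_u,\rho\rangle$ via \cref{thm:main-theorem-unipotent}, \cref{lem:mod-GGGR-decomp} and the formula \eqref{eq:decom-char-sheaf} relating characteristic functions of character sheaves to the almost characters $\mathcal{R}_{\bT_0^{\star}}^{\bG}(\widetilde{E},s)$. After these substitutions $\langle \Gamma_u,\rho\rangle$ becomes a weighted sum of the quantities $\langle X_{\iota_1},\rho\rangle$. Since $X_{\iota_1}$ is supported in $\overline{\mathcal{O}_{\iota_1}}$, together with the partition of $\mathcal{E}(G,s)$ into the blocks $\mathcal{E}(G,s,\mathfrak{C})$ from \cref{pa:sets-in-dual-grp}, only $\iota_1$ in the block $\mathscr{I}[\bL,\iota_0]$ corresponding to $(\mathscr{L},\mathfrak{C})$ can contribute, and the maximal-dimension contribution comes from the pair assigned to $\mathfrak{C}$. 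The twist from $\mathfrak{C}$ to $\mathfrak{C}^{\dag}$ comes from the sign appearing in the Fourier transform via \cref{lem:bijection-tensor-sign}, which turns the index $\iota_1$ appearing in \cref{thm:main-theorem-unipotent} into $\hat{\iota}_1$ and hence twists $E_{\iota_1}$ by $\sgn$; this is precisely the definition of $\mathfrak{C}^{\dag}$ in \cref{sec:weyl-uni}.

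Next come the reductions, closely paralleling those used in \cite{geck:1996:on-the-average-values} for the unipotent support. First, a regular embedding $\iota:\bG\hookrightarrow\widetilde{\bG}$ into a group with connected centre reduces to the connected-centre case: here Shoji's \cref{thm:shoji-gen-green-funcs} eliminates the $q$-restriction, and one checks that $\Gamma_u^G$ is (up to scalar) obtained from $\Gamma_u^{\widetilde{G}}$ by restriction (since $\iota$ induces an isomorphism of derived subgroups and hence of the relevant parabolics, Kawanaka isomorphisms, and Lagrangian data), while every $\rho\in\Irr(G)$ appears in $\Res_G^{\widetilde{G}}\widetilde{\rho}$. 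The correspondence of series $\mathcal{E}(G,s,\mathfrak{C}) \leftrightarrow \mathcal{E}(\widetilde{G},\widetilde{s},\mathfrak{C}^{\circ})$ recorded in \cref{pa:sets-in-dual-grp} means $\mathfrak{C}^{\dag}$ transfers correctly. Next, the usual decomposition $\bG=\bG_{\der}Z^{\circ}(\bG)$ and direct product argument reduces to the case where $\bG/Z(\bG)$ is simple. For $\bG/Z(\bG)$ simple of type not $\A$, $p$ good already implies $p$ acceptable, so the base case concludes. For type $\A$ with connected centre one may replace $\bG$ by $\GL_n(\mathbb{K})$ (times a torus), for which all primes are acceptable by \cref{lem:p-acceptable}(ii), again reducing to the base case.

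The main obstacle, as I see it, is the regular-embedding step: one needs a clean statement that restricting $\Gamma_u^{\widetilde{G}}$ to $G$ yields a scalar multiple of $\Gamma_u^G$, and that this restriction is compatible with the identifications of \cref{pa:sets-in-dual-grp}, so that the dagger operation on two-sided cells on the $\widetilde{\bG}$-side matches the dagger on the $\bG$-side through the restriction of representations. Once this compatibility is established the rest of the argument is formal, reducing the general theorem to the connected-centre, simple, acceptable-prime case where the proof of \cite[Theorem~11.2]{lusztig:1992:a-unipotent-support} applies.
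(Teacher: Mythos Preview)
Your overall strategy---establish the result for a restricted class of groups where \cref{cor:all-Lusztig-results} applies, then reduce the general case to this one---is exactly the paper's approach, and your description of the base case is correct. There are, however, two concrete problems with your reduction chain.

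First, a minor point: under a regular embedding $\bG\hookrightarrow\widetilde{\bG}$ the correct relation is $\Gamma_u^{\widetilde{G}}=\Ind_G^{\widetilde{G}}(\Gamma_u^G)$, not restriction. Frobenius reciprocity then gives $\langle\Gamma_u^{\widetilde{G}},\widetilde{\rho}\rangle_{\widetilde{G}}=\sum_i\langle\Gamma_u^G,\rho_i\rangle_G$ where $\Res_G^{\widetilde{G}}(\widetilde{\rho})=\rho_1+\cdots+\rho_r$ is multiplicity-free, and the non-negativity of each summand makes the equivalence immediate. Your restriction claim is not literally true in general (the restriction of $\Gamma_u^{\widetilde{G}}$ is a sum of GGGRs for the various $G$-classes inside the $\widetilde{G}$-class of $u$, not a scalar multiple of $\Gamma_u^G$), though it would still suffice for the theorem.

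Second, and this is the real gap: your passage from ``connected centre'' to the base case does not work as written. You propose ``the usual decomposition $\bG=\bG_{\der}Z^{\circ}(\bG)$ and direct product argument reduces to the case where $\bG/Z(\bG)$ is simple'', and then ``for type $\A$ with connected centre one may replace $\bG$ by $\GL_n(\mathbb{K})$''. But with $Z(\bG)$ connected the derived subgroup $\bG_{\der}$ need not be simply connected, so it does not split as a direct product of simple groups and the Frobenius need not respect any such putative decomposition; and a group with $Z(\bG)$ connected and $\bG/Z(\bG)\cong\PGL_n$ is not in general isogenous to $\GL_n$ by a map along which GGGRs transfer (for instance $\PGL_n\times\bT$ with $p\mid n$ is not proximate and $p$ is not acceptable for it). The paper closes this gap with a second reduction lemma (\cref{lem:connected-simply-connected}): given $\bG$ with connected centre one finds a surjective isotypic morphism $\pi:\widetilde{\bG}\to\bG$ with $\Ker(\pi)$ connected, $Z(\widetilde{\bG})$ connected, and $\widetilde{\bG}_{\der}$ simply connected, and shows that $\Inf_G^{\widetilde{G}}(\Gamma_u^G)$ is a summand of $\Gamma_u^{\widetilde{G}}$. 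Only after this step is $\widetilde{\bG}_{\der}$ a genuine direct product of simple simply connected groups, the direct-product and transitive-permutation arguments apply, and each simple simply connected factor of type $\A$ can be regularly embedded into $\GL_n$. Without this intermediate lemma your chain of reductions does not reach the base case.
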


\begin{pa}\label{pa:isotypic-morphism}
Before proving the theorem we will consider the following two reduction steps, which are similar to those used in \cite{geck:1996:on-the-average-values}. Note that the second reduction is only required because \cref{thm:main-theorem-unipotent} does not necessarily hold in good characteristic when $Z(\bG)$ is connected. In the following lemmas we will implicitly assume that $p$ is a good prime and the following fact. Assume $\varphi : \bG \to \bH$ is an isotypic morphism between connected reductive algebraic groups. In other words, the image of $\varphi$ contains the derived subgroup of $\bH$ and the kernel is contained in the centre of $\bG$. Then $\varphi$ induces a bijection between the unipotent conjugacy classes of $\bG$ and $\bH$ (see for instance \cite[Proposition 5.1.1]{carter:1993:finite-groups-of-lie-type}).
\end{pa}

\begin{lem}\label{lem:regular-embedding}
Assume $\bG \hookrightarrow \widetilde{\bG}$ is a regular embedding into a group with connected centre then \cref{thm:existence-wave-front-sets} holds for $G$ if and only if it holds for $\widetilde{G}$. Moreover, if \cref{thm:existence-wave-front-sets} holds, then for any irreducible characters $\widetilde{\rho} \in \Irr(\widetilde{G})$ and $\rho \in \Irr(G)$ satisfying $\langle \Res_G^{\widetilde{G}}(\widetilde{\rho}),\rho\rangle \neq 0$ we have $\mathcal{O}_{\widetilde{\rho}}^* = \mathcal{O}_{\rho}^*$.
\end{lem}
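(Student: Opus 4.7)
My plan proceeds in three steps. First, I exploit that a regular embedding $\iota : \bG \hookrightarrow \widetilde{\bG}$ is isotypic and so by \cref{pa:isotypic-morphism} induces a dimension- and $F$-stability-preserving bijection between the unipotent classes of the two groups. Because $\widetilde{\bG} = \iota(\bG)\cdot Z(\widetilde{\bG})$ and the two derived subgroups coincide, all the data entering the construction of a GGGR on $\widetilde{G}$ --- a cocharacter in $\mathcal{D}(\widetilde{\bG})$, the associated parabolic and its unipotent radical, a Springer isomorphism with the Kawanaka property, the bilinear form $\kappa$, the opposition automorphism ${}^\dag$, the Lagrangian and the extension $\widetilde{\varphi}_u$ of the Kawanaka character --- can be chosen so as to restrict to the corresponding data for $\bG$. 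Once this is arranged, $\bU(\lambda, 1.5) \subseteq \bG_{\der}$ is common to both groups and induction in stages gives the key identity
\[
\Gamma_{\widetilde{u}}^{\widetilde{G}} \;=\; \Ind_G^{\widetilde{G}}\bigl(\Gamma_u^G\bigr)
\]
for every unipotent $u = \widetilde{u} \in \mathcal{U}^F$.

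The second step transfers wave front sets. Since $\widetilde{G}/G$ is abelian, any $\widetilde{\rho} \in \Irr(\widetilde{G})$ has multiplicity-free restriction $\widetilde{\rho}|_G = \rho_1 + \cdots + \rho_r$ into pairwise $\widetilde{G}$-conjugate irreducibles. Frobenius reciprocity applied to the formula of step one yields
\[
\langle \Gamma_{\widetilde{u}}^{\widetilde{G}}, \widetilde{\rho} \rangle_{\widetilde{G}} \;=\; \sum_{i=1}^r \langle \Gamma_u^G, \rho_i \rangle_G,
\]
and because each summand is non-negative (the GGGRs being scalar multiples of honest characters), the left side is non-zero if and only if some $\langle \Gamma_u^G, \rho_i\rangle_G$ is. Writing $\rho_i = {}^{\tilde g}\rho_1$ rewrites the latter as $\langle \Gamma_{\tilde g^{-1}u\tilde g}^G, \rho_1\rangle_G \neq 0$; since $\tilde g^{-1}u\tilde g$ still lies in $\mathcal{O}_u^F$, letting $u$ range over $\mathcal{O}^F$ shows that $\mathcal{O}$ satisfies condition (WF1) for $\widetilde{\rho}$ if and only if it does so for any fixed constituent $\rho$ of $\widetilde{\rho}|_G$. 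The maximality clause (WF2) transfers at once, whence existence and uniqueness of the wave front set is equivalent for $\widetilde{\rho}$ and each such $\rho$, and $\mathcal{O}_{\widetilde{\rho}}^* = \mathcal{O}_\rho^*$ whenever both are defined.

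The third step matches the series on the two groups with the unipotent class attached to their dual data. Letting $\iota^\star:\widetilde{\bG}^\star\to\bG^\star$ be dual to $\iota$, the construction recalled in \cref{pa:sets-in-dual-grp} attaches to any $\rho\in\mathcal{E}(G,s,\mathfrak{C})$ a semisimple $\widetilde{s}\in\widetilde{\bG}^{\star F^\star}$ lying above $s$ together with a two-sided cell $\mathfrak{C}^\circ$ of $W_{\widetilde{\bG}^\star}(\widetilde{s})$ such that $\mathfrak{C} = \mathcal{A}_{\bG^\star}(s)\,\mathfrak{C}^\circ\,\mathcal{A}_{\bG^\star}(s)$ and $\rho$ is a constituent of the restriction of some $\widetilde{\rho}\in\mathcal{E}(\widetilde{G},\widetilde{s},\mathfrak{C}^\circ)$. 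Because the map $\mathfrak{C}\mapsto\mathcal{O}_{s,\mathfrak{C}}$ of \cref{sec:weyl-uni} is defined by $j$-inducing the special representation of $\mathfrak{C}^\circ$ from $W_{\bG^\star}^\circ(s)$ to $W_{\bG}(\bT_0)$ and then passing through the Springer correspondence --- a purely combinatorial recipe sensitive only to $\mathfrak{C}^\circ$ and the identification $W_{\bG}(\bT_0) \cong W_{\widetilde{\bG}}(\widetilde{\bT}_0)$ --- we obtain $\mathcal{O}_{s,\mathfrak{C}^\dag} = \mathcal{O}_{\widetilde{s},\mathfrak{C}^{\circ\,\dag}}$ under the identification of unipotent classes of $\bG$ and $\widetilde{\bG}$. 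Combining this with the second step and the theorem as known for $\widetilde{G}$ gives both directions of the ``if and only if'' together with the final equality $\mathcal{O}_{\widetilde{\rho}}^*=\mathcal{O}_\rho^*$.

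The main obstacle will be the careful bookkeeping in step one needed to ensure that the auxiliary data on $G$ and $\widetilde{G}$ can indeed be chosen so as to produce the identity $\Gamma_{\widetilde{u}}^{\widetilde{G}} = \Ind_G^{\widetilde{G}}(\Gamma_u^G)$. Once this is established, the remaining two steps are essentially formal consequences of Clifford theory, Frobenius reciprocity, and the combinatorial character of the Lusztig--Spaltenstein map $\mathfrak{C}\mapsto\mathcal{O}_{s,\mathfrak{C}}$.
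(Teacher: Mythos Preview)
Your proposal is correct and follows essentially the same route as the paper: the key identity $\Gamma_u^{\widetilde{G}}=\Ind_G^{\widetilde{G}}(\Gamma_u^G)$, Frobenius reciprocity together with non-negativity of the multiplicities, and the compatibility of the families via \cref{pa:sets-in-dual-grp}. The paper's own proof is much terser---it simply asserts the induction identity as ``clear from the definition'' and invokes \cite{lusztig:1988:reductive-groups-with-a-disconnected-centre} for the multiplicity-free restriction---whereas you spell out the Clifford-theoretic conjugacy argument in step two and the combinatorial matching $\mathcal{O}_{s,\mathfrak{C}^{\dag}}=\mathcal{O}_{\widetilde{s},\mathfrak{C}^{\circ\dag}}$ in step three; these are exactly the details the paper sweeps into ``by the discussion in \cref{pa:sets-in-dual-grp}'' and ``the last statement is clear''.
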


\begin{proof}
Let $u \in G$ be a unipotent element then by the definition of the GGGR it is clear to see that we have
\begin{equation*}
\Gamma_u^{\widetilde{G}} = \Ind_G^{\widetilde{G}}(\Gamma_u^G)
\end{equation*}
and all GGGRs of $\widetilde{G}$ are obtained in this way. Assume $\rho \in \Irr(\widetilde{G})$ is an irreducible character then according to \cite[Proposition 5.1]{lusztig:1988:reductive-groups-with-a-disconnected-centre} we have $\Res^{\widetilde{G}}_G(\rho) = \rho_1 + \cdots + \rho_r$ for some irreducible characters $\rho_i \in \Irr(G)$. In particular, by Frobenius reciprocity we have
\begin{equation*}
\langle\Gamma_u^{\widetilde{G}},\rho\rangle_{\widetilde{G}} = \sum_{i=1}^r \langle \Gamma_u^G,\rho_i\rangle_G.
\end{equation*}
As the right hand side is a sum of non-negative integers we have $\langle\Gamma_u^{\widetilde{G}},\rho\rangle_{\widetilde{G}} \neq 0$ if and only if $\langle \Gamma_u^G,\rho_i\rangle_G \neq 0$ for some $i \in \{1,\dots,r\}$. This proves that \cref{thm:existence-wave-front-sets} holds in $G$ if and only if it holds in $\widetilde{G}$ by the discussion in \cref{pa:sets-in-dual-grp} and the definition of the class $\mathcal{O}_{s,\mathfrak{C}^{\dag}}$. The last statement is clear.
\end{proof}

\begin{lem}\label{lem:connected-simply-connected}
Assume $\bG$ has a connected centre and let $\pi : \widetilde{\bG} \to \bG$ be a surjective isotypic morphism defined over $\mathbb{F}_q$ such that: $\Ker(\pi)$ is connected, $Z(\widetilde{\bG})$ is connected and the derived subgroup of $\widetilde{\bG}$ is simply connected. Then \cref{thm:existence-wave-front-sets} holds for $G$ if it holds for $\widetilde{G}$.
\end{lem}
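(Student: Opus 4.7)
The plan is to exploit the isotypic quotient $\pi : \widetilde{\bG} \to \bG$ to transfer wave front sets from $\widetilde{G}$ to $G$. Since $\Ker(\pi) = Z_0$ is a connected central torus, Lang's theorem yields a surjection $\pi : \widetilde{G} \to G$ with kernel $Z_0^F$, so inflation $\rho \mapsto \widetilde{\rho} := \rho \circ \pi$ is a bijection between $\Irr(G)$ and the set of characters in $\Irr(\widetilde{G})$ trivial on $Z_0^F$. First I would observe that $\pi$ restricts to an equivariant isomorphism $\mathcal{U}(\widetilde{\bG}) \to \mathcal{U}(\bG)$: applying \cref{lem:sep-isog-iso-uni-vars} to the induced separable isogeny between derived subgroups, together with the fact that $Z_0$ acts trivially on both unipotent varieties, gives this at once, whence a dimension-preserving bijection $\mathcal{O} \mapsto \pi(\mathcal{O})$ between $F$-stable unipotent classes of $\widetilde{\bG}$ and $\bG$ restricting to a bijection of rational points.

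Next I would establish the GGGR compatibility. For any $F$-stable unipotent $\widetilde{u} \in \widetilde{G}$ with image $u = \pi(\widetilde{u})$ and any cocharacter $\lambda \in \mathcal{D}(\widetilde{\bG})$ adapted to $\widetilde{u}$, the composition $\pi \circ \lambda$ lies in $\mathcal{D}(\bG)$ and is adapted to $u$. Since $Z_0 \cap \widetilde{\bU}(\lambda) = \{1\}$, $\pi$ restricts to an $F$-equivariant isomorphism of algebraic groups $\widetilde{\bU}(\lambda,i) \to \bU(\pi\circ\lambda,i)$ for every $i$, under which the Kawanaka characters $\widetilde{\varphi}_{\widetilde{u}}$ and $\widetilde{\varphi}_u$ correspond. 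A direct application of the induced character formula, accounting for the $|Z_0^F|$ preimages in $\widetilde{G}$ of each element of $G$, then yields
\[
\Gamma_{\widetilde{u}}^{\widetilde{G}}(\widetilde{v}) \;=\; |Z_0^F| \cdot \Gamma_u^G(\pi(\widetilde{v}))
\]
for every unipotent $\widetilde{v} \in \widetilde{G}$, both sides being zero on non-unipotent elements. Summing this identity against $\overline{\widetilde{\rho}}$ over $\widetilde{G}$, restricting to the unipotent support, using the unipotent bijection, and applying $|\widetilde{G}| = |Z_0^F| \cdot |G|$, we deduce the clean inner product identity
\[
\langle \Gamma_{\widetilde{u}}^{\widetilde{G}}, \widetilde{\rho} \rangle_{\widetilde{G}} \;=\; \langle \Gamma_u^G, \rho \rangle_G.
\]

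With this identity in hand, if \cref{thm:existence-wave-front-sets} holds for $\widetilde{G}$ then $\widetilde{\rho}$ admits a unique wave front set $\mathcal{O}_{\widetilde{\rho}}^*$, and the image $\pi(\mathcal{O}_{\widetilde{\rho}}^*)$ is a unique wave front set for $\rho$ by the dimension-preserving bijection on unipotent classes. To conclude the second assertion we must identify this class with $\mathcal{O}_{s,\mathfrak{C}^\dag}$ whenever $\rho \in \mathcal{E}(G,s,\mathfrak{C})$. For this I would use that the hypotheses on $\widetilde{\bG}$ (connected centre, simply connected derived subgroup) imply that all centralisers in $\widetilde{\bG}^{\star}$ of semisimple elements are connected, so $W_{\widetilde{\bG}^{\star}}(\widetilde{s}) = W_{\widetilde{\bG}^{\star}}^{\circ}(\widetilde{s})$ for every semisimple $\widetilde{s}$ lying over $s$ under the dual map. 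Invoking the compatibility of geometric Lusztig series under such isotypic quotients, as developed by Bonnaf\'e \cite{bonnafe:2006:sln} and Digne--Michel \cite{digne-michel:1990:lusztigs-parametrization} and recalled in \cref{pa:sets-in-dual-grp}, one identifies $W_{\widetilde{\bG}^{\star}}(\widetilde{s})$ with $W_{\bG^{\star}}^{\circ}(s)$ and $\widetilde{\mathfrak{C}}$ with the two-sided cell $\mathfrak{C}^{\circ} \subseteq W_{\bG^{\star}}^{\circ}(s)$ used to define $\mathfrak{C} = \mathcal{A}_{\bG^{\star}}(s)\mathfrak{C}^{\circ}\mathcal{A}_{\bG^{\star}}(s)$. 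Since the map $\mathfrak{C} \mapsto \mathcal{O}_{s,\mathfrak{C}}$ is constructed from the induced representation $j_{W^{\circ}}^{W}(E_{\mathfrak{C}^{\circ}})$ and is thus compatible with this identification, and since the permutation $\mathfrak{C}\mapsto\mathfrak{C}^{\dag}$ is defined by tensoring with sign and therefore also descends, we obtain $\pi(\mathcal{O}_{\widetilde{s},\widetilde{\mathfrak{C}}^\dag}) = \mathcal{O}_{s,\mathfrak{C}^\dag}$.

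The main obstacle will be the last paragraph: carefully checking that the combinatorial attachment of $\widetilde{\rho}$ to a cell $\widetilde{\mathfrak{C}} \subseteq W_{\widetilde{\bG}^{\star}}(\widetilde{s})$ is transported to the attachment of $\rho$ to $\mathfrak{C} \subseteq W_{\bG^{\star}}(s)$ in the correct way under inflation. The GGGR-level calculation producing the factor $|Z_0^F|$ is essentially routine once the correspondence of Kawanaka data under $\pi$ is set up, and the uniqueness statement for $\rho$ is then immediate from that for $\widetilde{\rho}$.
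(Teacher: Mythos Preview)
Your argument is correct and reaches the same target identity
\[
\langle \Gamma_{\widetilde{u}}^{\widetilde{G}}, \widetilde{\rho} \rangle_{\widetilde{G}} \;=\; \langle \Gamma_u^G, \rho \rangle_G
\]
as the paper, but via a different mechanism. The paper does not compute character values pointwise; instead it factors the induction defining $\Gamma_u^{\widetilde{G}}$ through the intermediate subgroup $U(\lambda,1.5)\cdot\Ker(\pi)^F$ (a direct product, since the kernel is a central torus), observes that $\Ind_{U(\lambda,1.5)}^{U(\lambda,1.5)\times Z_0^F}(\widetilde{\varphi}_u)$ decomposes as $\sum_{\psi\in\Irr(Z_0^F)}\widetilde{\varphi}_u\boxtimes\psi$, and identifies the $\psi=1$ summand after induction with $\Inf_G^{\widetilde{G}}(\Gamma_u^G)$. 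Since $\widetilde{\rho}=\Inf_G^{\widetilde{G}}(\rho)$ has $Z_0^F$ in its kernel, only this summand contributes, and the identity drops out of the isometry property of inflation. Your route --- the pointwise equality $\Gamma_{\widetilde{u}}^{\widetilde{G}}(\widetilde{v}) = |Z_0^F|\cdot\Gamma_u^G(\pi(\widetilde{v}))$ on unipotent elements followed by a direct inner-product calculation --- is more elementary and avoids the summand bookkeeping, at the cost of requiring the explicit $|Z_0^F|$ counting. Both are fine; yours is arguably cleaner.

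Two minor points. First, your appeal to \cref{lem:sep-isog-iso-uni-vars} is slightly roundabout: $\pi$ itself is not an isogeny (its kernel is a torus), and the induced map on derived subgroups need not be separable in general. But you do not need separability here --- the bijection $\mathcal{U}(\widetilde{\bG})\to\mathcal{U}(\bG)$ follows at once from the fact that $\Ker(\pi)$ is a central torus, hence meets $\mathcal{U}(\widetilde{\bG})$ trivially, and $\pi$ is surjective. Second, your final paragraph treats the identification of families under inflation as the ``main obstacle'' and worries about the decomposition $\mathfrak{C} = \mathcal{A}_{\bG^{\star}}(s)\mathfrak{C}^{\circ}\mathcal{A}_{\bG^{\star}}(s)$, but the hypothesis of the lemma already assumes $Z(\bG)$ is connected, so $C_{\bG^{\star}}(s)$ is connected, $W_{\bG^{\star}}(s)=W_{\bG^{\star}}^{\circ}(s)$, and $\mathcal{A}_{\bG^{\star}}(s)$ is trivial. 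The compatibility of $\mathcal{E}(G,s,\mathfrak{C})$ with inflation and of $\mathfrak{C}\mapsto\mathcal{O}_{s,\mathfrak{C}^\dag}$ with $\pi$ is then routine, which is why the paper simply says ``From this the result follows immediately''.
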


\begin{proof}
Note that the map $\pi$ descends to a surjective map $\pi : \widetilde{G} \to G$ of $F$-fixed points as the kernel of $\pi$ is connected. We denote by $\Inf_G^{\widetilde{G}} : \Cent(G) \to \Cent(\widetilde{G})$ the inflation map given by $\Inf_G^{\widetilde{G}}(f) = f\circ\pi$. As this map induces an isometry onto its image we have
\begin{equation*}
\langle \Gamma_u^G,\chi \rangle_G = \langle \Inf_G^{\widetilde{G}}(\Gamma_u^G),\Inf_G^{\widetilde{G}}(\chi) \rangle_{\widetilde{G}}
\end{equation*}
for all unipotent elements $u \in G$ and irreducible characters $\chi \in \Irr(G)$. Let us identify $u \in G$ with the unique unipotent element in the preimage $\pi^{-1}(u)$. Then we want to show that the inflation $\Inf_G^{\widetilde{G}}(\Gamma_u^G)$ is a summand of $\Gamma_u^{\widetilde{G}}$.

Recall the notation of \cref{def:GGGR} then we may identify the subgroup $U(\lambda,1) \leqslant G$ with the corresponding subgroup of $\widetilde{G}$. Denoting by $\widetilde{U}(\lambda,1) \leqslant \widetilde{G}$ the subgroup $U(\lambda,1)\Ker(\pi)^F$ (a direct product) we have the GGGR corresponding to $u$ in $\widetilde{G}$ is given by
\begin{equation*}
\Gamma_u^{\widetilde{G}} = \Ind_{U(\lambda,1)}^{\widetilde{G}}(\widetilde{\varphi}_u) = (\Ind_{\widetilde{U}(\lambda,1)}^{\widetilde{G}}\circ\Ind_{U(\lambda,1)}^{\widetilde{U}(\lambda,1)})(\widetilde{\varphi}_u).
\end{equation*}
There is a unique irreducible constituent of $\Ind_{U(\lambda,1)}^{\widetilde{U}(\lambda,1)}(\widetilde{\varphi}_u)$ whose kernel contains $\Ker(\pi)^F$ and inducing this to $\widetilde{G}$ we obtain the inflation $\Inf_G^{\widetilde{G}}(\Gamma_u^G)$ as a summand of $\Gamma_u^{\widetilde{G}}$. In particular, this shows that we have
\begin{equation*}
\langle \Gamma_u^G,\chi \rangle_G = \langle \Gamma_u^{\widetilde{G}}, \Inf_G^{\widetilde{G}}(\chi) \rangle_{\widetilde{G}}
\end{equation*}
as $\Inf_G^{\widetilde{G}}(\Gamma_u^G)$ contains all the irreducible constituents of $\Gamma_u^{\widetilde{G}}$ with $\Ker(\pi)^F$ in their kernel. From this the result follows immediately.
\end{proof}

\begin{proof}[of \cref{thm:existence-wave-front-sets}]
Let us assume that $\bG$ is $\GL_n(\mathbb{K})$ or that $Z(\bG)$ is connected and the derived subgroup $\bG_{\der}$ is simple not of type $\A$. In this situation both \cref{thm:main-theorem-unipotent} and the results of \cite{lusztig:1990:green-functions-and-character-sheaves} are available to us (assuming that $p$ is a good prime) and the theorem can be proved in exactly the same way as \cite[Theorem 11.2]{lusztig:1992:a-unipotent-support}. We will not repeat the argument here.

Assume now that $\bG$ is simple and simply connected then we may choose a regular embedding $\bG \hookrightarrow \widetilde{\bG}$ such that $\widetilde{\bG}$ is $\GL_n(\mathbb{K})$ if $\bG$ is of type $\A_{n-1}$. By the previous case and \cref{lem:regular-embedding} we see the theorem holds for $G$.

Now assume $\bG$ is semisimple and simply connected then we may write $\bG$ as a direct product $\bG^{(1)}\times\cdots \times \bG^{(r)}$ where each $\bG^{(i)}$ is a direct product of simple groups permuted transitively by $F$. Clearly if the result holds for each $\bG^{(i)}$ then it holds for $\bG$ so we may assume that $\bG = \bG^{(1)} = \bG_1\times \cdots \times \bG_r$, where each $\bG_j$ is a simple group. However, in this situation we have $\bG^F \cong \bG_1^{F^r}$ so the result follows from the previous case. Thus the theorem holds for semisimple simply connected groups.

Assume now that $\bG$ has a connected centre and simply connected derived subgroup $\bG_{\der}$. Applying \cref{lem:regular-embedding} and the previous case to the natural regular embedding $\bG_{\der} \hookrightarrow \bG$ we get that the theorem holds for $G$.

Finally, assume $\bG$ is any group with a connected centre then we may find a surjective morphism $\widetilde{\bG} \to \bG$ as in \cref{lem:connected-simply-connected}, c.f., \cite[\S 8.8]{lusztig:1984:characters-of-reductive-groups}. In particular, the theorem holds for $G$ by \cref{lem:connected-simply-connected} and the previous case. Finally, assume $\bG$ is arbitrary then choosing a regular embedding $\bG \hookrightarrow \widetilde{\bG}$ we deduce the theorem from \cref{lem:regular-embedding} and the previous case.
\end{proof}

\begin{pa}
Recall that if $p$ is a good prime for $\bG$ then Geck, using Lusztig's result \cite[Theorem 11.2]{lusztig:1992:a-unipotent-support}, has shown that every irreducible character $\rho \in \Irr(G)$ has a unique unipotent support $\mathcal{O}_{\rho}$; see \cite[Theorem 1.4]{geck:1996:on-the-average-values}. For any irreducible character $\rho \in \Irr(G)$ we will denote by $\rho^* \in \Irr(G)$ the dual character $\pm D_G(\rho)$, where $D_G(\rho)$ is the Alvis--Curtis dual of $\rho$. We then have the following relationship between unipotent supports and wave front sets which appears as \cite[Theorem 11.2]{lusztig:1992:a-unipotent-support} in large characteristic.
\end{pa}

\begin{lem}\label{lem:duality-unip-supp-wave-front}
Assume $p$ is a good prime for $\bG$ then for any irreducible character $\rho \in \Irr(G)$ we have $\mathcal{O}_{\rho^*} = \mathcal{O}_{\rho}^*$.
\end{lem}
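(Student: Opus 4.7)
The plan is to deduce this as a formal consequence of \cref{thm:existence-wave-front-sets}, Geck's theorem on unipotent supports, and the compatibility of Alvis--Curtis duality with the partition of Lusztig series into families indexed by two-sided cells. The strategy is to track the parameter $\mathfrak{C}$ through the operation $\rho \mapsto \rho^*$ and then to read off both $\mathcal{O}_{\rho^*}$ and $\mathcal{O}_\rho^*$ from the same two-sided cell $\mathfrak{C}^{\dag}$.

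First I would fix an $F$-stable semisimple element $s \in \bT_0^{\star}$ and an $F^{\star}$-stable two-sided cell $\mathfrak{C} \subseteq W_{\bG^{\star}}(s)$ such that $\rho \in \mathcal{E}(G,s,\mathfrak{C})$, using the partition established in \cref{pa:sets-in-dual-grp}. Then \cref{thm:existence-wave-front-sets} immediately gives $\mathcal{O}_\rho^* = \mathcal{O}_{s,\mathfrak{C}^{\dag}}$. The entire task is therefore to prove that $\rho^* \in \mathcal{E}(G,s,\mathfrak{C}^{\dag})$, for then the result of Geck \cite{geck:1996:on-the-average-values}, refined through Lusztig's description of unipotent supports in terms of two-sided cells (available in our generality via \cref{cor:all-Lusztig-results} and the reduction arguments of \S12--\S15), yields $\mathcal{O}_{\rho^*} = \mathcal{O}_{s,\mathfrak{C}^{\dag}}$, and the lemma follows.

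To establish $\rho^* \in \mathcal{E}(G,s,\mathfrak{C}^{\dag})$ I would analyse the effect of Alvis--Curtis duality on the class functions $\mathcal{R}_{\bT_0^{\star}}^{\bG}(\widetilde{E},s)$ of \cref{pa:sets-in-dual-grp}. The standard identity $D_G(R_{\bT_w}^{\bG}(\theta)) = \varepsilon_{\bG}\varepsilon_{\bT_w}R_{\bT_w}^{\bG}(\theta)$ together with the fact that $\varepsilon_{\bG}\varepsilon_{\bT_{w_1w}}$ differs from $\varepsilon_{\bG}\varepsilon_{\bT_0}$ by $\sgn(w_1w)$ (interpreted within $W_{\bG^{\star}}(s)$ as in \cite{digne-lehrer-michel:2003:space-of-unipotently-supported}) lets one verify the identity
\begin{equation*}
D_G\bigl(\mathcal{R}_{\bT_0^{\star}}^{\bG}(\widetilde{E},s)\bigr) = \pm\,\mathcal{R}_{\bT_0^{\star}}^{\bG}(\widetilde{E}\otimes\sgn,s),
\end{equation*}
where the extension $\widetilde{E}\otimes\sgn$ of $E\otimes\sgn$ corresponds to the family attached to $\mathfrak{C}^{\dag}$ by the very definition of that cell. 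Thus $D_G$ maps the span of $\{\mathcal{R}_{\bT_0^{\star}}^{\bG}(\widetilde{E},s) \mid E\in\Irr(W_{\bG^{\star}}(s)\mid\mathfrak{C})^{w_1F^{\star}}\}$ onto the analogous span for $\mathfrak{C}^{\dag}$, so $D_G$ interchanges the subseries $\mathcal{E}(G,s,\mathfrak{C})$ and $\mathcal{E}(G,s,\mathfrak{C}^{\dag})$ up to sign, giving $\rho^*\in\mathcal{E}(G,s,\mathfrak{C}^{\dag})$ as required.

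The main obstacle will be the disconnected centre case: when $Z(\bG)$ is not connected, $W_{\bG^{\star}}(s)$ is only a semidirect product $W_{\bG^{\star}}^{\circ}(s)\rtimes\mathcal{A}_{\bG^{\star}}(s)$, the extensions $\widetilde{E}$ are not canonical, and the functions $\mathcal{R}_{\bT_0^{\star}}^{\bG}(\widetilde{E},s)$ need no longer be irreducible characters, so one must argue through a regular embedding $\bG\hookrightarrow\widetilde{\bG}$ as in the proof of \cref{thm:existence-wave-front-sets}. Here \cref{lem:regular-embedding} shows that both sides of $\mathcal{O}_{\rho^*}=\mathcal{O}_\rho^*$ are compatible with restriction from $\widetilde{G}$ to $G$ (Alvis--Curtis duality commutes with restriction along a regular embedding, and both unipotent supports and wave front sets are preserved), so the general case reduces to the connected-centre case where the sign-twist analysis above applies cleanly.
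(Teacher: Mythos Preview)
Your proposal is correct and follows essentially the same approach as the paper: in the connected-centre case both you and the paper establish that $\rho^*\in\mathcal{E}(G,s,\mathfrak{C}^{\dag})$ and then invoke \cref{thm:existence-wave-front-sets} together with Geck's description $\mathcal{O}_{\rho}=\mathcal{O}_{s,\mathfrak{C}}$ from \cite{geck:1996:on-the-average-values}, while the disconnected-centre case is handled by both via a regular embedding and \cref{lem:regular-embedding}. The only difference is cosmetic: the paper obtains $\rho^*\in\mathcal{E}(G,s,\mathfrak{C}^{\dag})$ by directly citing \cite[(8.5.12)]{lusztig:1984:characters-of-reductive-groups}, whereas you unpack that argument via the sign-twist formula for $D_G$ on Deligne--Lusztig characters, and the paper cites \cite[Corollary 5.3]{taylor:2013:on-unipotent-supports} for the compatibility of $D_G$ with restriction along a regular embedding.
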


\begin{proof}
We first assume that $Z(\bG)$ is connected. Assume $\rho$ is contained in the series $\mathcal{E}(G,s,\mathfrak{C})$ then by the proof of \cite[(8.5.12)]{lusztig:1984:characters-of-reductive-groups} we have the dual character $\rho^*$ is contained in the series $\mathcal{E}(G,s,\mathfrak{C}^{\dag})$. In particular, we have $\mathcal{O}_{\rho^*} = \mathcal{O}_{s,\mathfrak{C}^{\dag}}$ by \cite[\S5.4]{geck:1996:on-the-average-values} so the result follows in this case from \cref{thm:existence-wave-front-sets}.

Now assume $Z(\bG)$ is disconnected and let $\bG \hookrightarrow \widetilde{\bG}$ be a regular embedding. If $\rho \in \Irr(G)$ is an irreducible character then choose an irreducible character $\widetilde{\rho} \in \Irr(\widetilde{G})$ such that $\rho$ is a constituent of $\Res_G^{\widetilde{G}}(\widetilde{\rho})$; we then have $\rho^*$ is a constituent of $\Res_G^{\widetilde{G}}(\widetilde{\rho}^*)$ by \cite[Corollary 5.3]{taylor:2013:on-unipotent-supports}. Now from the proof of \cite[Lemma 5.1]{geck:1996:on-the-average-values} and \cref{lem:regular-embedding} we see that $\mathcal{O}_{\rho^*} = \mathcal{O}_{\widetilde{\rho}^*}$ and $\mathcal{O}_{\rho}^* = \mathcal{O}_{\widetilde{\rho}}^*$. Hence the result follows from the previous case.
\end{proof}

\section{Closing Remarks}
\begin{pa}
In this final section we gather two important results from the literature concerning GGGRs. These results were proved assuming that the results of \cite{lusztig:1992:a-unipotent-support} hold. It is our purpose to show that these results now hold assuming only that $p$ is a good prime. We have chosen these results as they are relevant for \cite[Conjecture 2.1]{geck:2012:remarks-on-modular-representations}, which is a geometric refinement of \cite[Conjecture 3.4]{geck-hiss:1997:modular-representations} concerning the unitriangularity of the decomposition matrix; see \cite[Remark 2.4]{geck:2012:remarks-on-modular-representations}. The first result we consider, as mentioned in \cref{pa:geometric-condition}, is a geometric refinement of the condition in \cref{WF:2}.
\end{pa}

\begin{prop}[Achar--Aubert]\label{prop:geometric-refinement}
Assume $p$ is a good prime for $G$. Then for any irreducible character $\rho \in \Irr(G)$ and any unipotent element $u \in \mathcal{U}^F$ we have
\begin{equation*}
\langle \Gamma_u,\rho\rangle \neq 0 \Rightarrow \mathcal{O}_u \subseteq \overline{\mathcal{O}_{\rho}^*},
\end{equation*}
where $\mathcal{O}_u$ is the $\bG$-conjugacy class containing $u$.
\end{prop}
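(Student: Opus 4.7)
The plan is to apply the theorem of Achar--Aubert, \cite[Th\'eor\`eme 9.1]{achar-aubert:2007:supports-unipotents-de-faisceaux}, which establishes precisely this geometric refinement conditional on two ingredients: (a) the validity of Lusztig's decomposition of the GGGRs in terms of characteristic functions $X_\iota$ of intersection cohomology complexes on the closures of unipotent classes, and (b) the existence and uniqueness of the wave front set $\mathcal{O}_\rho^*$, together with its identification as $\mathcal{O}_{s,\mathfrak{C}^\dag}$ for $\rho \in \mathcal{E}(G,s,\mathfrak{C})$.

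Both hypotheses are now available to us in good characteristic. Hypothesis (a) is supplied by \cref{thm:main-theorem-unipotent}, combined with the reduction arguments of the previous sections, which extend Lusztig's formula from acceptable primes to the general good-prime case. Hypothesis (b) is provided by \cref{thm:existence-wave-front-sets}, which not only ensures uniqueness but also yields the explicit description of $\mathcal{O}_\rho^*$ in terms of the two-sided cell structure on $W_{\bG^\star}(s)$ that enters into Achar--Aubert's argument. Before invoking their theorem one should perform the standard reductions already used for \cref{thm:existence-wave-front-sets}: via \cref{lem:regular-embedding,lem:connected-simply-connected} one may assume $Z(\bG)$ is connected and the derived subgroup is simply connected, since the conclusion $\mathcal{O}_u \subseteq \overline{\mathcal{O}_\rho^*}$ is phrased purely in terms of geometric orbits of $\bG$ and is preserved under isotypic morphisms and regular embeddings.

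With these ingredients in place, the argument of Achar--Aubert applies essentially verbatim. The strategy is to expand $\langle \Gamma_u,\rho \rangle$ using \cref{thm:main-theorem-unipotent}, observing that non-vanishing forces $u \in \overline{\mathcal{O}_{\iota_1}}$ for some pair $\iota_1$ in the block $\mathscr{I}[\bL,\iota_0]$ attached to $\rho$ such that $X_{\iota_1}$ has non-zero inner product with $\rho$. One then identifies, via the generalised Springer correspondence and the $j$-induction $j_{H^\circ}^W$ of \S\ref{sec:weyl-uni}, the maximal such orbit with $\mathcal{O}_{s,\mathfrak{C}^\dag} = \mathcal{O}_\rho^*$. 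The hard part, entirely encapsulated in \cite{achar-aubert:2007:supports-unipotents-de-faisceaux}, is a combinatorial compatibility between the closure order on unipotent classes, the partial order on families induced by the $a$-function, and the Lusztig--Macdonald--Spaltenstein $j$-induction, together with a careful analysis of the matrices $P^\star_{\iota',\iota}$ appearing in our decomposition. Since none of these combinatorial statements depends on $p$ beyond the good-prime hypothesis, the Achar--Aubert argument transfers without modification to yield the desired inclusion $\mathcal{O}_u \subseteq \overline{\mathcal{O}_\rho^*}$.
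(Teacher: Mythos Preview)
Your overall strategy---reduce, then invoke Achar--Aubert---matches the paper's, but there is a genuine gap in how you set up the base case. You assert that \cref{thm:main-theorem-unipotent} ``combined with the reduction arguments of the previous sections'' extends Lusztig's formula from acceptable primes to all good primes. It does not: \cref{thm:main-theorem-unipotent} is proved only for acceptable primes, and nowhere in the paper is the formula itself extended beyond that hypothesis. What the previous sections do is extend the \emph{consequences} of the formula (existence of the wave front set, its identification as $\mathcal{O}_{s,\mathfrak{C}^{\dag}}$) to good primes by reducing to groups for which good $=$ acceptable. Achar--Aubert's Th\'eor\`eme 9.1 takes Lusztig's formula as input, so it can only be invoked once you are in a group where the formula is actually known.

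Consequently your reduction is not deep enough. Passing via \cref{lem:regular-embedding,lem:connected-simply-connected} to connected centre and simply connected derived subgroup still leaves, for instance, $\SL_n$ with $p\mid n$, where $p$ is good but not acceptable and \cref{thm:main-theorem-unipotent} is unavailable. The paper instead follows the \emph{entire} chain of reductions from the proof of \cref{thm:existence-wave-front-sets}, landing in the specific base cases $\bG=\GL_n(\mathbb{K})$ or $Z(\bG)$ connected with $\bG_{\der}$ simple not of type $\A$; there good primes are acceptable by \cref{lem:p-acceptable}, so \cref{cor:all-Lusztig-results} makes Achar--Aubert applicable. The one additional point you must check along the way---which the paper makes explicit---is that an isotypic morphism restricts to a homeomorphism of unipotent varieties and hence preserves the closure order on unipotent classes, so the inclusion $\mathcal{O}_u\subseteq\overline{\mathcal{O}_{\rho}^*}$ transfers through each reduction step.
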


\begin{proof}
We start by assuming that either $\bG$ is $\GL_n(\mathbb{K})$ or that $Z(\bG)$ is connected and the derived subgroup $\bG_{\der}$ is simple but not of type $\A$. Applying \cite[Th\'eor\`eme 9.1]{achar-aubert:2007:supports-unipotents-de-faisceaux}, which is available to us because of \cref{lem:p-acceptable,cor:all-Lusztig-results}, we see that the statement holds. Assume now that $\varphi : \bG \to \bH$ is an isotypic morphism (c.f.\ \cref{pa:isotypic-morphism}) then $\varphi$ restricts to a homeomorphism between the varieties of unipotent elements in $\bG$ and $\bH$ because the restriction is a $\bG$-equivariant bijective morphism, c.f., \cite[2.5.6(b)]{geck:2003:intro-to-algebraic-geometry}. In particular, it preserves the partial order on the unipotent conjugacy classes given by the closure relation. With this we now simply follow the reduction steps given in the proof of \cref{thm:existence-wave-front-sets}.
\end{proof}

\begin{pa}
The next result we consider is an observation which is due to Geck and Malle, see the proof of \cite[Proposition 3.5]{geck-malle:2000:existence-of-a-unipotent-support}. From this point forward we assume that $Z(\bG)$ is connected. Assume $\rho \in \mathcal{E}(G,s,\mathfrak{C})$ is an irreducible character then we denote by $a_{\rho} \in \mathbb{Z}_{\geqslant 0}$ the $a$-value of the unique special character in the family $\Irr(W_{\bG^{\star}}(s)\mid \mathfrak{C})$, c.f., \cite[4.1.1]{lusztig:1984:characters-of-reductive-groups}. By \cite[4.26.3]{lusztig:1984:characters-of-reductive-groups} there exists a unique positive integer  $n_{\rho} \in \mathbb{Z}_{>0}$ such that $n_{\rho}\cdot\rho(1) \in \mathbb{Z}[q]$ is a polynomial in $q$ with integer coefficients and
\begin{equation*}
\pm n_{\rho}\cdot\rho(1) = q^{a_{\rho}} + \text{ higher powers of }q.
\end{equation*}
With this we have the following.
\end{pa}

\begin{prop}[Lusztig, Geck--Malle]
Assume $p$ is a good prime for $\bG$ and $Z(\bG)$ is connected. Then for any irreducible character $\rho \in \Irr(G)$ we have
\begin{equation*}
\langle \Gamma_{\iota_0},\rho\rangle = \frac{|A_{\bG}(u)|}{n_{\rho}},
\end{equation*}
where $\iota_0 = (\mathcal{O}_{\rho}^*,\Ql) \in \mathcal{V}_{\bG}^{\uni}$ and $u \in \mathcal{O}_{\rho}^*$ is a class representative.
\end{prop}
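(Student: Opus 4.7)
The plan is to derive the formula by combining the decomposition of the modified GGGR from \cref{lem:mod-GGGR-decomp} with a careful analysis of the inner products $\langle X_{\iota_1}, \rho\rangle$ through the theory of character sheaves. Writing $\mathscr{I} \subseteq \mathcal{V}_{\bG}^{\uni}$ for the block containing $\iota_0 = (\mathcal{O}_\rho^*, \Ql)$, \cref{lem:mod-GGGR-decomp} expresses $\Gamma_{\iota_0}$ as an explicit $\Ql$-linear combination of the $X_{\iota_1}$ for $\iota_1 \in \mathscr{I}^F$, with coefficients built from Frobenius traces on the relative Weyl group, from the $P^{\star}_{\iota_0,\iota}$ entries, from the fourth root of unity $\zeta_{\mathscr{I}}$ of \cref{prop:fourth-root-unity}, and from explicit $q$-powers $q^{f'(\iota,\iota_1)}$. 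Pairing with $\rho$ therefore reduces the statement to evaluating each $\langle X_{\iota_1}, \rho\rangle$ and summing with these prescribed coefficients.

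To compute $\langle X_{\iota_1}, \rho\rangle$ I would use the relations \cref{eq:twist-X,eq:twist-X-inv,eq:twist-X-vee} that express the $X_{\iota_1}$ in terms of characteristic functions of the induced complexes $K_w$. Since $Z(\bG)$ is connected and $p$ is good, \cref{thm:shoji-gen-green-funcs} together with \cref{cor:all-Lusztig-results} guarantees that Lusztig's character sheaf machinery is available without restriction on $q$; in particular the characteristic functions of $F$-stable character sheaves in $\widehat{\bG}_{\mathscr{L},\mathfrak{C}}$ coincide, up to scalars, with Lusztig's almost characters as recalled in \cref{pa:char-func-of-char-sheaf}. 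Expanding $\rho$ in the basis of almost characters and invoking the disjointness theorem then reduces $\langle X_{\iota_1}, \rho\rangle$ to a finite expression involving $\Tr(wF, E)$ for $E \in \Irr(W_{\bG^\star}(s))^{w_1 F^\star}$ in the family indexed by $\mathfrak{C}$, together with $\rho(1)$ and the orders $|Z^\circ(\bL_w)^F|$. Substituting this back into \cref{lem:mod-GGGR-decomp}, the orthogonality of Frobenius-twisted characters of $W_{\bG}(\bL)$ collapses the inner sum over $w$, and the bijection $\iota \mapsto \hat{\iota}$ of \cref{lem:bijection-tensor-sign} together with \cref{thm:existence-wave-front-sets} identifies the surviving contribution as coming from the unique special representation in the family of $\rho$, whose associated unipotent class is precisely $\mathcal{O}_\rho^* = \mathcal{O}_{s,\mathfrak{C}^\dag}$.

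The hard part will be bookkeeping the $q$-powers, signs, and the root of unity $\zeta_{\mathscr{I}}$ to show that after all cancellations the surviving leading monomial, combined with $|\bG^F|$ and the $|Z^\circ(\bL_w)^F|$ factors, matches $q^{a_\rho}$ and hence $n_\rho$ via the defining property $\pm n_\rho \cdot \rho(1) = q^{a_\rho} + \text{higher powers of }q$. This is exactly the calculation carried out by Lusztig and by Geck--Malle in \cite[proof of Proposition 3.5]{geck-malle:2000:existence-of-a-unipotent-support} in the large-characteristic setting. Our task is to verify that every input they use is now available in good characteristic with connected centre: \cref{thm:main-theorem-unipotent} supplies the decomposition of the GGGR (first for acceptable $p$, then via the reductions of \cref{lem:regular-embedding,lem:connected-simply-connected} as in the proof of \cref{thm:existence-wave-front-sets} to reduce the good-prime case to the acceptable-prime cases of $\GL_n$ and of simple groups not of type $\A$), \cref{thm:shoji-gen-green-funcs} removes the restriction on $q$, and \cref{thm:existence-wave-front-sets} together with \cref{thm:unip-support-char-sheaves} pins down which $\iota_1$'s contribute. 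With these ingredients in place, Geck--Malle's calculation transfers verbatim and yields $\langle \Gamma_{\iota_0}, \rho\rangle = |A_{\bG}(u)|/n_\rho$.
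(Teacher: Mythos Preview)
Your overall strategy of reducing to acceptable-prime cases and then invoking the Geck--Malle computation matches the paper's, and the elaborate direct calculation you sketch in the first two paragraphs is ultimately unnecessary since you defer to \cite{geck-malle:2000:existence-of-a-unipotent-support} anyway. There is, however, a genuine gap in your reduction. The lemmas you cite, \cref{lem:regular-embedding,lem:connected-simply-connected}, concern wave front sets rather than the multiplicity $\langle\Gamma_{\iota_0},\rho\rangle$; analogous statements must be re-established here, verifying in particular that $|A_{\bG}(u)|$ and $n_\rho$ are preserved (the paper does this explicitly, using connectedness of both centres to identify component groups). More importantly, these reductions do not land you in $\GL_n(\mathbb{K})$ for type~$\A$. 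The double-embedding construction of \cite[\S8.8]{lusztig:1984:characters-of-reductive-groups} produces factors $\bH_i$ with connected centre and simple simply connected derived subgroup, but when $(\bH_i)_{\der}\cong\SL_n(\mathbb{K})$ there is no reason for $\bH_i$ itself to be $\GL_n(\mathbb{K})$, and for such an $\bH_i$ the prime $p$ need not be acceptable, so \cref{cor:all-Lusztig-results} and hence the Geck--Malle argument are not available.

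The paper closes this gap with a short separate argument. For such $\bH_i$ one has $|A_{\bG}(u)|=n_\rho=1$, so the claim becomes $\langle\Gamma_u^G,\rho\rangle_G=1$ for all $u\in{\mathcal{O}_\rho^*}^F$. By Frobenius reciprocity this equals $\langle\Gamma_u^{G_{\der}},\Res_{G_{\der}}^G(\rho)\rangle_{G_{\der}}$, and since $\Res_{G_{\der}}^G(\rho)$ is a sum over a $Z(\bG_{\der})_F$-orbit of irreducible characters of $G_{\der}$, this inner product depends only on $G_{\der}$ and that orbit, not on the ambient group $\bG$. One may therefore compute it taking $\bG=\GL_n(\mathbb{K})$ instead, where the result is already established by the acceptable-prime case. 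Your proposal omits this step.
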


\begin{proof}
Before proving the result we will need the following reduction argument. Assume $\pi : \bG \to \widetilde{\bG}$ is a regular embedding. If $\widetilde{\rho} \in \Irr(\widetilde{G})$ is an irreducible character then the restriction $\rho = \Res^{\widetilde{G}}_G(\widetilde{\rho})$ is also irreducible because $Z(\bG)$ is connected, see \cite[\S11]{lusztig:1988:reductive-groups-with-a-disconnected-centre}. As in the proof of \cref{lem:regular-embedding} we have for any unipotent element $u \in G$ that
\begin{equation*}
\langle \Gamma_u^{\widetilde{G}},\widetilde{\rho}\rangle_{\widetilde{G}} = \langle \Gamma_u^G,\rho\rangle_G.
\end{equation*}
As $Z(\bG)$ and $Z(\widetilde{\bG})$ are both connected we have $\pi$ induces an isomorphism $A_{\bG}(u) \to A_{\widetilde{\bG}}(u)$ and a bijection between the unipotent conjugacy classes of $G$ and $\widetilde{G}$. Hence, we easily see that the result holds for $\bG$ if and only if it holds for $\widetilde{\bG}$.

Now, let $\pi : \widetilde{\bG} \to \bG$ be a surjective isotypic morphism as in \cref{lem:connected-simply-connected} (see \cite[\S 8.8]{lusztig:1984:characters-of-reductive-groups}). The map $\pi$ induces an isomorphism $A_{\widetilde{\bG}}(u) \to A_{\bG}(u)$ for any unipotent element $u \in \widetilde{\bG}$ because the kernel of $\pi$ is connected. Let us denote by $\widetilde{\rho} = \Inf_G^{\widetilde{G}}(\rho)$ the inflation of $\rho$. From the definition it is clear that $n_{\widetilde{\rho}} = n_{\rho}$ and by the proof of \cref{lem:connected-simply-connected} we see that $\langle \Gamma_{\iota_0}^G,\rho\rangle = \langle \Gamma_{\iota_0}^{\widetilde{G}},\widetilde{\rho}\rangle$. Therefore, we may and will assume that the derived subgroup $\bG_{\der}$ is simply connected.

Now according to \cite[\S8.8]{lusztig:1984:characters-of-reductive-groups} we can find two connected reductive algebraic groups $\widetilde{\bG}$ and $\bH$ and a pair of regular embeddings
\begin{equation*}
\bG \hookrightarrow \widetilde{\bG} \hookleftarrow \bH
\end{equation*}
such that the following holds: $\bH$ is a direct product $\bH_1 \times \cdots \times \bH_r$ such that $Z(\bH_i)$ is connected and the derived subgroup of $\bH_i$ is simple and simply connected for all $1 \leqslant i \leqslant r$. Applying twice the first reduction argument we see that the result holds in $\bG$ if and only if it holds in $\bH$. In particular, we may assume that $\bG$ has the same form as $\bH$.

Using the same arguments as in the proof of \cref{thm:existence-wave-front-sets} it is clear that we need only prove the statement assuming that $Z(\bG)$ is connected and the derived subgroup of $\bG$ is simple and simply connected. We start with the case where either $\bG$ is $\GL_n(\mathbb{K})$ or the derived subgroup is not of type $\A$. Let us denote by $\rho^* = \pm D_G(\rho) \in \Irr(G)$ the Alvis--Curtis dual of $\rho$. From the proof of \cite[Proposition 3.5]{geck-malle:2000:existence-of-a-unipotent-support} we see that
\begin{equation*}
\langle \Gamma_{\iota_0},\rho\rangle = \langle \Gamma_{(\mathcal{O}_{\rho^*},\Ql)}, \rho \rangle = \frac{|A_{\bG}(u)|}{n_{\rho}},
\end{equation*}
which is applicable by \cref{lem:p-acceptable,cor:all-Lusztig-results}. Note that we have $\mathcal{O}_{\rho}^* = \mathcal{O}_{\rho^*}$ by \cite[Theorem 11.2]{lusztig:1992:a-unipotent-support}. Thus the result holds in this case.

It remains to deal with the case where $Z(\bG)$ is connected and the derived subgroup $\bG_{\der}$ is isomorphic to $\SL_n(\mathbb{K})$. For such a group we have $|A_{\bG}(u)| = n_{\rho} = 1$ for all unipotent elements $u \in \bG$ and irreducible characters $\rho \in \Irr(G)$. Hence, the statement reduces to the statement that
\begin{equation}\label{eq:mult-1}
\langle \Gamma_u^G,\rho\rangle_G = 1
\end{equation}
for all $u \in {\mathcal{O}_{\rho}^*}^F$. Now computing the multiplicity on the left we see by Frobenius reciprocity that
\begin{equation}\label{eq:frob-reciprocity}
\langle \Gamma_u^G,\rho\rangle_G = \langle \Gamma_u^{G_{\der}},\Res^G_{G_{\der}}(\rho)\rangle_{G_{\der}}.
\end{equation}
By \cite[\S3]{lusztig:1988:reductive-groups-with-a-disconnected-centre} the restriction $\Res^G_{G_{\der}}(\rho) = \sum_{\sigma \in \mathcal{A}} \sigma$ is a sum of irreducible characters such that $\mathcal{A}$ is an orbit under the action of the $F$-coinvariants $Z(\bG_{\der})_F$ of the centre of $\bG_{\der}$. Thus, it suffices to show that
\begin{equation*}
\langle \Gamma_u^{G_{\der}},\sum_{\sigma \in \mathcal{A}} \sigma\rangle_{G_{\der}} = 1
\end{equation*}
for any $Z(\bG_{\der})_F$-orbit $\mathcal{A} \subseteq \Irr(G_{\der})$ and $u \in {\mathcal{O}_{\sigma}^*}^F$ with $\sigma \in \mathcal{A}$ some (any) representative of the orbit. This problem doesn't depend upon $G$ so we may use any group to solve it. In particular, taking $\bG = \GL_n(\mathbb{K})$ we see that this is true by \cref{eq:mult-1,eq:frob-reciprocity}. This completes the proof.
\end{proof}

\begin{cor}
Assume $Z(\bG)$ is connected $\bG/Z(\bG)$ is an almost direct product of simple groups of type $\A$. Then for any irreducible character $\rho \in \Irr(G)$ and any unipotent element $u \in G$ we have
\begin{equation*}
\langle \Gamma_u,\rho \rangle = \begin{cases}
1 &\text{if } u \in \mathcal{O}_{\rho}^*,\\
0 &\text{if } \mathcal{O}_u \not\subset \overline{\mathcal{O}_{\rho}^*}.
\end{cases}
\end{equation*}
\end{cor}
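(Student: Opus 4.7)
The plan is to deduce the corollary almost immediately from the preceding proposition and the geometric refinement (Proposition 13.4, or rather the result labelled \cref{prop:geometric-refinement}). The two cases will be handled separately: the value $1$ at elements of $\mathcal{O}_{\rho}^*$ comes from the explicit formula for $\langle \Gamma_{\iota_0},\rho\rangle$, while the vanishing outside $\overline{\mathcal{O}_{\rho}^*}$ is simply the contrapositive of \cref{prop:geometric-refinement}.

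For the non-vanishing case, the key step is to verify that under the hypothesis on $\bG$ we have $|A_{\bG}(u)| = 1$ for every unipotent element $u$ and $n_{\rho} = 1$ for every $\rho \in \Irr(G)$. The triviality of the component groups is classical when $\bG$ has connected centre and $\bG/Z(\bG)$ is a product of simple type $\A$ groups: one reduces via the natural isotypic morphisms (as in \cref{lem:regular-embedding,lem:connected-simply-connected}) to the case $\bG = \GL_n(\mathbb{K})$, where centralisers of unipotent elements are connected. The fact that $n_{\rho} = 1$ likewise follows from the known shape of the generic degrees for type $\A$ groups with connected centre, which are integer polynomials in $q$ with leading coefficient $\pm 1$.

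Once these two facts are in hand, the non-vanishing case is immediate. Indeed, with $\iota_0 = (\mathcal{O}_{\rho}^*,\overline{\mathbb{Q}}_{\ell}) \in \mathcal{V}_{\bG}^{\uni}$ and $u \in {\mathcal{O}_{\rho}^*}^F$, the assumption $|A_{\bG}(u)| = 1$ forces the set $\{u_1,\dots,u_r\}$ of $G$-class representatives inside $\mathcal{O}_{\rho}^{*F}$ in \cref{eq:mod-GGGR} to reduce to a single element, and $Y_{\iota_0}(u_i) = 1$ because $\iota_0$ corresponds to the trivial local system. Hence $\Gamma_{\iota_0} = \Gamma_u$, and applying the previous proposition gives
\begin{equation*}
\langle \Gamma_u,\rho\rangle = \langle \Gamma_{\iota_0},\rho\rangle = \frac{|A_{\bG}(u)|}{n_{\rho}} = 1.
\end{equation*}

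For the vanishing case, the hypothesis $\mathcal{O}_u \not\subseteq \overline{\mathcal{O}_{\rho}^*}$ together with \cref{prop:geometric-refinement} forces $\langle \Gamma_u,\rho\rangle = 0$, since \cref{prop:geometric-refinement} states that $\langle \Gamma_u,\rho\rangle \neq 0$ implies $\mathcal{O}_u \subseteq \overline{\mathcal{O}_{\rho}^*}$. The main (mild) obstacle is the bookkeeping verification that $|A_{\bG}(u)| = 1$ and $n_{\rho} = 1$ genuinely persist through the various isogenies built into the hypothesis; this is routine, but care is needed because the isogenies used in the reductions do not always preserve component groups (they do in this particular type $\A$ situation because the relevant centres are connected). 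No other step requires any serious work.
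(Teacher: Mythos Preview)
Your proposal is correct and follows essentially the same approach as the paper: the paper's own proof is the single sentence that the result ``follows immediately from the fact that, for such a group, $C_{\bG}(u)$ is connected for any unipotent element $u \in \bG$ and that $n_{\rho} = 1$ for any irreducible character $\rho \in \Irr(G)$,'' and you have simply unpacked what ``immediately'' means by invoking the preceding proposition together with \cref{prop:geometric-refinement}. Your additional observation that $\Gamma_{\iota_0} = \Gamma_u$ when the component group is trivial (via \cref{eq:mod-GGGR}) is exactly the mechanism the paper has in mind.
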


\begin{proof}
This follows immediately from the fact that, for such a group, $C_{\bG}(u)$ is connected for any unipotent element $u \in \bG$ and that $n_{\rho} = 1$ for any irreducible character $\rho \in \Irr(G)$.
\end{proof}

\renewcommand*{\bibfont}{\small}
\begin{spacing}{0.96}
\printbibliography
\end{spacing}
\end{document}